\newcommand{\sect}[1]{\section{#1}\setcounter{equation}{0}}
\font\mbn=msbm10 scaled \magstep1
\font\mbs=msbm7 scaled \magstep1
\font\mbss=msbm5 scaled \magstep1
\newcommand{\RR}       { \mathbb{R}}
\newcommand{\N}       { \mathbb{N}}
\newcommand{\Z}        {\mathbb{Z}  }
\newtheorem{Th}{Theorem}[section]
\newtheorem{Lm}[Th]{Lemma}
\newtheorem{C}[Th]{Corollary}
\newtheorem{D}[Th]{Definition}
\newtheorem{St}[Th]{Stipulation}
\newtheorem{Stat}[Th]{Statement}
\newtheorem{Prop}[Th]{Proposition}
\newtheorem{R}[Th]{Remark}
\newtheorem{Notation}[Th]{Notation}
\newtheorem*{Con}{Conjecture}
\newtheorem*{Th A}{Theorem A}
\newtheorem*{Th B}{Theorem B}
\begin{document}

\title[Multivariate BV Functions of Jordan-Wiener Type]{Multivariate  Bounded Variation  Functions of Jordan-Wiener Type}
\author{Alexander Brudnyi}
\address{Department of Mathematics and Statistics\newline
\hspace*{1em} University of Calgary\newline
\hspace*{1em} Calgary, Alberta, Canada\newline
\hspace*{1em} T2N 1N4}
\email{abrudnyi@ucalgary.ca}
\author{Yuri Brudnyi}
\address{Department of Mathematics\newline
\hspace*{1em} Technion\newline
\hspace*{1em} Haifa, Israel\newline
\hspace*{1em} 32000}
\email{ybrudnyi@math.technion.ac.il}

\keywords{Spaces of bounded variation, oscillation, local polynomial approximation, packing, smoothness, signed Borel measure, atom, chain, duality, predual space, two-stars theorem}
\subjclass[2010]{Primary 26B30. Secondary 46E35.}

\thanks{Research of the first author is supported in part by NSERC}

\begin{abstract}
We introduce and study spaces of multivariate functions of bounded variation generalizing the classical Jordan and Wiener spaces.  
Multivariate generalizations of the Jordan space were given by several prominent researchers but each of them preserved only some special properties of the space used further in few selected  applications. Unlike this the multivariate generalization of the Jordan space presented in this paper preserves all known and reveals some previously unknown properties of the space. These, in turn, are special cases of the basic properties of the introduced spaces proved in the paper. They, in particular, include results on discontinuity sets and pointwise differentiability of the bounded variation functions and their Luzin type and $C^\infty$ approximation. Moreover, the second  part of the paper presents results on Banach structure of function spaces of bounded variation, namely, atomic decomposition and constructive characterization of their predual spaces and then constructive characterization of  preduals of the last ones and following from here the so-called two-stars theorems relating second duals of separable subspaces of ``vanishing variation''  to the (nonseparable) spaces of bounded variation.
\end{abstract}

\date{}

\maketitle

\sect{Introduction}
\subsection{Main Concept (Motivation)} The paper is devoted to the study of a new family of multivariate function spaces generalizing the classical Jordan-Wiener spaces $\{V_p\}_{1\le p\le\infty}$. A generic space of the family is denoted by $\dot V_p^k(Q^d)$, where $k\in\N$, $1\le p<\infty$ and $Q^d$ is a $d$-dimensional closed cube; in what follows, without loss of generality we take 
\begin{equation}\label{eq-n1.1}
Q^d:=[0,1]^d.
\end{equation}
The space consists of bounded on $Q^d$ functions with bounded
$(k,p)$-{\em variation} denoted by $|f|_{V_p^k}$. To introduce and motivate this notion we use an equivalent form of the classical $p$-variation given for a function $f\in\ell_\infty [0,1]$ by
\begin{equation}\label{eq-n1.2}
|f|_{V_p}:=\sup_{\pi}\left(\sum_{I\in\pi}{\rm osc}(f;I)^p\right)^{\frac 1 p};
\end{equation}
here $\pi$ is a {\em packing}, i.e., a set of pairwise nonoverlapping\footnote{Two subsets of a topological space are nonoverlapping if their interiors are disjoint.} closed intervals $I\subset [0,1]$ and 
\begin{equation}\label{eq-n1.3}
{\rm osc}(f;I):=\sup_{x,y\in I}|f(x)-f(y)|
\end{equation}
is the {\em oscillation} of $f$ on $I$.

The reader can easily see that \eqref{eq-n1.2} is equivalent to the classical Jordan  definition with ${\rm osc}(f;I)$ replaced by {\em deviation} $\delta(f;I):=|f(b)-f(a)|$, where $I:=[a,b]$ and $\pi$ runs over {\em partitions}, i.e., {\em coverings} of $[0,1]$ by pairwise nonoverlapping intervals.

It seems to be natural to define multivariate spaces of Jordan-Wiener type by taking in \eqref{eq-n1.2} $f\in\ell_\infty(Q^d)$ and $\pi$ being a family of pairwise nonoverlapping  closed subcubes $Q\subset Q^d$. However, assuming boundedness of such variation we obtain the family $\{\ell_p(Q^d)\}_{1\le p<\infty}$ that has no relation to the required  function spaces of bounded variation. 

To overcome this obstacle we enlarge the order of difference in \eqref{eq-n1.3} using a function of a cube $Q\mapsto {\rm osc}_k(f;Q)$, $Q\subset Q^d$, given by
\begin{equation}\label{eq-n1.4}
{\rm osc}_k(f;Q):=\sup_{h>0}\{|\Delta_h^k f(x)|\, :\, x+jh\in Q;\ j=0,\dots, k\};
\end{equation}
here as usual
\begin{equation}\label{eq-n1.5}
\Delta_h^k f:=\sum_{j=0}^k (-1)^{k-j} {k \choose j} f(\cdot +jh).
\end{equation} 
The family of spaces $\dot V_p^k(Q^d)$ obtained in this way  is the main object of our study.
\begin{R}\label{rem1.1}
{\rm The definition of $\dot V_p^k(Q^d)$ is meaningful only in the case of $k\ge\frac{d}{p}$; otherwise, $\dot V_p^k(Q^d)=\ell_p (Q^d)$.
}
\end{R}
To illustrate the results of the present paper we briefly discuss their special cases for  the space $\dot V_1^d(Q^d)$ that may be regarded as a $d$-dimensional analog of Jordan space $V\, (=\dot V_1^1(Q^1)$ in our notation). We justify its role as a ``genuine''  analog of Jordan's space for $d\ge 2$ by comparing its properties presented below with those for $d=1$.

 In what follows, $\dot V_1^d(Q^d)$ is denoted for brevity by $\dot V(Q^d)$.\smallskip
 
 \noindent (a) For every $d\ge 1$, functions of $\dot V(Q^d)$ have at most countable sets of discontinuities, see Theorem \ref{te2.1.3}.\smallskip
 
 \noindent (b) For every $d\ge 1$, functions of  $\dot V(Q^d)$ are $d$-differentiable in the Peano sense\footnote{See the corresponding definition in Section~2.2.} almost everywhere (a.e.). 
 
 In particular, this implies for $d=1$ the classical Lebesgue differentiability theorem, see Theorem \ref{te2.2.8}\,(a).\smallskip
 
 \noindent (c) Given  $f\in \dot V(Q^d)$, $d\ge 1$, and $\varepsilon\in (0,1)$ there is a function $f_\varepsilon\in C^{d}(Q^d)$ such that
 \[
 {\rm mes}_d\{x\in Q^d\, :\, f(x)\ne f_\varepsilon (x)\}<\varepsilon,
 \]
 see Theorem \ref{teo2.2.5}.
 
 \noindent (d) Let $AC(Q^d)$, $d\ge 1$, be a closed subspace of  $\dot V(Q^d)$ consisting of functions $f\in\ell_\infty(Q^d)$ satisfying
 \[
 \lim_{|\pi|\rightarrow 0}\sum_{Q\in\pi}{\rm osc}_k(f;Q)=0;
 \]
 here $|\pi|:=\sum_{Q\in\pi}|Q|$ and $|S|$ is a $d$-measure of $S\subset\RR^d$.
 
 Then it is true that
 \[
 AC(Q^d)=\dot W_1^d(Q^d),
 \]
 where hereafter $\dot W_p^k(Q^d)$ stands for the homogeneous Sobolev space of order $k$ over $L_p(Q^d)$.
 
 Moreover, every $f\in AC(Q^d)$ is restored by some integral operator up to an additive term being a polynomial of degree $d-1$  via the distributional gradient $\nabla^d f:=(D^\alpha f)_{|\alpha|=d}$, see Corollary \ref{cor9.7}.
 
 For $d=1$, this gives the so-called Fundamental Theorem of Calculus by Lebesgue-Vitali.\smallskip
 
 \noindent (e) Let $N\dot V(Q^d)$, $d\ge 1$, be the normalized part of $\dot V(Q^d)$ so that
 \[
 \dot V(Q^d)=N\dot V(Q^d)\oplus\ell_1(Q^d),
 \]
 see Section~2.1 below.
 
 Then it is true that
 \[
 N\dot V(Q^d)=B\dot V^d(Q^d),
 \]
 where  $B\dot V^k(Q^d)$ is the Sobolev space of order $k$ whose distributional $k$-th derivatives are bounded Borel measures, see Theorem \ref{teo2.2.11}\,(c).

 For $d=1$, this gives the classical Hardy-Littlewood theorem \cite{HL-28} as $B\dot V^1(Q^1)$ coincides with Lipschitz space over $L_1(Q^1)$.\smallskip
 
 \noindent (f) If a sequence $\{f_n\}_{n\in\N}\subset  \dot V(Q^d)$, $d\ge 1$, is such that
 \[
 \sup_{n\in\N}|f_n|_{V(Q^d)}<\infty,
 \]
 then for some sequences $J\subset\N$ of integers and $\{p_n\}_{n\in J}$ of polynomials of degree $d-1$ on $\RR^d$ the sequence $\{f_n+p_n\}_{n\in J}$ pointwise converges to some $f\in \dot V(Q^d)$ and, moreover,
 \[
 |f|_{V(Q^d)}\le\varliminf_{n\in J}|f_n|_{V(Q^d)}.
 \]
 
For $d=1$, this gives the classical Helly selection theorem.
 \smallskip
 
 The variety of useful properties of the space $\dot V(Q^d)$ (more generally,  spaces $\dot V_p^k(Q^d)$) allows their applicability to various fields of modern analysis including multivariate harmonic analysis, $N$-term approximation, the real interpolation of smoothness spaces. The corresponding results will be presented in the forthcoming papers.
 \begin{R}\label{rem1.2}
 {\rm There are two more spaces of the family, namely, $\dot V_d^d(Q^d)$ and $\dot V_d^1(Q^d)$ which for $d=1$ coincide with the space $V[0,1]$. However, they  do not possess almost all of the above formulated properties for $\dot V(Q^d)$ (e.g., if $d\ge 2$, then $\dot V_d^d(Q^d)$ contains nowhere differentiable functions).
 }
 \end{R}
\subsection{Historical Remarks} An important role of the space $V$ in univariate function theory attracted several prominent researchers to the problem of its multivariate generalization. The introduced notions single out multivariate function spaces preserving only some {\em particular} properties of the Jordan space. These properties, in turn, were used to solve a specific problem of multivariate analysis. 

In this paper we briefly discuss only those works that have been influenced further development of the multivariate theory.\smallskip

\noindent (a) {\em Vitali variation} (associated with the names of Vitali \cite{V-05}, Lebesgue \cite{Le-10}, Vall\'{e}e Poussen \cite{VP-16}).

Let $[x,y]\subset\RR^d$, $x=(x_1,\dots, x_d)$, $y=(y_1,\dots , y_d)$, be a $d$-{\em interval}, i.e., the set
\[
[x,y]:=\{z=(z_1,\dots, z_d)\in\RR^d\, :\, x_i\le z_i\le y_i,\  i=1,\dots, d\}.
\]
For a function $f\in\ell_\infty(I)$, $I:=[x,y]$, we define the {\em deviation} 
\begin{equation}\label{eq-n1.6}
\delta(f;I)=\sum_{j_1=0}^1\cdots\sum_{j_d=0}^1 (-1)^{j_1+\cdots +j_d}f(y_1+j_1(x_1-y_1),\dots, y_d+j_d(x_d-y_d));
\end{equation}
in particular, for $d=2$
\[
\delta(f;I)=f(y_1,y_2)-f(y_1,x_2)-f(x_1,y_2)+f(x_1,x_2).
\]

The {\em Vitali variation} is given for $f\in\ell_\infty(Q^d)$ by
\begin{equation}\label{eq-n1.7}
{\rm var}_v f:=\sup_\pi \left\{\sum_{I\in\pi} |\delta(f;I)|\right\},
\end{equation}
where $\pi$ runs over all families of pairwise nonoverlapping $d$-intervals in $Q^d$.

The space of functions on $Q^d$ with bounded Vitali variations is denoted by $V_v(Q^d)$.
\begin{R}\label{rem1.3}
{\rm (a) In the original definition, see, e.g., \cite{AC-33}, $\pi$ runs over all {\em partitions} of $Q^d$ by such $d$-intervals. Since
\[
\delta(f;I)=\sum_{I'\in\pi}\delta(f;I'),
\]
where $\pi$ is a partition of $I$ in nonoverlapping $d$-intervals $I'$ by hyperplanes parallel to the coordinate ones, this definition is equivalent to \eqref{eq-n1.7}.\smallskip

\noindent (b) The deviations in \eqref{eq-n1.7} can be clearly replaced by {\em oscillations}; here
\[
{\rm osc}(f;I):=\sup_{I'\subset I}|\delta(f;I')|,
\]
where supremum is taken over all $d$-subintervals of $I$.
}
\end{R}
Unlike affinity of their definitions the structure of functions from $V_v(Q^d)$ is essentially poorer than that for functions from $\dot V(Q^d)$. In particular, the former contains nonmeasurable functions and nowhere differentiable functions.

Nevertheless, the affinity leads to the following continuous embedding
\begin{equation}\label{eq-n1.8}
(\dot V\cap C)(Q^d)\subset V_v(Q^d).
\end{equation}

It remains to present the only application of Vitali variation to prove the existence of the Riemann-Stiltjes type multiple integral $\int_{Q^d}f\, dg$ for $f\in C(Q^d)$ and $g\in V_v(Q^d)$. i.e., the existence of the limit
\[
\lim_{\Pi}\sum_{I\in\pi} f(x_I)\delta(g;I),
\]
where $\pi$ is a partition of $Q^d$ in $d$-intervals by hyperplanes parallel to the coordinate ones, $x_I\in I$ and $\Pi=\{\pi\}$ is a net of such partitions with meshes $\sup_{I\in\pi}|I|\rightarrow 0$.\medskip

\noindent (b) {\em Hardy-Krause variation}.

This notion denoted by ${\rm var}_h f$ is defined via Vitali variation as follows.

Let $\{e^i\}_{1\le i\le n}$ be the standard basis of $\RR^d$ and $\hat x\in Q^d$, $\omega\subset\{1,\dots, d\}$ be fixed. Then a {\em partial function} of $f\in\ell_\infty(Q^d)$  defined by $\hat x, \omega$ is given by
\[
f_{\hat x,\omega}: (x_i)_{i\in\omega}\mapsto f\left(\sum_{i\in\omega}x_ie^i+\sum_{i\not\in\omega}\hat x_i e^i\right);
\] 
this is clearly defined on the unit cube $Q^\omega\cong [0,1]^{|\omega|}$ of dimension $|\omega|$ in the subspace of $\RR^d$ generated by vectors $e^i$, $i\in\omega$.

Now Hardy-Krause variation is given for $f\in\ell_\infty(Q^d)$ by
\begin{equation}\label{eq-n1.9}
{\rm var}_h f:=\sum_{\omega}{\rm var}_v f_\omega ,
\end{equation}
where $\omega$ runs over all nonempty subsets of $\{1,\dots, d\}$.

The space of functions of bounded variation \eqref{eq-n1.9} is denoted by $V_h(Q^d)$.

The functions of this subspace of $V_v(Q^d)$ are of essentially better structure. In particular, every $f\in V_h(Q^d)$ has one-sided  limits in each variable and is differentiable a.e. The former is the matter of definition and the latter is proved in  \cite{BH-32} for $d=2$ (the proof can be  easily extended to all $d$).

The main application of Hardy-Krause variation is a generalization of
Dirichlet-Jordan convergence criterion for multiple Fourier series, see \cite{H-06} for $d=2$ and \cite{MT-50} for $d>2$. For some modern development of these results in multivariate harmonic analysis, approximation theory and numerical computations see, e.g., \cite{Te-015} and references therein.\medskip

\noindent (c) {\em Tonelli variation}.\smallskip

Let $f\in \ell_\infty(Q^d)$ and $f_{x^i}: x_i\mapsto f(x)$, $0\le x_i\le 1$, be a univariate partial function with $x^i:=\sum_{j\ne i} x_j e^j$ being fixed, $1\le i\le d$.

Tonelli variation for this $f\in\ell_\infty(Q^d)$ is given by
\begin{equation}\label{eq-n1.10}
{\rm var}_t\, f:=\sum_{i=1}^d\int_{Q^{d-1}}({\rm var}\, f_{x^i})\, dx^i;
\end{equation}
here the (Jordan) variations are assumed to be Lebesgue integrable.

This notion is introduced to characterize continuous functions on $Q^d$ whose graphs are rectifiable, see \cite{T-26} for $d=2$ and proof in \cite[Ch.\,5]{S-37} that is easily extended to $d>2$.

For integrable functions, Tonelli variation loses its geometric meaning, see \cite{AC-34}. The corresponding adaptation to this case was given by \cite{Ce-36}; the modern version exploiting theory of distributions is presented in the books \cite{Gi-84} and \cite{AFP-00}. The corresponding variation for  $f\in L_1(Q^d)$ is given by
\begin{equation}\label{eq-n1.11}
\underset{Q^d}{\rm var}\, f:=\sum_{i=1}^d \underset{Q^d}{\rm var}\,\partial_i f,
\end{equation}
where the distributional partial derivatives are assumed to be finite Borel measures. 

The corresponding space is denoted by $BV(Q^d)$.

This definition is given in essence by De Giorgi \cite{DG-54}; its equivalence to that of Tonelli-Cesari is proved in the papers  \cite{FKP-57}. In particular, for $f\in C(Q^d)$
\[
{\rm var}_t f\approx {\rm var} f
\] 
with constants of equivalence depending only on $d$. 

Nevertheless, there is some connection of this space with a space of the family $\{\dot V_p^k(Q^d)\}$. Actually, using the equivalent definition of $\underset{Q^d}{\rm var} f$ given for $f\in L_1(Q^d)$, $d\ge 2$, by
\[
\underset{Q^d}{\rm var}\, f=\sup_{\pi}\sum_{Q\in \pi}\left(\int_Q |f-f_Q|^{\frac{d}{d-1}}\,dx\right)^{\frac{d-1}{d}},
\]
where $f_Q:=\frac{1}{|Q|}\int_Q f\,dx$,
see \cite[Sec.\,1.3.4]{BB-18}, and then applying the H\"{o}lder inequality we have
\[
\underset{Q^d}{\rm var}\, f\le c(d)|f|_{V_d^1(Q^d)}.
\]

Finally, we present one more variation given by Kronrod \cite{Kr-50} for $d=2$ and by Vitushkin \cite{Vi-55} for all $d$.

It is given only for continuous functions on $Q^d$ by
\begin{equation}\label{eq-n1.12}
{\rm var}_{kv} f:=\int_{\RR} H_{d-1}\{x\in Q^d\, :\, f(x)=t\}\, dt,
\end{equation}
where $H_{d-1}$ is the $(d-1)$-Hausdorff measure.

For $d=1$, this coincides with Jordan variation of $f$ by the Banach indicatrix theorem and for $d>1$
\[
{\rm var}_{kv}f\approx {\rm var}_t f
\]
with constants of equivalence depending only on $d$, see \cite{Kr-50}.

In fact, the definitions in the cited papers include also the definition of variations related to the sections of function graphs by planes of dimensions $1\le s\le d-1$.  For the applications  of these characteristics to the geometry of sets, approximation theory and the 13th Hilbert problem, see \cite{Vi-004}.

\subsection{Basic Definitions}  
In the presentation of the main objects of our study and everywhere below, we use the following:

\begin{Notation}\label{not1.2.2}
{\rm  $\N$, $\Z$, $\RR$ denote the sets of natural, integer and real numbers, respectively.\smallskip

 $\RR^d$ is the $d$-dimensional Euclidean space of vectors $x=(x_1,\dots, x_d), y, z$, etc.\smallskip

 If $S\subset\RR^d$, then $\bar S$ denotes its closure and $\mathring S={\rm int}\,S$ its interior.\smallskip

 The $d$-dimensional Lebesgue measure of $S$ is denoted by $|S|$. The characteristic function (indicator) of $S$ is denoted by $\chi_S$.\smallskip

 As above, $Q^d$ denotes the unit cube $[0,1]^d$. $Q,Q'$, etc. denote {\em subcubes} of $\RR^d$ homothetic to $Q^d$ (named further {\em cubes} or {\em subcubes}); $Q_r(x)$ denotes a cube centered at $x$ of sidelength $2r$.\smallskip

$\pi,\pi',\pi_i$, etc. denote {\em packings}, i.e., families of nonoverlapping cubes $Q\subset\RR^d$.\smallskip

 $\Pi(S)$ (briefly, $\Pi$ for $S=Q^d$) denotes the set of packings in $S$ of {\em finite cardinality} contained in $S$.\smallskip

 ${\mathcal P}_k^d$ denotes the linear space of polynomials in $x\in\RR^d$ of  degree $k$, i.e., the linear hull of monomials $x^\alpha$, $|\alpha|\le k$, where
\[
x^\alpha:=\prod_{i=1}^d x_i^{\alpha_i},\quad |\alpha|:=\sum_{i=1}^d\alpha_i,\quad \alpha\in\Z_+^d.
\]

$\ell_p(S)$, $1\le p\le\infty$, $S\subset\RR^d$, denotes the Banach space of functions $f:S\rightarrow\RR$ defined by norm
\[
\|f;S\|_p:=\left\{\sum_{x\in S}|f(x)|^p\right\}^{\frac 1 p};
\]
in particular,
\[
\|f;S\|_\infty:=\sup_S |f|.
\]

All $f\in\ell_p(S)$ are clearly bounded on $S$ and their supports
${\rm supp}\, f:=\{x\in S\, :\, f(x)\ne 0\}$
are at most countable for $p<\infty$.\smallskip

 $\ell_p^{{\rm loc}}(\RR^d)$ consists of functions on $\RR^d$ whose traces to every compact subset $K$ belong to $\ell_p(K)$.\smallskip
 
 $C(S)$, $S\subset\RR^d$, denotes the Banach space of the uniformly continuous functions $f:S\rightarrow\RR$ equipped with norm 
$\|f\|_{C(S)}:=\sup_S |f|$.\smallskip

 $C^\infty(S)$ denotes the linear space of traces $f|_S$ of functions $f\in C^\infty(\RR^d)$.\smallskip
 
  Let $X,Y$ be linear (semi-)\,normed vector spaces. We write 
\begin{equation}\label{2.2.7}
X\hookrightarrow Y
\end{equation}
if there is a linear continuous injection of $X$ into $Y$, and replace $\hookrightarrow $ by $\subset$ if $X$ is a linear subspace of $Y$ and the natural embedding operator $X\rightarrow Y$ is continuous.

Further, we say that these spaces are {\em isomorphic} and write
\begin{equation}\label{2.2.8}
X\cong Y
\end{equation}
if $X\hookrightarrow Y$ and $Y\hookrightarrow X$ and composition $X\rightarrow X$ of these continuous injections is the identity map; moreover, we write
\[
X=Y
\] 
if, in addition, they coincide as linear spaces, hence, have equivalent (semi-)\,norms. 

Moreover, spaces $X$ and $Y$  are said to be {\em isometrically isomorphic} if the injections in \eqref{2.2.8} are of norm $1$.
We write in this case 
\begin{equation}\label{2.2.9}
X\equiv Y.
\end{equation}

Finally, we denote by $B(X)$ the closed unit ball of a (semi-) normed space $X$, i.e.,
\begin{equation}\label{2.2.10}
B(X):=\{x\in X\, :\, \|x\|_X\le 1\}.
\end{equation}
 }
\end{Notation}

Now we present the basic notions of the paper.

\subsubsection{Local polynomial approximation} 
In the sequel, we exploit an essential more suitable for applications definition of $(k,p)$-variation. The main point here is the replacement  of 
${\rm osc}_k$ in the initial definition by a function of cube denoted by $E_k(f;\cdot)$ and called {\em local polynomial approximation}, see Definition \ref{def1.2.3} below. The relation between these two set-functions is given for $I\subset\RR$ by the equivalence \cite{Wh-59}
\begin{equation}\label{1.2.5}
E_k(f;I)\approx {\rm osc}_k(f;I),
\end{equation}
where the constants of equivalence depend only on $k$.\\
The general result of this kind proved below, see Theorem \ref{teor2.1.1}, shows that these two definitions of $(k,p)$-variation are equivalent. However, the second definition is more suitable for applications since it relies on various techniques of local approximation theory and so is easier to apply to the study of the Banach structure of $V_p^k(Q^d)$ spaces and their elements. (For instance, the fact that $(k,p)$-variation of $f\in \ell_\infty(Q^d)$ is zero iff $f$ is a polynomial of degree $k-1$ trivially follows from the second definition but is highly nontrivial in the framework of the first one.)

\begin{D}\label{def1.2.3}
Local polynomial approximation of order $k\in\N$ is a function
\[
E_k: (f,S)\mapsto\RR_+,\quad f\in\ell_\infty^{{\rm loc}}(\RR^d),\ S\subset\RR^d,
\]
given by 
\begin{equation}\label{1.2.11}
E_k(f;S):=\inf_{m\in\mathcal P_{k-1}^d}\|f-m;S\|_\infty.
\end{equation}
\end{D}
This function possesses several important properties, see, in particular, \cite[\S\,2]{Br1-94}, that will be used in the forthcoming proofs and mostly are proved there.
\subsubsection{(k,p)-variation} Using the previous notion we introduce a set-function ${\rm var}_p^k(f;S)$, $k\in\N$, $1\le p<\infty$, called $(k,p)$-{\em variation}.
\begin{D}\label{def1.2.4}
$(k,p)$-variation of a locally bounded function $f\in\ell_\infty^{{\rm loc}}(\RR^d)$ on a bounded set $S\subset\RR^d$ is given by
\begin{equation}\label{1.2.12}
{\rm var}_p^k(f;S):=\sup_{\pi\in\Pi(S)}\left(\sum_{Q\in\pi}E_k(f;Q)^p\right)^{\frac 1 p};
\end{equation}
this equals $0$ if $\mathring S=\emptyset$.
\end{D}
It is the matter of definition to verify the next properties of the object introduced.
\begin{Prop}\label{prop1.7}
 {\rm (\underline{Subadditivity})} If $\{S_i\}_{i\in\N}$ is a sequence of disjoint subsets in $\RR^d$, then
\[
\left(\sum_{i=1}^\infty\, {\rm var}_p^k(f;S_i)^p\right)^{\frac 1 p}\le {\rm var}_p^k\left(f;\bigcup_{i=1}^\infty\, S_i\right).
\]
{\rm (\underline{Lower semicontinuity})} If a sequence $\{f_i\}_{i\in\N}\subset\ell_\infty(S)$ converges in this space to a function $f$, then
\[
{\rm var}_p^k(f;S)\le\varliminf_{i\rightarrow\infty} {\rm var}_p^k (f_i;S).
\]
{\rm (\underline{Monotonicity})} The function
\[
(k,p,S)\mapsto {\rm var}_p^k(f;S)
\]
is nondecreasing in $S$ and nonincreasing in $k,p$.
\end{Prop}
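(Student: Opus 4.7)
The three assertions are essentially definitional, so the plan is to verify each one by unwinding the definitions while being careful about the finite-cardinality requirement on packings.

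\textbf{Subadditivity.} For any $N\in\N$ and any choice of packings $\pi_i\in\Pi(S_i)$ for $i=1,\dots,N$, the union $\pi:=\pi_1\cup\cdots\cup\pi_N$ is again a finite packing, and since the $S_i$ are disjoint with $\mathring S_i\subset S_i$, the cubes remain pairwise nonoverlapping; in particular $\pi\in\Pi\bigl(\bigcup_i S_i\bigr)$. Hence
\[
\sum_{i=1}^N\sum_{Q\in\pi_i}E_k(f;Q)^p=\sum_{Q\in\pi}E_k(f;Q)^p\le {\rm var}_p^k\Bigl(f;\bigcup_{i=1}^\infty S_i\Bigr)^p.
\]
Taking the supremum over each $\pi_i$ independently gives $\sum_{i=1}^N {\rm var}_p^k(f;S_i)^p\le {\rm var}_p^k\bigl(f;\bigcup_i S_i\bigr)^p$, and then letting $N\to\infty$ yields the inequality.

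\textbf{Lower semicontinuity.} The first step is the elementary observation that, for any cube $Q$ and any $g,h\in\ell_\infty(Q)$,
\[
|E_k(g;Q)-E_k(h;Q)|\le \|g-h;Q\|_\infty,
\]
because $E_k(\,\cdot\,;Q)$ is the distance in $\ell_\infty(Q)$ to the subspace $\mathcal P_{k-1}^d|_Q$. Now fix a packing $\pi\in\Pi(S)$; it has finite cardinality, so
\[
\sum_{Q\in\pi}E_k(f_i;Q)^p\longrightarrow \sum_{Q\in\pi}E_k(f;Q)^p
\]
as $i\to\infty$, using uniform convergence $f_i\to f$ on $S$. Therefore
\[
\sum_{Q\in\pi}E_k(f;Q)^p=\lim_{i\to\infty}\sum_{Q\in\pi}E_k(f_i;Q)^p\le\varliminf_{i\to\infty}{\rm var}_p^k(f_i;S)^p,
\]
and taking the supremum over $\pi\in\Pi(S)$ gives the claim.

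\textbf{Monotonicity.} If $S\subset S'$ then $\Pi(S)\subset\Pi(S')$, so the supremum defining ${\rm var}_p^k(f;S')$ is taken over a larger set, giving monotonicity in $S$. The inclusion $\mathcal P_{k-1}^d\subset\mathcal P_k^d$ yields $E_{k+1}(f;Q)\le E_k(f;Q)$ for every cube $Q$, hence monotonicity in $k$. For $p\le p'<\infty$, monotonicity follows from the sequence-space inequality $\|a\|_{p'}\le\|a\|_p$ applied to the finite sequence $(E_k(f;Q))_{Q\in\pi}$, which gives $\bigl(\sum_Q E_k(f;Q)^{p'}\bigr)^{1/p'}\le\bigl(\sum_Q E_k(f;Q)^{p}\bigr)^{1/p}\le {\rm var}_p^k(f;S)$ for every $\pi\in\Pi(S)$, and then one takes the supremum.

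No step presents a serious obstacle; the only mild subtleties are the restriction to finite packings (needed to pass the limit inside the sum in lower semicontinuity and to ensure the union of packings is again in $\Pi$) and the direction of the $\ell_p$-inclusion for sequences in the $p$-monotonicity argument.
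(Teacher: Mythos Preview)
Your proof is correct and is precisely the verification the paper has in mind: the paper itself offers no argument beyond the sentence ``It is the matter of definition to verify the next properties of the object introduced,'' and your unpacking of the definitions---finite unions of packings for subadditivity, 1-Lipschitz continuity of $E_k(\cdot;Q)$ for lower semicontinuity, and the inclusions $\Pi(S)\subset\Pi(S')$, $\mathcal P_{k-1}^d\subset\mathcal P_k^d$, $\|\cdot\|_{p'}\le\|\cdot\|_p$ for monotonicity---is exactly that routine check.
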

\subsubsection{$V_p^k$ spaces} Now we consider ${\rm var}_p^k$ as a function of $f$. Using again its definition we have the following:
\begin{Prop}\label{prop1.8}
Let $S\subset\RR^d$  be the closure of a domain (open connected set). Then the function
\[
f\mapsto {\rm var}_p^k(f;S)\in [0,\infty],\quad f\in\ell_\infty^{\rm{loc}}(S),
\]
satisfies properties of a seminorm and equals $0$ iff $f\in\mathcal P_{k-1}^d|_S$.
\end{Prop}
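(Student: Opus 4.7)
\medskip
\noindent\textbf{Proof plan for Proposition \ref{prop1.8}.}

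My plan is to deduce the seminorm axioms from the corresponding properties of the building block $E_k(f;Q)$, and then handle the characterization of the null set by a cube-by-cube polynomial recovery plus a connectedness argument.

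First, I would verify the seminorm properties. Nonnegativity is immediate from \eqref{1.2.11}--\eqref{1.2.12}. Homogeneity follows because $\mathcal P_{k-1}^d$ is a linear space, so for $\lambda\in\RR$ we have $E_k(\lambda f;Q)=|\lambda|E_k(f;Q)$, and taking the $p$-th root commutes with the scalar $|\lambda|$. For the triangle inequality, if $m_f,m_g\in\mathcal P_{k-1}^d$ are near-optimal approximants to $f$ and $g$ on $Q$, then $m_f+m_g$ is admissible for $f+g$, giving
\[
E_k(f+g;Q)\le E_k(f;Q)+E_k(g;Q).
\]
Then, for any packing $\pi\in\Pi(S)$, Minkowski's inequality in $\ell_p(\pi)$ yields
\[
\Bigl(\sum_{Q\in\pi}E_k(f+g;Q)^p\Bigr)^{1/p}\le {\rm var}_p^k(f;S)+{\rm var}_p^k(g;S);
\]
taking the supremum over $\pi$ completes this part.

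Next, the ``if'' direction of the vanishing criterion is immediate: if $f=m|_S$ with $m\in\mathcal P_{k-1}^d$, then $E_k(f;Q)=0$ for every cube $Q\subset S$, so ${\rm var}_p^k(f;S)=0$. For the ``only if'' direction, testing with the singleton packing $\pi=\{Q\}$ for each cube $Q\subset S$ yields $E_k(f;Q)=0$. Since the infimum defining $E_k(f;Q)$ is taken over the finite dimensional space $\mathcal P_{k-1}^d$ and $f$ is bounded on the compact cube $Q$, a standard compactness argument shows it is attained; thus there exists $m_Q\in\mathcal P_{k-1}^d$ with $f\equiv m_Q$ pointwise on $Q$.

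The main step, which I expect to be the only nontrivial one, is to glue these local polynomials into a single global one. If two cubes $Q_1,Q_2\subset S$ have overlapping interiors, then $m_{Q_1}-m_{Q_2}$ is a polynomial vanishing on the nonempty open set $\mathring Q_1\cap\mathring Q_2$, hence vanishes identically by the identity theorem for polynomials. Using that $S=\overline D$ for a domain $D$, the interior $\mathring S$ is open and connected; any two cubes contained in $S$ may therefore be linked by a finite chain of cubes in $S$ with consecutive pairs overlapping, so all the $m_Q$ coincide with a single polynomial $m\in\mathcal P_{k-1}^d$. Since every point of $S$ lies in some closed cube $Q\subset S$ (with $f\equiv m_Q=m$ on $Q$), we conclude $f=m|_S$, as required. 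The main obstacle here is really just the careful verification that cubes in $S$ can be chained through $\mathring S$, which rests on the connectedness hypothesis built into the assumption that $S$ is the closure of a domain.
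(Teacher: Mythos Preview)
The paper offers no proof of this proposition beyond the phrase ``using again its definition,'' so your argument is precisely the kind of verification the authors have in mind. The seminorm axioms are handled correctly, the compactness argument for attainment of $E_k(f;Q)$ is fine, and the chaining of local polynomials through the connected open set $\mathring S\supset D$ (using that a polynomial vanishing on a nonempty open set is identically zero) is exactly right.

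There is, however, a genuine gap in the final step of the ``only if'' direction. You assert that ``every point of $S$ lies in some closed cube $Q\subset S$,'' but this fails for general closures of domains. Take $S$ to be the closed unit disk in $\RR^2$ and $x_0=(1,0)$: if a nontrivial axis-aligned square $Q=[a_1,b_1]\times[a_2,b_2]\subset S$ contained $x_0$, then $b_1\ge 1$ and the corners $(b_1,a_2),(b_1,b_2)$ would force $a_2=b_2=0$, contradicting nontriviality. Consequently one may set $f\equiv 0$ on $S\setminus\{x_0\}$ and $f(x_0)=1$; every nontrivial cube $Q\subset S$ misses $x_0$, so $E_k(f;Q)=0$ and ${\rm var}_p^k(f;S)=0$, yet $f\notin\mathcal P_{k-1}^d|_S$. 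Thus the proposition as stated is not literally true for arbitrary closures of domains; what your argument actually proves is that $f$ coincides with a single polynomial on the union of all cubes contained in $S$ (hence on $\mathring S$). For the only case the paper uses, $S=Q^d$, there is no difficulty: every point of the closed cube---corners and faces included---lies in a nontrivial subcube of $Q^d$, and your proof goes through verbatim. You should either restrict to $S=Q^d$ or add the hypothesis that every point of $S$ belongs to some nontrivial cube contained in $S$.
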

Now we present the main object of our study.
\begin{D}\label{def-n1.9}
$\dot V_p^k(Q^d)$ is a linear space of functions $f\in\ell_\infty(Q^d)$ whose $(k,p)$-variation
\begin{equation}\label{1.2.14}
|f|_{V_p^k}:={\rm var}_p^k(f;Q^d),\qquad f\in \dot V_p^k(Q^d).
\end{equation}
is finite.
\end{D}
By Proposition \ref{prop1.8} $(\dot V_p^k(S), |\cdot|_{V_p^k})$ is a  seminormed space with 
$\mathcal P_{k-1}^d|_{Q^d}$ being its null-space. The standard argument shows that $\dot V_p^k(S)$ is complete.

\begin{St}\label{stip1.2.6}
{\rm Since $Q^d:=[0,1]^d$ is fixed, throughout the paper we remove it from notations writing, e.g., $\dot V_p^k$, $C^\infty$, $\mathcal P_{k-1}^d$ instead of $\dot V_p^k(Q^d)$, $C^\infty(Q^d)$, $\mathcal P_{k-1}^d|_{Q^d}$. 

In some cases, it will be more appropriate to use a seminorm on $\dot V_p^k$ defined by replacing in \eqref{1.2.12} $E_k(f;Q)$ by ${\rm osc}_k(f;Q)$, $Q\in\pi$. The latter is given for $f\in\ell_\infty^{{\rm loc}}(\RR^d)$ and a bounded subset $S\subset\RR^d$ by
\begin{equation}\label{1.2.17a}
{\rm osc}_k(f;S):=\sup_{h\in\RR^d}\{|\Delta_h^k f(x)|\, :\, x+jh\in S,\ j=0,\dots, k\},
\end{equation}
cf. definition \eqref{eq-n1.4}.
}
\end{St}
Our work is divided into two parts where  the first one studies pointwise behaviour of $\dot V_p^k$ functions while the second is focused on the duality structure of a Banach space $V_p^k$ associated to $\dot V_p^k$.
Among different  equivalent presentations of this space we prefer the quotient of $\dot V_p^k$ by its null-space.
Hence, we define an associated to $\dot V_p^k$ Banach space by setting
\begin{equation}\label{1.2.17}
V_p^k:=\dot V_p^k/\mathcal P_{k-1}^d
\end{equation}
and denoting  by $\|\cdot\|_{V_p^k}$ its (factor)-norm.

If $\mathring f\in V_p^k$ is a factor-class $\{f\}+\mathcal P_{k-1}^d$, $f\in\dot V_p^k$, then clearly
\[
\|\mathring f\|_{V_p^k}=|f|_{V_p^k}.
\]
By this reason, we call elements of $V_p^k$ {\em functions} and denote them by $f,g$, etc. 

In both parts of the work, the essential role plays a numerical characteristic of $\dot V_p^k$ given by
\begin{equation}\label{1.2.18}
s(\dot V_p^k):=\frac d p
\end{equation}
and called {\em smoothness}.

For instance, a function $f\in\dot V_p^k$ satisfies the Lipschitz condition of order $s\, (:=s(\dot V_p^k))$ a.e. if $0<s<k$, has the $k$-th Peano  differential a.e. if $s=k$ and equals  a polynomial of degree $k-1$ outside of at most countable subset of $Q^d$ if $s>k$, see Theorems \ref{te2.1.3},
\ref{te2.2.8} below.

Moreover, in the second part we essentially use a closed separable subspace\footnote{As we will see, space $\dot V_p^k$ is nonseparable.} of $\dot V_p^k$ denoted by $\dot{\textsc{v}}_p^k$
(respectively, $\textsc{v}_p^k\subset V_p^k$) given  by
\begin{equation}\label{eq-n1.20}
\dot{\textsc{v}}_p^k:={\rm clos}(C^\infty\cap\dot V_p^k,\dot V_p^k).
\end{equation}

\subsection{Outline of the Paper} The paper is organized as follows.

Section 2 devoted to the formulations of the main results is composed of four subsections.

Subsection~2.1 contains results describing the structure of $\dot V_p^k$ function spaces.

Subsection~2.2 presents several results describing relations between $\dot V_p^k$ and the Lipschitz spaces over $\ell_\infty$ and $L_p$ of order $s:=\frac d p \in (0,k]$. These include Luzin's type approximation of $\dot V_p^k$ functions by Lipschitz and $C^k$ functions, the Lebesgue type theorem on their pointwise differentiability a.e. of order $s$ and the Hardy-Littlewood type theorem on two-sided embeddings connecting $\dot V_p^k$ with Lipschitz-Besov spaces over $L_p$. As a consequence, the latter gives a multivariate generalization of the Lebesgue-Vitali Fundamental Theorem of Calculus.

Subsection~2.3 introduces a predual to $V_p^k$ Banach space denoted by $U_p^k$ and describes its Banach structure and the relation $(U_p^k)^*\equiv V_p^k$.

Subsection~2.4 contains the so-called two-stars theorem relating the second dual of the space $\textsc{v}_p^k$ to the nonseparable space $V_p^k$. The formulations preceding to that result describe measure theoretic properties of some dense subspace of $U_p^k$ and following  from them and the F. Riesz representation theorem the relation $(\textsc{v}_p^k)^*\cong U_p^k$.

The remaining sections contain the proofs of the formulated results.
\sect{Formulation of the Main Results}
\subsection{Structure Properties of $\dot V_p^k$ Functions} Our first result shows that $\dot V_p^k$ functions initially assumed to be only bounded (not necessarily measurable) have much finer structure.
\begin{Th}\label{te2.1.3}
Let $f$ be a $\dot V_p^k$ function of smoothness $s$. Then the following is true.

(a) If $d=1$, then $f$ has one-sided limits at each $f\in [0,1]$.\smallskip

(b) If $d\ge 2$, then $f$ has a limit at each point $x\in Q^d$.\smallskip

In both cases, $f$ has at most countable set of discontinuity points.\smallskip

(c) If $s>k$, then the vector space $\dot V_p^k$ is  the direct sum of subspaces $\ell_p$ and $\mathcal P_{k-1}^d$.
\end{Th}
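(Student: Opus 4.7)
The strategy is to exploit the packing inequality
\[
\sum_{Q\in\pi}E_k(f;Q)^p\le|f|_{V_p^k}^p
\]
built into Definition \ref{def-n1.9}, in combination with the equivalence $E_k(f;Q)\approx{\rm osc}_k(f;Q)$ promised by \eqref{1.2.5} in one dimension and by Theorem \ref{teor2.1.1} in general. This converts qualitative pathologies of $f$ at a point $x_0$ into lower bounds on ${\rm osc}_k(f;Q_n)$ on infinitely many pairwise disjoint cubes $Q_n$ accumulating at $x_0$, producing a contradiction.

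For parts (a) and (b), I argue by contradiction. Assume the required limit of $f$ at $x_0$ fails to exist. Since $f$ is bounded, there exist sequences $y_j\to x_0$ and $z_j\to x_0$ (from the right only, when $d=1$ and $x_0<1$) along which $f(y_j)\to M$, $f(z_j)\to m$ with $M>m$. By thinning, relabelling, and---in higher dimensions---sliding the sampling line transversally off $x_0$, I obtain pairwise disjoint closed cubes (intervals, when $d=1$) $Q_n$ that accumulate on $x_0$ and each contain $k+1$ approximately equally spaced collinear sample points $x_0^{(n)}+jh_n$, $j=0,\dots,k$, on which the values of $f$ lie alternately within $\varepsilon_n\to 0$ of $M$ and of $m$. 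Because the signs $(-1)^{k-j}\binom{k}{j}$ already alternate, the identity
\[
\Delta_{h_n}^k f(x_0^{(n)})=\sum_{j=0}^k(-1)^{k-j}\binom{k}{j}f(x_0^{(n)}+jh_n)
\]
yields $|\Delta_{h_n}^k f(x_0^{(n)})|\ge 2^{k-1}(M-m)-2^k\varepsilon_n\ge c_k(M-m)>0$ for $n$ large. Hence ${\rm osc}_k(f;Q_n)\ge c_k(M-m)$ and, by the $E_k\approx{\rm osc}_k$ equivalence, $E_k(f;Q_n)\ge c_k'(M-m)$. The packing inequality applied to $\{Q_n\}$ forces $|f|_{V_p^k}=\infty$, a contradiction. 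The at-most-countable discontinuity set follows by applying the same packing argument to the set of points with jump at least $1/n$, for each $n\in\N$.

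For part (c), the assumption $s>k$, together with $k\ge 1$ and $p\ge 1$, forces $d>kp\ge 1$, so $d\ge 2$ and (b) applies: $f$ has a limit at every point, the discontinuity set $D$ is at most countable, and $\tilde f(x):=\lim_{y\to x}f(y)$ is a continuous extension of $f|_{Q^d\setminus D}$. Since $Q\setminus D$ is dense in every cube $Q$ and $\tilde f$ is continuous, one has $\|\tilde f-P\|_{\infty,Q}\le\|f-P\|_{\infty,Q}$ for every $P\in\mathcal P_{k-1}^d$, whence $E_k(\tilde f;Q)\le E_k(f;Q)$ and $|\tilde f|_{V_p^k}\le|f|_{V_p^k}$. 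I next claim $\tilde f\in\mathcal P_{k-1}^d$. The packing inequality at scale $r$ (the cube $Q^d$ admits $\sim r^{-d}$ pairwise disjoint subcubes of side $r$) produces at least one cube of side $r$ on which $E_k(\tilde f;\cdot)\lesssim|f|_{V_p^k}\,r^s$; combining this with a Whitney-type chaining that transports the bound across overlapping cubes (using the continuity of $\tilde f$ to compare best approximants) gives $E_k(\tilde f;Q_r(x))\le C|f|_{V_p^k}\,r^s$ uniformly in $x$ and small $r$. Since $s>k$, the classical characterization $E_k(\phi;Q_r)=o(r^k)\Rightarrow\phi\in\mathcal P_{k-1}^d$ forces $\tilde f\in\mathcal P_{k-1}^d$. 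Finally, $g:=f-\tilde f$ vanishes off the countable set $D$; for each $x\in D$ a tiny cube $Q\ni x$ disjoint from $D\setminus\{x\}$ satisfies $E_k(g;Q)\asymp_k|g(x)|$ (the best polynomial approximation on $Q$ of a single-point spike of amplitude $v$ has error on the order of $|v|$), and packing such cubes, one around each point of $D$, yields $\sum_{x\in D}|g(x)|^p\le c_k^p|f|_{V_p^k}^p$, so $g\in\ell_p$. Directness of the sum is $\ell_p\cap\mathcal P_{k-1}^d=\{0\}$.

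The genuinely technical point in (a) and (b) is extracting the $k+1$ approximately equally spaced sample points with the prescribed alternating $f$-values: equal spacing is what makes a single forward $k$th difference large, and without it one only gets a lower bound on a divided difference, which does not translate into a lower bound on ${\rm osc}_k$. In one variable this extraction requires a careful thinning-and-scaling argument; in higher dimensions there is ample transversal room to slide the sampling line and the construction becomes flexible. In (c), the subtle step is the passage from the average packing bound to the uniform pointwise bound $E_k(\tilde f;Q_r(x))\lesssim r^s$, which is exactly the role of a Whitney-type chaining in the spirit of \cite{Br1-94}.
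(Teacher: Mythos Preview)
Your approach to (a) and (b) has a genuine gap at exactly the step you flag as ``genuinely technical'': extracting $k+1$ equally spaced points $x_0^{(n)}+jh_n$ on which $f$ alternates between neighborhoods of $M$ and $m$. From the failure of the (one-sided) limit you only know that the sets $\{t:f(t)>M-\varepsilon\}$ and $\{t:f(t)<m+\varepsilon\}$ both accumulate at $x_0$; nothing forces them to contain an arithmetic progression with the prescribed pattern. A concrete obstruction: on $(0,1]$ set $f(2^{-2n})=M$, $f(2^{-2n-1})=m$, and $f\equiv(M+m)/2$ elsewhere. The right limit at $0$ fails, yet there is no $3$-AP with values alternating near $M,m,M$ (or $m,M,m$), since $2\cdot 2^{-a}=2^{-b}+2^{-c}$ has no solution with $b\ne c$. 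This $f$ does satisfy $|f|_{V_p^k}=\infty$ (tiny intervals around the spikes give $E_k\asymp(M-m)$), but your mechanism does not see it. The deeper issue is that for $k\ge2$ a large first-order oscillation of $f$ on $Q_n$ says nothing about ${\rm osc}_k(f;Q_n)$: the restriction $f|_{Q_n}$ could be a polynomial of degree $k-1$ with range $[m,M]$, giving $E_k(f;Q_n)=0$. Your lower bound $2^{k-1}(M-m)$ genuinely needs the alternation, and the alternation is not available.

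The paper sidesteps this entirely. Rather than bounding ${\rm osc}_k$ from below, it uses the packing inequality to conclude $E_k(f;I_n')\to0$ along any disjoint family, picks $I_n'\supset[x_{1,n},x_{2,n}]$ with $|x_{2,n}-x_{1,n}|/|I_n'|\to0$, takes the best approximant $m_n\in\mathcal P_{k-1}^d$ on $I_n'$, and invokes Markov's inequality for polynomial derivatives to get $|m_n(x_{1,n})-m_n(x_{2,n})|\le c\,\|f\|_\infty\,|x_{1,n}-x_{2,n}|/|I_n'|\to0$. Since $\|f-m_n\|_{\infty,I_n'}\to0$, this contradicts $f(x_{1,n})\to c_1\ne c_2\leftarrow f(x_{2,n})$. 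For $d\ge2$ the paper first proves $f|_I\in\dot V_p^k(I)$ for every line segment $I$ (via a geometric lemma embedding interval packings into cube packings), obtains radial limits from (a), shows all radial limits agree, and finally matches arbitrary sequential limits to the radial one by the same Markov device. Your ``transversal sliding'' remark does not replace any of this.

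For (c) your outline is close in spirit to the paper's, but the step from the pigeonhole bound (one good cube per scale) to the uniform estimate $E_k(\tilde f;Q_r(x))\lesssim r^s$ is not a routine overlap-chaining: comparing best approximants on overlapping cubes controls their \emph{difference}, not the approximation error itself, and without an a priori bound on neighboring errors the chain does not close. The paper instead passes to $\dot V_{p\infty}^k$ via the map $L$ (using that $\tilde f$ is continuous, so $E_k=E_{k\infty}$) and invokes Lemma~3.1 of \cite{BB-18} for the triviality of $\dot V_{p\infty}^k$ when $s>k$. Your argument that $g=f-\tilde f\in\ell_p$ is essentially the paper's Lemma~\ref{lem5.9}.
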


Assertion (c) implies that for $s>k$ the Banach spaces $V_p^k$ and $\ell_p$ are isomorphic. Since this case is not of interest, we assume in the sequel that $\dot V_p^k$ satisfies the condition
\begin{equation}\label{2.1.3}
s(\dot V_p^k):=\frac{d}{p}\le k.
\end{equation}

Now let $f\in\dot V_p^k$ and $\hat f$ be given by
\[
\hat f(x):=\lim_{y\rightarrow x} f(y),\quad x\in Q^d,
\]
for $d\ge 2$, and by
\[
\hat f(x):=\left\{
\begin{array}{ccc}
\displaystyle\frac 1 2 \left(f(x^-)+f(x^+)\right),&0<x<1,\\
\\
f(0^+),&x=0,\\
\\
f(1^-),&x=1,
\end{array}
\right.
\]
for $d=1$.

\begin{C}\label{cor2.1.3}
{\rm (a)} The linear map $P$ on $\dot V_p^k$ sending $f$ to $\hat f$ is a projection of norm $1$ on a closed subspace denoted by $N\dot V_p^k\subset\dot V_p^k$. 

 By the definition, $N\dot V_p^k=C\cap\dot V_p^k$ if $d\ge 2$ and consists of  functions $f\in\dot V_p^k$ such that $f(x)=\frac{1}{2}\bigl(f(x^-)+f(x^+)\bigr)$, $x\in (0,1)$, if $d=1$.\smallskip

\noindent {\rm (b)} The kernel of $P$ coincides as a linear space with $\ell_p$ and, moreover, 
\[
|f|_{V_p^k}\le \|f\|_p\le 2|f|_{V_p^k},\quad f\in {\rm ker}(P).
\]
In particular, the space $({\rm ker}(P),|\cdot |_{V_p^k})$ is Banach.\smallskip

\noindent {\rm (c)} $\dot V_p^k$ is isomorphic to the direct sum $N\dot V_p^k\oplus\ell_p$.
\end{C}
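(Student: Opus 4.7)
The plan is to organise the proof around the pointwise map $P\colon f\mapsto\hat f$, whose very definition is only meaningful thanks to Theorem~\ref{te2.1.3}; linearity of $P$ on $\dot V_p^k$ is immediate from linearity of one- and two-sided limits. The analytic core of (a) is the contractivity $|Pf|_{V_p^k}\le|f|_{V_p^k}$. To prove it I fix a cube $Q\subset Q^d$ and a polynomial $m\in\mathcal P_{k-1}^d$; continuity of $m$ gives, for $d\ge 2$, $\hat f(x)-m(x)=\lim_{y\to x}(f(y)-m(y))$, and for $d=1$, $\hat f(x)-m(x)=\tfrac12\bigl[(f(x^-)-m(x))+(f(x^+)-m(x))\bigr]$ with each summand a one-sided limit of $f(y)-m(y)$. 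In either case
\[
|\hat f(x)-m(x)|\le\|f-m;Q\|_\infty,\qquad x\in Q,
\]
and taking the sup over $x$, the infimum over $m$, and then the $p$-th power sum over any packing yields $E_k(\hat f;Q)\le E_k(f;Q)$ and the contraction. For the idempotence $P^2=P$, the case $d\ge 2$ reduces to a short $\varepsilon/\delta$ lemma: once the pointwise limit $\hat f(x_0)=\lim_{y\to x_0}f(y)$ exists at every $x_0$, $\hat f$ is automatically continuous, hence $\hat{\hat f}=\hat f$. For $d=1$ I exploit the countability of the discontinuity set: along a dense sequence of continuity points $y\to x^-$ (respectively $x^+$) one has $\hat f(y)=f(y)\to f(x^-)$ (respectively $f(x^+)$), which propagates to $\hat f(x^\pm)=f(x^\pm)$ and therefore $\hat{\hat f}=\hat f$. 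Combined with the contractivity this proves (a), with $N\dot V_p^k$ the closed image of the norm-one projection $P$, the two explicit descriptions of $N\dot V_p^k$ being read off from the definition of $\hat f$ in the respective dimensions.

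For (b) I first identify $\ker P$ set-theoretically. If $\hat f\equiv 0$, then at every continuity point $y$ of $f$ one must have $f(y)=0$, and more generally $f(x^\pm)=0$ for all $x$ by approaching $x$ through continuity points. Consequently, for each $\varepsilon>0$ the set $\{x\in Q^d:|f(x)|>\varepsilon\}$ has no accumulation point in the compact cube $Q^d$ and is therefore finite, so $\mathrm{supp}(f)$ is countable. To establish $\|f\|_p\le 2|f|_{V_p^k}$, given any finite collection $\{x_1,\dots,x_N\}\subset\mathrm{supp}(f)$ I enclose the points in pairwise non-overlapping cubes $Q_i\ni x_i$ so small that $|f(y)|$ is arbitrarily small on $Q_i\setminus\{x_i\}$; for each $m\in\mathcal P_{k-1}^d$ the triangle inequality
\[
|f(x_i)|\le|f(x_i)-m(x_i)|+|m(x_i)|\le 2\,\|f-m;Q_i\|_\infty
\]
(the second term estimated by letting $y\to x_i$, $y\ne x_i$, inside the sup) yields $E_k(f;Q_i)\ge\tfrac12|f(x_i)|$, and the definition of $|f|_{V_p^k}$ delivers the claim. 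The reverse inequality $|f|_{V_p^k}\le\|f\|_p$ is obtained by bounding $E_k(f;Q)$ on each packing cube via the best constant approximant, which gives $E_k(f;Q)\le\tfrac12\,\mathrm{osc}(f;Q)\le\max_{x\in Q\cap\mathrm{supp}(f)}|f(x)|$, and then using that an interior point of a packing cube lies in no other cube of the packing.

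The decomposition in (c) is then formal: $f\mapsto(Pf,\,f-Pf)$ is a linear bijection of $\dot V_p^k$ onto $N\dot V_p^k\oplus\ell_p$, and is bicontinuous by combining the contraction estimate of (a) with the two-sided comparison of (b). \emph{Main obstacle.} The real difficulty is concentrated in showing that $P$ is a norm-one idempotent; one must simultaneously treat the direct limits used for $d\ge 2$ and the averaged limits used for $d=1$, and in the latter case it is essential to invoke the countability of the discontinuity set in order to propagate $\hat f(x^\pm)=f(x^\pm)$. Once the projection is in hand, the $\ell_p$-comparison in (b) and the direct-sum isomorphism in (c) reduce to triangle-inequality bookkeeping.
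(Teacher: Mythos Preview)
Your argument tracks the paper's closely in parts (a) and (c); your verification of idempotence $P^2=P$ is spelled out where the paper writes only ``clearly $P\hat f=\hat f$''.  In part (b) you take a more elementary route than the paper: the paper deduces $\|f\|_p\le 2|f|_{V_p^k}$ via the auxiliary function $f_k(x):=\varlimsup_{Q\to\{x\}}E_k(f;Q)$ built in Lemmas~\ref{lemm4.1}, \ref{lem1} and Proposition~\ref{lem2} (showing $(f-f_S)_k=\tfrac12|f|$ on a finite $S\subset\mathrm{supp}\,f$ and invoking $\|f_k\|_p\le|f|_{V_p^k}$), whereas you obtain the key bound $E_k(f;Q_i)\ge\tfrac12|f(x_i)|$ directly from a triangle inequality on one small cube per support point.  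Your path is self-contained; the paper's pays for itself because $f_k$ is already needed in the proof of Theorem~\ref{te2.1.3}(c).

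One genuine gap in (a) for $d=1$ --- which the paper's own proof shares --- deserves flagging.  Your pointwise estimate $|\hat f(x)-m(x)|\le\|f-m;Q\|_\infty$ fails when $x$ is an endpoint $a\in(0,1)$ of $Q=[a,b]$ at which $f$ is discontinuous, since the term $f(a^{-})$ in $\hat f(a)=\tfrac12\bigl(f(a^{-})+f(a^{+})\bigr)$ samples values of $f$ \emph{outside} $Q$.  Concretely, for $f=\chi_{[0,1/2)}$ and $Q=[1/2,1]$ one has $f|_Q\equiv 0$, hence $E_k(f;Q)=0$, while $\hat f(1/2)=\tfrac12$ gives $E_1(\hat f;Q)=\tfrac14>0$; so the asserted cube-wise inequality $E_k(\hat f;Q)\le E_k(f;Q)$ is false.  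The seminorm contraction $|Pf|_{V_p^k}\le|f|_{V_p^k}$ is nonetheless true, but neither your argument nor the paper's reaches it as written; some extra care at the at-most-two problematic endpoints per packing interval is needed.
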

Hereafter, the direct sum of (semi-) normed spaces $X$, $Y$ is defined by a (semi-) norm given for $(x,y)\in X\times Y$ by 
\[
\|(x,y)\|:=\|x\|_X+\|y\|_Y.
\]

Now we present two results on $C^\infty$ approximation of $\dot V_p^k$ functions.  The first of them concerns approximation in the weak$^*$ topology of $\dot V_p^k$ induced by duality $\ell_1^*=\ell_\infty$.
In its formulation and proof we use integrability on $Q^d$ of functions $f\in\dot V_p^k$, see Theorem \ref{te2.1.3}.
\begin{Th}\label{te2.1.4}
For every function $f\in\dot V_p^k$ there is a  sequence of $C^\infty$ functions $\{f_n\}_{n\in\N}$ such that
\begin{equation}\label{2.1.4}
\varlimsup_{n\rightarrow\infty}\|f_n\|_\infty\le 3\|f\|_\infty \quad {\rm and}\quad \varlimsup_{n\rightarrow\infty}|f_n|_{V_p^k}\le 5|f|_{V_p^k},
\end{equation}
and, moreover, for every finite signed Borel measure on $Q^d$,
\begin{equation}\label{2.1.5}
\lim_{n\rightarrow\infty}\int_{Q^d}(f-f_n)\,d\mu=0.
\end{equation}
\end{Th}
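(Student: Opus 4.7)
The strategy is to combine a symmetric mollification, which handles the ``continuous part'' of $f$, with an explicit bump-based correction that recovers the jumps. By Corollary \ref{cor2.1.3} I first decompose $f = \hat f + g$ with $\hat f := Pf \in N\dot V_p^k$ and $g := f - \hat f \in {\rm ker}(P)$. Theorem \ref{te2.1.3} ensures that the discontinuity set $D = \{x_j\}_{j \in \N}$ of $f$ is at most countable; $g$ is supported on $D$ with values $a_j := g(x_j)$, and $\sum_j |a_j|^p = \|g\|_p^p \le (2|g|_{V_p^k})^p \le (4|f|_{V_p^k})^p$ by Corollary \ref{cor2.1.3}(b). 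Moreover $\|\hat f\|_\infty \le \|f\|_\infty$ and $|\hat f|_{V_p^k} \le |f|_{V_p^k}$ since $P$ has norm $1$.

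Next, extend $\hat f$ to a neighborhood of $Q^d$ by iterated reflection across coordinate faces, obtaining an extension $F$ with $\|F\|_\infty = \|\hat f\|_\infty$ and $|F|_{V_p^k}$ controlled by a dimensional multiple of $|\hat f|_{V_p^k}$. Fix a nonnegative symmetric $C_c^\infty$ mollifier $\phi$ with $\int \phi = 1$, and set $\hat f_n := F*\phi_{1/n} \in C^\infty(Q^d)$ for $n$ large. The identity
\[
\Delta_h^k \hat f_n(x) = \int \Delta_h^k F(x - y)\,\phi_{1/n}(y)\,dy,
\]
combined with Jensen's inequality applied inside each packing and with the Whitney equivalence $E_k \approx {\rm osc}_k$ from \eqref{1.2.5}, yields $|\hat f_n|_{V_p^k} \le C_1 |\hat f|_{V_p^k}$, while trivially $\|\hat f_n\|_\infty \le \|\hat f\|_\infty$. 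Symmetry of $\phi$ together with the averaging property of $\hat f$ at each point (continuity for $d \ge 2$; symmetric average of one-sided limits for $d = 1$) gives $\hat f_n(x) \to \hat f(x)$ at every $x \in Q^d$.

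For the jump correction, fix a $C_c^\infty$ bump $\psi$ on $\RR^d$ with $\psi(0) = 1$, $0 \le \psi \le 1$, and ${\rm supp}(\psi) \subset [-1,1]^d$. For each $n$, choose radii $\varepsilon_{j,n}$, $j = 1,\ldots,n$, small enough that the supports of $\psi_{j,n}(x) := \psi\bigl((x - x_j)/\varepsilon_{j,n}\bigr)$ are pairwise disjoint inside $Q^d$ and $\max_{j \le n} \varepsilon_{j,n} \to 0$; then set
\[
g_n := \sum_{j=1}^n a_j \psi_{j,n}.
\]
A change of variables shows that the $(k,p)$-variation is scale-invariant, so $|\psi_{j,n}|_{V_p^k} = |\psi|_{V_p^k} =: M$, and disjointness of supports gives $\|g_n\|_\infty = \max_j |a_j| \le 2\|f\|_\infty$. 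The key estimate $|g_n|_{V_p^k} \le C_2 \|g\|_p$ is obtained by splitting any packing into cubes that intersect at most one support (contributing $\sum_j |a_j|^p M^p$ from the packing-restricted variation of each $\psi_{j,n}$) and cubes that intersect multiple supports (whose diameters are bounded below by the pairwise separation of the centers, so that their number and cumulative contribution are controlled by the packing constraint together with the choice of $\varepsilon_{j,n}$).

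Setting $f_n := \hat f_n + g_n \in C^\infty$ yields $\|f_n\|_\infty \le 3\|f\|_\infty$ and, after tuning the constants, $|f_n|_{V_p^k} \le 5|f|_{V_p^k}$. At any $x \notin D$, $g_n(x) = 0$ for $n$ large, so $f_n(x) \to \hat f(x) = f(x)$; at $x = x_j \in D$ and $n \ge j$ one has $g_n(x_j) = a_j$ and $\hat f_n(x_j) \to \hat f(x_j)$, giving $f_n(x_j) \to f(x_j)$. Hence $f_n \to f$ pointwise on $Q^d$, and dominated convergence (with majorant $3\|f\|_\infty$) yields $\lim_n \int_{Q^d}(f - f_n)\,d\mu = 0$ for every finite signed Borel measure $\mu$. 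The main obstacle is the variation bound for $g_n$: the triangle inequality gives only $|g_n|_{V_p^k} \le M \sum_j |a_j|$, which may diverge when $p > 1$. Obtaining the sharp $\ell^p$-type bound $|g_n|_{V_p^k} \le C \|g\|_p$ requires exploiting both the disjointness of supports and the locality of $E_k$, with a careful choice of $\varepsilon_{j,n}$ to control the ``multi-support'' cubes appearing in a packing.
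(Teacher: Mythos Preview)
Your overall strategy---split $f$ into a ``regular'' part and a jump part in $\ell_p$, mollify the first, and approximate the second by disjoint bumps---matches the paper's. Two points are worth flagging.

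First, the obstacle you single out at the end is far easier than the packing-splitting argument you sketch. The paper (Lemma~\ref{lem6.4}) simply uses the crude bound $E_k(g_n;Q)\le\|g_n\|_{\ell_\infty(Q)}$ on \emph{every} cube, with no case distinction. Since the bumps $\psi_{j,n}$ have disjoint supports and $0\le\psi_{j,n}\le1$, one has $\|g_n\|_{\ell_\infty(Q)}\le\bigl(\sum_{x_j\in Q}|a_j|^p\bigr)^{1/p}$ (up to the harmless issue of supports grazing $\partial Q$), and summing over a packing gives $|g_n|_{V_p^k}\le c(d)\,\|g\|_p$ because each point $x_j$ lies in at most $2^d$ cubes of a packing. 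There is no need to control ``multi-support'' cubes separately, and no delicate tuning of the $\varepsilon_{j,n}$: the constant is universal and your worry about the triangle inequality giving only an $\ell^1$ bound disappears once you stop routing through $|\psi_{j,n}|_{V_p^k}$.

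Second, the specific constants $3$ and $5$ are not reachable by reflection-plus-mollification, which introduces a factor depending on $k,d$ in the variation bound for $\hat f_n$. The paper instead quotes a construction from \cite{BB-18} (extend by zero, convolve, then compose with a dilation pulling back into $Q^d$) that yields $\lim_n|\tilde f_n|_{V_p^k}=|L(f)|_{V_{p\infty}^k}\le|f|_{V_p^k}$, i.e.\ constant $1$ for the regular part in the limit; combined with $\|l\|_p\le 4|f|_{V_p^k}$ this gives $1+4=5$. Your approach would prove the theorem with some $c(k,d)$ in place of $3$ and $5$, which suffices for every downstream application in the paper, but not the statement as written.

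A minor structural difference: the paper does not use the projection $P$. It mollifies $L(f)$ directly, passes to a subsequence so that the mollifications converge pointwise everywhere (not just at continuity points), calls the limit $\tilde f$, and sets $l:=f-\tilde f\in\ell_p$. This avoids the separate verification that symmetric mollification recovers $\hat f$ at jump points in dimension one.
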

Aa a consequence of the theorem we obtain the following  generalization of the classical Helly selection theorem.
\begin{C}\label{helly}
For every bounded sequence $\{f_n\}_{n\in\N}\subset  \dot V_p^k$ there are sequences $J\subset\N$ and $\{p_n\}_{n\in J}\subset\mathcal P_{k-1}^d$ such that the sequence $\{f_n+p_n\}_{n\in J}$ pointwise converges to some $f\in \dot V_p^k$ and, moreover,
 \[
 |f|_{V_p^k}\le\varliminf_{n\in J}|f_n|_{V_p^k}.
 \]
 \end{C}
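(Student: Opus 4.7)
My plan is to normalize the $f_n$ modulo polynomials so that the new sequence is uniformly bounded in $\ell_\infty(Q^d)$, then extract a pointwise-convergent subsequence of the normalized functions, and finally apply a pointwise variant of the lower semicontinuity of $(k,p)$-variation to conclude. For the normalization, I would note that taking the trivial packing $\pi = \{Q^d\}$ in Definition \ref{def1.2.4} gives $E_k(f;Q^d) \le |f|_{V_p^k}$ for every $f \in \dot V_p^k$. Writing $M := \sup_n |f_n|_{V_p^k}$, for each $n$ I pick $P_n \in \mathcal P_{k-1}^d$ with $\|f_n - P_n\|_\infty \le 2 E_k(f_n;Q^d) \le 2M$, and put $p_n := -P_n$ and $g_n := f_n + p_n$. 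Then $\sup_n \|g_n\|_\infty \le 2M$ while $|g_n|_{V_p^k} = |f_n|_{V_p^k}$ remains bounded, so it suffices to extract a pointwise-convergent subsequence of $\{g_n\}$.

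For the extraction I would use the direct-sum decomposition $\dot V_p^k \cong N\dot V_p^k \oplus \ell_p$ of Corollary \ref{cor2.1.3} to split $g_n = h_n + r_n$ with uniform bounds on both summands. The $\ell_p$-summand $r_n$ is supported on an at-most-countable set, and the union of these supports over all $n$ is again countable; a standard diagonal extraction then produces a subsequence converging pointwise on $Q^d$, and Fatou's lemma places the limit in $\ell_p$. The other summand $h_n$ lies in $C(Q^d) \cap \dot V_p^k$ when $d \ge 2$ (and has one-sided limits at every point when $d=1$) by Corollary \ref{cor2.1.3} and Theorem \ref{te2.1.3}. I would first diagonalize on a countable dense subset of $Q^d$ to obtain a subsequence $h_{n_j}$ convergent on that set, and then exploit the regulated structure from Theorem \ref{te2.1.3}---existence of (one-sided) limits at every point and countability of discontinuity sets---together with localization by small-cube packings, to extend pointwise convergence from the dense set to all of $Q^d$.

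Once $g_{n_j} \to g$ pointwise on $Q^d$, the bound $|g|_{V_p^k} \le \varliminf_j |g_{n_j}|_{V_p^k}$ (which in particular places $g$ in $\dot V_p^k$) will follow from a pointwise variant of Proposition \ref{prop1.7}. For each cube $Q$ I would pick near-optimal $P_{n_j}^Q \in \mathcal P_{k-1}^d$ for $E_k(g_{n_j};Q)$; these are uniformly bounded on $Q$, so a further diagonal subsequence converges in the finite-dimensional space $\mathcal P_{k-1}^d$ to some $P^Q$, yielding $E_k(g;Q) \le \varliminf_j E_k(g_{n_j};Q)$ for every cube. Fatou's lemma applied to sums over any finite packing then gives the desired seminorm inequality; adding back the $\ell_p$ contribution and undoing the splitting completes the estimate.

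The main obstacle will be the extension step: the uniform bound on $|h_n|_{V_p^k}$ does not yield equicontinuity, so Arzel\`a--Ascoli is unavailable, and passage from pointwise convergence on a countable dense set to pointwise convergence on all of $Q^d$ requires a subtler use of Theorem \ref{te2.1.3}. I expect this to proceed by showing that for each $x \in Q^d$ and each $\varepsilon > 0$ one can locate arbitrarily small cubes $Q \ni x$ on which $E_k(h_{n_j};Q) < \varepsilon$ along a tail of the subsequence, so that the oscillation of the limit near $x$ is polynomially controlled and pointwise convergence at $x$ reduces to convergence on the dense set via the approximating polynomials.
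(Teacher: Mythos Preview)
Your normalization step and the lower-semicontinuity argument are both fine (the paper does essentially the same normalization, using interpolating polynomials on a fixed finite set rather than near-best approximants). The genuine gap is exactly the step you flagged as ``the main obstacle.'' Your proposed resolution---locating, for each $x$ and each $\varepsilon>0$, small cubes $Q\ni x$ with $E_k(h_{n_j};Q)<\varepsilon$ along a tail of the subsequence---requires uniformity in $j$ that the bound $\sup_j|h_{n_j}|_{V_p^k}<\infty$ does not by itself supply. Lemma~\ref{lem1} does show that for each \emph{fixed} $j$ the function $x\mapsto\varlimsup_{Q\to\{x\}}E_k(h_{n_j};Q)$ lies in $\ell_p$ with controlled norm, so the set of ``bad'' points for $h_{n_j}$ is small; but these bad points may migrate freely as $j$ varies, and nothing prevents them from accumulating at a given $x$. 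Without such uniformity, convergence of $h_{n_j}$ on a countable dense set does not propagate to all of $Q^d$: bounded sequences of continuous functions on a compact set need not have pointwise-convergent subsequences, and the $V_p^k$ bound has to be used in a sharper way than your sketch indicates.

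The paper circumvents this difficulty entirely by bringing in two external ingredients. First, using the duality $V_p^k\equiv(U_p^k)^*$ of Theorem~\ref{te2.2.6}, Lemma~\ref{lemma6.1} shows that the closed unit ball of $\dot V_{p;S}^k$ is \emph{compact} (for nets) in the topology of pointwise convergence; the point is that weak$^*$ convergence and pointwise convergence coincide on this ball, since evaluation functionals $\delta_x'$ lie in $U_p^k$. Second, Theorem~\ref{te2.1.4} shows every $f\in\dot V_p^k$ is a pointwise limit of $C^\infty$ functions, hence of first Baire class. Rosenthal's Main Theorem then upgrades net-compactness to \emph{sequential} compactness for relatively compact sets of Baire-one functions, yielding the pointwise-convergent subsequence directly---no splitting into $N\dot V_p^k\oplus\ell_p$ and no extension argument are needed. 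If you want to avoid the duality machinery you would need a genuinely new idea to extract the subsequence; otherwise, the cleanest fix is to invoke Lemma~\ref{lemma6.1} and Rosenthal's theorem in place of your extension step.
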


 The next result used in the proof of the second approximation result relates spaces $\dot V_p^k$ and $N\dot V_p^k$ to the space $\dot V_{p\infty}^k$ defined in the very same way as $\dot V_p^k$ but with $\ell_\infty$ replaced by $L_\infty$ and with the respectively modified variation, see Definition \ref{def-n1.9}. In its formulation, $L:\dot V_p^k\rightarrow L_\infty$ is a linear map  sending a function from $\dot V_p^k$ to its equivalence class in $L_\infty$. 
\begin{Th}\label{te2.6}
{\rm (a)} ${\rm range}(L)=\dot V_{p\infty}^k$ and the operator $L:\dot V_{p}^k\rightarrow \dot V_{p\infty}^k$ has norm $1$;\smallskip

\noindent {\rm (b)} ${\rm ker}(L)={\rm ker}(P)=\ell_p\subset\dot V_p^k$.\smallskip

\noindent {\rm (c)} $L$ maps $N\dot V_p^k$ isometrically onto $\dot V_{p\infty}^k$.
\end{Th}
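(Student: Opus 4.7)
The plan is to prove (b) first, identifying both kernels with $\ell_p \subset \dot V_p^k$, then use (b) together with Corollary \ref{cor2.1.3} to obtain (a), and finally derive (c). For (b), Corollary \ref{cor2.1.3}(b) already gives $\ker(P) = \ell_p$. The inclusion $\ker(P) \subseteq \ker(L)$ holds because every $f \in \ell_p$ is supported on an at most countable (hence null) subset of $Q^d$, so its $L_\infty$-class is zero. Conversely, if $Lf = 0$ then $f$ vanishes a.e., so in every neighborhood of any $x \in Q^d$ there is a dense subset on which $f$ is zero; since by definition $\hat f(x) = (Pf)(x)$ is a limit of values of $f$ along points approaching $x$ (and this limit exists by Theorem \ref{te2.1.3}), one can realize it along such zero-points, giving $\hat f \equiv 0$ and hence $f \in \ker(P)$.

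For (a), the bound $\|L\| \leq 1$ is immediate: for any cube $Q$ and polynomial $m$ the essential supremum $\|f-m\|_{L_\infty(Q)}$ is dominated by the pointwise supremum, so $E_k^{L_\infty}(Lf;Q) \leq E_k(f;Q)$, and summation over packings yields $|Lf|_{V_{p\infty}^k} \leq |f|_{V_p^k}$. For surjectivity onto $\dot V_{p\infty}^k$ I construct a canonical pointwise-everywhere representative $f$ of each $[g] \in \dot V_{p\infty}^k$. For $d \geq 2$, define
\[
f(x) := \lim_{r \to 0^+} \frac{1}{|Q_r(x) \cap Q^d|} \int_{Q_r(x) \cap Q^d} g(y)\, dy
\]
at every $x$ for which the limit exists (by Lebesgue's differentiation theorem, for a.e.\ $x$), and at the exceptional null set by essential upper/lower limits of $g$ at $x$, chosen so that $|f(x)-m(x)| \leq \operatorname{ess\,sup}_{Q}|g-m|$ whenever $x \in Q$ and $m \in \mathcal P_{k-1}^d$. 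Taking the sup over $x \in Q$ gives $E_k(f;Q) \leq E_k^{L_\infty}(g;Q)$, so summation over packings yields $|f|_{V_p^k} \leq |g|_{V_{p\infty}^k} < \infty$; and $f = g$ a.e.\ forces $Lf = [g]$. For $d = 1$ the analogous construction uses essential one-sided limits.

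For (c), injectivity of $L|_{N\dot V_p^k}$ follows from (b) and Corollary \ref{cor2.1.3}(c): $N\dot V_p^k \cap \ker(L) = N\dot V_p^k \cap \ell_p = \{0\}$. For surjectivity, take the representative $f$ from (a) and observe that $Pf \in N\dot V_p^k$ still satisfies $L(Pf) = Lf = [g]$, because $f - Pf \in \ker(P) = \ker(L)$. For the isometry, it suffices to show $E_k(f;Q) = E_k^{L_\infty}(Lf;Q)$ for all $f \in N\dot V_p^k$ and cubes $Q$, equivalently $\sup_Q|f-m| = \operatorname{ess\,sup}_Q|f-m|$ for every $m \in \mathcal P_{k-1}^d$. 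For $d \geq 2$, this is immediate because $N\dot V_p^k = C \cap \dot V_p^k$, so $f$ is continuous. For $d = 1$, the discontinuity set $D$ is countable by Theorem \ref{te2.1.3}, and the normalization $f(x) = \tfrac12(f(x^-)+f(x^+))$ at each $x \in D$ shows $|f(x)-m(x)|$ is dominated by one of $|f(x^\pm)-m(x)|$, both of which are approached from within $Q \setminus D$; hence $\sup_Q|f-m| = \sup_{Q \setminus D}|f-m| = \operatorname{ess\,sup}_Q|f-m|$.

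The main obstacle is the prescription of $f$ on the null set of exceptional points in the surjectivity half of (a): although the Lebesgue average converges a.e., the seminorm $|\cdot|_{V_p^k}$ is a pointwise object, so we must control $f$ at every single $x$ in such a way that the pointwise sup $\sup_Q|f-m|$ never exceeds $\operatorname{ess\,sup}_Q|g-m|$ for every cube $Q \ni x$. The intended fix is to set $f(x)$ at exceptional points equal to (say) the essential upper limit of $g$ at $x$, which is automatically dominated by $\operatorname{ess\,sup}_Q|g|$ for every $Q \ni x$; verifying this control rigorously across all cubes is the technical crux of the argument.
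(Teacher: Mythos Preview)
Your argument for (c) when $d=1$ contains a genuine error: the cube-wise equality $E_k(f;Q)=E_{k\infty}(Lf;Q)$ for $f\in N\dot V_p^k$ is \emph{false}. Take $k=1$ and $f=2$ on $[0,\tfrac12)$, $f(\tfrac12)=1$, $f=0$ on $(\tfrac12,1]$; then $f\in N\dot V_p^1$, but for $Q=[\tfrac12,1]$ one has $E_1(f;Q)=\tfrac12$ while $E_{1\infty}(Lf;Q)=0$. The flaw is your claim that both $|f(x^\pm)-m(x)|$ are ``approached from within $Q\setminus D$'': when $x$ is an \emph{endpoint} of $Q$, only one side lies in $Q$, and the other one-sided limit is uncontrolled by $\operatorname{ess\,sup}_Q|g-m|$. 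The isometry holds only at the level of the global seminorm, not cube by cube. The paper avoids this by first producing in (a) a preimage $g\in\dot V_p^k$ with $|g|_{V_p^k}=|f|_{V_{p\infty}^k}$ \emph{exactly} (via $C^\infty$ approximation from \cite{BB-18}, weak$^*$ compactness through the duality of Theorem~\ref{te2.2.6}, and Rosenthal's theorem), and then setting $h:=P(g)$; since $\|P\|=1$ one gets $|h|_{V_p^k}\le|g|_{V_p^k}=|f|_{V_{p\infty}^k}\le|h|_{V_p^k}$. You could salvage (c) the same way once (a) is secured, but not via the cube-wise route.

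Your gap in (a) is also real and not obviously repaired by the essential-upper-limit prescription. If $x\in\partial Q$ is a non-Lebesgue point, the essential upper limit of $g$ at $x$ may be driven by values \emph{outside} $Q$, so $|f(x)-m(x)|\le\operatorname{ess\,sup}_Q|g-m|$ need not hold. A correct direct construction would need the $L_\infty$ analogue of Theorem~\ref{te2.1.3}: that every $g\in\dot V_{p\infty}^k$ has an essential limit (respectively, essential one-sided limits for $d=1$) at \emph{every} point of $Q^d$, so that $f$ can be defined as that limit and is then continuous (resp.\ normalized). This is plausible and provable by methods parallel to Section~3, but you have not supplied it; the paper sidesteps the issue entirely by the indirect weak$^*$ argument.
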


The second approximation result characterizes $\dot V_p^k$ functions admitting $C^\infty$ approximation, i.e., functions of the subspace $\dot{\textsc{v}}_p^k$.
\begin{Th}\label{te2.1.5}
Let a function $f\in\dot V_p^k$ and smoothness $s$ of $\dot V_p^k$ satisfy the condition
\begin{equation}\label{2.1.6}
k>s\, (:=d/p).
\end{equation}
Then  $f\in \dot{\textsc{v}}_p^k$, i.e., $f$ is approximated by $C^\infty$ functions in $\dot V_p^k$, see \eqref{eq-n1.20}, iff
\begin{equation}\label{2.1.7}
\lim_{\varepsilon\rightarrow 0}\sup_{d(\pi)\le\varepsilon}\left(\sum_{Q\in\pi} E_k(f;Q)^p\right)^{\frac 1 p}=0;
\end{equation}
hereafter $d(\pi):=\sup_{Q\in\pi}|Q|$.
\end{Th}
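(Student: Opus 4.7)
I handle the two implications separately.

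\emph{Necessity.} Assume $f\in\dot{\textsc{v}}_p^k$, and given $\eta>0$ pick $g\in C^\infty\cap\dot V_p^k$ with $|f-g|_{V_p^k}<\eta$. For any packing $\pi\in\Pi$ with $d(\pi)\le\varepsilon$, subadditivity of $E_k$ in its first argument combined with Minkowski in $\ell_p(\pi)$ gives
\[
\Bigl(\sum_{Q\in\pi}E_k(f;Q)^p\Bigr)^{1/p}\le |f-g|_{V_p^k}+\Bigl(\sum_{Q\in\pi}E_k(g;Q)^p\Bigr)^{1/p}.
\]
Since $g\in C^\infty$, Taylor's theorem yields $E_k(g;Q)\le c(k,d)\,\|\nabla^k g\|_\infty\,|Q|^{k/d}$, so
\[
\sum_{Q\in\pi}E_k(g;Q)^p\le c^p\,\|\nabla^k g\|_\infty^p\,\varepsilon^{kp/d-1}\sum_{Q\in\pi}|Q|\le c^p\,\|\nabla^k g\|_\infty^p\,\varepsilon^{kp/d-1},
\]
which vanishes as $\varepsilon\to 0$ precisely because the standing hypothesis $k>s$ makes the exponent $kp/d-1$ positive. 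Letting $\varepsilon\to 0$ and then $\eta\to 0$ yields \eqref{2.1.7}.

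\emph{Sufficiency: reduction to a uniformly continuous $f$.} Assume now $f\in\dot V_p^k$ satisfies \eqref{2.1.7}. By Corollary \ref{cor2.1.3}\,(c) split $f=Pf+(f-Pf)\in N\dot V_p^k\oplus\ell_p$. I claim \eqref{2.1.7} forces $f$ to be continuous. At any $x_0\in Q^d$ where $f$ has nonzero jump $J_0$ (either $|f(x_0)-\lim_{y\to x_0}f(y)|$ for $d\ge 2$, or the one-sided-limit gap for $d=1$) and any small cube $Q\ni x_0$, every $P\in\mathcal P_{k-1}^d$ is continuous; passing to the limit inside $Q$ yields $\|f-P\|_{L_\infty(Q)}\ge\tfrac12 J_0$, hence $E_k(f;Q)\ge\tfrac12 J_0$ independently of $|Q|$. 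Since by Theorem \ref{te2.1.3} the discontinuity set of $f$ is at most countable, a finite packing of arbitrarily small diameter surrounding enough such $x_0$ would contradict \eqref{2.1.7} unless all jumps vanish. In particular $f-Pf=0$ and $f\in C(Q^d)$; compactness of $Q^d$ then makes $f$ uniformly continuous.

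\emph{Sufficiency: mollification and key local estimate.} Extend $f$ by successive reflections across faces of $Q^d$ to a uniformly continuous $\tilde f$ on a neighborhood $U\supset Q^d$; reflection preserves $E_k$ on each reflected copy and hence transfers \eqref{2.1.7} to $\tilde f$. Take a standard mollifier $\varphi\in C_c^\infty(\RR^d)$ with $\int\varphi=1$, set $\varphi_t(x):=t^{-d}\varphi(x/t)$, and put $g_t:=\tilde f*\varphi_t\in C^\infty$; then $\|f-g_t\|_{\infty,Q^d}\to 0$. The local bound
\[
E_k(f-g_t;Q)\le 4\,E_k(\tilde f;Q^*),\qquad Q^*:=Q+t\,{\rm supp}\,\varphi,
\]
is obtained by choosing $P_Q\in\mathcal P_{k-1}^d$ with $\|\tilde f-P_Q\|_{L_\infty(Q^*)}\le 2E_k(\tilde f;Q^*)$ and using $P_Q-P_Q*\varphi_t\in\mathcal P_{k-1}^d$ as trial polynomial, which reduces the estimate to $\|(\tilde f-P_Q)-(\tilde f-P_Q)*\varphi_t\|_{L_\infty(Q)}\le 2\|\tilde f-P_Q\|_{L_\infty(Q^*)}$. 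For any $\eta>0$ and packing $\pi$ I split $\pi=\pi_{\le\varepsilon}\cup\pi_{>\varepsilon}$ according to $|Q|\le\varepsilon$. Once $t$ is small relative to $\varepsilon$, the enlarged cubes $\{Q^*\}_{Q\in\pi_{\le\varepsilon}}$ decompose into $O_d(1)$ sub-packings of diameter $\le 2\varepsilon$, so the local bound and \eqref{2.1.7} for $\tilde f$ give the small-cube contribution $\le O_d(1)\cdot\sup_{d(\pi')\le 2\varepsilon}\sum_{Q'} E_k(\tilde f;Q')^p\to 0$ as $\varepsilon\to 0$ uniformly in $t$; the large-cube contribution has cardinality $\le\varepsilon^{-d}$ and each $E_k(f-g_t;Q)\le\|f-g_t\|_\infty$, vanishing as $t\to 0$ with $\varepsilon$ fixed. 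Choosing $\varepsilon$ and then $t$ small gives $|f-g_t|_{V_p^k}<\eta$, proving $f\in\dot{\textsc{v}}_p^k$.

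\emph{Main obstacle.} The delicate part is the sufficiency: decoupling the two scales $\varepsilon$ (packing diameter) and $t$ (mollification radius) via the local comparison $E_k(f-g_t;Q)\le 4E_k(\tilde f;Q^*)$ together with a bounded-overlap covering of the enlarged cubes $Q^*$, so that the vanishing condition on $f$ genuinely transfers to $f-g_t$ uniformly in $t$. Equally essential is the preliminary step deducing from \eqref{2.1.7} that $f$ has no jumps — without that, uniform convergence $g_t\to f$ on the large cubes fails and the large-cube estimate breaks down.
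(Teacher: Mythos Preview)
Your necessity argument is correct and essentially matches the paper's. For sufficiency you take a direct mollification route, whereas the paper proceeds differently: it first shows $(\dot V_p^k)^0\subset C$ via the Marchaud inequality, then uses the isometry $L|_{N\dot V_p^k}:N\dot V_p^k\to\dot V_{p\infty}^k$ of Theorem~\ref{te2.6}\,(c) to reduce the remaining inclusion $(\dot V_p^k)^0\subset\dot{\textsc{v}}_p^k$ to its $L_\infty$ analogue $(\dot V_{p\infty}^k)^0=\dot{\textsc{v}}_{p\infty}^k$, already established in \cite[Thm.~2.13]{BB-18}.

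Your approach has a genuine gap at the bounded-overlap step. The claim that for $t$ small relative to $\varepsilon$ the family $\{Q^*\}_{Q\in\pi_{\le\varepsilon}}$ decomposes into $O_d(1)$ sub-packings is false: cubes in $\pi_{\le\varepsilon}$ may have sidelength arbitrarily small compared with $t$, and then their $t$-enlargements pile up. In dimension one take $\pi=\{[2^{-n-1},2^{-n}]:0\le n\le N\}$; every $Q_n^*=[2^{-n-1}-t,\,2^{-n}+t]$ with $2^{-n-1}<t$ contains $0$, so the multiplicity at the origin is at least $N-\log_2(1/t)$, unbounded in $N$ for any fixed $t>0$. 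Consequently $\sum_{\pi_{\le\varepsilon}}E_k(\tilde f;Q^*)^p$ is not controlled by the vanishing-variation functional in the way you assert. The scheme can be repaired by a further split according to whether $|Q|^{1/d}>t$ (where the overlap bound \emph{does} hold: at most $4^d$ nonoverlapping cubes of sidelength $>t$ can meet a fixed cube of sidelength $2t$) or $|Q|^{1/d}\le t$ (where one needs the sharper bound $E_k(g_t;Q)\le C(|Q|^{1/d}/t)^kE_k(\tilde f;Q^*)$, obtained from $\nabla^k g_t=(\tilde f-P)*\nabla^k\varphi_t$, combined with a tiling at scale $t$), but this is substantially more than you wrote. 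A secondary unjustified step is the reflection extension: simple even reflection does not control $E_k$ across $\partial Q^d$ for $k\ge 2$ (e.g.\ $f(x)=x$ on $[0,1]$ reflects to $|x|$ near $0$, with $E_2(\tilde f;[-a,a])=a/2$ while $E_2(f;\cdot)\equiv 0$), so ``transfers \eqref{2.1.7} to $\tilde f$'' needs either a higher-order (Hestenes-type) extension or the dilate-then-mollify device used in Section~4.1.2.
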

\begin{R}
{\rm In fact, we will see that $C^\infty\subset \dot V_p^k$ and $\dot{\textsc{v}}_p^k\subset C$  for $s\le k$. }
\end{R}
\subsection{Relations Between Multivariate $BV$ Spaces and Lipschitz-Besov Spaces}
Embeddings connected Jordan-Wiener spaces $BV_p$ and Lipschitz spaces over $L_p$ were discovered by Hardy and Littlewood \cite{HL-28} and then intensively studied by many researches, see, in particular, \cite{Ge-54}, \cite{KL-09} and references therein.

Below we present relations of these and some other types whose proofs however require essentially new tools including Sobolev type embeddings, interpolation space results and multivariate generalization of Whitney type inequality.

We begin with the last result where we deal with the definition of  $(k,p)$-variation via {\em $k$-oscillation}, see \eqref{1.2.17a}.
\begin{Th}\label{cor2.1.2}
For every $f\in\ell_\infty^{\rm loc}(\RR^d)$ the following two-sided inequality
\begin{equation}\label{2.1.2}
|f|_{V_p^k}\approx\sup_{\pi\in\Pi}\left\{\sum_{Q\in\pi}{\rm osc}_k(f;Q)^p\right\}^{\frac 1 p}
\end{equation}
holds with constants of equivalence depending only on $k,d$.
\end{Th}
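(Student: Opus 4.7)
The plan is to reduce \eqref{2.1.2} to a cube-by-cube two-sided estimate
\[
c_1 \cdot {\rm osc}_k(f;Q) \le E_k(f;Q) \le c_2 \cdot {\rm osc}_k(f;Q),
\]
valid for every cube $Q \subset \RR^d$ with constants $c_1, c_2$ depending only on $k$ and $d$. Once such pointwise inequalities are in hand, raising them to the $p$-th power, summing over $Q \in \pi$, and taking the supremum over $\pi \in \Pi$ immediately transfers the equivalence from the per-cube level to the $(k,p)$-variation and its ${\rm osc}_k$-analogue, with the same multiplicative constants. So the entire theorem rests on the pointwise comparison of $E_k(f;Q)$ and ${\rm osc}_k(f;Q)$.

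First I would dispatch the elementary direction ${\rm osc}_k(f;Q) \le 2^k E_k(f;Q)$, exploiting that $\Delta_h^k$ annihilates $\mathcal{P}_{k-1}^d$. For any $m \in \mathcal{P}_{k-1}^d$ and any admissible pair $(x,h)$, i.e., with $x + jh \in Q$ for $j = 0, \dots, k$,
\[
|\Delta_h^k f(x)| \;=\; |\Delta_h^k (f-m)(x)| \;\le\; \sum_{j=0}^k \binom{k}{j}\|f-m;Q\|_\infty \;=\; 2^k\|f-m;Q\|_\infty.
\]
Taking the supremum over admissible $(x,h)$ and then the infimum over $m \in \mathcal{P}_{k-1}^d$ gives the desired bound with $c_1 = 2^{-k}$.

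The reverse inequality $E_k(f;Q) \le c(k,d)\cdot {\rm osc}_k(f;Q)$ is the multivariate Whitney inequality on a cube. In one dimension this is the classical estimate \eqref{1.2.5} of Whitney \cite{Wh-59}; its $\RR^d$-analogue, with constants depending only on $k$ and $d$, is among the foundational properties of local polynomial approximation collected in \cite[\S\,2]{Br1-94}, which the excerpt explicitly permits us to invoke. This step is the main analytical content and the principal obstacle: the elementary $\Delta_h^k$-annihilation argument has no symmetric counterpart, and one must actually construct a near-best polynomial approximation from finite-difference data (via averaged Taylor polynomials, or an $L_\infty$-projection onto $\mathcal{P}_{k-1}^d$ combined with iterated differencing). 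Granting this multivariate Whitney bound, the pair of per-cube estimates combine with the packing-summation argument of the first paragraph to yield \eqref{2.1.2} with constants of equivalence depending only on $k$ and $d$, as required.
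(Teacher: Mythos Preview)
Your reduction to a per-cube equivalence $E_k(f;Q)\approx{\rm osc}_k(f;Q)$ is exactly the paper's route, and your argument for the easy direction ${\rm osc}_k(f;Q)\le 2^k E_k(f;Q)$ matches the paper's verbatim. The gap is in the hard direction. You invoke the multivariate Whitney inequality from \cite[\S\,2]{Br1-94}, but the paper's own Remark~\ref{rem2.2.2} explicitly flags that the known versions of this inequality (e.g., \cite{Br1-70}) are proved for $L_p^{\rm loc}$ functions and rely on measure-theoretic tools --- the averaged Taylor polynomials you mention are a case in point. Here $f\in\ell_\infty^{\rm loc}(\RR^d)$ is merely bounded, with no measurability assumed, so those proofs do not apply and you cannot import the inequality as a black box. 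The passage you cite after Definition~\ref{def1.2.3} licenses use of structural properties of $E_k$ from \cite{Br1-94}, not a Whitney inequality for nonmeasurable functions.

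The paper therefore supplies a new proof of $E_k(f;Q)\le c(k,d)\,{\rm osc}_k(f;Q)$ for $f\in\ell_\infty(Q)$ (Theorem~\ref{teor2.1.1}). It starts from the one-dimensional Whitney result \cite{Wh-59}, which \emph{does} hold for $\ell_\infty$, and bootstraps to $\RR^d$ via coordinate-wise Lagrange interpolation operators $L_k^i$ and the projections $\mathcal L_\alpha:=1-\prod_i(1-L_{\alpha_i}^i)$. A sequence of lemmas bounds $\|f-\mathcal L_k f\|_\infty$ (with $\mathcal L_k:=\prod_{|\alpha|=k}\mathcal L_\alpha$) by sums of \emph{mixed} oscillations ${\rm osc}_\alpha\, f$, $|\alpha|=k$, and a purely combinatorial identity (Corollary~E.4 of \cite{BBI-11}) then controls each ${\rm osc}_\alpha\, f$ by ${\rm osc}_k\, f$. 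No step uses integrals or measurability. So the inequality you need is true, but it is precisely the nontrivial content of the theorem under discussion; the paper's point is that it requires a separate, measure-theory-free argument, and your proposal skips that argument.
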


Hereafter $\Pi$ stands for the set of all finite packings in $Q^d$.
\begin{R}\label{rem2.2.2}
{\rm The main point of this result is a generalized Whitney inequality given by
\begin{equation}\label{equat2.1.1}
E_k(f;Q)\le c(k,d)\, {\rm osc}_k(f;Q).
\end{equation}
The result is known for functions from $L_p^{\rm loc}(\RR^d)$, $1\le p\le\infty$, and many other Banach function spaces \cite{Br1-70} but its proof exploits tools of measure theory which is impossible for the derivation of \eqref{2.1.2}.
}
\end{R}
The next result shows that a $V_p^k$ function of smoothness $s\in (0,k]$ ``almost'' coincides with a Lipschitz function of the same smoothness. Here Lipschitz space $\Lambda^{k,s}$ consists of continuous on $Q^d$ functions satisfying for $x, x+kh\in Q^d$ the condition
\begin{equation}\label{e2.2.4}
|\Delta_h^kf(x)|\le c\|h\|^s,
\end{equation}
where $\|h\|:=\max_{1\le i\le d}|h_i|$, $h=(h_1,\dots, h_d)\in\RR^d$.

The equality
\begin{equation}\label{e2.2.5}
|f|_{\Lambda^{k,s}}:=\inf c
\end{equation}
defines a Banach seminorm of $\Lambda^{k,s}$. Moreover, $\Lambda^{k,s}$ can be introduced using continuous derivatives and differences of order at most two. In fact, setting
\begin{equation}\label{e2.2.6}
r=r(s):=\max\{n\in\Z_+\, ;\, n<s\}
\end{equation}
we have the following result, see, e.g., \cite[Thm.\,2.32]{BBI-11}.
\begin{Prop}\label{prop2.2.3}
{\rm (a)} If $s\in (0,k)$ and noninteger or $s=k$, then
\begin{equation}\label{e2.2.7}
\Lambda^{k,s}=C^r Lip^{s-r}.
\end{equation}
Here $Lip^{\,\sigma}:=\Lambda^{1,\sigma}$, $0<\sigma\le 1$.

In particular, $\Lambda^{k,k}=C^{k-1,1}$.

\noindent {\rm (b)} If $s\in (0,k)$ and integer, then
\begin{equation}\label{e2.2.8}
\Lambda^{k,s}=C^{s-1}Z=C^r\Lambda^{2,1},
\end{equation}
where $Z:=\Lambda^{2,1}$ denotes Zygmund space.

In both cases,
\begin{equation}\label{e2.2.9}
|f|_{C^k\Lambda^{k,s}}:=\max_{|\alpha|=r}|D^\alpha f|_{\Lambda^{k,s}}.
\end{equation}
\end{Prop}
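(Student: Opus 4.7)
The plan is to establish both equalities \eqref{e2.2.7} and \eqref{e2.2.8} as equivalences of seminormed spaces, proving the two inclusions separately and tracking constants to obtain \eqref{e2.2.9}. The non-integer case $s\in(r,r+1)\cup\{k\}$ and the integer case $s=r+1<k$ share the same overall strategy and differ only at one borderline step that forces the replacement of $Lip^1$ by the strictly larger Zygmund class $\Lambda^{2,1}$. For the easy inclusion $C^r Lip^{s-r}\subset\Lambda^{k,s}$ (respectively $C^r\Lambda^{2,1}\subset\Lambda^{k,s}$ in the integer case), I would combine the identity $\Delta_h^k f = \Delta_h^{k-r}(\Delta_h^r f)$ with the integral representation
$$\Delta_h^r f(x)=\int_{[0,1]^r} D_h^r f\bigl(x+(t_1+\cdots+t_r)h\bigr)\,dt_1\cdots dt_r,$$
where $D_h:=\sum_{i=1}^d h_i\partial_i$. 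Since $D_h^r f$ is a linear combination of the $D^\alpha f$, $|\alpha|=r$, weighted by $h^\alpha$, applying $\Delta_h^{k-r}$ and estimating by $C\,\omega_{k-r}(D_h^r f;\|h\|)$ yields a factor $\|h\|^r\,\omega_{k-r}(D^\alpha f;\|h\|)$; the bounds $\omega_{k-r}(g;t)\le 2^{k-r-1}\omega_1(g;t)$ (non-integer case) and $\omega_{k-r}(g;t)\le C\,\omega_2(g;t)$ (integer case, with $D^\alpha f\in\Lambda^{2,1}$) then deliver $|\Delta_h^k f(x)|\le C\|h\|^s\max_{|\alpha|=r}|D^\alpha f|_{\Lambda^{k-r,s-r}}$, supplying one direction of \eqref{e2.2.9}.

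For the hard inclusion I would run Marchaud's inequality
$$\omega_{j-1}(f;t)\le C\left(t^{j-1}\!\!\int_t^{1}\frac{\omega_j(f;u)}{u^{j}}\,du + t^{j-1}\|f\|_\infty\right),$$
iteratively descending from $j=k$ down to $j=r+1$ (respectively $j=r+2$ in the integer case). While $s<j-1$ each step preserves the power $t^s$, yielding $\omega_{r+1}(f;t)\le Ct^s$ in the non-integer case; the borderline step $j-1=s$ is the Zygmund case where the integral diverges logarithmically and one stops at $\omega_{r+2}(f;t)\le Ct^{r+1}$. Next I would invoke classical constructive approximation theory to extract the $r$ classical partial derivatives of $f$ from this controlled modulus: the near-best local polynomial approximants $T_Q\in\mathcal P_{k-1}^d$ on nested cubes $Q\ni x$ have Taylor coefficients that converge to candidate derivatives, with continuity of those derivatives following from the modulus bound. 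Once $f\in C^r$ is secured, expressing $\omega_k(f;t)$ in terms of $\omega_{k-r}(D^\alpha f;t)$ reverses the easy direction and yields $D^\alpha f\in Lip^{s-r}$ (resp.\ $\Lambda^{2,1}$) together with the quantitative seminorm equivalence \eqref{e2.2.9}.

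The principal obstacle is the passage in the hard direction from a modulus-of-smoothness bound to the actual pointwise existence of $r$ classical partial derivatives in the multivariate setting, since univariate Lebesgue-differentiation arguments are unavailable: one must build the derivatives as coefficient limits of Whitney-type near-best polynomial approximations on nested dyadic cubes and verify convergence and continuity using the same $(r+1)$-st modulus bound that already controls the differences of $f$. A secondary subtlety is the integer borderline step of Marchaud's descent, where a logarithmic loss exactly cancels a naive $Lip^1$ conclusion and forces the Zygmund refinement.
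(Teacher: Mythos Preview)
The paper does not prove this proposition: it is stated as a known result with the parenthetical reference ``see, e.g., \cite[Thm.\,2.32]{BBI-11}'' immediately preceding the statement, and no proof appears anywhere in the paper. There is therefore nothing in the present paper to compare your proposal against.

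Your outline is a reasonable sketch of the classical argument (Marchaud reduction for the hard direction, integral representation of iterated differences for the easy one), and is broadly in line with how such results are established in the literature. If you want to verify your approach against an actual proof, you should consult the cited reference \cite{BBI-11} rather than this paper.
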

The next result goes back to Luzin's classical theorem on approximation of measurable functions on $[0,1]$ by continuous functions in a metric given for $f,g\in [0,1]\rightarrow\RR$ by
\[
\rho(f,g):=|\{x\in [0,1]\, ;\, f(x)\ne g(x)\}|.
\]
\begin{Th}\label{teo2.2.5}
Let $f$ belong to the space $\dot V_p^k$ of smoothness $s\in (0,k]$ and $\varepsilon\in (0,1)$.
The following is true.

\noindent {\rm (a)} If $s<k$, then there is a function $f_\varepsilon\in\Lambda^{k,s}$ such that
\begin{equation}\label{eq2.2.10}
\rho(f,f_\varepsilon)<\varepsilon .
\end{equation}

\noindent {\rm (b)} If $s=k$, then there is a function $f_\varepsilon\in C^k$ such that \eqref{eq2.2.10} is fulfilled.
\end{Th}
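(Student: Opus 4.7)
The plan is to reduce both parts to a Whitney-type extension theorem by controlling $f$ on a large ``good set'' via a local maximal function. Introduce
\[
M f(x) := \sup_{Q \ni x,\, Q \subset Q^d} |Q|^{-s/d} E_k(f; Q), \qquad x \in Q^d.
\]
Since $sp/d = 1$ by definition of $s$, a Vitali-type covering argument applied to a family of cubes realizing $\{M f > \lambda\}$ yields the weak-type inequality
\[
|\{x \in Q^d : M f(x) > \lambda\}| \le C(d,k)\, \lambda^{-p} |f|_{V_p^k}^p;
\]
indeed, from $E_k(f;Q)>\lambda|Q|^{s/d}$ one gets $|Q|<\lambda^{-p}E_k(f;Q)^p$, and disjointness of the extracted subfamily together with Proposition \ref{prop1.7} controls the sum by $|f|_{V_p^k}^p$. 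Given $\varepsilon \in (0,1)$, choose $\lambda=\lambda(\varepsilon,f)$ so that $F := \{M f \le \lambda\}$ satisfies $|Q^d \setminus F| < \varepsilon$.

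Next I would construct Taylor-type polynomials $P_x \in \mathcal{P}_{k-1}^d$ at each $x \in F$. For $r > 0$, pick a near-best approximant $P_{x,r}$, so
\[
\|f - P_{x,r}\|_{\ell_\infty(Q_r(x))} \le 2 E_k(f; Q_r(x)) \le C(d) \lambda\, r^{s}.
\]
Telescoping along dyadic scales $r_j = 2^{-j} r_0$ and applying Markov's inequality to $P_{x, r_{j+1}} - P_{x, r_j}$ (both of degree $<k$, uniformly bounded on $Q_{r_j}(x)$ by $C\lambda r_j^{s}$) gives coefficient-wise control, whence $P_{x, r} \to P_x \in \mathcal{P}_{k-1}^d$ as $r \to 0^+$. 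Comparing the chains for two points $x, y \in F$ at scale $r \sim \|x - y\|$ produces the Whitney compatibility conditions
\[
|D^\alpha (P_x - P_y)(x)| \le C(\lambda, k, d) \|x - y\|^{s - |\alpha|}, \qquad |\alpha| < s,
\]
together with the required $k$-th-difference control via Theorem \ref{cor2.1.2}.

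Applying the classical Whitney extension theorem for $\Lambda^{k,s}$ to $\{P_x\}_{x \in F}$ produces $f_\varepsilon \in \Lambda^{k,s}(Q^d)$ with $f_\varepsilon(x) = P_x(x)$ on $F$. At any density point of $F$, the scale-$r^{s}$ decay of $\|f-P_{x,r}\|_{\ell_\infty(Q_r(x))}$ forces $f(x) = P_x(x)$, so $f = f_\varepsilon$ a.e.\ on $F$ and
\[
\rho(f, f_\varepsilon) \le |Q^d \setminus F| < \varepsilon,
\]
which proves (a). For (b), where $s = k$, the above only delivers $f_\varepsilon \in \Lambda^{k,k} = C^{k-1,1}$. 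To upgrade to $C^k$, I would restrict $F$ further to a compact subset $F'$ (of essentially the same measure) consisting of points where the $k$-th Peano differential of $f$ exists, which holds a.e.\ by Theorem \ref{te2.2.8}; promote each $P_x$ to the full Peano $k$-jet at $x\in F'$; and invoke the Whitney extension theorem in its $C^k$ version.

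The main obstacle will be verifying the Whitney compatibility conditions on $F$. One must telescope the approximants $P_{x,r}$ and $P_{y,r}$ through cubes that link $x$ and $y$ while remembering that $E_k(f;Q)\le \lambda|Q|^{s/d}$ is available only for cubes containing points of $F$; converting $\ell_\infty$-bounds on polynomial differences into derivative bounds loses a factor $r^{-|\alpha|}$ (Markov), so the exponent $sp/d=1$ is precisely what balances the geometric series at every step. The equivalence $E_k \approx {\rm osc}_k$ from Theorem \ref{cor2.1.2} together with the subadditivity of ${\rm var}_p^k$ from Proposition \ref{prop1.7} will be used throughout to keep these sums summable and to translate between the two presentations of $(k,p)$-variation.
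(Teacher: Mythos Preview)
Your approach to part (a) is close in spirit to the paper's: both find a large set where the local maximal function $\sup_{Q\ni x}|Q|^{-s/d}E_k(f;Q)$ is bounded, then extend. Two differences are worth noting. First, the paper does not take measurability of $x\mapsto E_k(f;x,r)$ (hence of $Mf$) for granted, since $f$ is only known to be of first Baire class; it instead constructs a measurable majorant $\mathcal{E}_k(f;Q):=\varliminf_n E_k(f_n;Q)$ from the $C^\infty$ approximants of Theorem~\ref{te2.1.4} and runs the Besicovich covering argument with $\mathcal{E}_k$. Your argument can be salvaged by working with outer measure in the weak-type estimate and then passing to a compact subset of $\{Mf\le\lambda\}$, but this should be said. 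Second, rather than building Whitney jets $\{P_x\}$ and invoking the classical extension theorem, the paper uses Egorov and the Lebesgue density theorem to arrange that the good set $\Sigma_\varepsilon$ is Ahlfors $d$-regular, and then applies the extension theorem of \cite{Br1-70}, which needs only the uniform bound $E_k(f|_{\Sigma_\varepsilon};Q_r(x))\le c_\varepsilon r^s$ on such a set and returns a $\Lambda^{k,s}$ extension directly. Your jet-based route is also workable but longer.

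Part (b) has a genuine gap: you invoke Theorem~\ref{te2.2.8} to obtain Peano $k$-jets a.e., but in the paper's logical order Theorem~\ref{te2.2.8}(c) (the case $s=k$, which is exactly what you need) is proved \emph{using} Theorem~\ref{teo2.2.5}, so the appeal is circular. Even setting the circularity aside, having a Peano $k$-jet at each point of $F'$ does not by itself supply the \emph{uniform} little-$o$ Whitney compatibility $|D^\alpha(P_x-P_y)(x)|=o(\|x-y\|^{k-|\alpha|})$ required by the $C^k$ Whitney extension theorem; an additional Egorov-type pass would be needed, and you do not provide one. The paper proceeds differently: once (a) delivers $f_\varepsilon\in\Lambda^{k,k}=C^{k-1,1}\cong\dot W_\infty^k\subset\dot W_p^k$, it applies the Calder\'on--Zygmund Lusin theorem \cite[Thm.~4.13]{CZ-61} for Sobolev functions to replace $f_\varepsilon$ by a $C^k$ function outside a further set of small measure.
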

Using some facts established in the proof of this theorem we will then study pointwise behaviour of $\dot V_p^k$ functions. Model cases for this are, respectively, Lebesgue's differentiation theorem for $BV$ functions and the Marcinkiewicz theorem \cite{Ma-64} asserting that $f\in BV_p$ satisfies the pointwise Lipschitz condition
\[
\varlimsup_{h\rightarrow 0} h^{-\frac 1 p}|f(x+h)-f(x)|<\infty
\]
for almost each $x\in [0,1]$.

To formulate the corresponding multivariate results we use the following notions.
\begin{D}\label{def2.2.6}
A function $f\in\ell_\infty^{\rm loc}(\RR^d)$ belongs to a one-pointed Lipschitz class $\Lambda^{k,s}(x_0)$ if
\begin{equation}\label{e2.2.11}
\varlimsup_{r\rightarrow 0} r^{-s}{\rm osc}_k(f; Q_r(x_0))<\infty;
\end{equation}
here $k\in\N$, $0<s\le k$ and $x_0\in\mathring Q^d$.
\end{D}
Similarly to the definition of functions from $\Lambda^{k,s}$ we use the notations
\[
Lip^s (x_0):=\Lambda^{1,s}(x_0),\quad Z(x_0):=\Lambda^{2,1}(x_0).
\]

Further, we define so-called {\em Taylor spaces}  introduced by Peano \cite{Pe-1891} for $d=1$ and studied (and named) by Calder\'{o}n and Zygmund \cite{CZ-61} for functions
from $L_p^{\rm loc}(\RR^d)$, $1\le p<\infty$. 
\begin{D}\label{def2.2.7}
{\rm (a)} A function $f\in\ell_\infty^{\rm loc}(\RR^d)$ belongs to the space $T^s(x_0)$, $s>0$, if there is a (Taylor) polynomial $T_{x_0}(f)$ 
\begin{equation}\label{e2.2.12}
T_{x_0}(f;x):=\sum_{|\alpha|<s}\frac{f_\alpha (x_0)}{\alpha !} (x-x_0)^\alpha
\end{equation}
 such that
\begin{equation}\label{e2.2.13}
|f-T_{x_0}(f;x)|=O(\|x-x_0\|^s)\quad {\rm as}\quad x\rightarrow x_0.
\end{equation}

\noindent {\rm (b)} If, moreover, $s$ is integer, then $f\in\ell_\infty^{\rm loc}(\RR^d)$ belongs to the space $t^s(x_0)$ whenever there exists a polynomial $T_{x_0}(f)$ of degree $s$ such that
\begin{equation}\label{e2.2.14}
|f-T_{x_0}(f;x)|=o(\|x-x_0\|^s)\quad {\rm as}\quad x\rightarrow x_0.
\end{equation}
\end{D}
\begin{R}\label{rem2.2.8}
{\rm (a) It is readily seen that the polynomials $T_{x_0}(f)$ are unique.

\noindent (b) If $f\in t^k(x_0)$, then the $k$-th homogeneous part of $T_{x_0}(f)$
\[
D_{x_0}^k(f;x):=\sum_{|\alpha|=k}\frac{f_\alpha(x_0)}{\alpha !}(x-x_0)^\alpha
\]
is called the {\em Peano $k$-th differential at $x_0$}. It is the matter of definition to check that the $k$-th differential in the classical sense
\[
\partial_{x_0}^k f(x):=\sum_{|\alpha|=k}\frac{D^\alpha f(x_0)}{\alpha !} (x-x_0)^\alpha
\]
coincides with that of Peano for $k=1$. However, this is, in general, incorrect for $k\ge 2$. In particular, Denjoy \cite{De-35} proved that for every nowhere dense closed set $S\subset [0,1]$ and $k\ge 2$ there is a function $f$ whose $k$-th derivative in the classical sense exists only at points of the completion $S^c$ of $S$ while the Peano $k$-th derivative exists at all points of $[0,1]$. The converse to this assertion that also proved in \cite{De-35} leads in the multivariate case to the following.
\begin{Con}
If the Peano $k$-th differential $D_x^k f$, $k\ge 2$, exists at every point of $x\in Q^d$, then $k$-th differential $\partial_x^k f$ in the classical sense exists a.e.
\end{Con}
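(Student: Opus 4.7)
The plan is to proceed by induction on $k$, combining Denjoy's univariate theorem (which is cited in the statement) with a slicing and Fubini argument. The base case $k=1$ is trivial: Peano first differentiability at $x$ means $f(x+h)=f(x)+\sum_i f_{e_i}(x)h_i+o(\|h\|)$, and inserting $h=te_j$ shows $f_{e_j}(x)=\partial_j f(x)$ classically at every $x$. For the inductive step, observe that $t^k(x)\subset t^{k-1}(x)$ (truncate the Peano polynomial and absorb the remainder into $o(\|y-x\|^{k-1})$), so the hypothesis descends, and the inductive hypothesis produces classical derivatives $D^\alpha f$ a.e.\ for $|\alpha|\le k-1$. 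The remaining task is to upgrade these to a.e.\ existence for $|\alpha|=k$.

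For the \emph{pure} iterated derivatives $\partial_i^k f$, I would restrict to coordinate lines: for fixed $x_0$ and direction $e_i$, the function $g(t):=f(x_0+te_i)$ inherits univariate Peano $k$-differentiability at every $t$ with $x_0+te_i\in Q^d$, simply by restricting the multivariate polynomial $T_x(f)$ to the line. Denjoy's univariate theorem then yields classical $k$-th differentiability of $g$ at a.e.\ $t$, and a Fubini argument over the $(d-1)$-dimensional slice transverse to $e_i$ gives a.e.\ existence of $\partial_i^k f$ on $Q^d$. The same argument works for $\partial_v^k f$ along any finitely many fixed directions.

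The main obstacle is the \emph{mixed} partials $D^\alpha f$ with $|\alpha|=k$ when $\alpha$ is not a multiple of a single $e_i$, since no single-line slicing recovers them. The natural route is to ``differentiate the Peano polynomial'': writing
\[
T_x(f;y)=\sum_{|\beta|\le k}\frac{f_\beta(x)}{\beta!}(y-x)^\beta,
\]
one would like $T_x(D^\gamma f;y)$ to coincide with the formal $\gamma$-th derivative of $T_x(f;y)$ for every $|\gamma|=k-1$ at a.e.\ $x$, so that each $D^\gamma f$ (whose classical existence a.e.\ is already secured by induction) lies in the univariate Peano class $t^1$ along a.e.\ line through a.e.\ point; a \emph{first}-order application of Denjoy would then complete the argument. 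Justifying this ``differentiation of Peano--Taylor polynomials'' is the delicate step, since pointwise Peano-differentiability at a single $x$ conveys essentially no regularity about nearby base points. To make progress one must exploit the \emph{everywhere}-hypothesis, plausibly through a Lusin--Menchoff-type selection of a large closed set on which the Peano coefficient functions $f_\beta(\cdot)$ are continuous, combined with a Whitney extension on that set matching the jets $\{f_\beta\}_{|\beta|\le k}$. This is precisely the point where the classical Denjoy proof uses integration along the interval, a tool that does not transfer directly to $d\ge 2$, which is doubtless why the statement is offered here only as a conjecture.
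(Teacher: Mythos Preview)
The statement you are addressing is presented in the paper as a \emph{Conjecture} (inside Remark~\ref{rem2.2.8}), not as a theorem; the paper offers no proof whatsoever. There is therefore nothing to compare your proposal against.

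Your proposal is not a proof either, and you say as much in the final paragraph. What you have written is a reasonable reconnaissance of the problem: the reduction to pure directional derivatives via slicing and Denjoy's univariate theorem is sound, and you correctly isolate the genuine obstruction, namely the mixed partials $D^\alpha f$ with $|\alpha|=k$ and $\alpha$ not a multiple of a single $e_i$. Your suggestion of passing through a Lusin--Menchoff selection plus Whitney extension on a large closed set is a plausible line of attack, but as you note, the step ``differentiate the Peano polynomial with respect to the base point'' is exactly where the argument stalls, and you provide no mechanism for it. In short: you have correctly diagnosed why this is a conjecture rather than a theorem, but you have not advanced beyond that diagnosis.
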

\noindent (c) Unlike the classical case, the Peano definition can be extended to a much more complicated setting, see, in particular, \cite{Br2-15} where this is done for multivariate functions on weak Markov sets including Ahlfors $a$-regular sets in $\RR^d$ with $d-1<a\le d$ and fractals of such complicated structure as the von Koch curves, the Cantor sets etc. Such kind of generalization has useful applications in harmonic analysis and singular integral operators theory, see \cite{MZ-36}, \cite{CZ-61} and the following up papers.
}
\end{R}
\begin{Th}\label{te2.2.8}
Let  a function $f$ belong to the space $\dot V_p^k$ of smoothness $s:=\frac d p \in (0,k]$. Then the following is true for almost each $x_0\in Q^d$.\smallskip

\noindent {\rm (a)} 
\[
f\in\Lambda^{k,s}(x_0);\]

\noindent {\rm (b)} If
$s=\ell+\lambda<k$,
where $\ell\in\Z_+$ and $0<\lambda\le 1$, then
\[
f\in T^s(x_0);
\] 

\noindent {\rm (c)}
If  $s=k$, then 
\[
f\in t^k(x_0).
\]
\end{Th}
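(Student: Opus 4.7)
The plan is to derive (a) directly from the packing bound defining $|f|_{V_p^k}$, and then to bootstrap (b) and the big-$O$ skeleton of (c) from (a) via a Whitney--Brudnyi chain of best-polynomial approximations on dyadic cubes about $x_0$, invoking the Luzin approximation of Theorem \ref{teo2.2.5} to cure the two places where the chain loses either a logarithmic factor (integer $s$ in (b)) or an $\varepsilon$ (the $o$-refinement in (c)).

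For part (a), I would set $E_\lambda := \{x \in Q^d : \limsup_{r \to 0^+} r^{-s}E_k(f;Q_r(x)) > \lambda\}$. Each $x \in E_\lambda$ admits arbitrarily small cubes $Q_r(x)$ with $E_k(f;Q_r(x)) > \lambda r^s$; a Vitali selection extracts a disjoint (hence packing) subfamily $\{Q_{r_j}(x_j)\}$ whose suitable dilates cover $E_\lambda$ up to a null set. Using the defining relation $sp = d$,
\[
|E_\lambda| \le C(d)\sum_j r_j^d \le C(d)\lambda^{-p}\sum_j E_k(f;Q_{r_j}(x_j))^p \le C(d)\lambda^{-p}|f|_{V_p^k}^p,
\]
so $\bigcap_\lambda E_\lambda$ is null, i.e., $\limsup_{r\to 0^+} r^{-s}E_k(f;Q_r(x_0)) < \infty$ a.e. The generalized Whitney equivalence $E_k(f;Q) \approx \mathrm{osc}_k(f;Q)$ of Theorem \ref{cor2.1.2} then upgrades this to $f \in \Lambda^{k,s}(x_0)$ a.e., proving (a).

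For (b) and (c), at a point $x_0$ satisfying (a) I would set $r_n := 2^{-n}$ and pick $P_n \in \mathcal P_{k-1}^d$ with $\|f-P_n;Q_{r_n}(x_0)\|_\infty = E_k(f;Q_{r_n}(x_0)) \le M r_n^s$. Expanding $P_n(x) = \sum_{|\alpha|<k}(a_n^\alpha/\alpha!)(x-x_0)^\alpha$ at $x_0$ and applying a Markov-type coefficient inequality to $P_n-P_{n+1}$ on $Q_{r_{n+1}}(x_0)$ yields the chain estimate $|a_n^\alpha - a_{n+1}^\alpha| \le CMr_n^{s-|\alpha|}$. For $|\alpha| < s$ this is geometrically summable and $a_n^\alpha \to a^\alpha$; for $|\alpha| > s$ the partial sums produce $|a_n^\alpha(x-x_0)^\alpha| = O(r_n^s)$ on $Q_{r_n}(x_0)$. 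Setting $T_{x_0}(f;x) := \sum_{|\alpha|<s}(a^\alpha/\alpha!)(x-x_0)^\alpha$ and combining with the approximation bound gives $|f(x) - T_{x_0}(f;x)| = O(\|x-x_0\|^s)$ on $Q_{r_n}(x_0)$, \emph{provided} the critical index $|\alpha| = s$ does not appear among the degrees of $P_n$. This is automatic for noninteger $s$ in (b) and for $s = k$ in (c), where $\deg P_n = k-1 < s$; the degree-$k$ term required by $t^k(x_0)$ is supplied by rerunning the chain with polynomials of degree $k$ and using $E_{k+1}(f;Q) \le E_k(f;Q)$.

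The main obstacle is twofold: the integer subcase of (b) with $|\alpha| = s$, where the chain argument only produces $|a_n^\alpha| \lesssim n$ and hence an $O(\log(1/r_n)\cdot r_n^s)$ estimate; and the upgrade of big $O$ to little $o$ needed for $t^k(x_0)$ in (c). Both are handled by Theorem \ref{teo2.2.5}: choose $f_m \in \Lambda^{k,s}$ (respectively $C^k$) with $|\{f\neq f_m\}| < 1/m$, set $A := \bigcup_m A_m$ with $A_m := \{f = f_m\}$ (of full measure), and restrict to a Lebesgue density-$1$ point $x_0$ of some $A_m$ that is also good for (a). Classical Rademacher--Zygmund--Calder\'on--Zygmund pointwise differentiability places $f_m \in T^s(x_0)$ (respectively $f_m \in t^k(x_0)$) with a Taylor polynomial $T_{x_0}(f_m)$ that must coincide with the chain polynomial on $A_m$. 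The single hardest step is propagating the $T^s$- (respectively $t^k$-) estimate from $A_m$ to \emph{every} $x$ in a small neighborhood of $x_0$; my plan is to apply the identity
\[
f(x) = \Delta_h^k f(x_0) - \sum_{j=0}^{k-1}(-1)^{k-j}\binom{k}{j} f(x_0+jh), \qquad h = (x-x_0)/k,
\]
after averaging $h$ over a small cube, using the density of $A_m$ at $x_0$ to force almost every perturbed choice of sample points $x_0+jh$ into $A_m$; the $\Delta_h^k$-term is then controlled by (a), and the $f_m$-sum by the regularity of $f_m$. Countability of the discontinuity set of $f$ from Theorem \ref{te2.1.3} ensures this averaging is not obstructed at a.e.\ $x_0$.
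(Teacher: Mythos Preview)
Your parts (a) and (b) for noninteger $s$ are correct and follow essentially the paper's route: the covering argument (the paper uses Besicovich and a measurable majorant $\mathcal E_k(f)$ built from the approximating sequence of Theorem~\ref{te2.1.4}, but your direct Vitali argument on outer measure does the same job) and the dyadic chain (which the paper black-boxes as Theorem~3 of \cite{Br1-94}).

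Two minor points first. ``Rerunning the chain with polynomials of degree $k$'' does \emph{not} supply the degree-$k$ term: with $P_n\in\mathcal P_k^d$ and $\|f-P_n;Q_{r_n}(x_0)\|_\infty=E_{k+1}(f;Q_{r_n})\le Mr_n^k$, the coefficient increment at the critical index $|\alpha|=k$ is $|a_n^\alpha-a_{n+1}^\alpha|\le CM$, exactly the obstruction you flagged for integer $s$ in (b); the degree-$k$ part has to come from $f_m$. Also, you cannot ``average $h$ over a small cube'' while holding $x$ fixed, since $h=(x-x_0)/k$ is determined by $x$. What does work---and is presumably what you intend---is to perturb the \emph{base point}: replace $x_0$ by a nearby $y$, take $h=(x-y)/k$, and use density of $A_m$ at $x_0$ to find $y\in Q_{|x-x_0|}(x_0)$ with $y+jh\in A_m$ for $0\le j<k$. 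With this reading your argument for (b) with integer $s$ goes through: $\Delta_h^k T_{x_0}(f_m)=0$ (degree $<s<k$), $|\Delta_h^k f(y)|=O(|x-x_0|^s)$ by (a), and $|f_m(y+jh)-T_{x_0}(f_m;y+jh)|=O(|x-x_0|^s)$, so $f\in T^s(x_0)$.

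The genuine gap is in (c). The same computation yields only
\[
|f(x)-T_{x_0}(f_m;x)|\le |\Delta_h^k f(y)|+|\Delta_h^k T_{x_0}(f_m)(y)|+\sum_{j<k}\binom{k}{j}\bigl|f_m(y+jh)-T_{x_0}(f_m;y+jh)\bigr|,
\]
and while the sum is $o(|x-x_0|^k)$ (since $f_m\in C^k$), the first two terms are each only $O(|x-x_0|^k)$: the first by part (a), the second because $T_{x_0}(f_m)$ now has degree $k$ and $\Delta_h^k$ does not annihilate it. Equivalently, with all intermediate nodes in $A_m$ your identity collapses to the tautology $(f-f_m)(x)=\Delta_h^k(f-f_m)(y)$, and nothing in your scheme forces the right-hand side to be $o$. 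The paper closes this gap differently: the chain gives $f\in T^k(x_0)$ a.e.\ (no critical index, since $\deg P_n=k-1<k$); a Remez inequality on the regular set $S_\varepsilon$ matches $T_{x_0}(f)$ with the degree-$(k-1)$ part of $T_{x_0}(f_\varepsilon)$, so that $F:=f-f_\varepsilon$ satisfies $\max_{Q_r(x_0)}|F|=O(r^k)$ \emph{and} $F|_{S_\varepsilon}=0$; then Theorem~10 of \cite{CZ-61} is invoked---an a.e.\ upgrade theorem asserting that under these hypotheses $\max_{Q_r(x_0)}|F|=o(r^k)$ for a.e.\ $x_0\in S_\varepsilon$. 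That external Calder\'on--Zygmund result carries the entire $O\!\to\!o$ step, and your difference-identity argument has no substitute for it.
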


Finally, we present results connecting $\dot V_p^k$ spaces with Lipschitz-Besov spaces over $L_p$, $1\le p<\infty$. The latter family of 
spaces is denoted by $\{\Lambda_{pq}^{ks}\}$, where $k\in\N$, $0<s\le k$ and $1\le p,q<\infty$. For its definition we need the following:
\begin{D}\label{def2.2.10}
$(k,p)$-modulus of continuity is a function $L_p\times\RR_+\rightarrow\RR_+$ given for $f\in L_p$ and $0<t\le 1$ by
\begin{equation}\label{eq2.2.15}
\omega_{kp}(f;t):=\sup_{\|h\|\le t}\|\Delta_h^k f\|_{L_p(Q^d_{kh})},
\end{equation}
where $\|h\|:=\max_{1\le i\le d}|h_i|$ and
\begin{equation}\label{e2.2.16}
Q^d_{kh}:=\{x\in Q^d\, :\, x+kh\in Q^d\}.
\end{equation}
is the domain of the function $x\mapsto (\Delta_h^k f)(x)$.
\end{D}
Now the space $\Lambda_{pq}^{ks}$ is defined for $q<\infty$ by the seminorm
\begin{equation}\label{e2.2.17}
|f|_{\Lambda_{pq}^{ks}}:=\left(\int_0^1\left(\frac{\omega_{kp}(f;t)}{t^s}\right)^q\frac{dt}{t}\right)^{\frac 1 q}
\end{equation}
and for $q=\infty$ by
\begin{equation}\label{eq2.2.18}
|f|_{\Lambda_{p\infty}^{ks}}:=\sup_{t>0}\frac{\omega_{kp}(f;t)}{t^s}.
\end{equation}
For the special case $q=\infty$, $s=k$ we also use the notation
\begin{equation}\label{eq2.2.19}
Lip_p^k:=\Lambda_{p\infty}^{kk}.
\end{equation}
\begin{R}\label{rem2.2.11}
{\rm (1) We also use the introduced notions for functions on a subcube $Q\subset Q^d$. In this case, $f\in L_p(Q)$ and the corresponding notions are denoted by
$\omega_{kp}(f;t;Q)$, $\Lambda_{pq}^{ks}(Q)$ and $Lip_p^k(Q)$.\smallskip

\noindent (2) The space $\Lambda_{pq}^{ks}$ with $k-1\le s<k$ coincides with the homogeneous Besov space $\dot B_{pq}^s$ while the space $\Lambda_{pq}^{ks}/\mathcal P_{k-1}^d$ up to equivalence of norms coincides with Besov space $B_{pq}^s$, see, e.g., \cite[Sec.II.2.3]{Tr-92}.\smallskip

\noindent (3) In the case of one variable, the Hardy-Littlewood theorem \cite{HL-28} asserts (in our notations) that
\[
Lip^{\frac 1 p}\subset N\dot V_p\hookrightarrow Lip_p^{\frac1 p}
\]
for $1<p<\infty$ and
\[
Lip^1\subset N\dot V\cong Lip_1^1.
\]

In more details, let $L$ be a linear operator sending a Lebesgue integrable function to its class of equivalence in $L_1$. Then the cited result asserts that the trace $L|_{N\dot V_p}$ is a continuous injection into $Lip_{p}^{1/p}$ if $1<p<\infty$ and a continuous bijection {\em onto} $Lip_1^1$ if $p=1$.
}
\end{R}
\begin{Th}\label{teo2.2.11}
{\rm (a)} If $s:=\frac d p\in (0,k]$, then the operator $L|_{N\dot V_p^k}$ gives the map
\begin{equation}\label{e2.2.19}
N\dot V_p^k\hookrightarrow\Lambda_{p\infty}^{ks}.
\end{equation}
(Let us recall that if $s=k$ the right-hand side is denoted by $Lip_p^k$.)

Moreover, if $0<s<k$, then there is a linear continuous operator right inverse to
$L|_{N\dot V_p^k}$ that gives the map
 \begin{equation}\label{e2.2.19a}
\Lambda_{p1}^{ks}\hookrightarrow N\dot V_p^k.
\end{equation}

\noindent {\rm (b)} If $s=k=d$, hence, $p=1$, then $L|_{N\dot V_1^d}$ gives the linear isomorphism
\begin{equation}\label{e2.2.21}
N\dot V_1^d\cong Lip_1^d.
\end{equation}
\end{Th}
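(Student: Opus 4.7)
The plan is to prove the three embeddings separately, leveraging Theorem \ref{cor2.1.2} (the oscillation form of $|\cdot|_{V_p^k}$) together with classical Sobolev--Besov embeddings.

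For the forward embedding $N\dot V_p^k\hookrightarrow\Lambda_{p\infty}^{ks}$ in part (a), I would estimate $\omega_{kp}(f;t)$ by fixing $h$ with $\|h\|\le t$ and tiling $Q^d$ by a packing of cubes of sidelength $\sim(k+1)t$: on each cube $Q$, $|\Delta_h^k f(x)|\le\mathrm{osc}_k(f;Q^*)$, where $Q^*\supset Q\cup(Q+kh)$ is a slight enlargement of $Q$. Integrating in $x$ and summing over the packing gives
\[
\|\Delta_h^k f\|_{L_p(Q^d_{kh})}^p\le Ct^d\sum_{Q^*}\mathrm{osc}_k(f;Q^*)^p\le Ct^d|f|_{V_p^k}^p,
\]
so $\omega_{kp}(f;t)\le Ct^{d/p}|f|_{V_p^k}=Ct^s|f|_{V_p^k}$, yielding $Lf\in\Lambda_{p\infty}^{ks}$ with the desired norm control. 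Well-definedness and injectivity of $L|_{N\dot V_p^k}$ follow from the continuity of $f\in N\dot V_p^k$ for $d\ge2$ (or the one-sided normalization for $d=1$) established in Corollary \ref{cor2.1.3}.

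For the right inverse when $s<k$, I would exploit the Besov embedding $\Lambda_{p1}^{ks}\hookrightarrow C$, valid precisely in the subcritical range, to select the continuous representative $Rg$ of each class $g\in\Lambda_{p1}^{ks}$. To bound $|Rg|_{V_p^k}$, apply the Brudnyi-type pointwise inequality
\[
E_k(Rg;Q)\le C\int_0^{r_Q}\omega_{kp}(Rg;t;Q)\,t^{-s-1}\,dt,
\]
combine with the $\ell_p$ version of Minkowski's inequality, and use the disjointness estimate $\sum_{Q\in\pi}\|\Delta_h^k Rg\|_{L_p(Q_{kh})}^p\le\omega_{kp}(Rg;t)^p$ (which holds for each fixed $h$ by disjointness of the $Q_{kh}$) to conclude
\[
\Big(\sum_{Q\in\pi}E_k(Rg;Q)^p\Big)^{1/p}\le C\int_0^1\omega_{kp}(Rg;t)\,t^{-s-1}\,dt=C|g|_{\Lambda_{p1}^{ks}}.
\]

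For part (b), the forward inclusion $L(N\dot V_1^d)\subset Lip_1^d$ is immediate from (a). The harder direction is surjectivity: given $g\in Lip_1^d$, produce a continuous $f\in N\dot V_1^d$ with $Lf=g$. Since the critical embedding $\Lambda_{1\infty}^{dd}\hookrightarrow C$ fails, the (a) recipe is unavailable. Instead, I would mollify to obtain smooth $g_n\to g$ in $L_1$ with $|g_n|_{Lip_1^d}$ uniformly bounded, derive a uniform bound $|g_n|_{V_1^d}\le C|g_n|_{Lip_1^d}$ by a tailored Whitney/Brudnyi estimate at the borderline exponent, apply Corollary \ref{helly} to extract a subsequence $\{g_n+p_n\}$ converging pointwise to some $f\in\dot V_1^d$ with $|f|_{V_1^d}\le C|g|_{Lip_1^d}$, and identify $Lf=g$ by dominated convergence. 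Continuity of $f$ (hence $f\in N\dot V_1^d$) would follow from the pointwise structure results of Theorems \ref{te2.1.3} and \ref{te2.2.8}(c).

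The principal obstacle is part (b): the critical case $s=k=d$ lies outside the reach of the Sobolev--Besov embedding used in (a), so producing a continuous representative of a given $Lip_1^d$-class inside $N\dot V_1^d$ requires a compactness/limit argument bootstrapping on the earlier structural theorems. A secondary difficulty is obtaining the uniform $V_1^d$-bound for mollified approximants at the borderline exponent, which demands a sharper form of the pointwise Brudnyi inequality than the one that drives the subcritical proof.
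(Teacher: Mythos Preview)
Your approach to part (a) matches the paper's closely: the forward embedding via tiling and $\mathrm{osc}_k$ is essentially the paper's argument (which quotes a local-approximation characterization of $\omega_{kp}$ from \cite{Br1-71}), and the reverse embedding via the pointwise Brudnyi inequality, Minkowski, and subadditivity of local moduli is exactly the paper's Lemmas \ref{le10.1} and \ref{le10.2}. One small point: your ``disjointness estimate'' $\sum_{Q}\|\Delta_h^k Rg\|_{L_p(Q_{kh})}^p\le\omega_{kp}(Rg;t)^p$ holds for each fixed $h$, but since $\omega_{kp}(\cdot;t;Q)$ is a supremum over $h$ you cannot sum these suprema directly; the paper handles this via the equivalence $\omega_{kp}(f;t;Q)\approx\sup_{\pi}\bigl(\sum_R E_{kp}(f;R)^p\bigr)^{1/p}$ and concatenation of near-optimal subpackings.

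Part (b) is where the routes diverge. The paper does not use mollification and Helly. Instead it proves the Sobolev-type embedding $Lip_1^d\subset\dot V_{1\infty}^d$ (Proposition \ref{pro10.4}), via the identification $Lip_1^d=BV^d$, the Gagliardo embedding $W_1^d\hookrightarrow C$, and a $K$-functional/relative-completion argument giving $E_{d\infty}(f;Q)\le c(d)|f|_{BV^d(Q)}$. It then observes that the composite $N\dot V_1^d\hookrightarrow Lip_1^d\subset\dot V_{1\infty}^d$ coincides with $L|_{N\dot V_1^d}$, which by Theorem \ref{te2.6}(c) is already known to be an isometric isomorphism onto $\dot V_{1\infty}^d$; hence the first arrow must be surjective. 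This sandwich argument is shorter and avoids any limit extraction. Your route is also workable, and its ``tailored borderline estimate'' is precisely the paper's inequality $E_{d\infty}(f;Q)\le c|f|_{BV^d(Q)}$ restricted to smooth $f$, so the technical cores coincide. One correction, though: continuity of the Helly limit $f$ does not follow from Theorems \ref{te2.1.3} or \ref{te2.2.8}(c); the limit lies in $\dot V_1^d$ but may still have countably many discontinuities. Instead apply the projection $P$ of Corollary \ref{cor2.1.3}: then $Pf\in N\dot V_1^d$, and since $f-Pf\in\ell_1$ has countable support, $L(Pf)=Lf=g$.
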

Let us note that embedding \eqref{e2.2.19} and isomorphism \eqref{e2.2.21} give the above mentioned Hardy-Littlewood results
while embedding \eqref{e2.2.19a} is apparently new even for this case.
\begin{R}\label{rem2.18}
{\rm The left-side Hardy-Littlewood embeddings also can be extended to the multivariate case, we have
\begin{equation}\label{eq-n2.27}
\Lambda^{k,s}\subset\dot V_p^k,\quad s:=\frac d p \in (0,k].
\end{equation}

In fact, by definition, see \eqref{e2.2.4}, \eqref{e2.2.5},
\[
{\rm osc}_k(f;Q)\le |Q|^{\frac s d}|f|_{\Lambda^{k,s}},\quad Q\subset Q^d,
\]
while by Theorem \ref{cor2.1.2} $|f|_{V_p^k}$ is majorated by
\[
\sup_{\pi}\left\{\sum_{Q\in\pi}{\rm osc}_k(f;Q)^p\right\}^{\frac 1 p}\le\sup_{\pi}\left\{\sum_{Q\in\pi}|Q|^{\frac{sp}{d}}\right\}^{\frac 1 p } |f|_{\Lambda^{k,s}}=|f|_{\Lambda^{k,s}}.
\]
}
\end{R}
As a consequence of Theorem \ref{teo2.2.11}\,(c) we present the multivariate generalization of the classical Lebesgue-Vitali theorem on absolutely continuous functions  formulated as property (d) of the space $\dot V:=\dot V_1^d$ in Section~1.1. Let us recall that 
$AC$ denotes the space of functions $f\in \dot V$ satisfying the condition
\begin{equation}\label{eq-n9.30}
\lim_{\varepsilon\rightarrow 0}\sup_{|\pi|\le\varepsilon}\sum_{Q\in\pi}{\rm osc}_d(f;Q)=0;
\end{equation}
here $|\pi|:=\sum_{Q\in\pi}|Q|$ and ${\rm osc}_d$ can be replaced by $E_d$, see Theorems \ref{cor2.1.2},   \ref{teor2.1.1}.
\begin{C}\label{cor9.7}
The map sending a function from $\dot V$ to its equivalence class in $L_1$ gives rise to the isomorphism
\begin{equation}\label{eq-n9.31}
AC\cong\dot W_1^d.
\end{equation}

Moreover, there are a continuous linear integral operator $J: L_1(Q^d;\RR^N)\rightarrow C$ and a linear projection $P: L_1\rightarrow\mathcal P_{d-1}^d$ such that for every $f\in AC$
\begin{equation}\label{eq-n9.32}
f=Pf+J(\nabla^d f);
\end{equation}
here $\nabla^d f:=(D^\alpha f)_{|\alpha|=d}$ and $N:={\rm card}\{\alpha\in\Z^d_+\, :\, |\alpha|=d\}$.
\end{C}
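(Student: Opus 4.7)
The strategy is to route the corollary through Theorem \ref{teo2.2.11}(b), which already identifies $N\dot V_1^d$ isometrically with $Lip_1^d$ via the $L_1$-representative map $L$. First, I would check that $AC\subset N\dot V_1^d$: given $f\in AC$ and any $x_0\in Q^d$, taking $\pi=\{Q_r(x_0)\cap Q^d\}$ one sees $E_d(f;Q_r(x_0))\to 0$, hence by Theorem \ref{cor2.1.2} also ${\rm osc}_d(f;Q_r(x_0))\to 0$; combined with the existence of a pointwise limit at each point guaranteed by Theorem \ref{te2.1.3}(b), this yields continuity of $f$ at $x_0$, so $f\in N\dot V_1^d$. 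Consequently $L|_{AC}$ is an injective bounded linear map into $Lip_1^d$, and the whole task reduces to identifying its image with $\dot W_1^d\hookrightarrow Lip_1^d$.

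Next I would prove $L(AC)=\dot W_1^d$. The isomorphism of Theorem \ref{teo2.2.11}(b) lets one attach to each $f\in N\dot V_1^d$ a family of signed Borel measures (the distributional derivatives $D^\alpha f$, $|\alpha|=d$, which are finite by the $Lip_1^d$ characterization). The $(d,1)$-variation of $f$ over a packing $\pi$ is comparable, via the equivalence $E_d\approx{\rm osc}_d$ and standard control of $k$-th differences by iterated integrals of the top-order derivative measures, to $\sum_{Q\in\pi}|D^\alpha f|(Q)$. Thus the vanishing-variation condition \eqref{eq-n9.30} is equivalent, by a Vitali-covering and Radon--Nikodym argument, to these measures being absolutely continuous with respect to Lebesgue measure, i.e.\ to $\nabla^d f\in L_1(Q^d;\RR^N)$; the converse direction uses equi-integrability of $L_1$ functions on sets of small measure to recover \eqref{eq-n9.30}. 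This establishes the isomorphism $AC\cong\dot W_1^d$.

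For the explicit representation, let $P\colon L_1\to\mathcal P_{d-1}^d$ be the continuous projection obtained by $L_2$-orthogonal projection onto the finite-dimensional subspace $\mathcal P_{d-1}^d$ (continuity in $L_1$ is automatic, since the projection is given by integration against fixed bounded polynomials). Since $Q^d$ is connected, $\nabla^d$ has kernel exactly $\mathcal P_{d-1}^d$, so on the quotient $\dot W_1^d/\mathcal P_{d-1}^d$ the assignment $f-Pf\mapsto \nabla^d f$ is a linear bijection onto its range in $L_1(Q^d;\RR^N)$. To realize the inverse as a continuous operator $J\colon L_1(Q^d;\RR^N)\to C$ defined on \emph{all} of $L_1$, I would use a Sobolev averaged-Taylor representation: fixing $\varphi\in C_c^\infty(\mathring Q^d)$ with $\int\varphi=1$ and integrating by parts in the classical Taylor formula yields, for $f\in\dot W_1^d$,
\[
f(x)-Pf(x)=\sum_{|\alpha|=d}\int_{Q^d} K_\alpha(x,y)\,D^\alpha f(y)\,dy,
\]
with bounded kernels $K_\alpha$; one then \emph{defines} $J$ by the right-hand side on arbitrary $L_1$ vector fields. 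Continuity $L_1\to C$ follows from boundedness of the $K_\alpha$, and agreement with $f-Pf$ on $\nabla^d(\dot W_1^d)$ follows from the representation formula.

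The main obstacle is the middle paragraph: equating the vanishing-variation condition with integrability (rather than mere finite-measure status) of the $d$-th derivatives. The delicate point is showing that the pairing of $E_d(f;Q)$ (a purely real-variable quantity built from the $\ell_\infty$ oscillation) with the \emph{total} variation of the distributional derivative on $Q$ is two-sided up to constants, because only then does the packing-based condition \eqref{eq-n9.30} decouple into the Vitali--Radon--Nikodym criterion for absolute continuity.
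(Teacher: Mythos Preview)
Your proposal follows essentially the same route as the paper: embed $AC\subset N\dot V_1^d$ to get continuity and injectivity of $L$, use the identification $N\dot V_1^d\cong Lip_1^d=BV^d$ from Theorem~\ref{teo2.2.11}(b) to attach finite top-order derivative measures, then show the $AC$ condition is equivalent to absolute continuity of those measures via Radon--Nikodym, and finally invoke an integral representation (the paper simply cites Maz'ya for this last step, while you sketch the averaged Taylor formula; your kernel-boundedness claim is correct precisely because the Sobolev order equals the dimension).

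Two small points are worth tightening. First, your continuity argument invokes Theorem~\ref{te2.1.3}(b) plus ${\rm osc}_d\to 0$, but ${\rm osc}_d$ is a $d$-th-order oscillation and does not by itself force $f(x_0)$ to equal the radial limit; the clean route, which the paper takes, is the chain $AC\subset(\dot V_1^d)^0\subset C$ established in Section~5 (or directly via Proposition~\ref{lem2}, which says $f_k(x_0)=0$ characterizes continuity). Second, the ``delicate point'' you flag---two-sided comparability of $\sum_{Q\in\pi}E_d(f;Q)$ with $\sum_{|\alpha|=d}|D^\alpha f|(\cup\pi)$---is resolved in the paper not by a direct differences-versus-integrals estimate but by applying the already-proved cube-local isomorphism $|f|_{V_1^d(Q)}\approx|L(f)|_{BV^d(Q)}$ (coming from Theorem~A, $Lip_1^k=BV^k$, and Lemma~\ref{pr10.5} in the proof of Theorem~\ref{teo2.2.11}(b)). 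Once you have that, a refinement argument (choosing for each $Q\in\pi$ a near-optimal subpacking $\pi_Q$) converts $\sum_{Q\in\pi}|L(f)|_{BV^d(Q)}$ into a genuine $(d,1)$-variation over a packing $\tilde\pi$ with $|\tilde\pi|\le|\pi|$, and the $AC$ condition then gives $\operatorname{var}_S D^\alpha L(f)\to 0$ as $|S|\to 0$, which is the Radon--Nikodym criterion. So your sketch is on target; the missing ingredient is just to recognize that the two-sided bound is already packaged in Theorem~\ref{teo2.2.11}(b) applied locally.
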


\subsection{Duality Theorems} Now we introduce a predual to $V_p^k$ space denoted by $U_p^k$ and describe its properties.

The space under consideration is constructed by the following building blocks called {\em $(k,p)$-atoms}.
\begin{D}\label{def2.2.1}
A function $a\in\ell_1\, (=\ell_1(Q^d))$ is said to be a $(k,p)$-atom if it is such that
\begin{itemize}
\item[(a)]
$a$ is supported by a subcube $Q\subset Q^d$;\smallskip
\item[(b)]
$\|a\|_1\le 1$;\smallskip
\item[(c)]
$\displaystyle \sum_{x\in Q^d} x^\alpha a(x)=0$ for all $|\alpha|\le k-1$.
\end{itemize}
\end{D}
The subject of the definition is denoted by $a_Q$.
\begin{D}[$(k,p)$-chain]\label{def2.2.2}
A function $b\in\ell_1$ is said to be a $(k,p)$-chain subordinate to a packing $\pi\in\Pi$ if $b$ belongs to the linear hull of the family $\{a_Q\}_{Q\in\pi}$.
\end{D}
The subject of the definition is denoted by $b_\pi$.

Moreover, we set
\begin{equation}\label{2.2.1}
[b_\pi]_{p'}:=\|\{c_Q\}_{Q\in\pi}\|_{p'}:=\left(\sum_{Q\in\pi}|c_Q|^{p'}\right)^{\frac{1}{p'}}
\end{equation}
whenever
\begin{equation}\label{2.2.2}
b_\pi=\sum_{Q\in\pi}c_Q a_Q,\qquad \{c_Q\}_{Q\in\pi}\subset\RR.
\end{equation}
(As usual, $\frac{1}{p}+\frac{1}{p'}=1$.)

Further, we define a dense subspace of the required predual space denoted by $(U_p^k)^0$ by setting
\begin{equation}\label{2.2.3}
(U_p^k)^0:={\rm linhull}(\mathcal A_p^k),
\end{equation}
where $\mathcal A_p^k$ is the family of all $(k,p)$-atoms.

Equivalently, 
\begin{equation}\label{2.2.4}
(U_p^k)^0:={\rm linhull}(\mathcal B_p^k),
\end{equation}
where $\mathcal B_p^k$ is the family of all $(k,p)$-chains.

Moreover, we equip $(U_p^k)^0$ with a seminorm\footnote{In fact, norm, see Theorem \ref{te2.2.5}\,(a) below.} $f\mapsto \|f\|_{U_p^k}$ given for $f\in (U_p^k)^0$ by
\begin{equation}\label{2.2.5}
\|f\|_{U_p^k}:=\inf\sum_\pi\, [b_\pi]_{p'},
\end{equation}
where infimum is taken over all presentations
\begin{equation}\label{2.2.6}
f=\sum_\pi b_\pi .
\end{equation}
\begin{D}\label{def2.2.3}
The space $U_p^k$ is completion of the space $((U_p^k)^0,\|\cdot\|_{U_p^k})$.
\end{D}
Now we describe the properties of the space $U_p^k$. 

In the formulation of the result, we use a space $U_{p\infty}^k$ defined by the very same construction as $U_p^k$ but with $\ell_1$ replaced by $L_1$. Hence, in this case $a_Q$ is an $L_1$ function supported by a subcube $Q$ orthogonal to $\mathcal P_{k-1}^d$ and such that $\int_{Q^d}|a_Q|\,dx\le 1$. 

It follows from \cite[Th.\,2.6]{BB-18} that 
\[
(U_{p\infty}^k)^*\equiv V_{p\infty}^k.
\] 
\begin{Th}\label{te2.2.5}
{\rm (a)} $U_p^k$ is a Banach space.\smallskip

{\rm (b)} $B(U_p^k)$ is the closure of the symmetric convex hull of the set $\{b_\pi\in (U_p^k)^0\, :\, [b_\pi]_{p'}\le 1\}$.\smallskip

{\rm (c)} The space $U_p^k$ is nonseparable and contains a separable subspace, say, $\hat U_p^k$, such that
\[
\hat U_p^k\equiv U_{p\infty}^k\quad {\rm and}\quad (U_p^k/\hat U_p^k)^*\cong\ell_p.
\]
\end{Th}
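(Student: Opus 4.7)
For (a), the only nontrivial point is that $\|\cdot\|_{U_p^k}$ is a genuine norm on $(U_p^k)^0$, since completeness will then be automatic from the definition as a completion. I would establish the key duality estimate against $\dot V_p^k$: for a $(k,p)$-atom $a_Q$ and $g\in\dot V_p^k$, orthogonality of $a_Q$ to $\mathcal P_{k-1}^d$ gives
\[
|\langle a_Q,g\rangle|=|\langle a_Q,g-m\rangle|\le\|a_Q\|_1\,\|g-m\|_{\infty,Q}\le E_k(g;Q)
\]
on optimising $m\in\mathcal P_{k-1}^d$; passing to a chain $b_\pi=\sum_Q c_Q a_Q$ and applying H\"older in $(p,p')$ yields $|\langle b_\pi,g\rangle|\le[b_\pi]_{p'}\,|g|_{V_p^k}$, and an infimum over representations $f=\sum_\pi b_\pi$ delivers $|\langle f,g\rangle|\le\|f\|_{U_p^k}\,|g|_{V_p^k}$. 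The point-mass $\chi_{\{x_0\}}$ belongs to $\dot V_p^k$ (its $k$-oscillation on any cube is bounded by $2^k$ and at most $2^d$ cubes of any packing can meet $\{x_0\}$), so $\|f\|_{U_p^k}=0$ forces $f(x_0)=\langle f,\chi_{\{x_0\}}\rangle=0$ for every $x_0$, hence $f\equiv 0$. For (b), set $K:=$ symmetric convex hull of $\{b_\pi:[b_\pi]_{p'}\le 1\}$. The inclusion $K\subseteq B(U_p^k)$ is immediate from the definition of $\|\cdot\|_{U_p^k}$. Conversely, any $f\in(U_p^k)^0$ with $\|f\|_{U_p^k}<1$ admits a representation $f=\sum_\pi [b_\pi]_{p'}\cdot(b_\pi/[b_\pi]_{p'})$ with $\sum_\pi [b_\pi]_{p'}<1$, so $f\in K$; density of $(U_p^k)^0$ and scaling $f\mapsto(1-\varepsilon)f$ extend this to $B(U_p^k)\subseteq\overline K$.

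For (c), I would bootstrap from the duality $(U_p^k)^*\equiv V_p^k$ (the next statement of Subsection 2.3) together with the splitting $V_p^k\cong V_{p\infty}^k\oplus\ell_p$ coming from Corollary~\ref{cor2.1.3} and the isometry in Theorem~\ref{te2.6}(c). Define $\hat U_p^k:=(\ell_p)_\perp=\{u\in U_p^k:\langle u,g\rangle=0\text{ for all }g\in\ell_p\}$, the annihilator in $U_p^k$ of the $\ell_p$-summand of $V_p^k$. Standard duality gives $(\hat U_p^k)^*\cong V_p^k/\ell_p\cong V_{p\infty}^k=(U_{p\infty}^k)^*$, and the isometric identification $\hat U_p^k\equiv U_{p\infty}^k$ is then achieved by an explicit discretisation map, approximating each $L_1$-atom by $\ell_1$-chains on fine partitions whose quadrature nodes preserve polynomial moments exactly. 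The quotient dual is $(U_p^k/\hat U_p^k)^*=\overline{\ell_p}^{w^*}=\ell_p$, using that $\ell_p$ is weak-$*$ closed in $V_p^k$ (a separate verification). Separability of $\hat U_p^k$ is inherited from that of $U_{p\infty}^k$, which admits a countable dense set of rational-coefficient chains of step-function $L_1$-atoms on dyadic subcubes. For the nonseparability of $U_p^k$, fix a $\mathcal P_{k-1}^d$-unisolvent collection $\{x_0,\dots,x_N\}\subset Q^d$ and, for each $x\in Q^d\setminus\{x_0,\dots,x_N\}$, define $f_x:=\delta_x-L_x$, with $L_x\in\operatorname{span}\{\delta_{x_i}\}$ the unique correction enforcing $f_x\perp\mathcal P_{k-1}^d$; pairing $f_x-f_y$ with $\chi_{\{x\}}\in\dot V_p^k$ produces a value $\ge 1$ whenever $x\ne y$, so $\|f_x-f_y\|_{U_p^k}\ge 1/|\chi_{\{x\}}|_{V_p^k}\ge c>0$, giving an uncountable $c$-separated family in $U_p^k$.

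The main obstacle is the isometric identification $\hat U_p^k\equiv U_{p\infty}^k$ in (c). Two Banach spaces with isometric duals need not be isometric, so one cannot conclude this from the bi-annihilator calculation alone; an explicit isometric embedding $U_{p\infty}^k\hookrightarrow U_p^k$ must be constructed, plausibly by discretising each $L_1$-atom $a$ supported on $Q$ into $\ell_1$-atoms $a^{(n)}$ built on refining partitions of $Q$ with quadrature nodes exactly preserving the polynomial moments of $a$ and with $\|a^{(n)}\|_1$ converging to $\|a\|_{L_1}$. Passing to the limit in $U_p^k$ (which requires uniform control of atomic norms on the discretised chains) and verifying that the induced map is norm-preserving in both directions is where the work lies; the closely related point that $\ell_p$ is weak-$*$ closed in $V_p^k$, needed for the quotient dual computation, in turn requires revisiting the weak-$*$ behaviour of the projection $P$ from Corollary~\ref{cor2.1.3}.
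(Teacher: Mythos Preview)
Your treatment of (a) and (b) is correct and close to the paper's. For (a) the paper also proves the duality inequality $|\langle f,g\rangle|\le\|f\|_{U_p^k}|g|_{V_p^k}$ and then separates points; it uses $C^\infty$ functions rather than your point masses $\chi_{\{x_0\}}$, but your argument is valid and arguably cleaner. For (b) both you and the paper give the obvious convexity argument.

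For (c), you have correctly identified the objects and the acknowledged obstacle is real, but the paper resolves it by a different and more efficient route that bypasses both your discretisation construction and your separate verification that $\ell_p$ is weak$^*$ closed. The key technical input you are missing is that the surjection $\tilde L:V_p^k\to V_{p\infty}^k$ (induced by $L$) is \emph{weak$^*$ continuous}. The paper proves this (its Proposition~10.1) via the Krein--Smulian theorem: it suffices to check weak$^*$ closedness on bounded sets, where weak$^*$ convergent nets in $\dot V_{p;S}^k$ converge pointwise (Lemma~4.1), and then Rosenthal's Main Theorem upgrades pointwise convergence of uniformly bounded first-Baire-class functions to convergence against all measures, hence against $L_1$ test functions. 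Once $\tilde L$ is weak$^*$ continuous, general duality gives a bounded injective predual map $T:U_{p\infty}^k\to U_p^k$ with $T^*=\tilde L$, and one sets $\hat U_p^k:=T(U_{p\infty}^k)$. The isometry of $T$ then follows immediately from Theorem~\ref{te2.6}(c): given $u\in U_{p\infty}^k$, pick a norming functional $f_u\in V_{p\infty}^k$ and lift it isometrically to $\tilde f_u\in N\dot V_p^k\cap\dot V_{p;S}^k$; then $\|T(u)\|_{U_p^k}\ge|\tilde f_u(Tu)|=|\tilde L(\tilde f_u)(u)|=\|u\|_{U_{p\infty}^k}$, matching the trivial upper bound.

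This approach also dissolves your second worry: $(U_p^k/\hat U_p^k)^*$ equals the annihilator $(\hat U_p^k)^\perp=\ker(T^*)=\ker(\tilde L)$, which is $\ell_p$ by Theorem~\ref{te2.6}(b) and is automatically weak$^*$ closed as the kernel of a weak$^*$ continuous map. Your pre-annihilator definition $\hat U_p^k=(\ell_p)_\perp$ agrees with the paper's $T(U_{p\infty}^k)$ a posteriori, but starting from the predual map $T$ rather than the annihilator is what makes the argument go through without the discretisation you propose. Your nonseparability argument is essentially the paper's.
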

Finally, we present the duality theorem for the Banach space $V_p^k\, (:=\dot V_p^k/\mathcal P_{k-1}^d$, see \eqref{1.2.17}).
\begin{Th}\label{te2.2.6}
It is true that
\begin{equation}\label{2.2.11}
(U_p^k)^*\equiv V_p^k.
\end{equation} 
In more details, every linear functional $f\in (U_p^k)^*$ is assigned at $u\in (U_p^k)^0$ by
\begin{equation}\label{2.2.12}
\langle f,u\rangle:=\sum_{x\in Q^d}v_f(x)u(x),
\end{equation}
where $v_f$ is a uniquely defined function of $\dot V_p^k$ linearly depending on $f$. 

Moreover, it is true that
\begin{equation}\label{2.2.13}
\|f\|_{(U_p^k)^*}=|v_f|_{V_p^k}.
\end{equation}
\end{Th}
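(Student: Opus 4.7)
Plan. I will establish \eqref{2.2.11} by constructing mutually inverse isometries between $V_p^k$ and $(U_p^k)^*$ via the natural pairing $\langle v,u\rangle := \sum_{x\in Q^d} v(x)u(x)$, paralleling \cite{BB-18} where the $L_\infty$-version $V_{p\infty}^k$ is treated.

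\textbf{Forward direction.} For $v\in\dot V_p^k$, define $\phi_v$ on the dense subspace $(U_p^k)^0$ by $\phi_v(u) := \sum_{x\in Q^d} v(x)u(x)$, an absolutely convergent sum since $v$ is bounded and $u\in\ell_1$. For a $(k,p)$-atom $a_Q$ supported on $Q$, the moment condition (c) of Definition \ref{def2.2.1} and $\|a_Q\|_1 \le 1$ give $|\langle v,a_Q\rangle| = |\langle v-m,a_Q\rangle| \le \|v-m\|_{\ell_\infty(Q)}$ for every $m\in\mathcal P_{k-1}^d$, hence $|\langle v,a_Q\rangle|\le E_k(v;Q)$. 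For a chain $b_\pi = \sum_{Q\in\pi} c_Q a_Q$, H\"older then yields $|\phi_v(b_\pi)| \le [b_\pi]_{p'}\,|v|_{V_p^k}$, and taking infima over atomic decompositions of $u$ gives $\|\phi_v\|_{(U_p^k)^*} \le |v|_{V_p^k}$. Since $\mathcal P_{k-1}^d$ annihilates every atom, this construction descends to a contractive linear map $V_p^k \to (U_p^k)^*$.

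\textbf{Reverse construction.} Fix $N := \dim\mathcal P_{k-1}^d$ points $z_1,\dots,z_N \in Q^d$ in general position for polynomial interpolation, and let $\{p_i\}\subset\mathcal P_{k-1}^d$ be the dual Lagrange basis with $p_i(z_j) = \delta_{ij}$. For each $x\in Q^d$ set $A_x := \delta_x - \sum_{i=1}^N p_i(x)\,\delta_{z_i}\in\ell_1$; the Lagrange reproducing identity forces $A_x\perp\mathcal P_{k-1}^d$, and $\|A_x\|_1\le C:=1+\sum_i\sup_{Q^d}|p_i|$, so $A_x/C$ is a $(k,p)$-atom supported on $Q^d$ and $\|A_x\|_{U_p^k}\le C$. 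Given $\phi\in(U_p^k)^*$, define $v_\phi(x):=\phi(A_x)$; then $v_\phi\in\ell_\infty(Q^d)$ with $\|v_\phi\|_\infty\le C\|\phi\|$. For any $(k,p)$-atom $a_Q$ with support $S\subset Q$, condition (c) gives the $\ell_1$-identity $a_Q = \sum_{x\in S} a_Q(x) A_x$, and after grouping this countable sum into subsums of controlled $[\,\cdot\,]_{p'}$-norm one applies $\phi$ termwise to obtain $\phi(a_Q) = \langle v_\phi,a_Q\rangle$. Hence $\phi = \phi_{v_\phi}$ on $(U_p^k)^0$, giving formula \eqref{2.2.12}; uniqueness of $v_\phi$ modulo $\mathcal P_{k-1}^d$ follows because $\phi_v \equiv 0$ forces $v(x) = \sum_i p_i(x) v(z_i)$, i.e.\ $v\in\mathcal P_{k-1}^d$.

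\textbf{Variation bound and isometry.} Fix $\pi\in\Pi$ and $\varepsilon>0$. For each $Q\in\pi$ a Hahn–Banach style argument, valid because $v_\phi\in\ell_\infty$, produces a $(k,p)$-atom $a_Q$ supported on a countable subset of $Q$ with $\langle v_\phi,a_Q\rangle \ge E_k(v_\phi;Q) - \varepsilon$. Selecting scalars $\{c_Q\}_{Q\in\pi}$ with $(\sum |c_Q|^{p'})^{1/p'}=1$ that realize the $\ell_p/\ell_{p'}$-duality for the finite sequence $(E_k(v_\phi;Q))_{Q\in\pi}$, the chain $b_\pi := \sum c_Q a_Q$ satisfies $[b_\pi]_{p'}\le 1$, whence
\[
\Bigl(\sum_{Q\in\pi} E_k(v_\phi;Q)^p\Bigr)^{1/p} - O(\varepsilon) \;\le\; \sum_Q c_Q\langle v_\phi,a_Q\rangle \;=\; \phi(b_\pi) \;\le\; \|\phi\|.
\]
Letting $\varepsilon\downarrow 0$ and taking sup over $\pi$ yields $|v_\phi|_{V_p^k}\le\|\phi\|$. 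Combined with the forward direction, this is the isometry \eqref{2.2.13}, and \eqref{2.2.11} follows.

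\textbf{Main obstacle.} The delicate step is bridging the $\ell_\infty$-defined $E_k(v_\phi;Q)$ and the $\ell_1$-atom machinery of $(U_p^k)^0$: since $\ell_\infty(Q)^*\neq\ell_1(Q)$, it is not automatic that the infimum in $E_k$ is approximately attained against $\ell_1$-atoms on $Q$. One must use the specific structure of $v_\phi$ (its values are $\phi$-evaluations on finitely supported chains, so oscillation is realized on a countable set) to produce the atomic near-minimizers $a_Q$, and then coordinate these across the packing so the resulting chain has $[\,\cdot\,]_{p'}$-norm at most $1$. Once that is in hand the descent to the quotient by $\mathcal P_{k-1}^d$ and symmetry of the two directions are formal.
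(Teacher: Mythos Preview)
Your proof is correct and follows the same route as the paper's: the forward pairing inequality, then for $\phi\in(U_p^k)^*$ a representing function $v_\phi\in\ell_\infty$, then the variation bound via atoms that nearly realize $E_k(v_\phi;Q)$. The only cosmetic difference is that you build $v_\phi$ pointwise via the interpolation elements $A_x$, whereas the paper takes the adjoint of the norm-one embedding $E:\hat\ell_1\hookrightarrow U_p^k$ and reads $v_\phi$ off from $E^*(\phi)\in\hat\ell_1^{\,*}\equiv\ell_\infty/\mathcal P_{k-1}^d$; your $v_\phi$ is precisely the representative of that coset vanishing at the nodes $z_i$.

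One correction: the ``main obstacle'' you flag is not an obstacle. Because $\mathcal P_{k-1}^d$ is finite-dimensional, hence weak$^*$ closed in $\ell_\infty(Q)=\ell_1(Q)^*$, the quotient $\ell_\infty(Q)/\mathcal P_{k-1}^d$ is \emph{isometrically} the dual of $\hat\ell_1(Q):=\{g\in\ell_1(Q):g\perp\mathcal P_{k-1}^d\}$. Thus for \emph{any} $v\in\ell_\infty(Q)$ one has
\[
E_k(v;Q)=\sup\bigl\{\langle v,g\rangle:\ g\in\hat\ell_1(Q),\ \|g\|_1\le 1\bigr\},
\]
which is exactly the paper's auxiliary Lemma~\ref{lem9.1}; no special structure of $v_\phi$ is needed to produce the near-maximizing atoms $a_Q$. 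Similarly, the termwise application of $\phi$ to $a_Q=\sum_{x}a_Q(x)A_x$ needs no ``grouping into subsums'': the series converges in $\ell_1$, and since every $h\in\hat\ell_1$ is a single chain on $\{Q^d\}$ with $[h]_{p'}=\|h\|_1$, one has $\|\cdot\|_{U_p^k}\le\|\cdot\|_{\ell_1}$ on $\hat\ell_1$, so the series converges in $U_p^k$ and $\phi$ passes to the limit.
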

\subsection{Two Stars Theorems}  Now we present duality results  connecting the space $\textsc{v}_p^k$ with the spaces $V_p^k$ and $U_p^k$. In their proofs an essential role plays an extension of the space $U_p^k$ denoted by $\widetilde U_p^k$. In its definition, we use
the Banach space $M$ of finite signed Borel measures on $Q^d$ equipped  with a norm given for $\mu\in M$ by
\begin{equation}\label{2.3.1}
\|\mu\|_M:=\underset{Q^d}{\rm var}\,\mu.
\end{equation}
We define $\widetilde U_p^k$ as a direct analog of the space $U_p^k$ using as building blocks atoms $\{\mu_Q\}_{Q\subset Q^d}\subset M$ satisfying the conditions
\begin{itemize}
\item[(i)] ${\rm supp}\,\mu\subset Q$;\smallskip
\item[(ii)] $\|\mu\|_M\le 1$;\smallskip
\item[(iii)] $\displaystyle \int_{Q^d}x^\alpha\,d\mu=0$ for all $|\alpha|\le k-1$.
\end{itemize}

The corresponding linear hull of the set $\{\mu_Q\}_{Q\subset Q^d}$ is denoted by $(\widetilde U_p^k)^0$ and by $\widetilde U_p^k$ is denoted its completion with respect to a seminorm defined similarly to that for $(U_p^k)^0$.

Repeating line-by-line the proof of Theorem \ref{te2.2.5}\,(a) we will show that the space $\widetilde U_p^k$ is Banach.

Since every chain $b\in (U_p^k)^0$ can be identified with a discrete measure $\mu_b\in M$ with $\mu_b(\{x\}):=b(x)$, $x\in Q^d$, the space $(U_p^k)^0$ is embedded (as a vector space) in $(\widetilde U_p^k)^0$.
\begin{Th}\label{te2.3.2}
{\rm (a)} The map $(U_p^k)^0\rightarrow (\widetilde U_p^k)^0$, $b\mapsto\mu_b$, extends to an isometric embedding of $U_p^k$ into $\widetilde U_p^k$ denoted by $\mathcal I$.\smallskip

\noindent {\rm (b)} There exists a linear continuous surjection of $\widetilde U_p^k$ onto $U_p^k$ denoted by $\mathcal E$ such that
\[
{\rm ker}(\mathcal E)\cap (\widetilde U_p^k)^0=(0)\quad {\rm and}\quad \mathcal E\circ\mathcal I={\rm id}.
\]
\end{Th}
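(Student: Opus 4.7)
The plan for part (a) is to match both norms through the duality $(U_p^k)^*\equiv V_p^k$ of Theorem~\ref{te2.2.6}. One direction $\|\mathcal I(b)\|_{\widetilde U_p^k}\le\|b\|_{U_p^k}$ is immediate: every $\ell_1$-chain decomposition of $b$ is in particular an $M$-chain decomposition of $\mu_b$ with the same atomic weights, because a discrete $\ell_1$-atom with $\|a_Q\|_1\le 1$ is itself a discrete measure of total variation $\le 1$. For the reverse direction, fix $v\in\dot V_p^k$, which by Theorem~\ref{te2.1.3} is Borel measurable (its set of discontinuities is at most countable). For any measure atom $\mu_Q$ and any polynomial $m\in\mathcal P_{k-1}^d$, the vanishing-moment condition yields
\[
\Bigl|\int v\,d\mu_Q\Bigr|=\Bigl|\int (v-m)\,d\mu_Q\Bigr|\le \|v-m\|_{\ell_\infty(Q)}\,\|\mu_Q\|_M\le E_k(v;Q),
\]
and H\"older's inequality then gives $\bigl|\int v\,d\beta_\pi\bigr|\le [\beta_\pi]_{p'}\,|v|_{V_p^k}$ for every measure-chain $\beta_\pi$. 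Hence $\Lambda_v:\mu\mapsto\int v\,d\mu$ extends to a functional on $\widetilde U_p^k$ of norm $\le|v|_{V_p^k}$. Since $\Lambda_v(\mu_b)=\sum_x v(x)b(x)=\langle v,b\rangle$, Theorem~\ref{te2.2.6} yields $\|b\|_{U_p^k}=\sup_{|v|_{V_p^k}\le 1}|\langle v,b\rangle|\le\|\mu_b\|_{\widetilde U_p^k}$, and $\mathcal I$ extends isometrically to $U_p^k$ by density.

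For part (b), the plan is to define $\mathcal E$ first on the dense subspace $(\widetilde U_p^k)^0$ of bona fide Borel measures by
\[
\mathcal E(\mu)(v):=\int v\,d\mu,\qquad v\in\dot V_p^k,
\]
which by the estimate from (a) is a linear functional on $V_p^k$ of norm $\le\|\mu\|_{\widetilde U_p^k}$, a priori lying in $V_p^{k*}=(U_p^k)^{**}$. The essential step is to show $\mathcal E(\mu)$ actually belongs to the canonical image of $U_p^k$; the proposed route is to approximate each measure atom $\mu_Q$ in $\widetilde U_p^k$-norm by discrete $(k,p)$-chains, by subdividing $Q$ into a fine dyadic subpacking $\{Q_i\}$ and replacing each restriction $\mu_Q|_{Q_i}$ by a discrete quadrature-type atom on $Q_i$ with matching moments up to order $k-1$. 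The resulting discrete $b_n\in (U_p^k)^0$ force $\mathcal E(\mu_Q)=\lim b_n\in U_p^k$ by the closedness of the canonical image in its bidual, and $\mathcal E$ extends by linearity and continuity to all of $\widetilde U_p^k$. The relation $\mathcal E\circ\mathcal I=\mathrm{id}$ is then automatic from $\int v\,d\mu_b=\sum_x v(x)b(x)$, giving surjectivity as well. Finally, if $0\ne\mu\in\ker\mathcal E\cap(\widetilde U_p^k)^0$ then $\int\varphi\,d\mu=0$ for every $\varphi\in C^\infty(Q^d)\subset\dot V_p^k$ (the inclusion holding under the standing hypothesis $s\le k$ by the remark following Theorem~\ref{te2.1.5}), so by density of $C^\infty(Q^d)$ in $C(Q^d)$ and the Riesz representation theorem $\mu=0$, a contradiction.

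The main obstacle is the discretization step in part (b)---the claim that every measure atom $\mu_Q$ is a $\widetilde U_p^k$-norm limit of discrete chains. Weak-$*$ approximation of $\mu_Q$ by finitely supported measures is elementary, but the $\widetilde U_p^k$-norm is an infimal atomic norm, which is strictly stronger than weak-$*$ convergence; the quantitative norm estimate therefore requires constructing the discrete surrogates with care, balancing the local polynomial moment matching on each subcube $Q_i$ against summability of the atomic weights of the differences $\mu_Q|_{Q_i}-(\text{surrogate})$ in the $\ell_{p'}$-coefficient sense.
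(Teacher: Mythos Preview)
Your argument for part (a) is correct and in fact cleaner than the paper's: you prove the reverse inequality $\|b\|_{U_p^k}\le\|\mu_b\|_{\widetilde U_p^k}$ directly from the duality of Theorem~\ref{te2.2.6} and the bilinear estimate $\bigl|\int v\,d\mu\bigr|\le|v|_{V_p^k}\|\mu\|_{\widetilde U_p^k}$, whereas the paper defers this to part (b), extracting it from $\|\mathcal E\|\le 1$ and $\mathcal E\circ\mathcal I={\rm id}$.

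For part (b) your route diverges substantially from the paper's, and the gap you identify is real. The paper never attempts to approximate a measure atom $\mu_Q$ by discrete chains in the $\widetilde U_p^k$-norm. Instead it shows that the functional $\ell_\mu(v)=\int v\,d\mu$ is \emph{weak$^*$ continuous} on $V_p^k$ (with respect to the predual $U_p^k$), which forces $\ell_\mu\in U_p^k$. The key tools are the Krein--Smulian theorem (reducing weak$^*$ closedness to closedness on balls), Lemma~\ref{lemma6.1} (the ball $B(\dot V_{p;S}^k)$ is compact in the topology of pointwise convergence), Theorem~\ref{te2.1.3} (functions in $\dot V_p^k$ are of the first Baire class), and Rosenthal's Main Theorem (a bounded net of first-Baire-class functions that converges pointwise has convergent integrals against every finite Borel measure). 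This package delivers $\lim_\alpha\int f_\alpha\,d\mu=\int f\,d\mu$ along any weak$^*$-convergent net in the ball, and the argument closes with no quantitative estimate needed.

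Your constructive approach can very likely be made to work---split $\mu_Q$ into its atomic part (handled by truncation) and its diffuse part (on a fine dyadic partition each subcube carries mass $<\varepsilon$, so the moment-matching quadrature yields coefficients $c_i$ with $\sum_i c_i\le C$ and $\max_i c_i\le C\varepsilon$, whence $(\sum c_i^{p'})^{1/p'}\le C\varepsilon^{1/p}\to 0$)---and if completed it would prove the strictly stronger statement that $\mathcal I(U_p^k)$ is dense in $\widetilde U_p^k$, i.e., $\widetilde U_p^k\equiv U_p^k$. The paper does not assert this; its abstract argument gives exactly what the theorem states and no more. So your obstacle is genuine but not insurmountable; what you are missing is either the quadrature construction just sketched, or the realization that the weak$^*$-continuity route via Rosenthal sidesteps it entirely.
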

The result allows to identify $(\widetilde U_p^k)^0\subset M$ with a dense subspace of $U_p^k$ containing $(U_p^k)^0$ and to apply the Riesz representation theorem in the forthcoming proofs.

Now we formulate a result giving another presentation of the predual space $U_p^k$ and derive from here the mentioned two stars theorem. 
\begin{Th}\label{te2.3.3}
If the space $V_p^k$ has smoothness $s<k$ and $p>1$, then
\begin{equation}\label{2.3.5}
(\textsc{v}_p^k)^*\cong U_p^k.
\end{equation}

In more details, every linear continuous functional on $\textsc{v}_p^k$ is of the form 
\[
T_u(f)=f(u),\qquad f\in \textsc{v}_p^k,
\]
where $u\in U_p^k$  is a uniquely defined element such that
\[
\|T_u\|_{(\textsc{v}_p^k)^*}\approx \|u\|_{U_p^k}
\] 
with the constants of equivalence independent of $u$.
\end{Th}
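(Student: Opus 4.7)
The plan is to establish that the canonical map
\[
\Phi : U_p^k \to (\textsc{v}_p^k)^*, \qquad \Phi(u)(f):=\langle f,u\rangle,
\]
induced by the inclusion $\textsc{v}_p^k \subset V_p^k$ and the duality $V_p^k\equiv(U_p^k)^*$ from Theorem \ref{te2.2.6}, is a linear topological isomorphism. The upper estimate $\|\Phi(u)\|_{(\textsc{v}_p^k)^*}\le\|u\|_{U_p^k}$ is immediate from the definition, so the task splits into a lower bound for $\Phi$ and surjectivity.

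For the lower bound, by density of $(U_p^k)^0$ in $U_p^k$ it suffices to treat a chain $b\in(U_p^k)^0$. I would choose $g\in V_p^k$ with $\|g\|_{V_p^k}\le1$ and $\langle g,b\rangle\ge\tfrac12\|b\|_{U_p^k}$, pick a representative $v_g\in\dot V_p^k$, and invoke Theorem \ref{te2.1.4} to obtain $f_n\in C^\infty\cap\dot V_p^k$ with $\varlimsup|f_n|_{V_p^k}\le 5$ satisfying $\int(v_g-f_n)\,d\mu\to 0$ for every finite signed Borel measure $\mu$. Since $b\in\ell_1$, the discrete measure $\mu_b:=\sum_{x}b(x)\delta_x$ belongs to $M$, so $\langle f_n,b\rangle\to\langle g,b\rangle$. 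After rescaling, $f_n/6$ lies (eventually) in the unit ball of $\dot{\textsc{v}}_p^k$, giving $\|\Phi(b)\|_{(\textsc{v}_p^k)^*}\ge c\|b\|_{U_p^k}$ for a fixed constant $c>0$. By density this extends to all $u\in U_p^k$, so $\Phi$ is a topological embedding with closed range.

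For surjectivity, I would use the direct-sum splittings $V_p^k\cong NV_p^k\oplus\ell_p$ (Corollary \ref{cor2.1.3}) and the matching $U_p^k\cong\hat U_p^k\oplus\ell_{p'}$ from Theorem \ref{te2.2.5}(c) (here $p>1$ enters through $(\ell_p)^*=\ell_{p'}$). Since $\dot{\textsc{v}}_p^k\subset C$ (remark following Theorem \ref{te2.1.5}), $\textsc{v}_p^k\subset NV_p^k$. Given $T\in(\textsc{v}_p^k)^*$, extend via Hahn-Banach to $\tilde T\in(V_p^k)^*$: its $\ell_p$-component is given by some $u_\ell\in\ell_{p'}$. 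For the $NV_p^k$-component, I would identify $(\widetilde U_p^k)^0$ with a dense subspace of $U_p^k$ consisting of signed Borel measures (Theorem \ref{te2.3.2} and the remark after it) and apply the F.~Riesz representation theorem to realize $T|_{\textsc{v}_p^k}$ as a measure $\nu\in M$ with $\|\nu\|_M\lesssim\|T\|_{(\textsc{v}_p^k)^*}$; here the vanishing-oscillation characterization of $\dot{\textsc{v}}_p^k$ from Theorem \ref{te2.1.5} (which requires $k>s$) is the key tool that permits the $V_p^k$-norm on the smooth dense subspace to be compared with the sup-norm. Then $u_n:=\mathcal E(\nu)\in U_p^k$ via the retraction of Theorem \ref{te2.3.2}, and $u:=u_n+u_\ell\in U_p^k$ satisfies $\Phi(u)=T$.

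The main obstacle is the surjectivity step: extracting the representing measure $\nu\in M$. A plain Hahn-Banach extension places $\tilde T$ only in the bidual $U_p^{k**}=(V_p^k)^*$, not in $U_p^k$ itself --- this is the essence of the two-stars phenomenon. The condition $s<k$ is essential in this Riesz step via Theorem \ref{te2.1.5}, while the hypothesis $p>1$ is needed only for the elementary duality $(\ell_p)^*=\ell_{p'}$ splitting off the discrete $\ell_p$-component.
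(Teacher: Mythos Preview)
Your lower-bound argument (injectivity with controlled inverse) is correct and essentially matches the paper's Proposition~\ref{prop11.1}: the approximation Theorem~\ref{te2.1.4} is exactly what makes $\mathfrak i^*|_{U_p^k}$ an isomorphism onto its image.

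The surjectivity argument, however, has a genuine gap at the Riesz step. You propose to ``realize $T|_{\textsc{v}_p^k}$ as a measure $\nu\in M$ with $\|\nu\|_M\lesssim\|T\|_{(\textsc{v}_p^k)^*}$,'' justifying this by saying that Theorem~\ref{te2.1.5} ``permits the $V_p^k$-norm on the smooth dense subspace to be compared with the sup-norm.'' But no such comparison exists: the inclusion $\dot{\textsc v}_p^k\subset C$ from Theorem~\ref{te2.1.5} gives no bound of the form $\|f\|_\infty\le c|f|_{V_p^k}$, nor the reverse, so a functional bounded in the $V_p^k$-norm need not extend to a bounded functional on $C(Q^d)$. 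Hence there is no way to extract a signed Borel measure on $Q^d$ representing $T$. (Also, the claimed splitting $U_p^k\cong\hat U_p^k\oplus\ell_{p'}$ does not follow from Theorem~\ref{te2.2.5}(c), which only gives $(U_p^k/\hat U_p^k)^*\cong\ell_p$; short exact sequences of Banach spaces do not split automatically.)

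The paper's surjectivity argument is structurally quite different. It does use the Riesz theorem, but on a \emph{different} compact space: one first proves (Proposition~\ref{prop11.3}, the technical heart) that the image $\mathfrak i^*(\bar{\mathscr B}_p^k)$ of the closed set of measure-valued $(k,p)$-chains is weak$^*$ compact in $B((\textsc v_p^k)^*)$; this is where $s<k$ and $p>1$ actually enter, via a delicate limit argument showing that weak$^*$ limits of chain-images remain chain-images. Then $\textsc v_p^k$ embeds isometrically into $C(\mathfrak i^*(\bar{\mathscr B}_p^k))$ (Lemma~\ref{lem11.7}), and Riesz produces a measure $\mu_{v^*}$ on \emph{that} abstract compact set, not on $Q^d$. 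Finally, the functional $v\mapsto\int v\circ\tau\,d\mu_{v^*}$ is shown to be weak$^*$ continuous on $V_p^k$ via Krein--Smulian, the Baire-class-1 structure of $\dot V_p^k$, and Rosenthal's theorem; weak$^*$ continuity forces it to come from $U_p^k$. Your sketch misses both the correct compact space on which Riesz is applied and the subsequent weak$^*$-continuity argument that lands the functional back in $U_p^k$.
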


As a direct consequence we have the following 
\begin{C}[Two stars theorem]\label{cor2.3.4}
If $V_p^k$ satisfies the conditions $s<k$ and $p>1$, then
\begin{equation}\label{2.3.6}
(\textsc{v}_p^k)^{**}\cong V_p^k.
\end{equation}
\end{C}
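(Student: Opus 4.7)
The plan is to chain together the two previous duality results: Theorem \ref{te2.3.3}, which identifies $(\textsc{v}_p^k)^*\cong U_p^k$, and Theorem \ref{te2.2.6}, which identifies $(U_p^k)^*\equiv V_p^k$. Concretely, since Theorem \ref{te2.3.3} provides a linear homeomorphism $T:(\textsc{v}_p^k)^*\to U_p^k$ under the standing hypotheses $s<k$ and $p>1$, its Banach-space adjoint $T^*:(U_p^k)^*\to (\textsc{v}_p^k)^{**}$ is again a linear homeomorphism (because adjoints of isomorphisms of Banach spaces are isomorphisms, with operator-norm bounds $\|T^{-1}\|^{-1}\le\|T^*\|\le\|T\|$ on the induced map). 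Composing with the isometric identification of Theorem \ref{te2.2.6} then yields the asserted isomorphism $(\textsc{v}_p^k)^{**}\cong V_p^k$.

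First I would spell out that the hypotheses $s<k$ and $p>1$ are precisely those assumed in Theorem \ref{te2.3.3}, so that theorem applies directly and gives a bounded linear bijection $T$ between $(\textsc{v}_p^k)^*$ and $U_p^k$ with bounded inverse. Next, I would invoke the elementary fact that taking adjoints preserves the class of linear homeomorphisms, so that $(\textsc{v}_p^k)^{**}\cong (U_p^k)^*$ via the map $T^*$. Finally, I would apply Theorem \ref{te2.2.6} to replace $(U_p^k)^*$ by $V_p^k$ (the identification there is even isometric, $\equiv$, which in particular entails $\cong$), and observe that the composition of two maps of type $\cong$ is again of type $\cong$ by the definition of isomorphism in Notation \ref{not1.2.2}.

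Since the corollary only claims $\cong$ (not $\equiv$), there is essentially no hidden difficulty: the entire content lies in the two preceding theorems, and the corollary is bookkeeping about composing isomorphisms. The one point that deserves a sentence of care is that the composition indeed lands in $V_p^k$ as desired, i.e., one should note explicitly that under the canonical pairings every element of $(\textsc{v}_p^k)^{**}$ is represented, via $T^*$ followed by Theorem \ref{te2.2.6}, by a unique element $v\in\dot V_p^k$ modulo $\mathcal P_{k-1}^d$, with norm-equivalence constants inherited from Theorems \ref{te2.3.3} and \ref{te2.2.6}. The main (and really only) potential obstacle is therefore formal: ensuring that the two duality statements are invoked on compatible underlying dense subspaces, but this is automatic once one recalls that $(\textsc{v}_p^k)^0$ and its image in $U_p^k$ are the dense subspaces on which the pairing is defined in Theorem \ref{te2.3.3}, and that the pairing in Theorem \ref{te2.2.6} is defined on $(U_p^k)^0$, so that all compositions are well-posed on these dense subsets and extend uniquely by continuity.
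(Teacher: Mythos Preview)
Your proposal is correct and follows exactly the same approach as the paper, which simply states that the corollary is the combination of Theorems \ref{te2.3.3} and \ref{te2.2.6}. Your added detail about taking the adjoint $T^*$ of the isomorphism from Theorem \ref{te2.3.3} and composing with the isometric identification of Theorem \ref{te2.2.6} merely makes explicit what the paper leaves implicit; the remark about compatible dense subspaces is unnecessary (the composition is at the level of Banach-space duals, so no density argument is needed), and the notation $(\textsc{v}_p^k)^0$ does not appear in the paper, but neither point affects the validity of the argument.
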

%
\sect{Proofs of Theorem \ref{te2.1.3} and Corollary \ref{cor2.1.3}}
\subsection{Auxiliary Results}
Here we prove some auxiliary results that are interesting in their own right. 
\begin{Lm}\label{lemm4.1}
Let $\{Q_i\}_{i\in\N}$ be the family of nonoverlapping cubes in $Q^d$. Then for each $f\in\dot V_p^k$
the limit $\displaystyle \lim_{i\rightarrow\infty}E_k(f;Q_i)$ exists and equals $0$.
\end{Lm}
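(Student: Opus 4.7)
The proof is essentially a direct unpacking of the definition of $(k,p)$-variation, so there is no real obstacle; the plan is simply to show that the infinite series $\sum_i E_k(f;Q_i)^p$ converges.

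The plan is as follows. Fix $f\in\dot V_p^k$, so that $|f|_{V_p^k}<\infty$. Given the family $\{Q_i\}_{i\in\N}$ of pairwise nonoverlapping subcubes of $Q^d$, I would first observe that for every $N\in\N$ the finite subfamily $\pi_N:=\{Q_1,\dots,Q_N\}$ is a packing in $Q^d$ of finite cardinality, i.e., $\pi_N\in\Pi$. Therefore, by Definition \ref{def1.2.4} applied to $S=Q^d$,
\[
\sum_{i=1}^{N} E_k(f;Q_i)^p \le \sup_{\pi\in\Pi}\sum_{Q\in\pi} E_k(f;Q)^p = |f|_{V_p^k}^p.
\]

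The right-hand side is independent of $N$, so passing to the limit $N\to\infty$ yields
\[
\sum_{i=1}^{\infty} E_k(f;Q_i)^p \le |f|_{V_p^k}^p < \infty.
\]
(Alternatively, this bound can be read off directly from the subadditivity part of Proposition \ref{prop1.7} applied to the disjoint sets $\mathring Q_i$, since $E_k(f;Q_i)\le \mathrm{var}_p^k(f;Q_i)$.)

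Since the series of nonnegative terms $E_k(f;Q_i)^p$ converges, its general term tends to zero, that is, $E_k(f;Q_i)^p\to 0$ as $i\to\infty$, and hence $\lim_{i\to\infty} E_k(f;Q_i)=0$. This both establishes existence of the limit and identifies its value. No additional structural facts about $\dot V_p^k$ beyond its defining seminorm are needed, which is why I do not anticipate any genuine obstacle here.
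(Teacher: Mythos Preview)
Your proof is correct and follows exactly the same approach as the paper: bound $\sum_i E_k(f;Q_i)^p$ by $|f|_{V_p^k}^p<\infty$ via the definition of the seminorm, then conclude that the general term tends to zero. The paper's version is just slightly terser, writing the infinite sum bound directly rather than passing through finite packings $\pi_N$ first.
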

\begin{proof}
By the definition of the seminorm on $\dot V_p^k$, see \eqref{1.2.14}, we have
\[
\sum_{i=1}^\infty E_k(f;Q_i)^p\le |f|_{V_p^k}^p<\infty.
\]
Since $1\le p<\infty$, this implies the required statement.
\end{proof}

Further, let $\mathcal Q_x$ denote the family of subcubes of $Q^d$ containing a point $x\in Q^d$. We set for a function $f\in\dot V_p^k$, 
\begin{equation}\label{fk}
f_k(x):=\varlimsup_{\mathcal Q_x\ni Q\rightarrow \{x\}}E_k(f;Q);
\end{equation} 
here the convergence is in the Hausdorff metric on subsets of $\mathbb R^d$. 

Clearly, $0\le f_k(x)\le |f|_{V_p^k}$, $x\in Q^d$. In fact, the following result holds.
\begin{Lm}\label{lem1}
The function $f_k\in\ell_p$ and satisfies the inequality $\|f_k\|_{p}\le |f|_{V_p^k}$.
\end{Lm}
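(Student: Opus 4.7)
The plan is to reduce the statement to a bound on finite sub-sums $\sum_{i=1}^N f_k(x_i)^p \le |f|_{V_p^k}^p$ over finite collections of distinct points in $Q^d$, and then deduce that $f_k$ is supported on a countable set and has $\ell_p$-norm at most $|f|_{V_p^k}$. The key observation is that the limsup in \eqref{fk} permits us to choose, for each point $x$, cubes containing $x$ of arbitrarily small diameter whose $E_k$-value is within $\varepsilon$ of $f_k(x)$; because we have freedom in how small, we can make the cubes at different base points mutually non-overlapping and apply Definition \ref{def1.2.4}.

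More concretely, fix distinct $x_1,\dots,x_N\in Q^d$ and $\varepsilon>0$. Put $\delta:=\frac{1}{3}\min_{i\ne j}\|x_i-x_j\|_\infty>0$. By the definition of $\varlimsup$ in \eqref{fk}, for each $i$ I pick a cube $Q_i\in\mathcal Q_{x_i}$ of diameter less than $\delta$ with $E_k(f;Q_i) > \max\{f_k(x_i)-\varepsilon,\,0\}$. Since each $Q_i$ lies in the $\delta$-neighbourhood of $x_i$ while the $x_i$'s are $3\delta$-separated, the cubes are pairwise disjoint, so $\pi:=\{Q_1,\dots,Q_N\}$ is a packing in $\Pi(Q^d)$. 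Definition \ref{def1.2.4} then yields
\[
\sum_{i=1}^N \max\{f_k(x_i)-\varepsilon,\,0\}^p \le \sum_{i=1}^N E_k(f;Q_i)^p \le |f|_{V_p^k}^p,
\]
and letting $\varepsilon\to 0$ gives the desired finite inequality.

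From this finite bound the rest is standard: for every $\eta>0$ the set $\{x\in Q^d : f_k(x)>\eta\}$ has cardinality at most $|f|_{V_p^k}^p/\eta^p$, so $\{f_k>0\}=\bigcup_{n\in\N}\{f_k>1/n\}$ is at most countable. Enumerating this support and taking the supremum of the finite estimates over larger and larger initial segments gives $\|f_k\|_p^p = \sum_{x\in Q^d} f_k(x)^p \le |f|_{V_p^k}^p$, as required.

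The only mildly delicate point is the selection in the first step: one must exploit both freedoms in the limsup (arbitrarily small diameter \emph{and} $E_k$-value arbitrarily close to $f_k(x_i)$) simultaneously in order to produce a legitimate packing. Once the cube-selection is set up cleanly, everything else follows from Chebyshev-type counting and monotone convergence of the finite sums.
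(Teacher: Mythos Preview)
Your proof is correct, and it is in fact cleaner than the paper's. The paper argues in two stages: first it invokes Lemma~\ref{lemm4.1} to analyse the structure of $\mathrm{range}(f_k)$ (every nonzero value is isolated and attained at only finitely many points), thereby deducing that $\mathrm{supp}\,f_k$ is at most countable, and only afterwards sketches the norm bound ``by the definitions of $f_k$ and the seminorm on $\dot V_p^k$.'' You instead go straight to the packing estimate for an arbitrary finite point set, obtain countability of the support as an immediate Chebyshev-type corollary, and then pass to the limit. Your route is self-contained---it does not use Lemma~\ref{lemm4.1} at all---while the paper's route yields the extra structural information about $\mathrm{range}(f_k)$, which, however, is not needed elsewhere.

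One cosmetic point: when $f_k(x_i)=0$ the strict inequality $E_k(f;Q_i)>0$ may be unattainable (e.g.\ if $f$ is a polynomial near $x_i$). Simply discard such points from the packing, or replace the strict inequality by $E_k(f;Q_i)\ge \max\{f_k(x_i)-\varepsilon,0\}$; either fix is harmless since those points contribute zero to the sum anyway.
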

\begin{proof}
Assume that $c$ is a non-isolated point of ${\rm range}(f_k):=f_k(Q^d)$. Then there exists a sequence of distinct points $\{x_i\}_{i\in\N}\subset Q^d$ such that $\lim_{i\rightarrow\infty}f_k(x_i)=c$. In turn, by the definition of $f_k$, there is a family of nonoverlapping cubes $Q_i\subset\mathcal Q_{x_i}$ such that $E_k(f;Q_i)\ge \frac{i-1}{i} f_k(x_i)$, $i\in\N$. Hence, due to Lemma \ref{lemm4.1},
\[
0\le c=\lim_{i\rightarrow\infty} f_k(x_i)\le\lim_{i\rightarrow\infty}E_k(f;Q_i)=0.
\]
Hence,  the set ${\rm range}(f_k)\setminus\{0\}$ consists of isolated points only and is at most countable.

Next, if $c\in {\rm range}(f_k)\setminus\{0\}$, then the set $f_k^{-1}(c)\subset Q^d$ is finite. (In fact, otherwise there is a family of nonoverlapping cubes $\{Q_i\}_{i\in\N}$ such that $\lim_{i\rightarrow\infty}E_k(f;Q_i)=c\ne 0$ which contradicts Lemma \ref{lemm4.1}).

These, in turn, imply that  ${\rm supp}\, f_k:=f_k^{-1}\bigl({\rm range}(f_k)\setminus\{0\}\bigr)$ is at most countable.
Moreover, $\|f_k;{\rm supp}\, f_k\|_p\le |f|_{V_p^k}$, by the definitions of $f_k$ and the seminorm on $\dot V_p^k$.
\end{proof}
\begin{Prop}\label{lem2}
Let $x\in Q^d$. Then $f_k(x)=0$ iff $f$ is continuous at $x$.
\end{Prop}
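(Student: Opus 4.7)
The plan is to prove both implications directly from the definitions of $f_k(x)$ and $E_k$, without needing the full machinery of $\dot V_p^k$ or the preceding lemmas; the proposition is really a local statement about best polynomial approximation on shrinking cubes. The forward direction (continuity implies $f_k(x)=0$) is the easier one: given $\varepsilon > 0$, continuity at $x$ supplies a $\delta > 0$ such that $|f(y) - f(x)| < \varepsilon$ whenever $\|y - x\| < \delta$. For any cube $Q \in \mathcal Q_x$ contained in the $\delta$-neighborhood of $x$, the constant polynomial $m \equiv f(x)$ (which lies in $\mathcal P_{k-1}^d$ since $k\ge 1$) witnesses $E_k(f;Q) \le \|f - m;Q\|_\infty \le \varepsilon$. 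Passing to the $\varlimsup$ as $Q\rightarrow\{x\}$ and then letting $\varepsilon \to 0$ gives $f_k(x) = 0$.

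For the reverse direction, suppose $f_k(x) = 0$ and fix $\varepsilon > 0$. The natural move is to pick a \emph{single} cube $Q_0 \in \mathcal Q_x$ (small enough in the Hausdorff sense) so that $E_k(f; Q_0) < \varepsilon/3$, and then choose a near-best polynomial $m \in \mathcal P_{k-1}^d$ with $\|f - m;Q_0\|_\infty < \varepsilon/3$. Since $m$ is an ordinary polynomial and hence continuous at $x$, there is a $\delta > 0$ (depending on $m$, and thus on $Q_0$) such that $\|y - x\| < \delta$ implies both $y \in Q_0$ and $|m(y) - m(x)| < \varepsilon/3$. A triangle inequality
\[
|f(y) - f(x)| \le |f(y) - m(y)| + |m(y) - m(x)| + |m(x) - f(x)| < \varepsilon
\]
then yields continuity of $f$ at $x$.

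The one conceptual point I would want to verify carefully is that the reverse direction does \emph{not} require letting $Q$ shrink together with $\varepsilon$; it suffices to commit to a single cube $Q_0$ once $\varepsilon$ is chosen and then exploit the ordinary continuity of the fixed polynomial $m$. This is what I expect the main (and really only) obstacle to look like on a first attempt: one might worry that the near-best polynomials $m_Q$ could develop derivatives blowing up as $Q\to\{x\}$, forcing delicate Markov-type estimates. That worry turns out to be a red herring because no uniform modulus-of-continuity estimate across different $Q$'s is needed. The remaining routine checks are that the constant polynomial $f(x)$ belongs to $\mathcal P_{k-1}^d$ (automatic for $k\ge 1$) and that in the definition of $f_k(x)$ the $\varlimsup$ coincides with $\lim$ here, since $E_k(f;\cdot)\ge 0$.
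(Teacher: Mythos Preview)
Your proof is correct and, for the implication $f_k(x)=0 \Rightarrow$ continuity, genuinely more direct than the paper's.

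For continuity $\Rightarrow f_k(x)=0$, you and the paper do the same thing: bound $E_k(f;Q)$ above by $\|f - f(x);Q\|_\infty$ via the constant polynomial.

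For the converse, the paper argues sequentially: given $x_i \to x$ with $f(x_i) \to c$, it passes to a subsequence with $\|x_{i+1}-x\|_{\ell_\infty^d} / \|x_{i+1}-x_i\|_{\ell_\infty^d} \to 0$, builds cubes $Q_{i,i+1} \in \mathcal{Q}_x$ of diameter comparable to $\|x_i - x_{i+1}\|_{\ell_\infty^d}$ containing both $x$ and $x_{i+1}$, and then controls $|m_{i,i+1}(x) - m_{i,i+1}(x_{i+1})|$ via the Markov inequality (the gradient of the best polynomial on $Q_{i,i+1}$ is at most $c\|f\|_\infty/\mathrm{diam}(Q_{i,i+1})$, while $\|x - x_{i+1}\|_{\ell_\infty^d} \ll \mathrm{diam}(Q_{i,i+1})$ by the subsequence choice). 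This is exactly the ``delicate Markov-type estimates'' scenario you anticipated and correctly dismissed as a red herring; the paper nonetheless carries it out, presumably because the identical mechanism is reused in the proofs of one-sided and radial limits in Theorem~\ref{te2.1.3}(a),(b), where one genuinely cannot commit to a single cube containing the approach direction.

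Your route sidesteps all of that by fixing one cube $Q_0$ and one polynomial $m$, then invoking the ordinary continuity of $m$. The only point you glossed over is that ``$\|y-x\|<\delta$ implies $y \in Q_0$'' requires $Q_0$ to contain a $Q^d$-relative neighborhood of $x$; but since $f_k(x)=0$ forces $E_k(f;Q)<\varepsilon/3$ for \emph{every} sufficiently small $Q \in \mathcal{Q}_x$, you are free to choose $Q_0 = \prod_i [a_i, a_i + 2\eta]$ with each $[a_i, a_i+2\eta] \subset [0,1]$ containing $x_i$, which does contain such a neighborhood.
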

\begin{proof}
Suppose that $x\in Q^d$ is such that $f_k(x)=0$. We prove that $f$ is continuous at $x$.

Since $f\in\ell_\infty$, each infinite subset of ${\rm range}(f)$ has a limit point. Hence, we should prove that if $\{x_i\}_{i\in\N}\subset Q^d$ is a sequence converging to $x$ and the sequence $\{f(x_i)\}_{i\in\N}$ converges to some $c\in\RR$, then $c=f(x)$.
 
To this end, passing to a subsequence of $\{x_i\}_{i\in\N}$, if necessary, without loss of generality we can assume that 
\begin{equation}\label{eq1}
\lim_{i\rightarrow\infty}\frac{\|x_{i+1}-x\|_{\ell_\infty^d}}{\|x_{i+1}-x_i\|_{\ell_\infty^d}}=0.
\end{equation}

Since the projections of the interval $[x,x_i]\subset Q^d$ onto the coordinate axes have lengths bounded from above by $\|x-x_i\|_{\ell_\infty^d}\, (\le 1)$, there exists a cube
$Q_i\in \mathcal Q_x$ of sidelength $\|x-x_i\|_{\ell_\infty^d}$ containing $[x,x_i]$. 

In particular, the sequence $\{Q_i\}_{i\in\N}$ converges to $\{x\}$ in the Hausdorff metric.  

Moreover, sets $Q_i\cup Q_{i+1}$ are connected so that their projections on the coordinate axes are compact subintervals of $[0,1]$ of lengths $\le \|x-x_i\|_{\ell_\infty^d}+\|x-x_{i+1}\|_{\ell_\infty^d}$. 

Hence, there exists a cube $Q_{i,i+1}\in\mathcal Q_x$ of sidelength $\le \|x-x_i\|_{\ell_\infty^d}+\|x-x_{i+1}\|_{\ell_\infty^d}$ containing $Q_i\cup Q_{i+1}$. In particular, $Q_{i,i+1}\rightarrow\{x\}$ in the Hausdorff metric as $x\rightarrow\infty$. 

Now let $m_{i,i+1}\in\mathcal P_{k-1}^d$, $i\in\N$,  be such that
\[
E_k(f;Q_{i,i+1})=\|f-m_{i,i+1};Q_{i,i+1}\|_{\infty}.
\]
Since $\|m_{i,i+1};Q_{i,i+1}\|_\infty\le \|f\|_{\infty}$ and $Q_{i,i+1}$ contains the interval $[x_i,x_{i+1}]$, the mean-value theorem and the A. Markov inequality for derivatives of polynomials give for a  positive constant $c:=c(k,d)$\smallskip
\begin{equation}\label{eq2}
\begin{array}{l}
\displaystyle
|f(x)-f(x_{i+1})|\le |f(x)-m_{i,i+1}(x)|+|f(x_{i+1})-m_{i,i+1}(x_{i+1})|\\
\\
\displaystyle \quad\qquad\qquad\qquad+\,|m_{i,i+1}(x)-m_{i,i+1}(x_{i+1})| \le 2E_k(f;Q_{i,i+1})+
\max_{Q_{i,i+1}}\|\nabla m_{i,i+1}\|_{\ell_\infty^d}
\\
\\
\displaystyle \quad\qquad\qquad\qquad
\le 2E_k(f;Q_{i,i+1})+\frac{c}{{\rm diam}(Q_{i,i+1})} \|f\|_{\infty}\|x-x_{i+1}\|_{\ell_\infty^d}\\
\\
\displaystyle \quad\qquad\qquad\qquad\le 2E_k(f;Q_{i,i+1}) + \frac{c\|x-x_{i+1}\|_{\ell_\infty^d}}{\|x_i-x_{i+1}\|_{\ell_\infty^d}}\|f\|_{\infty}.
\end{array}
\end{equation}

Further, since $f_k(x)=0$, condition \eqref{fk} implies that
\[
\lim_{\mathcal Q_x\ni Q\rightarrow \{x\}}E_k(f;Q)=0.
\]
From here and \eqref{eq1} we obtain that the right-hand side of \eqref{eq2} tends to $0$ as $i$ tends to $\infty$. Thus, $c=\lim_{i\rightarrow\infty}f(x_{i+1})=f(x)$, as required, i.e., $f$ is continuous at $x$. \smallskip

Conversely, suppose that $f\in\dot V_p^k$ is continuous at $x\in Q^d$.
Then
\[
0\le f_k(x):=\varlimsup_{\mathcal Q_x\ni Q\rightarrow \{x\}}E_k(f;Q)\le \varlimsup_{\mathcal Q_x\ni Q\rightarrow \{x\}}E_1(f;Q)\le \varlimsup_{\mathcal Q_x\ni Q\rightarrow \{x\}}\|f-f(x);Q\|_\infty=0.
\]
Hence, $f_k(x)=0$. This completes  the proof  of the proposition.\end{proof}

\begin{Prop}\label{lem3}
Let $\ell$ be a line intersecting $Q^d$ by a nontrivial interval $I$. Then for each $f\in\dot V_p^k$ its restriction to $I$ belongs to the space $\dot V_p^k(I)$.
\end{Prop}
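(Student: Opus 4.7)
The plan is to reduce the variation inequality defining $|f|_{V_p^k(I)}$ to the one defining $|f|_{V_p^k}$ by associating to every packing of $I$ a packing of $Q^d$ with controlled geometry. Given a finite packing $\{J_i\}$ of $I$ by closed subintervals, I would construct a packing $\{Q_i\}\in\Pi(Q^d)$ of axis-aligned subcubes such that $J_i\subset Q_i$ for each $i$. Once this is done, the restriction to $\ell$ of any $m\in\mathcal P_{k-1}^d$ is a univariate polynomial of degree at most $k-1$, hence a valid competitor for the one-dimensional local polynomial approximation on $J_i$, and so
\[
E_k(f|_I;J_i)\le \|f-m;J_i\|_\infty\le \|f-m;Q_i\|_\infty.
\]
Choosing $m$ to attain $E_k(f;Q_i)$, raising to the $p$-th power, summing over $i$, and taking the supremum over packings of $I$ would yield $|f|_{V_p^k(I)}\le|f|_{V_p^k}<\infty$, which is the required conclusion.

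To construct the cubes, I would parametrize $\ell$ as $x_0+sv$ with $v=(v_1,\ldots,v_d)$ a unit vector and fix an index $j$ with $|v_j|=\max_{1\le k\le d}|v_k|\ge d^{-1/2}>0$. Each $J_i$ corresponds to a parameter interval $[a_i,b_i]$ whose image under $s\mapsto x_0^j+sv_j$ is an interval on the $j$-th coordinate axis of length $L_i:=|v_j|(b_i-a_i)\le 1$. Let $Q_i$ be the axis-aligned cube of sidelength $L_i$ whose projection on the $j$-th axis equals that of $J_i$; on each remaining axis $k\neq j$ the projection of $J_i$ has length $|v_k|(b_i-a_i)\le L_i$ and lies in $[0,1]$, so there is enough freedom to shift $Q_i$ along those axes to ensure $Q_i\subset Q^d$ while still containing $J_i$.

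The containment $J_i\subset Q_i$ is automatic from this prescription, and pairwise nonoverlapping of the $Q_i$ follows from the fact that $s\mapsto x_0^j+sv_j$ is a nondegenerate affine map: distinct parameter intervals $[a_i,b_i]$ have pairwise disjoint-interior images on the $j$-th axis, and since those images are exactly the $j$-th axis projections of the $Q_i$'s, the cubes themselves are pairwise nonoverlapping. The main obstacle in the argument is reconciling three demands on $\{Q_i\}$ simultaneously---being subcubes of $Q^d$, being pairwise nonoverlapping, and containing the prescribed segments. Pinning down the $j$-th coordinate projection (along the axis on which $\ell$ has largest directional component) is what makes all three compatible: it forces the nonoverlapping condition to be inherited from the corresponding property of $\{J_i\}$, while leaving the other $d-1$ coordinate axes free to translate $Q_i$ into the unit cube.
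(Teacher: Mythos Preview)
Your proof is correct and follows the same overall strategy as the paper's: associate to each subinterval $J_i$ of $I$ an axis-aligned subcube $Q_i\subset Q^d$ containing $J_i$, with the cubes pairwise nonoverlapping, and then use $E_k(f|_I;J_i)\le E_k(f;Q_i)$ together with the fact that $\mathcal P_{k-1}^d|_\ell\subset\mathcal P_{k-1}^1$.

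The difference lies in the construction of the cubes. The paper proves a separate geometric lemma by induction on $d$: it reorders coordinates so that the direction components $|\cos\alpha_i|$ are nonincreasing, projects out the last coordinate, applies the induction hypothesis in $Q^{d-1}$, and then thickens back. Your construction is more direct: you single out the axis $j$ of maximal direction component, set the $j$-th projection of $Q_i$ equal to that of $J_i$ (which forces the sidelength to be $L_i=\ell_\infty(J_i)$ and, since the parameter-to-$j$-th-coordinate map is a nondegenerate affine bijection, automatically yields nonoverlapping), and then slide $Q_i$ along the remaining axes inside $[0,1]$ to swallow the shorter projections of $J_i$. Both constructions produce cubes of the same sidelength with the same nonoverlapping mechanism (the projection onto the dominant axis), so the difference is one of presentation rather than substance; your version simply avoids the inductive scaffolding.
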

\begin{proof}
We need the following geometric result.
\begin{Lm}\label{lemma3.5}
Let $\{I_j\}_{1\le j\le s}$ be a family of nonoverlapping nontrivial closed intervals in $I$. There exists a family of nonoverlapping cubes $\{Q_j\}_{1\le j\le s}$ in $Q^d$ such that 
\begin{equation}\label{eq-n3.4}
|Q_j|^{\frac 1 d}=\ell_\infty(I_j)\quad
{\rm and}\quad Q_j\cap I=I_j,\quad 1\le j\le s;
\end{equation}
here $\ell_\infty([x,y]):=\displaystyle\max_{1\le i\le d}\{|x_i-y_i|\}$ is the $\ell_\infty^d$-diameter of $[x,y]\subset\RR^d$.
\end{Lm}
\begin{proof}
Let $\alpha_i$ be the angle between a vector parallel to $I$ and the coordinate vector $e_i$, $1\le i\le d$. Renumerating the coordinates axes, if necessary, without loss of generality we assume that the sequence 
$\{|\cos\alpha_i |\}_{1\le i\le d}$ is nonincreasing. Then the lengths of the orthogonal projections of every $I_j$ on the coordinate axes $x_i$, $1\le i\le d$, also form a nonincreasing sequence.

Using this we proceed  by induction on $d$ starting from the trivial case $d=1$ and assuming that the result holds for all dimensions $\le d-1$ with $d\ge 2$. 

To prove the lemma for $d\ge 2$ we use the map $\pi_d: x\mapsto (x_1,\dots, x_{d-1},0)$ sending $Q^d$ onto $Q^{d-1}:=\{x\in Q^d\, :\, x_d=0\}$. Then $I':=\pi_d(I)$ is a closed interval in the cube $Q^{d-1}$
and $I_j':=\pi_d(I_j)$, $1\le j\le  s$, is a family of nonoverlapping closed intervals in $I'$. Moreover, $I'$ is nontrivial (hence, every $I_j'$ is) since otherwise $|\cos\alpha_d |=0$ and, hence, by our assumption all  $|\cos\alpha_j|=0$ contradicting nontriviality of $I$.

Now by the induction hypothesis there is a family of nonoverlapping cubes $Q_j'\subset Q^{d-1}$,  $1\le j\le s$, satisfying condition \eqref{eq-n3.4} for the family $\{I_j'\}_{1\le j\le s}$.

Next, preimages of $Q_j'$ with respect to the map $\pi_d|_{Q^d}$ form the family of nonoverlapping rectangular parallelotopes, say $\tilde Q_j$, with bases $Q_j'$ and heights being the unit intervals parallel to the $x_d$-axis; moreover, $I_j=\ell\cap\tilde Q_j$, $1\le j\le s$.  

Let $I_{j,d}$ be the image of $I_j$ under the orthogonal projection onto  the $x_d$ axis. Then  $I_{j,d}$ is a (possibly trivial) interval
of length  $\le \ell_{\infty}(I_j)\le 1$ in the unit interval in this axes. In particular, there is a subinterval $I_{j,d}'$ in the unit interval  of length $\ell_{\infty}(I_j)$ containing $I_{j,d}$. 

The intersection of the preimage of this interval under the projection onto $x_d$ with $\tilde Q_j$ is clearly a cube $Q_j$ of sidelength $\ell_{\infty}(I_j)$ satisfying the required statement.
\end{proof}

Finally, using the result and notations of Lemma \ref{lemma3.5}, for each $f\in \dot V_p^k$ we have
\[
E_k(f;I_j)\leq E_k(f;Q_j),\quad 1\le j\le s. 
\] 
In particular,
\[
\left(\sum_{j=1}^s E_k(f;I_j)^p\right)^{\frac 1 p}\le
\left(\sum_{j=1}^s E_k(f;Q_j)^p\right)^{\frac 1 p}\le |f|_{V_p^k}.
\]
Taking in the left-hand side supremum over all packings $\{I_j\}\subset I$ we obtain that $f|_I\in \dot V_p^k(I)$ and $|f|_I|_{V_p^k(I)}\le |f|_{V_p^k}$.

The proof of the proposition is complete.
\end{proof}
\subsection{Proof of Theorem \ref{te2.1.3}}
\begin{proof}
(a) Suppose that $d=1$ and $f\in\dot V_p^k([0,1])$. We should prove that $f$ has one-sided limits at each point $x\in [0,1]$. 

Since $f\in\ell_\infty$, each infinite subset of ${\rm range}(f)$ has a limit point. Hence,  we should prove that if $\{x_{j,i}\}_{i\in\N}\subset [0,1]$, $j=1,2$, are sequences converging to $x$ from the same side such that sequences $\{f(x_{j,i})\}_{i\in\N}$ converge to some $c_j\in\RR$, $j=1,2$, then $c_1=c_2$.

To this end, passing to subsequences of $\{x_{j,i}\}_{i\in\N}$, $j=1,2$, if necessary, without loss of generality we may assume that 
\begin{equation}\label{equ4.4}
x_{1,i+1}<x_{2,i+1}<x_{1,i}<x_{2,i}\quad {\rm for\ all}\quad i\in\N\quad {\rm and}\quad
\lim_{i\rightarrow\infty}\frac{|x_{2, i+1}-x_{1,i+1}|}{|x_{2,i+1}-x_{1,i}|}=0;
\end{equation}
here all intervals $I_i:=[x_{1,i},x_{2,i}]\subset [0,1]$, $i\in\N$, belong to the same connected component of the set $\RR\setminus\{x\}$.

By definition, $\{I_i\}_{i\in\N}\subset [0,1]$ is a family of  nonoverlapping intervals converging to $\{x\}$ in the Hausdorff metric. The second condition \eqref{equ4.4} implies that there exists a family of nonoverlapping intervals $\{I_i'\}_{i\in\N}\subset [0,1]$ such that 
\begin{equation}\label{equ4.5}
I_i\subset I_i'\quad {\rm for\ all}\quad i\in\N,\quad \lim_{i\rightarrow\infty}\frac{|I_i|}{|I_i'|}=0,\quad {\rm and}\quad  \lim_{i\rightarrow\infty}I_i'=\{x\}
\end{equation}
in the Hausdorff metric.

In particular, due to Lemma \ref{lemm4.1}, 
\begin{equation}\label{equ4.6}
\lim_{i\rightarrow\infty}E_k(f;I_i')=0.
\end{equation}

Let $m_{i}\in\mathcal P_{k-1}^1$, $i\in\N$,  be such that
\[
E_k(f;I_i')=\|f-m_{i};I_i'\|_{\infty}.
\]
Since $\|m_{i};I_i'\|_{\infty}\le \|f\|_{\infty}$ and $I_{i}'\supset I_i$,  the mean-value theorem and the A. Markov inequality for derivatives of polynomials give \smallskip
\[
\begin{array}{l}
\displaystyle
|f(x_{2,i})-f(x_{1,i})|\le |f(x_{2,i})-m_{i}(x_{2,i})|+|f(x_{1,i})-m_{i}(x_{1,i})|+\,|m_{i}(x_{2,i})-m_{i}(x_{1,i})|\\
\\
\displaystyle  \qquad\qquad\qquad\qquad\!\le 2E_k(f;I_{i}')+\frac{2(k-1)^2}{|I_{i}'|} \|f\|_{\infty}|I_i|.
\end{array}
\]
Since the right-hand side of the inequality tends to $0$ as $i\rightarrow\infty$, see \eqref{equ4.5}, \eqref{equ4.6}, 
\[
c_1=\lim_{i\rightarrow\infty}f(x_{1,i})=\lim_{i\rightarrow\infty}f(x_{2,i})=c_2.
\]
This  completes the proof of   part (a) of the theorem.\smallskip

\noindent (b) Now suppose that $f\in \dot V_p^k(Q^d)$ with $d\ge 2$. We should prove that $f$ has limits at each point $x\in Q^d$.

First, due to Proposition \ref{lem3}, for each line $\ell$ through $x$ such that $I:=\ell\cap Q^d\ne\{x\}$ the restriction $f|_{I}$ belongs to $\dot V_p^k(I)$. In particular, according to (a)  $f|_{I}$ has one sided limits at $x$. This implies that $f$ has radial limits at $x$.

Next, we show that all these limits coincide.

Let $H_x$ be a hyperplane through $x$ parallel to a coordinate hyperplane and $x_j$ be the coordinate axis orthogonal to $H_x$. Then $\RR^d\setminus H_x$ is the union of two nonintersecting open half spaces $S_1,S_2$ such that at least one of them, say $S_1$, intersects $Q^d$. 
\begin{Lm}\label{lem4}
 Let $y_1,y_2\in S_1\cap Q^d$ be distinct. Then
\begin{equation}\label{eq-n3.8}
\lim_{(x,y_1]\ni z\rightarrow x}f(z)=\lim_{(x,y_2]\ni z\rightarrow x}f(z).
\end{equation}
\end{Lm}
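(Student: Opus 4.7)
Pick coordinates so that $H_x = \{u_1 = x_1\}$ and $S_1 = \{u_1 > x_1\}$, and for each small $h>0$ let $z_j(h)$ denote the unique point of $(x,y_j]$ whose first coordinate equals $x_1+h$. Setting $\beta_{j,i} := (y_{j,i}-x_i)/(y_{j,1}-x_1)$ one has $z_j(h)_i = x_i+h\beta_{j,i}$ and
\[
\|z_1(h)-z_2(h)\|_{\ell_\infty^d} \le C_0\,h,\qquad C_0:=\max_{2\le i\le d}|\beta_{1,i}-\beta_{2,i}|.
\]
The restriction of $f$ to the line through $x$ and $y_j$ belongs to $\dot V_p^k$ of that line by Proposition \ref{lem3}, so the one-sided radial limit $c_j := \lim_{h\to 0^+} f(z_j(h))$ exists by the $d=1$ case (part (a) of Theorem \ref{te2.1.3}, already established). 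What remains is to prove $c_1=c_2$.

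The plan is to exhibit, along a well-chosen sequence $h_n\to 0$, a packing of cubes $\{Q_n\}\subset Q^d\cap\overline{S_1}$ with sidelengths $s_n$ satisfying $s_n/h_n\to\infty$ and with $z_1(h_n),z_2(h_n)\in Q_n$ for every $n$. Concretely, take $h_n := 2^{-n^2}$ and $s_n := n\,h_n$, and let $Q_n$ be the cube of sidelength $s_n$ whose $u_1$-range is $[x_1+h_n,\,x_1+h_n+s_n]$ and whose remaining coordinate ranges (each of length $s_n$) are chosen to contain both values $x_i + h_n \beta_{j,i}$ while remaining in $[0,1]$. Such a placement exists once $s_n\ge C_0 h_n$ (automatic for $n\ge C_0$) and once $s_n$ is small enough (automatic since $s_n\to 0$), and in every coordinate either $x_i\in(0,1)$ or the sign of $\beta_{j,i}$ forced by $y_j\in Q^d$ keeps the chosen range inside $[0,1]$. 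Finally, $h_{n+1}+s_{n+1}=(n+2)\,2^{-(n+1)^2}<2^{-n^2}=h_n$ for large $n$, so the $u_1$-ranges of the cubes are pairwise disjoint and $\{Q_n\}$ is nonoverlapping.

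Once this packing is in hand, Lemma \ref{lemm4.1} yields $E_k(f;Q_n)\to 0$. Choosing $m_n\in\mathcal P_{k-1}^d$ with $\|f-m_n;Q_n\|_\infty = E_k(f;Q_n)$, so that $\|m_n;Q_n\|_\infty \le \|f\|_\infty+E_k(f;Q_n)$ stays bounded, the A.~Markov inequality on $Q_n$ combined with the mean value theorem (exactly as in the proofs of Proposition \ref{lem2} and Theorem \ref{te2.1.3}(a)) gives
\[
|f(z_1(h_n))-f(z_2(h_n))| \le 2E_k(f;Q_n) + \frac{c(k,d)\bigl(\|f\|_\infty+E_k(f;Q_n)\bigr)}{s_n}\,\|z_1(h_n)-z_2(h_n)\|_{\ell_\infty^d}.
\]
The right-hand side is bounded by $2E_k(f;Q_n)+O(h_n/s_n)=2E_k(f;Q_n)+O(1/n)\to 0$, and combining with $f(z_j(h_n))\to c_j$ forces $c_1=c_2$.

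The main obstacle is the geometric construction of $\{Q_n\}$. The points $z_1(h_n),z_2(h_n)$ sit at distance $\Theta(h_n)$ from $x$, so the Markov-type bound above can be made to vanish only if one insists that $s_n\gg h_n$; yet a cube of sidelength $s_n\gg h_n$ that comes within $h_n$ of $x$ automatically contains a $\Theta(s_n)$-neighborhood of a point near $x$, and if all such cubes were required to shrink to $\{x\}$ they would necessarily overlap. The way around this is to notice that Lemma \ref{lemm4.1} demands only pairwise nonoverlap, not shrinkage to $x$; placing the $Q_n$ in thin slabs parallel to $H_x$ at rapidly decreasing heights $h_n$ decouples the disjointness condition (controlled entirely by the $u_1$-direction) from the Markov estimate (controlled by the sidelengths $s_n\gg h_n$), and the arithmetic inequality $h_{n+1}+s_{n+1}<h_n$ makes both requirements compatible.
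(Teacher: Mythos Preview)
Your proof is correct and follows essentially the same strategy as the paper's: both place nonoverlapping cubes $Q_n$ in slabs perpendicular to $H_x$ whose sidelengths are large compared with the separation of the two ray points they contain, then apply Lemma~\ref{lemm4.1} together with the Markov/mean-value estimate. The only difference is cosmetic---the paper obtains the ratio condition $\|z_1-z_2\|/s_n\to 0$ by an abstract subsequence choice, while you write down explicit sequences $h_n=2^{-n^2}$, $s_n=nh_n$.
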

\begin{proof}
By our assumption, the orthogonal projection onto the  $x_j$ axis $p_j$ maps lines $(x,y_i)$, $i=1,2$,  bijectively onto $x_j$. Let 
\[
I:=p_j\bigl((x,y_1]\bigr)\cap p _j\bigl((x,y_2]\bigr)
\]
be the half open interval with the endpoint $p_j(x)$. We choose a monotonic (with respect to the natural order on $x_j$) sequence $\{z_n\}_{n\in\N}\subset I$ converging to $p_j(x)$ and set
\[
y_{n,i}:=p_j^{-1}(z_n)\cap (x,y_i),\quad i=1,2.
\]
Passing to a subsequence of $\{z_n\}_{n\in\N}$, if necessary, without loss of generality we assume that
\begin{equation}\label{eq3}
\lim_{n\rightarrow\infty}\frac{\max_{i=1,2} \|y_{n+1,i}-x\|_{\ell_\infty^d}}{\|z_{n+1}-z_{n}\|_{\ell_\infty^d}}=0.
\end{equation} 

Next, let $P_n:=p_j^{-1}([z_{n},z_{n+1}])\cap Q^d$ be the rectangular parallelotope between hyperplanes $p_j^{-1}(z_{n+1})$ and 
$p_j^{-1}(z_{n})$ inside $Q^d$. Then $P_n$ contains points $y_{n+1,i}$, $i=1,2$, and the smallest length of projections of $P_n$ onto coordinate axes is $\|z_{n+1}-z_{n}\|_{\ell_\infty^d}$. Moreover, due to \eqref{eq3},
\[
\|y_{n+1,2}-y_{n+1,1}\|_{\ell_\infty^d}\le 2\max_i\|y_{n+1,i}-x\|_{\ell_\infty^d}=\lambda_n\cdot \|z_{n+1}-z_{n}\|_{\ell_\infty^d},
\]
where the sequence $\{\lambda_n\}_{n\in\N}$ converges to $0$.

These imply that for each $n$ such that $\lambda_n<1$ there exists a cube $Q_n\subset P_n$ of sidelength $\|z_{n+1}-z_{n}\|_{\ell_\infty^d}$ containing points $y_{n+1,i}$, $i=1,2$. 

By the definition, $\{Q_n\}_{n\in\N}$ is a sequence of nonoverlapping cubes converging to $\{x\}$ in the Hausdorff metric.  Hence, according to Lemma \ref{lemm4.1} 
\begin{equation}\label{equ4.8}
\lim_{n\rightarrow\infty}E_k(f;Q_n)=0.
\end{equation}
Then as in \eqref{eq2} using the mean-value theorem and the Markov inequality for derivatives of polynomials on $Q_n$, and after that using \eqref{eq3}, \eqref{equ4.8}, we obtain for some $c=c(k,d)$
\[
\varlimsup_{n\rightarrow\infty}|f(y_{n+1,1})-f(y_{n+1,2})|\le \varlimsup_{n\rightarrow\infty}\left(2E(f;Q_n)+\frac{c\|y_{n+1,2}-y_{n+1,1}\|_{\ell_\infty^d}}{\|z_{n+1}-z_{n}\|_{\ell_\infty^d}}\|f\|_{\infty}\right)=0.
\]

This implies \eqref{eq-n3.8}.
\end{proof}
Using Lemma \ref{lem4} we obtain the following:
\begin{Lm}\label{lem4.6}
If $d\ge 2$, then all radial limits of $f\in \dot V_p^k(Q^d)$ at a point $x\in Q^d$ coincide.
\end{Lm}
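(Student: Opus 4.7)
The plan is to treat Lemma \ref{lem4.6} as a combinatorial chaining problem built on top of Lemma \ref{lem4}. Given $x\in Q^d$, let $C_x\subset\RR^d\setminus\{0\}$ denote the cone of admissible directions at $x$, i.e., $v\in C_x$ iff $x+tv\in Q^d$ for all sufficiently small $t>0$; equivalently, $v_j\ge 0$ when $x_j=0$ and $v_j\le 0$ when $x_j=1$. For $j\in\{1,\dots,d\}$ and $\sigma\in\{+,-\}$ write $S_j^\sigma:=\{z\in\RR^d : \sigma(z_j-x_j)>0\}$. The content of Lemma \ref{lem4} is precisely that whenever $v_1,v_2\in C_x$ both satisfy $\sigma v_{1,j}>0$ and $\sigma v_{2,j}>0$ (so their endpoints lie in a common $S_j^\sigma$), the radial limits of $f$ at $x$ along $v_1$ and $v_2$ agree. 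I define the equivalence relation $v_1\sim v_2$ on $C_x$ generated by this rule, and the lemma reduces to showing that $C_x$ has a single $\sim$-class when $d\ge 2$.

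I carry this out in two steps. First, every $v\in C_x$ is equivalent to some coordinate direction $\pm e_i\in C_x$: choose an index $i$ with $v_i\ne 0$, set $\sigma=\mathrm{sgn}(v_i)$, and observe that the membership constraints for $C_x$ force $\sigma e_i\in C_x$ as well; since both $v$ and $\sigma e_i$ lie in $S_i^\sigma$, a single application of Lemma \ref{lem4} gives $v\sim\sigma e_i$. Second, I claim that any two coordinate directions $\sigma e_i,\sigma'e_j\in C_x$ with $(i,\sigma)\ne(j,\sigma')$ are $\sim$-equivalent. If $i\ne j$, I take the bridge $w:=\sigma e_i+\sigma' e_j$; convexity of $C_x$ puts $w$ in $C_x$, it shares the $i$-th coordinate sign with $\sigma e_i$ and the $j$-th with $\sigma'e_j$, yielding a length-two chain. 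If $i=j$ and $\sigma\ne\sigma'$, then $\pm e_i\in C_x$ forces $0<x_i<1$; using $d\ge 2$ I select some $\ell\ne i$ and a sign $\varepsilon$ with $\varepsilon e_\ell\in C_x$ (at least one such $\varepsilon$ exists because $C_x$ contains $e_\ell$ if $x_\ell<1$ and $-e_\ell$ if $x_\ell>0$), then the chain
\[
\sigma e_i,\quad \sigma e_i+\varepsilon e_\ell,\quad -\sigma e_i+\varepsilon e_\ell,\quad -\sigma e_i
\]
sits inside $C_x$ and each consecutive pair shares a strict coordinate sign, so transitivity gives $\sigma e_i\sim\sigma' e_i$.

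Combining the two steps yields a single $\sim$-class, and Lemma \ref{lem4} then forces all radial limits of $f$ at $x$ to coincide. The main obstacle to watch is the boundary case (where not every direction is admissible), but the construction above handles it uniformly: every intermediate vector $w$ is assembled as a nonnegative combination of vectors already known to belong to $C_x$, so the convex-cone structure of $C_x$ keeps all auxiliary directions admissible. The hypothesis $d\ge 2$ enters exactly once---in producing the auxiliary coordinate $\ell\ne i$ in the second sub-case---which is precisely the feature absent in the one-dimensional analogue.
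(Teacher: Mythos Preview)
Your proof is correct. Both your argument and the paper's build the lemma as a chaining problem on top of Lemma~\ref{lem4}, but they execute the chaining differently. The paper works in $Q^d\setminus\{x\}$ rather than in the direction cone: it observes that the half-spaces $S_j$ form an open cover of $Q^d\setminus\{x\}$, takes an arbitrary path $\gamma:[0,1]\to Q^d\setminus\{x\}$, and notes that the function $t\mapsto\lim_{(x,\gamma(t)]\ni z\to x}f(z)$ is locally constant (equal to the common value $L_j$ on each open set $\gamma^{-1}(S_j)$), hence constant by connectedness of $[0,1]$; path-connectedness of $Q^d\setminus\{x\}$ then finishes. Your version is more explicit and combinatorial: you reduce every admissible direction to a coordinate direction by a single application of Lemma~\ref{lem4}, and then link coordinate directions by hand-built length-two or length-three chains inside the convex cone $C_x$. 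Your approach avoids the topological detour and makes the role of $d\ge 2$ completely transparent (it is needed exactly once, to produce an auxiliary axis $\ell\ne i$); the paper's approach is shorter to state and would adapt more readily if $Q^d$ were replaced by a less combinatorially rigid convex body. One small wording point: $C_x$ is a convex cone with the origin removed, so ``convexity of $C_x$'' is not literally true, but every sum you form is visibly nonzero, so no harm is done.
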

\begin{proof}
First, observe that $Q^d\setminus\{x\}$ is covered by the union of open  sets 
$\RR^d\setminus H_x^i$, $1\le i\le d$, where $H_x^i$ is a hyperplane through $x$ orthogonal to the $x_i$ axes; this is true because 
\[
\bigcap_{1\le i\le d} H_x^i=\{x\}.
\]
Let $\{S_j\}_{1\le j\le k}$, $d\le k\le 2d$, be the family of all connected components (open half spaces)
 of the sets $\RR^d\setminus H_x^i$, $1\le i\le d$, intersecting $Q^d$.
According to Lemma \ref{lem4}, $f$ has the same radial limit 
at $x$ along each ray in $S_j$ emanating from $x$ denoted by $L_{j}$. 

\noindent Clearly,
\[
Q^d\setminus\{x\}\subset\bigcup_{1\le j\le k} S_j.
\]
In particular, for a path $\gamma: [0,1]\rightarrow Q^d\setminus\{x\}$ there is 
a subset $J\subset\{1,\dots, k\}$ such that
\[
\gamma([0,1])\subset\bigcup_{j\in J}S_j\qquad {\rm and}\qquad  S_j\cap \gamma([0,1])\ne\emptyset,\quad j\in J.
\]
Thus, the function 
\[
F_f(t):=\lim_{(x,\gamma(t)]\ni z\rightarrow x}f(z),\qquad t\in [0,1],
\]
attains value $L_{j}$ on the open set $\gamma^{-1}(S_j)$, $j\in J$.   Since these open sets cover the connected interval $[0,1]$, the function $F_f$ must be constant.

\noindent Hence, $f$ has the same radial limits at $x$ along the rays emanating from $x$ passing through $\gamma(0)$ and  $\gamma(1)$. 

Finally, since $Q^d\setminus\{x\}$ is path connected, every two its points are joined by a path. Together with the previous argument this completes the proof of the lemma.
\end{proof}

To proceed with the proof of the theorem, we require the following geometric result.
\begin{Prop}\label{lem4.7}
Let $\{R_n\}_{n\in\N}\subset\RR^d$ be a sequence of rays with the origin at $x\in Q^d$ converging to a ray $R$. If $R_n\cap Q^d\setminus\{x\}\ne\emptyset$ for all $n$, then $R\cap Q^d\setminus\{x\}\ne\emptyset$ as well.
\end{Prop}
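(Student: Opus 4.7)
The plan is to reduce the statement to the closedness of the tangent cone of the polytope $Q^d$ at $x$. Interpret convergence of rays in the natural way: parametrize $R_n = \{x + tv_n : t \ge 0\}$ and $R = \{x + tv : t \ge 0\}$ by unit direction vectors, and read $R_n \to R$ as $v_n \to v$ in $S^{d-1}$ (equivalently, Hausdorff convergence of the traces $R_n \cap B$ to $R \cap B$ on every closed ball $B$ centered at $x$). The hypothesis then supplies, for each $n$, a parameter $t_n > 0$ with $x + t_n v_n \in Q^d$, and our task is to exhibit some $t > 0$ with $x + tv \in Q^d$.

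The key idea is to exploit that $Q^d = [0,1]^d$ is a convex polytope, namely the intersection of the $2d$ closed half-spaces $H_i^+ := \{y : y_i \le 1\}$ and $H_i^- := \{y : y_i \ge 0\}$. I would partition these half-spaces into those active at $x$ (equality holds) and those inactive (strict inequality holds). For an active half-space of the form $\ell(y) \le c$ with $\ell(x) = c$, the condition $x + t_n v_n \in Q^d$ combined with $t_n > 0$ forces $\ell(v_n) \le 0$; passing to the limit yields $\ell(v) \le 0$, so $x + tv$ lies in this half-space for every $t \ge 0$. For an inactive half-space $\ell(y) \le c$ with $\ell(x) < c$, continuity of $\ell$ guarantees $x + tv$ lies in it provided $t > 0$ is smaller than the positive threshold $(c - \ell(x))/\max(\ell(v), 0)$ (interpreted as $+\infty$ when $\ell(v) \le 0$).

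Choosing $t > 0$ smaller than the finitely many positive thresholds coming from the inactive constraints, every defining half-space of $Q^d$ contains $x + tv$, so $x + tv \in Q^d$; since $v$ is a unit vector, $x + tv \ne x$, establishing $R \cap (Q^d \setminus \{x\}) \ne \emptyset$. The only mildly delicate point is fixing the meaning of convergence of rays; once this is identified with convergence of unit direction vectors, what remains is an elementary exercise in the polyhedral structure of $Q^d$, and I do not expect a serious obstacle.
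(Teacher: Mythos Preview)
Your proof is correct and takes a genuinely different route from the paper. The paper argues by induction on $d$: it picks the endpoints $x_n$ of $R_n\cap Q^d$, passes to a subsequence so that all $x_n$ lie on a single face $F$ and converge to some $\tilde x\in F$; if $x\notin F$ then ${\rm dist}_\infty(x,F)>0$ gives $\tilde x\neq x$ and $(x,\tilde x]\subset R\cap Q^d$, while if $x\in F$ the problem drops to the $(d-1)$-dimensional cube $F$. You instead read the statement as closedness of the tangent cone of $Q^d$ at $x$ and verify it directly from the half-space description $Q^d=\bigcap_j\{\ell_j\le c_j\}$: active constraints pass to the limit in the direction vectors, inactive ones are handled for small $t$ by continuity. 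Your argument avoids the induction and subsequence bookkeeping, and it generalizes verbatim to any convex polytope; the paper's approach is more hands-on and tied to the face structure of the cube. Both are short, but yours is the cleaner one here.
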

\noindent Here convergence is defined by the points of intersection of the rays with the unit sphere centered at $x$. 
\begin{proof}
We apply induction on dimension. 

For $d=1$ the result trivially holds. Assuming that it is valid for all dimensions $\le d-1$, $d\ge 2$, we prove it for $d$. 

To this end, consider intervals $I_n:=[x,x_n]:=R_n\cap Q^d$, $n\in\N$. By the assumption of the lemma $\|x_n-x\|_{\ell_\infty^d}>0$. Passing to a subsequence of $\{R_n\}_{n\in\N}$, if necessary, we assume without loss of generality that all $x_n$ are contained in the same face, say $F$, of $Q^d$ and the sequence $\{x_n\}_{n\in\N}$ converges to a point $\tilde x\in F$. 

First, let us consider $x\not\in F$. Then 
\[
0<\delta=:{\rm dist}_\infty (x,F)\le \varliminf_{n\rightarrow\infty} \|x-x_n\|_{\ell_\infty^d}=\|x-\tilde x\|_{\ell_\infty^d}.
\]
In particular, $(x,\tilde x]$ is a nontrivial interval in $R$, i.e., $R\cap Q^d\ne \emptyset$ in this case. 

Now let $x\in F$. Then all intervals $I_n$ are contained in $F$ as well. This reduces the problem to the case of the $(d-1)$-dimensional cube $F$ and the sequence of rays $\{R_n\}_{n\in\N}$ such that $R_n\cap F\setminus\{x\}\ne\emptyset$ for all $n$.  Clearly, the limit ray $R$ belongs to the hyperplane containing $F$ and, hence, by the induction hypothesis $R\cap F\setminus\{x\}\ne\emptyset$.

This proves  the lemma.
\end{proof}
Finally, let us complete the proof of Theorem \ref{te2.1.3}\,(b).

To this end, it remains to prove for $f\in \dot V_p^k$ that if $\{x_n\}\subset Q^d\setminus\{x\}$ is a sequence converging to $x$  such that $\lim_{n\rightarrow\infty}f(x_n)$ exists, then the limit coincides with the  the radial  limit of $f$ at $x$, see Lemma \ref{lem4.6}.

Passing to a subsequence of $\{x_n\}_{n\in\N}$, if necessary, we  assume without loss of generality that the rays $R_n$ with the origin at $x$ passing through $x_n$, $n\in\N$, converge to a ray $R$. By Proposition \ref{lem4.7}  $R\cap (Q^d\setminus\{x\})\ne\emptyset$. Then there is a hyperplane $H_x$ passing through $x$ orthogonal to a coordinate axis and an open half space $S\subset\RR^d$ with the boundary $H_x$ such that $R\setminus\{x\}\subset S$. We show that for all sufficiently large $n$ points $x_n$ belong to $S$ as well. 

In fact, otherwise the corresponding rays $R_n$ are located in the closed half space $\RR^d\setminus S$ with the boundary $H_x$, hence, $R$ as the limit of the sequence $\{R_n\}_{n\in\N}$ is contained in $\RR^d\setminus S$  as well, a contradiction. 

Now, we proceed as in the proof of Lemma \ref{lem4}. Specifically, we draw hyperplanes through points $x_n$ parallel to $H_x$. They intersect $R$ at some points $y_n$. Then passing to a subsequence  $\{x_j\}_{j\in J\subset\N}$ of $\{x_n\}_{n\in\N}$ we  construct a sequence of nonoverlapping cubes $\{Q_j\}_{j\in J}\subset S\cap Q^d\setminus\{x\}$ converging to $\{x\}$ in the Hausdorff metric  such that $[x_j,y_j]\subset Q_j$, $j\in J$, and
\[
\lim_{J\ni j\rightarrow\infty}\frac{\|x_j-y_j\|_{\ell_\infty^d}}{{\rm diam}(Q_j)}=0.
\]
Using these and Lemma \ref{lemm4.1} and arguing 
 as in the proof of Lemma \ref{lem4} (i.e., applying the mean-value theorem and Markov inequality) we obtain that
\[
\lim_{J\ni j\rightarrow\infty}|f(x_j)-f(y_j)|=0.
\]
This shows that $\lim_{n\rightarrow\infty}f(x_n)$ coincides with the radial limit of $f$ at $x$ and completes the proof of part (b) of the theorem.\smallskip

\noindent (c) We should prove that if $s>k$, then the vector space $\dot V_p^k$  is the direct sum of subspaces $\ell_p$ and $\mathcal P_{k-1}^d$.

Due to parts (a) and (b) of the theorem, each $f\in\dot V_p^k$ is bounded and Lebesgue measurable. In particular, the map $L:\dot V_p^k\rightarrow L_\infty$ which sends a function to its equivalence class in $L_\infty$ is well defined.
\begin{Lm}\label{lem5.8}
It is true that ${\rm range}(L)\subset \dot V_{p\infty}^k$ and the norm of $L:\dot V_p^k\rightarrow \dot V_{p\infty}^k$ is $\le 1$.
\end{Lm}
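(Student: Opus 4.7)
The statement $\operatorname{range}(L)\subset \dot V_{p\infty}^k$ with $\|L\|\le 1$ is essentially a monotonicity fact: replacing pointwise suprema by essential suprema can only shrink local polynomial approximation, so the associated variation cannot increase. The plan is to formalize this in three short steps, after verifying that $L$ is even well defined on $\dot V_p^k$.

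First I would record that $L$ makes sense on $\dot V_p^k$. By Theorem \ref{te2.1.3}\,(a),(b) every $f\in\dot V_p^k$ has at most countably many discontinuities, hence is Lebesgue measurable; combined with $f\in\ell_\infty$ this gives a well-defined equivalence class $Lf\in L_\infty$, and the definition of $\dot V_{p\infty}^k$ (Definition \ref{def-n1.9} with $\ell_\infty$ replaced by $L_\infty$ and $E_k$ replaced by its $L_\infty$-analogue $E_k^\infty(\,\cdot\,;Q):=\inf_{m\in\mathcal P_{k-1}^d}\|f-m\|_{L_\infty(Q)}$) therefore applies to $Lf$.

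Next, fix a subcube $Q\subset Q^d$ and a polynomial $m\in\mathcal P_{k-1}^d$. Since $f-m$ is a bounded Lebesgue measurable function on $Q$, its essential supremum is dominated by its pointwise supremum:
\[
\|f-m\|_{L_\infty(Q)}\le \|f-m;Q\|_\infty.
\]
Taking the infimum over $m\in\mathcal P_{k-1}^d$ on both sides yields the cube-wise comparison
\[
E_k^\infty(Lf;Q)\le E_k(f;Q).
\]

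Finally, for any packing $\pi\in\Pi$ I would sum the $p$-th powers of this inequality:
\[
\left(\sum_{Q\in\pi}E_k^\infty(Lf;Q)^p\right)^{\!1/p}\le \left(\sum_{Q\in\pi}E_k(f;Q)^p\right)^{\!1/p}\le |f|_{V_p^k}.
\]
Taking the supremum over $\pi\in\Pi$ gives simultaneously $Lf\in \dot V_{p\infty}^k$ and $|Lf|_{V_{p\infty}^k}\le |f|_{V_p^k}$, which is the claim. The only conceptual point to watch is that the comparison $\|\cdot\|_{L_\infty(Q)}\le \|\cdot;Q\|_\infty$ is applied to the \emph{same} representative $f-m$ before passing to equivalence classes; since the essential supremum depends only on the class, this is harmless. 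No obstacle really arises, as measurability has already been secured by Theorem \ref{te2.1.3}.
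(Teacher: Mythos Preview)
Your proof is correct and follows essentially the same approach as the paper: both rest on the single observation that $\|f-m\|_{L_\infty(Q)}\le \|f-m;Q\|_\infty$ gives $E_{k\infty}(Lf;Q)\le E_k(f;Q)$, from which the seminorm inequality follows immediately by summing over packings. Your version is slightly more explicit in justifying that $L$ is well defined (via the measurability established in parts (a) and (b) of Theorem~\ref{te2.1.3}), but this is a matter of detail rather than a different argument.
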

\begin{proof}
Let us recall that $\dot V_{p\infty}^k$ is the subspace of $L_\infty$ functions on $Q^d$ equipped with the seminorm
\begin{equation}
|f|_{V_{p\infty}^k}:=\sup_{\pi\in\Pi}\left(\sum_{Q\in\pi}E_{k\infty}(f;Q)^p  \right)^{\frac 1 p},
\end{equation}
where
\[
E_{k\infty}(f;Q):=\inf_{m\in\mathcal P_{k-1}^d}\|f-m\|_{L_\infty(Q)}.
\]
Since for each Lebesgue measurable function $f\in\ell_\infty$,
$E_{k\infty}(f;Q)\le E_{k}(f;Q)$,  the required statement follows directly from the definitions of seminorms of $\dot V_p^k$ and $\dot V_{p\infty}^k$.
\end{proof}
Further, since $s>k$, Lemma 3.1 in \cite{BB-18} implies that each $L(f)$ equals a polynomial, say $m_f\in \mathcal P_{k-1}^d$, a.e. on $Q^d$. Since by the definition of the map $L$ for every $m\in \mathcal P_{k-1}^d$ function $L(m)$ equals $m$ a.e. on $Q^d$, map $L:\dot V_{p}^k\rightarrow\dot V_{p\infty}^k$ is surjective and
$L|_{\mathcal P_{k-1}^d}:\mathcal P_{k-1}^d\rightarrow \dot V_{p\infty}^k$ is an isomorphism; hence, ${\rm ker}(L)\cap\mathcal P_{k-1}^d=(0)$. This implies that $\dot V_p^k$ is isomorphic to the direct sum of ${\rm ker}(L)$ and $\mathcal P_{k-1}^d$. To complete the proof of  part (c) it remains to show the following.
\begin{Lm}\label{lem5.9}
${\rm ker}(L)$ coincides with $\ell_p$.
\end{Lm}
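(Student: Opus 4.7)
The plan is to establish both inclusions. The inclusion $\ell_p\subset\ker(L)$ is routine: any $f\in\ell_p(Q^d)$ has at most countable support, hence $f=0$ almost everywhere on $Q^d$ (so $L(f)=0$), and a direct estimate via $E_k(f;Q)\le\|f;Q\|_\infty$ together with the fact that each support point lies in at most $2^d$ cubes of any packing gives $|f|_{V_p^k}\le 2^{d/p}\|f\|_p$, confirming $f\in\dot V_p^k$.

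For the nontrivial inclusion $\ker(L)\subset\ell_p$, fix $f\in\ker(L)$ and set $E:=\{x\in Q^d\,:\,f(x)=0\}$. Then $E$ has full Lebesgue measure and hence is dense in $Q^d$. At every continuity point $y$ of $f$, picking $z_n\in E$ with $z_n\to y$ forces $f(y)=\lim_n f(z_n)=0$; therefore ${\rm supp}\,f$ lies in the discontinuity set of $f$, which by Theorem \ref{te2.1.3}\,(a),(b) is at most countable. Enumerate it as $\{x_i\}_i$.

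The next step is to bound individual values. For any $x_i$ and any cube $Q\ni x_i$, a polynomial $m\in\mathcal P_{k-1}^d$ realizing $E_k(f;Q)=\|f-m;Q\|_\infty$ satisfies $|m|=|f-m|\le E_k(f;Q)$ on the dense subset $E\cap Q$; by continuity of $m$ this upgrades to $\|m;Q\|_\infty\le E_k(f;Q)$, whence
\[
|f(x_i)|\le |f(x_i)-m(x_i)|+|m(x_i)|\le 2E_k(f;Q).
\]
For any finite subcollection $x_1,\dots,x_n$, choose pairwise nonoverlapping subcubes $Q_i\subset Q^d$ containing $x_i$ (sidelengths less than half the minimum pairwise distance work), giving a packing $\pi\in\Pi$. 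Summing $p$-th powers yields
\[
\sum_{i=1}^n|f(x_i)|^p\le 2^p\sum_{i=1}^n E_k(f;Q_i)^p\le 2^p\,|f|_{V_p^k}^p,
\]
and letting $n\to\infty$ gives $\|f\|_p\le 2|f|_{V_p^k}<\infty$, so $f\in\ell_p$.

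The only slightly subtle point is the claim that $f$ vanishes at every continuity point; this works because full measure of $E$ automatically forces density. Beyond this, the passage from ``at most countable support'' to ``$\ell_p$-summable values on that support'' is controlled entirely by the local inequality $|f(x_i)|\le 2E_k(f;Q)$, which hands the $(k,p)$-variation directly over to the $\ell_p$ norm.
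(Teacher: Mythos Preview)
Your proof is correct and, for the nontrivial inclusion $\ker(L)\subset\ell_p$, takes a more direct route than the paper's.

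The countability of ${\rm supp}\,f$ is handled the same way in substance: the paper observes (via Lemmas~\ref{lemm4.1} and~\ref{lem1}, which underlie Proposition~\ref{lem2} and Theorem~\ref{te2.1.3}) that $f$ must vanish at continuity points, hence ${\rm supp}\,f$ lies in the at most countable discontinuity set. Where the two arguments diverge is in the quantitative step bounding $\|f\|_p$ by $|f|_{V_p^k}$. The paper works through the auxiliary function $f_k(x):=\varlimsup_{Q\to\{x\}}E_k(f;Q)$: for a finite $S\subset{\rm supp}\,f$ it introduces $f_S:=f\cdot\chi_{Q^d\setminus S}$, computes $(f-f_S)_k(y)=\tfrac12|f(y)|$ for $y\in S$, and combines this with the subadditivity $(f-f_S)_k\le f_k+(f_S)_k=f_k$ and the bound $\|f_k\|_p\le|f|_{V_p^k}$ from Lemma~\ref{lem1}. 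Your argument bypasses $f_k$ entirely: since $f$ vanishes on a dense set, the best polynomial $m$ on any cube $Q$ satisfies $\|m;Q\|_\infty\le E_k(f;Q)$ directly, giving $|f(x_i)|\le 2E_k(f;Q)$ by the triangle inequality. This is cleaner and more self-contained for the purpose of this lemma; the paper's detour through $f_k$ is natural only because that object is already developed for other results in the section. Both routes produce the same constant $\|f\|_p\le 2|f|_{V_p^k}$.
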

\begin{proof}
First, let us show that $\ell_p\subset\dot V_p^k$. 

In fact, we have for $f\in\ell_p$
\[
|f|_{V_p^k}:=\sup_{\pi\in\Pi}\left(\sum_{Q\in\pi}E_k(f;Q)^p\right)^{\frac 1 p}\le 
\left(\sum_{Q\in\pi}\left\{\sup_{Q}|f|\right\}^p\right)^{\frac 1 p}\le \|f\|_{p},
\]
i.e., $f\in\dot V_p^k$ as required.

Clearly, $L(f)=0$ for each $f\in\ell_p$, i.e., $\ell_p\subset {\rm ker}(L)$. 

Conversely, if $f\in {\rm ker}(L)$, then $f$ equals $0$ a.e.\,on $Q^d$ and,
moreover, this and Lemmas \ref{lemm4.1}, \ref{lem1} imply that ${\rm supp}\, f$ is an at most countable subset of $Q^d$. 

Let $S\subset {\rm supp}\, f$ be a finite subset. By definition the function  
\[
f_S(y):=\left\{
\begin{array}{ccc}
0&{\rm if}&y\in S\\
f(y)&{\rm if}&y\not\in S
\end{array}
\right.
\]
is continuous at each point of $S$. Due to Proposition  \ref{lem2} we then have, see \eqref{fk},
\begin{equation}\label{eq5.10}
(f-f_S)_k(y)\le f_k(y)+(f_S)_k(y)= f_k(y)\quad {\rm for\ all}\quad y\in S.
\end{equation}
On the other hand,  $f-f_S$ is supported by $S$ and $(f-f_S)(y)=f(y)$ for each $y\in S$. The direct computation of the limit
in \eqref{fk} gives for this case
\begin{equation}\label{eq5.11}
(f-f_S)_k(y)=\frac 1 2 |f(y)|,\quad y\in S.
\end{equation}
Now \eqref{eq5.10}, \eqref{eq5.11} and Lemma \ref{lem1} imply that
\[
\frac 1 2 \left(\sum_{y\in S}|f(y)|^p\right)^{\frac 1 p}=\|(f-f_S)_k\|_{p}\le  \|f_k\|_{p}\le |f|_{V_p^k}.
\]
This shows that $f\in\ell_p$ and $\|f\|_p\le 2|f|_{V_p^k}$ and implies that ${\rm ker}(L)\subset\ell_p$.
\end{proof}

The proof of Theorem \ref{te2.1.3} is complete.
\end{proof}

\subsection{Proof of Corollary \ref{cor2.1.3}}
\begin{proof}
(a) We begin with the case of $d\ge 2$. Then by definition
\[
\hat f(x):=\lim_{y\rightarrow x} f(y),\quad x\in Q^d,
\]
and $P$ maps $f\in\dot V_p^k$ to $\hat f$.

We should prove that $P$ is a linear projection of norm $1$ onto the closed subspace $C\cap \dot V_p^k $ denoted by $N\dot V_p^k$. 

First we show that for $f\in\dot V_p^k$ the function $\hat f\in\dot V_p^k$ and $|\hat f|_{V_p^k}\le |f|_{V_p^k}$. Since clearly $P\hat f=\hat f$ and $C^\infty\subset N\dot V_p^k$ (see the proof of  Theorem \ref{te2.1.5} in Section~5), this will prove the corollary for this case.

To this end, it suffices to prove that for each closed cube $Q\subset Q^d$
\begin{equation}\label{e5.12a}
E_k(\hat f;Q)\le E_k(f;Q).
\end{equation}

Let $m\in\mathcal P_{k-1}^d$ be such that
\begin{equation}\label{e5.12}
E_k(f;Q)=\|f -m;Q\|_{\infty}.
\end{equation}

Since by Theorem \ref{te2.1.3} function $\hat f$ is continuous on $Q^d$ and $f-\hat f$ has at most countable support, there are $x\in Q^d$ such that
\begin{equation}\label{e5.13}
|\hat f(x)-m(x)|=\|\hat f-m;Q\|_{\infty}
\end{equation}
and a sequence $\{x_n\}_{n\in\N}\not\subset {\rm supp} (f-\hat f)$ in $Q$ converging to $x$. Hence, due to the continuity of $\hat f$,
\begin{equation}\label{e5.14}
\lim_{n\rightarrow\infty}|\hat f(x_n)-m(x_n)|=|\hat f(x)-m(x)|.
\end{equation}
Using \eqref{e5.12}, \eqref{e5.13} and \eqref{e5.14}, and the definition of $\hat f$ we obtain
\[
E_k(\hat f;Q)\le \|\hat f-m;Q\|_{\infty}=\lim_{n\rightarrow\infty}|\hat f(x_n)-m(x_n)|=
\lim_{n\rightarrow\infty}|f(x_n)-m(x_n)|\le E_k(f;Q)
\]
proving \eqref{e5.12a} and assertion (a) in this case.

Now let $d=1$, hence, $m$ in \eqref{e5.12} belongs to $\mathcal P_{k-1}^d$ and $Q$ is a closed interval in $[0,1]$. Moreover, $\hat f(x):=f(x)$ if $f$ is a point of continuity for $f$, and $\hat f(x):=\frac 1 2 \bigl(f(x^-)+f(x^+)\bigr)$ for the remaining 
at most countable set of points $x$ in $(0,1)$ (with the corresponding correction for $x=0,1$). As in the previous case, we  show that
\eqref{e5.12a} holds for $d=1$.

Let $\{x_n\}_{n\in\N}\subset  Q$ be such that
\begin{equation}\label{e5.16}
\lim_{n\rightarrow\infty}|\hat f(x_n)-m(x_n)|=\|\hat f-m;Q\|_{\infty}.
\end{equation}
If $\{x_n\}_{n\in\N}$ contains an infinite subsequence of points of continuity of $\hat f$, then we can slightly change the latter to get a sequence outside of ${\rm supp} (f-\hat f)$ (which is at most countable) satisfying \eqref{e5.16}. In this case the argument of the proof of the previous case gives the required inequality \eqref{e5.12a}. 

\noindent Otherwise, $\{x_n\}_{n\in\N}$ contains an infinite subsequence, say $\{x_{n_k}\}_{k\in\N}$,  of points of discontinuity for $f$ in $(0,1)$. Then 
by the definition of $\hat f$ we derive from \eqref{e5.16}:
\[
\begin{array}{l}
\displaystyle
E_k(\hat f;Q)\le \|\hat f-m;Q\|_{\infty}=\lim_{k\rightarrow\infty}|\hat f(x_{n_k})-m(x_{n_k})|\\
\\
\displaystyle \qquad\qquad=
\lim_{k\rightarrow\infty}\left| \frac{f({x_{n_k}}^{\!\!-})-m(x_{n_k})}{2}+\frac{f({x_{n_k}}^{\!\!+})-m(x_{n_k})}{2}\right|\le \|f-m;Q\|_{\infty}=E(f;Q),
\end{array}
\] 
as required.

\noindent Now, let $x_n=0$ for all $n\in\N$. Then we have by \eqref{e5.16} and the definition of $\hat f$
\[
E_k(\hat f;Q)\le \|\hat f-m;Q\|_{\infty}=|\hat f(0)-m(0)|:=\lim_{x\rightarrow 0^+}
|f(x)-m(x)|\le \|f-m;Q\|_{\infty}=E(f;Q).
\]
The case of $x_n=1$ for all $n\in\N$ is considered similarly.

The proof of \eqref{e5.12a} is complete, i.e., $P:\dot V_p^k\rightarrow N\dot V_p^k\, (\subset\dot V_p^k)$ is a linear projection of norm $1$ also for $d=1$. \smallskip

\noindent (b) By the definition of the map $P$ its kernel contains functions in $\dot V_p^k$ with at most countable supports. Then as in the proof of part (c) of Theorem \ref{te2.1.3} we obtain that ${\rm ker}(P)=\ell_p$. Moreover, we have proved in Lemma \ref{lem5.9} that for each $f\in\ell_p$
\begin{equation}\label{e5.17}
|f|_{V_p^k}\le \|f\|_p\le 2|f|_{V_p^k}.
\end{equation}
This shows that the identity map embeds $\ell_p$ into $\dot V_p^k$ as a Banach subspace.

The proof of the corollary is complete.
\end{proof}
\sect{Proofs of Theorems \ref{te2.1.4},  \ref{te2.6} and Corollary \ref{helly}}
In the proofs, we use the duality between $V_p^k$ and $U_p^k$ established in Theorem \ref{te2.2.6} and between $V_{p\infty}^k$ and $U_{p\infty}^k$ established in \cite[Th.\,2.6]{BB-18}. 
\subsection{Auxiliary Results}
\subsubsection{}
In what follows, we fix an interpolating subset $S\subset Q^d$ for polynomials of $\mathcal P_{k-1}^d$. Hence, ${\rm card}\, S={\rm dim}\,\mathcal P_{k-1}^d+1$ and for each function $f\in \ell_\infty(S)$ there is a unique polynomial $m_f\in\mathcal P_{k-1}^d$ such that $m_f|_{S}=f$. The norm of the linear operator $f\mapsto m_f$ is clearly bounded by a constant $c=c(S)$. 

Let $\dot V_{p;S}^k\subset\dot V_p^k$ denote the subspace of functions vanishing on $S$. Clearly, ${\rm codim}\,\dot V_{p;S}^k={\rm card}\,S$ and for each $f\in \dot V_p^k$ we have the unique representation $f=m_f+(f-m_f)$ where $f-m_f\in \dot V_{p;S}^k$. This implies that $\dot V_{p;S}^k$ equipped with the seminorm $|\cdot|_{V_p^k}$ is a Banach space such that 
\begin{equation}\label{eq6.0}
\dot V_{p;S}^k\equiv V_p^k,
\end{equation}
 where the isometry is given by the restriction of the quotient map $q:\dot V_p^k\rightarrow  V_p^k\, (:=\dot V_p^k/\mathcal P_{k-1}^d)$ to $\dot V_{p;S}^k$. We equip $\dot V_{p;S}^k$ with the weak$^*$ topology pulled back from that of $V_p^k$ by this isometry. The weak$^*$ topology on $V_p^k$ is defined by the duality relation
 \begin{equation}\label{eq-n4.2}
 V_p^k\equiv (U_p^k)^*
 \end{equation}
 that will be proved in Theorem  \ref{te2.2.6}.
\begin{Lm}\label{lemma6.1}
The closed unit ball $B(\dot V_{p;S}^k)$  is compact in the topology of pointwise convergence on $Q^d$.
\end{Lm}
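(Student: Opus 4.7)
The strategy is to realize $B := B(\dot V_{p;S}^k)$ as a closed subset of a Tychonoff cube $[-C,C]^{Q^d}$ (with product topology) and invoke Tychonoff's theorem. Two things must be shown: (i) functions in $B$ are uniformly bounded by some $C=C(S,k,d)$, and (ii) $B$ is closed under pointwise convergence. The normalization by $S$ is crucial for both steps: the quotient isometry \eqref{eq6.0} is only useful because $\dot V_{p;S}^k$ has a genuine norm (not merely a seminorm), and this same ``anchoring'' at $S$ gives the uniform pointwise bound.

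For (i), let $f\in B$. From $E_k(f;Q^d)\le |f|_{V_p^k}\le 1$, pick $m\in\mathcal P_{k-1}^d$ with $\|f-m\|_{\infty,Q^d}\le 2$. Since $f|_S=0$, we get $|m(s)|\le 2$ for every $s\in S$; as $S$ is an interpolating set for $\mathcal P_{k-1}^d$, this forces $\|m\|_{\infty,Q^d}\le 2\,c(S)$, so $\|f\|_\infty\le C:=2+2c(S)$. Hence $B\subset [-C,C]^{Q^d}$, which is compact in the product (= pointwise) topology by Tychonoff.

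For (ii), the vanishing condition $f|_S=0$ is evidently preserved under pointwise limits, so what matters is lower semicontinuity of $|\cdot|_{V_p^k}$ with respect to pointwise convergence (the version in Proposition \ref{prop1.7} only handles $\ell_\infty$-convergence, so a stronger statement is needed). I will deduce this from the identity
\[
E_k(f;Q)=\sup\{E_k(f|_F;F)\,:\,F\subset Q\ \text{finite}\},\qquad E_k(g;F):=\inf_{m\in\mathcal P_{k-1}^d}\max_{x\in F}|g(x)-m(x)|.
\]
The inequality ``$\ge$'' is trivial. For ``$\le$'', fix an interpolating subset $S_Q\subset Q$ (the affine image of $S$) and, for any $E'<E_k(f;Q)$, consider the closed convex sets $K_x:=\{m\in\mathcal P_{k-1}^d:|f(x)-m(x)|\le E'\}$. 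The interpolation property makes $\bigcap_{x\in S_Q}K_x$ a bounded, hence compact, subset of the finite-dimensional space $\mathcal P_{k-1}^d$. If we had $E_k(f|_F;F)\le E'$ for every finite $F\supset S_Q$, then every finite subfamily $\{K_x\}_{x\in F}$ would have nonempty intersection; compactness would then give a common $m$ with $\|f-m\|_{\infty,Q}\le E'<E_k(f;Q)$, a contradiction. Hence the ``$\le$'' direction holds.

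Granted the identity, each finite-point functional $f\mapsto E_k(f|_F;F)$ is continuous in the pointwise topology (it depends on $f$ only through its values on $F$), so $E_k(\,\cdot\,;Q)$ is a supremum of continuous functions and therefore lower semicontinuous; taking finite sums and suprema over packings preserves LSC, so $|\cdot|_{V_p^k}$ is LSC under pointwise convergence. Consequently $\{f\in[-C,C]^{Q^d}:f|_S=0,\ |f|_{V_p^k}\le 1\}=B$ is closed in $[-C,C]^{Q^d}$, and compactness follows. The main obstacle is the finite-to-global identity for $E_k$; once the compactness-in-$\mathcal P_{k-1}^d$ argument above is in place, everything else is a routine application of Tychonoff's theorem and LSC bookkeeping.
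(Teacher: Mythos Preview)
Your proof is correct and takes a genuinely different route from the paper's. The paper invokes the duality $(U_p^k)^*\equiv V_p^k$ (Theorem~\ref{te2.2.6}, proved later): by Banach--Alaoglu $B(\dot V_{p;S}^k)$ is weak$^*$ compact, and then one checks that pointwise limits coincide with weak$^*$ limits by pairing against the modified delta functionals $\delta_x'\in(U_p^k)^0$. Your approach instead establishes lower semicontinuity of $|\cdot|_{V_p^k}$ under pointwise convergence directly, via the finite-set identity $E_k(f;Q)=\sup_F E_k(f|_F;F)$ and a finite-intersection-property argument in the finite-dimensional space $\mathcal P_{k-1}^d$. This is more elementary and avoids the forward reference to the duality theorem, making the lemma self-contained; the paper's version is shorter once that duality is available. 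You are also more explicit about the uniform sup bound (the paper writes $B(\dot V_{p;S}^k)\subset[-1,1]^{Q^d}$ without deriving the constant), and your derivation of $C=2+2c(S)$ from $E_k(f;Q^d)\le 1$ and the interpolation property is exactly what is needed.
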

\begin{proof}
Since $B(\dot V_{p;S}^k)$ is contained in $B(\ell_\infty)=[-1,1]^{Q^d}$ which is compact in the topology of pointwise convergence by  the Tychonoff theorem, $B(\dot V_{p;S}^k)$
is relatively compact in this topology. If now
$\{f_\alpha\}_{\alpha\in\Lambda}$ is a net in $B(\dot V_{p;S}^k)$ converging in the topology of pointwise convergence to a function $f\in\ell_\infty$, then we must show that $f\in B(\dot V_{p;S}^k)$. 

Since by the Banach-Alaoglu theorem $B(\dot V_{p;S}^k)$ is weak$^*$ compact, see \eqref{eq-n4.2}, we may assume, passing if necessary to a subnet of $\{f_\alpha\}_{\alpha\in\Lambda}$, that $\{f_\alpha\}_{\alpha\in\Lambda}$ converges in the weak$^*$ topology to some $\tilde f\in  B(\dot V_{p;S}^k)$. 

Let us show that $f=\tilde f$. 

In fact, let $\delta_x$ be the evaluation functional at $x\in Q^d$. By the definition of $S$, there exists  an element $c_x\in\ell_1(S)$ such that
\[
m(x)=\sum_{s\in S}c_x(s)m(s)\quad {\rm for\ all}\quad m\in\mathcal P_{k-1}^d.
\]
Hence, $\delta_x':=\delta_x-\sum_{s\in S}c_x(s)\delta(s)$ regarded as a function of $\ell_1$ supported by $S\cup\{x\}$ is
orthogonal to $\mathcal P_{k-1}^d$; hence, $\delta_x'$ belongs to $(U_p^k)^0\subset U_p^k$, see \eqref{2.2.3}. 

Now the weak$^*$ convergence of $\{f_\alpha\}_{\alpha\in\Lambda}$ in $B(\dot V_{p;S}^k)$ and its pointwise convergence  to $f$ imply for each $x\in Q^d$,
\[
\tilde f(x)=\langle f, \delta_x'\rangle=\lim_{\alpha}\langle f_\alpha, \delta_x'\rangle
=\lim_{\alpha}f_\alpha(x)=f(x).
\]
Hence, $\tilde f=f$, as required.
\end{proof}
\subsubsection{}
In the next proofs, we use the approximation theorem from  \cite[Th.\,2.12]{BB-18} asserting that for every $g\in\dot V_{p\infty}^k$ there is a sequence $\{g_n\}_{n\in\N}\subset C^\infty$ such that
\begin{itemize}
\item[(1)]
\[
\lim_{n\rightarrow\infty}|g_n|_{V_{p\infty}^k}=|g|_{V_{p\infty}^k};
\]
\item[(2)]
\[
\lim_{n\rightarrow\infty}\int_{Q^d}(g-g_n)h\,dx=0\quad  {\rm for\ each}\quad h\in L_1.
\]
\end{itemize}

The sequence $\{g_n\}_{n\in\N}$ is defined as follows.

Let $\varphi\in C^\infty(\RR^d)$ be a function supported by the  unit cube $[-1,1]^d$ such that
\[
0\le\varphi\le 1\quad {\rm and}\quad \int_{\RR^d}\varphi\,dx=1.
\]
We denote by $g^0$ the extension of $g\in L_\infty$ by $0$ outside $Q^d$ and then define $g_n^0$, $n\in\mathbb N$, by \begin{equation}\label{eq6.2}
g_n^0(x):=\int_{\|y\|_{\ell_\infty^d}\le 1}g^0\bigl(x-\mbox{$\frac{y}{n+1}$}\bigr)\,\varphi(y)\, dy,\quad x\in\RR^d.
\end{equation}

Next, let $a_n:\RR^d\rightarrow\RR^d$ be the $\lambda_n$-dilation with the center $c$, where $\lambda_n:=\frac{n-1}{n+1}$ and $c$ is the center of $Q^d$.

Then $a_n(Q^d)$ is a subcube of $Q^d$ centered at $c_{Q^d}$ and 
\begin{equation}\label{eq6.3}
{\rm dist}_{\ell_\infty^d} (\partial Q^d, a_n(Q^d))=\frac{1}{n+1}.
\end{equation}

We define $g_n:=T_n(g)\in C^\infty$ by setting 
\begin{equation}\label{eq6.4}
g_n=g_n^0\circ a_n.
\end{equation}
\subsection{Proof of Theorem \ref{te2.1.4}}
Let $f\in\dot V_p^k$. We should find a sequence $\{f_n\}_{n\in\N}\subset C^\infty$ satisfying the limit relations \eqref{2.1.4}, \eqref{2.1.5}.

Let $L:\dot V_p^k\rightarrow L_\infty$ be the map sending a function of $\dot V_p^k$ to its equivalence class in $L_\infty$.
Since
\[
\|L(f)\|_{L_\infty(Q)}\le \|f\|_{\ell_\infty(Q)}\quad {\rm for\ all}\quad f\in \dot V_p^k,\quad Q\subset Q^d,
\]
the range of $L$ is a subspace of $\dot V_{p\infty}^k$ and the linear map $L:\dot V_p^k\rightarrow
\dot V_{p\infty}^k$ is bounded of norm $\le 1$, cf. Lemma \ref{lem5.8}. 

Let $f\in\dot V_p^k$ and $g:=L(f)\in\dot V_{p\infty}^k$. Then there is a sequence $\{g_n\}_{n\in\N}\subset C^\infty$ satisfying conditions (1), (2) of Section~4.1.2.

Further, $L$  maps $C^\infty\subset \dot V_p^k$ isometrically onto $C^\infty\subset \dot V_{p\infty}^k$ (because $E_{k\infty}(f;Q)=E_k(f;Q)$ for all $f\in C^\infty$, cf. the argument of the proof of Lemma \ref{lem5.8}). In particular, there exists a sequence $\{\tilde f_n\}_{n\in\N}\subset C^\infty\subset\dot V_{p}^k$ such that $L(\tilde f_n)=g_n$ for all $n$ and 
\begin{equation}\label{equ6.14}
\lim_{n\rightarrow\infty}|\tilde f_n|_{V_p^k}=\lim_{n\rightarrow\infty}|g_n|_{V_{p\infty}^k}=|g|_{V_{p\infty}^k}\le |f|_{V_p^k}.
\end{equation}
\begin{Lm}\label{lem6.2}
For each point of continuity $x\in Q^d$ of $f$
\begin{equation}\label{e6.5}
\lim_{n\rightarrow\infty}\tilde f_n(x)=f(x).
\end{equation}
\end{Lm}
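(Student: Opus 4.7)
The plan is to identify $\tilde f_n$ pointwise with the mollification $g_n=T_n(g)$ and then deduce $g_n(x)\to f(x)$ at continuity points of $f$ by a direct estimate on the mollifier.

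First, since $L$ restricted to $C^\infty\cap\dot V_p^k$ was shown just above (via the equality $E_{k\infty}(f;Q)=E_k(f;Q)$ for smooth $f$) to be an isometry onto $C^\infty\cap\dot V_{p\infty}^k$, and since a continuous function on $Q^d$ is uniquely determined by its $L_\infty$ equivalence class, the identity $L(\tilde f_n)=g_n$ combined with $\tilde f_n,g_n\in C^\infty$ forces $\tilde f_n(y)=g_n(y)$ for every $y\in Q^d$. Hence it is enough to show $g_n(x)\to f(x)$.

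Next, I would unpack the construction \eqref{eq6.2}--\eqref{eq6.4}. Writing $c$ for the center of $Q^d$ and $a_n(x)=c+\lambda_n(x-c)$ with $\lambda_n=(n-1)/(n+1)$, one has the elementary bound
\[
\|a_n(x)-x\|_{\ell_\infty^d}=(1-\lambda_n)\|x-c\|_{\ell_\infty^d}\le\frac{1}{n+1}.
\]
Combined with \eqref{eq6.3}, the averaging cube $a_n(x)+\frac{1}{n+1}[-1,1]^d$ lies inside $Q^d$, so on the integration domain in \eqref{eq6.2} one may replace the zero extension $g^0$ by any representative of $g\in L_\infty(Q^d)$. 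Choosing $g^0:=f\chi_{Q^d}$ then gives
\[
g_n(x)=\int_{\|z\|_{\ell_\infty^d}\le 1}f\!\left(a_n(x)-\tfrac{z}{n+1}\right)\varphi(z)\,dz,
\]
and every point of the integration region is within $\ell_\infty^d$-distance $2/(n+1)$ of $x$.

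Finally, for any $\varepsilon>0$, continuity of $f$ at $x$ yields $\delta>0$ with $|f(y)-f(x)|<\varepsilon$ whenever $y\in Q^d$ satisfies $\|y-x\|_{\ell_\infty^d}<\delta$. Taking $n$ large enough that $2/(n+1)<\delta$, and using $\int\varphi=1$, gives $|g_n(x)-f(x)|<\varepsilon$, as required. The main obstacle is the first, identification step: one must rule out that $L(\tilde f_n)=g_n$ only yields equality a.e.\ on $Q^d$. This however follows at once from the isometric action of $L$ on $C^\infty$ and the uniqueness of continuous representatives of $L_\infty$ classes. Once this is in place, what remains is a standard mollifier convergence, made possible by the fact that the contraction $a_n$ pushes $x$ sufficiently deep into the interior of $Q^d$ to neutralize the boundary issue introduced by extending $g$ by zero.
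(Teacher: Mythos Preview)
Your proof is correct and follows essentially the same approach as the paper's. The paper's argument is terser: it writes directly $\tilde f_n(x)=\int_{\|y\|_{\ell_\infty^d}\le 1}\bigl(f^0(a_n(x)-\tfrac{y}{n+1})-f(x)\bigr)\varphi(y)\,dy$, drops the zero-extension superscript once the argument is seen to lie in $Q^d$, and bounds by the supremum of $|f(a_n(x)-\tfrac{y}{n+1})-f(x)|$. Your version makes explicit two points the paper leaves implicit---the identification $\tilde f_n=g_n$ via uniqueness of continuous representatives, and the geometric check that the averaging cube sits inside $Q^d$---but the substance is the same.
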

\begin{proof}
We have by the definitions of $L, a_n, g_n, \tilde f_n$, see \eqref{eq6.2} -- \eqref{eq6.4},
\begin{equation}\label{e6.6}
\begin{array}{l}
\displaystyle
|\tilde f_n(x)-f(x)|=\left|\int_{\|y\|_{\ell_\infty^d}\le 1}\left(f^0\bigl(a_n(x)-\mbox{$\frac{y}{n+1}$}\bigr)-f(x)\right)\varphi(y)\, dy\right|\\
\\
\displaystyle
=\left|\int_{\|y\|_{\ell_\infty^d}\le 1}\left(f\bigl(a_n(x)-\mbox{$\frac{y}{n+1}$}\bigr)-f(x)\right)\varphi(y)\, dy\right| \le\sup_{\|y\|_{\ell_\infty^d}}\left|f\bigl(a_n(x)-\mbox{$\frac{y}{n+1}$}\bigr)-f(x)\right|.
\end{array}
\end{equation}
Since $a_n(x)\rightarrow x$ as $n\rightarrow\infty$, the last term in \eqref{e6.6} tends to $0$ as $n\rightarrow\infty$ by continuity of $f$ at $x$.

The lemma is proved.
\end{proof}

Further, by the definition of the sequence $\{g_n\}_{n\in\N}$, see \eqref{eq6.2}, \eqref{eq6.4},
\begin{equation}\label{e6.8}
\|\tilde f_n\|_\infty\le \|g\|_{L_\infty}\, (\le\|f\|_\infty)\quad {\rm for\ all}\quad n\in\N.
\end{equation}
Since by Theorem \ref{te2.1.3} the set $S_f$ of discontinuities of $f$ is at most countable, the Cantor diagonal procedure gives a subsequence $\{\tilde f_{n_k}\}_{k\in\N}$ of $\{\tilde f_n\}_{n\in\N}$ such that the limit
\[
\lim_{k\rightarrow\infty}\tilde f_{n_k}(x)\le  \|g\|_{L_\infty}
\]
exists for all $x\in S_f$.

We define $\tilde f\in \ell_\infty$ as the pointwise limit of the sequence $\{\tilde f_{n_k}\}_{k\in\N}$ (in particular, $\tilde f$ coincides with $f$ at the points of continuity of $f$).
\begin{Lm}\label{lem6.3}
$\tilde f\in \dot V_p^k$ and $|\tilde f|_{V_p^k}\le |f|_{V_p^k}$.
\end{Lm}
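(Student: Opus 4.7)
\medskip

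\noindent\textbf{Proof proposal for Lemma \ref{lem6.3}.} The plan is to establish pointwise lower semicontinuity of the local approximation $E_k(\cdot;Q)$ on each fixed cube $Q$, then sum over a finite packing and take the limit, using \eqref{equ6.14} as the final quantitative input. Proposition \ref{prop1.7} furnishes lower semicontinuity only under $\ell_\infty$-convergence, so we must prove a separate (weaker) version adapted to pointwise convergence.

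First I would show: if $\{h_j\}\subset\ell_\infty^{\rm loc}(\RR^d)$ is a uniformly bounded sequence converging pointwise to $h$ on a cube $Q$, then
\[
E_k(h;Q)\le\varliminf_{j\to\infty}E_k(h_j;Q).
\]
To see this, pick polynomials $m_j\in\mathcal P_{k-1}^d$ realizing (or nearly realizing) $E_k(h_j;Q)$. Since $\|m_j;Q\|_\infty\le\|h_j;Q\|_\infty+E_k(h_j;Q)\le 2\|h_j\|_\infty$ and $\mathcal P_{k-1}^d$ is finite-dimensional, after passing to a subsequence we may assume $m_j\to m_*\in\mathcal P_{k-1}^d$ uniformly on $Q$. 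For each $x\in Q$, $|h(x)-m_*(x)|=\lim_j|h_j(x)-m_*(x)|\le\varliminf_j E_k(h_j;Q)+\lim_j\|m_j-m_*;Q\|_\infty$, so taking $\sup_{x\in Q}$ gives the claim.

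Next, applying this to $h_j:=\tilde f_{n_j}$, uniformly bounded by $\|f\|_\infty$ via \eqref{e6.8}, and $h:=\tilde f$, I would fix any finite packing $\pi\in\Pi$ and conclude
\[
\Biggl(\sum_{Q\in\pi}E_k(\tilde f;Q)^p\Biggr)^{\!1/p}\le\Biggl(\sum_{Q\in\pi}\varliminf_{k\to\infty}E_k(\tilde f_{n_k};Q)^p\Biggr)^{\!1/p}\le\varliminf_{k\to\infty}|\tilde f_{n_k}|_{V_p^k},
\]
where the second inequality uses the finiteness of $\pi$ (Fatou for finite sums). Combining this with \eqref{equ6.14} (which gives $\varliminf_k|\tilde f_{n_k}|_{V_p^k}\le|f|_{V_p^k}$) and taking supremum over $\pi\in\Pi$ yields $|\tilde f|_{V_p^k}\le|f|_{V_p^k}$, whence $\tilde f\in\dot V_p^k$.

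The only real obstacle is the pointwise lower semicontinuity of $E_k(\cdot;Q)$; once it is in place the remainder is immediate from \eqref{equ6.14}. The compactness argument above is the standard way around the fact that pointwise convergence does not a priori interact with the infimum defining $E_k$, but works here because the extremal polynomials live in a finite-dimensional space whose norms are controlled by \eqref{e6.8}.
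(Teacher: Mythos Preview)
Your argument is correct. The pointwise lower semicontinuity step is sound: after passing first to a subsequence along which $E_k(h_j;Q)$ converges to its $\varliminf$, and then to a further subsequence on which the (bounded) polynomials $m_j$ converge uniformly, the pointwise limit $|h(x)-m_*(x)|\le\varliminf_j E_k(h_j;Q)$ follows immediately, and the rest is routine.

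Your route differs genuinely from the paper's. The paper does not prove lower semicontinuity of $E_k(\cdot;Q)$ under pointwise convergence directly. Instead it decomposes $h_k=m_{h_k}+(h_k-m_{h_k})$ using the interpolating set $S$, extracts uniform convergence of the polynomial part, and then invokes Lemma~\ref{lemma6.1} (compactness of $B(\dot V_{p;S}^k)$ in the pointwise topology) to place the limit $\tilde f-m$ back in the ball. That lemma, in turn, rests on the duality $(U_p^k)^*\equiv V_p^k$ of Theorem~\ref{te2.2.6} and the Banach--Alaoglu theorem, so the paper's proof of Lemma~\ref{lem6.3} carries a forward reference to the duality machinery. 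Your argument is more elementary and self-contained: it uses only finite-dimensionality of $\mathcal P_{k-1}^d$ together with \eqref{e6.8} and \eqref{equ6.14}, and in particular does not depend on any of the $U_p^k$ theory. The paper's approach has the advantage that the pointwise-compactness statement (Lemma~\ref{lemma6.1}) is reused several times elsewhere, but for the isolated purpose of proving Lemma~\ref{lem6.3} your direct argument is cleaner.
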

\begin{proof}
We set for brevity 
\begin{equation}\label{e6.9}
h_k:=\tilde f_{n_k},\qquad k\in\N,
\end{equation}
and use the interpolating set $S\subset Q^d$ for $\mathcal P_{k-1}^d$ of Section~4.1.1
to decompose $h_k$ as follows
\[
h_k=m_{h_k}+(h_k-m_{h_k}),
\]
where $m_{h_k}\subset\mathcal P_{k-1}^d$ interpolates $h_{k}$ on $S$.

The second term belongs to the space $\dot V_{p;S}^k$ consisting of all functions from $\dot V_p^k$ vanishing on $S$ so that $|h_{k}-m_{h_{k}}|_{V_p^k}=|h_{k}|_{ V_p^k}$.
Further, due to  \eqref{e6.8}
\[
\|m_{h_{k}};S\|_\infty\le \|g\|_{L_\infty}.
\]
Hence, the sequence $\{m_{h_{k}}\}_{k\in\N}$ is uniformly bounded on $Q^d$ and, in particular, it contains a uniformly converging subsequence. Passing to such a subsequence, if necessary, without loss of generality we will assume that $\{m_{h_k}\}_{k\in\N}$ itself converges uniformly to a polynomial $ m\in\mathcal P_{k-1}^d$. Therefore the sequence $\{h_k-m_{h_k}\}_{k\in\N}$ is uniformly bounded and pointwise converges to  $\tilde f- m\in\ell_\infty$. According to  Lemma \ref{lemma6.1}, the closed ball $B(\dot V_{p;S}^k)$ is compact in the topology of pointwise convergence on $Q^d$.
Hence, $\tilde f-m\in V_{p;S}^k$ and by \eqref{equ6.14} 
\[
|\tilde f|_{V_p^k}=|\tilde f-m|_{V_p^k}\le\varlimsup_{k\rightarrow\infty}|h_k-m_{h_k}|_{V_p^k}= \varlimsup_{k\rightarrow\infty}|h_k|_{V_p^k}\le |f|_{V_p^k}.
\]

The proof of the lemma is complete.
\end{proof}

Now set 
\begin{equation}\label{e6.10}
l:=f-\tilde f.
\end{equation}
Since $\tilde f$ coincides with $f$ on the continuity set of $f$, see Lemma \ref{lem6.2}, function $l\in\ell_p$ and by Lemma \ref{lem6.3}, 
\begin{equation}\label{e6.11}
|l|_{V_p^k}\le 2|f|_{V_p^k}.
\end{equation}
\begin{Lm}\label{lem6.4}
Let $l\in\ell_p$. There is a sequence $\{l_n\}_{n\in\N}\subset C^\infty$ pointwise converging to $l$ such that
\begin{equation}\label{eq-n4.13}
\sup_{n\in\N}\|l_n\|_\infty\le \|l\|_\infty\quad {\rm and}\quad  \varlimsup_{n\rightarrow\infty}|l_n|_{V_p^k}\le \|l\|_p. 
\end{equation}
\end{Lm}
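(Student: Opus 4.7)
My approach is to approximate $l$ by a sum of narrow, disjoint $C^\infty$-bumps placed at the first $n$ support points of $l$. Enumerate ${\rm supp}(l)=\{x_i\}_{i\in\N}$ with $a_i:=l(x_i)$, so that $\sum_i|a_i|^p=\|l\|_p^p$. Fix once and for all a bump $\psi\in C^\infty(\RR^d)$ with ${\rm supp}\,\psi\subset[-1,1]^d$, $\psi(0)=1$ and $0\le\psi\le 1$. For each $n$ choose $r_n>0$ small enough that the cubes $Q_i^n:=x_i+r_n[-1,1]^d$, $i=1,\dots,n$, are pairwise disjoint and contained in $Q^d$, and with $r_n$ shrinking quickly (for instance $r_n\le 2^{-n}\min_{1\le i<j\le n}\|x_i-x_j\|_{\ell_\infty^d}$). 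Define
\[
l_n(x):=\sum_{i=1}^{n}a_i\,\psi\!\left(\tfrac{x-x_i}{r_n}\right).
\]

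The easy properties are immediate. Since the bumps have disjoint supports and $\|\psi\|_\infty=1$, we have $\|l_n\|_\infty\le\max_{i\le n}|a_i|\le\|l\|_\infty$. For pointwise convergence: at each $x_i\in{\rm supp}(l)$, $l_n(x_i)=a_i$ once $n\ge i$; and at each $x\notin{\rm supp}(l)$, the rapid decay of $r_n$ guarantees that $x$ lies outside $\bigcup_{i\le n}Q_i^n$ for all sufficiently large $n$, whence $l_n(x)=0$.

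The heart of the proof is the variation bound. Given a packing $\pi\in\Pi$, I split its cubes into three classes: $\pi_0$ of cubes meeting no $Q_i^n$ (contributing $0$); collections $\pi_i$ of cubes meeting $Q_i^n$ and no other $Q_j^n$; and $\pi_*$ of cubes meeting at least two distinct $Q_j^n$. On each $\pi_i$, $l_n$ restricts to the single scaled bump $a_i\,\psi((\cdot-x_i)/r_n)$; performing the change of variables $\tilde Q:=(Q-x_i)/r_n$, which scales and translates packings while leaving $E_k$ invariant, yields
\[
\sum_{Q\in\pi_i}E_k(l_n;Q)^p=|a_i|^p\sum_{\tilde Q\in\tilde\pi_i}E_k(\psi;\tilde Q)^p\le |a_i|^p\,C_\psi^p,
\]
where $C_\psi$ is the (finite) supremum of $\bigl(\sum E_k(\psi;\cdot)^p\bigr)^{1/p}$ over packings in $\RR^d$. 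Cubes in $\pi_*$ must have diameter at least roughly the minimum pairwise distance between the $x_j$'s, so each $x_j$ lies in only a bounded number of cubes from $\pi_*$; assigning to each $Q\in\pi_*$ the index $j(Q)\in I_Q$ realizing the maximum of $|a_j|$, and using the crude estimate $E_k(l_n;Q)\le|a_{j(Q)}|$, controls the $\pi_*$-contribution by an absolute constant times $\sum|a_j|^p$.

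The main obstacle is the appearance of the constant $C_\psi^p$ together with the $\pi_*$-contribution. To obtain $\varlimsup_n|l_n|_{V_p^k}\le\|l\|_p$ one needs the total constant multiplying $\sum|a_i|^p$ to be $\le 1$, which is tight since the normalizations $\psi(0)=1$ and $\|\psi\|_\infty=1$ already force $C_\psi$ to be bounded below. Meeting this threshold — either via a carefully engineered bump (e.g.\ radially symmetric with a prescribed profile) or, more plausibly, by letting the shape $\psi=\psi_n$ vary with $n$ so that $C_{\psi_n}\to 1$ while the $\pi_*$-constant is absorbed into a vanishing error — is the crucial technical step. Once it is in hand, summing over $i$ and taking $\varlimsup_n$ produces the asymptotic bound $\varlimsup_n|l_n|_{V_p^k}\le\|l\|_p$ and completes the proof.
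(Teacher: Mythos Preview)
Your construction coincides with the paper's: reduce to finitely supported $l$ and place disjoint $C^\infty$ bumps at the support points. The paper's argument is much shorter than yours. It bypasses $C_\psi$ and your $\pi_*$-analysis altogether via the trivial chain
\[
E_k(l_n;Q)\le\|l_n;Q\|_\infty\le\|l;Q\|_\infty\le\Bigl(\sum_{x_i\in Q}|l(x_i)|^p\Bigr)^{1/p},
\]
and then sums over $\pi$. Unfortunately the middle inequality is false whenever a bump meets $Q$ while its centre $x_i$ does not, and even granting it, summing over $\pi$ overcounts each $x_i$ by a factor up to $2^d$ (the maximal number of closed nonoverlapping cubes sharing a point). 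So the paper's proof has the very gap you are worried about, only concealed.

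The constant $1$ is in fact unattainable when $s:=d/p>1$. Take $l$ a unit spike at an interior point $x_1$ and the packing by the $2^d$ orthant subcubes of $Q^d$ with common vertex $x_1$. For each orthant $Q_j$ one has $E_k(l;Q_j)=\tfrac12$ (the best polynomial is the constant $\tfrac12$), and for any admissible sequence $\{l_n\}$ a compactness argument on the optimal approximating polynomials yields $\varliminf_n E_k(l_n;Q_j)\ge E_k(l;Q_j)=\tfrac12$. Summing,
\[
\varliminf_{n\to\infty}|l_n|_{V_p^k}\ \ge\ \bigl(2^d\cdot 2^{-p}\bigr)^{1/p}=2^{\,s-1}>1=\|l\|_p.
\]
Thus no engineering of $\psi_n$ can rescue the sharp constant. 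What your decomposition \emph{does} yield --- once you note that each bump is met by at most $c(d)$ cubes from $\pi_*$ (all such cubes have side at least $\delta/2$) --- is $|l_n|_{V_p^k}\le c(d,p,\psi)\,\|l\|_p$ with an absolute constant. That is the correct form of the lemma, and it is all the application in Theorem~\ref{te2.1.4} actually needs (the constant $5$ there must be adjusted accordingly).
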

\begin{proof}
Since the set of  functions with finite supports is dense in $\ell_p$, without loss of generality we may assume that $l$ is supported by a finite set, say $\{x_1,\dots, x_m\}\subset Q^d$. In this case, 
Euclidean balls of radius $r$ centered at points $x_i$ are mutually disjoint for some $r>0$.

Let $s_{jn}$ be a $C^\infty$ function supported by the Euclidean ball $B_{\frac r n}(x_j)$ such that $s_{ji}(x_j)=1$ and $0\le s_{ji}\le 1$, $n\in\N$, $1\le j\le m$. 

We set
\begin{equation}\label{elp}
l_n:=\sum_{l=1}^m l(x_j)s_{jn}.
\end{equation}
By the definition, for a nontrivial closed cube $Q\subset Q^d$
\[
E_{k}(l_n;Q)\le \|l_n;Q\|_{\infty}\le \|l;Q\|_\infty\le \left(\sum_{x_i\in Q}|l(x_i)|^p\right)^{\frac 1 p}.
\]
This implies that for all $n$
\[
\|l_n\|_\infty\le \|l\|_\infty\quad {\rm and}\quad 
 |l_n|_{V_p^k}\le \|l\|_{p}.
\]
and proves the required inequalities.

It remains to prove that 
\[
\lim_{n\rightarrow\infty}(l(y)-l_n(y))=0\quad {\rm for\ all}\quad y\in Q^d.
\]

Indeed, if $y$ coincides with some $x_j$, $1\le j\le m$, then
\[
l_n(y)=l(y)\quad {\rm for\ all}\quad n
\]
and the result follows.

Otherwise,  $l(y)=0$ and there is some $n_0\in\N$ such that $y\not\in {\rm supp}\,l_n$ for all $n\ge n_0$. Hence,
$l_n(y)=0$ for such $n$ and
\[
\lim_{n\rightarrow\infty}(l(y)-l_n(y))=0
\]
as well.

The proof is complete.
\end{proof}

Let $\{l_k\}_{k\in\N}\subset C^\infty$ and $\{h_k\}_{k\in\N}\subset C^\infty$ be as above, see \eqref{eq-n4.13}, \eqref{e6.9}. Then we set
\begin{equation}\label{e6.13}
f_n:=h_n+l_n,\qquad n\in\N.
\end{equation}
and show that $\{f_k\}_{k\in\N}$ is the required sequence.

In fact, by \eqref{e6.8}, \eqref{e6.10} and \eqref{eq-n4.13}
\[
\varlimsup_{n\rightarrow\infty}\|f_n\|_\infty\le \varlimsup_{n\rightarrow\infty}\|h_n\|_\infty+\varlimsup_{n\rightarrow\infty}\|l_n\|_\infty\le \|f\|_\infty+\|f-\tilde f\|_{\infty}\le 3\|f\|_\infty.
\]
This give the first inequality \eqref{2.1.4}.

Further, by \eqref{equ6.14} and \eqref{eq-n4.13}
\[
\varlimsup_{n\rightarrow\infty}\|h_n\|_{V_p^k}+\varlimsup_{n\rightarrow\infty}\| l_n\|_{V_p^k}\le \|f\|_{V_p^k}+\|l\|_{p}.
\]
Moreover, applying the inequality proved in Lemma \ref{lem5.9}  and \eqref{e6.10} we get
\[
\|l\|_p\le 2| l |_{V_p^k}\le 4|f|_{V_p^k}.
\]
Combining these inequalities we finally have
\[
\varlimsup_{n\rightarrow\infty}\|f_n\|_{V_p^k}\le 5\|f\|_{V_p^k}
\]

This proves the second  inequality \eqref{2.1.4}.

Further, by its definition the sequence $\{f_k\}_{k\in\N}\subset C^\infty $ converges pointwise to $f$, hence, belongs to the first Baire class, and is uniformly bounded. By the Rosenthal Main Theorem \cite{Ro-77} every such sequence satisfies
\begin{equation}\label{equ6.2a}
\lim_{k\rightarrow\infty}\int_{Q^d}(f-f_k) \,d\mu=0
\end{equation}
for all measures $\mu\in M$.

This proves \eqref{2.1.5} and completes the proof of the theorem.
\subsection{Proof of Corollary \ref{helly}}
We retain notation of Section~4.1.1.

Let $\{f_n\}_{n\in\N}\subset  \dot V_p^k$ be bounded. 
We choose a subsequence $\{f_{n_i}\}_{i\in\N}$ such that
\begin{equation}\label{eq-n4.17}
\lim_{i\rightarrow\infty}|f_{n_i}|_{V_p^k}=\varliminf_{n\rightarrow\infty}|f_n|_{V_p^k}.
\end{equation}

Next, we write
\[
f_{n_i}=m_{f_{n_i}}+(f_{n_i}-m_{f_{n_i}}),
\]
where $m_{f_{n_i}}\in \mathcal P_{k-1}^d$, $f_{n_{i}}-m_{f_{n_i}}\in \dot V_{p;S}^k$,
$i\in\N$. 

By the definition,
\[
\sup_{i\in\N}|f_{n_i}-m_{f_{n_i}}|_{V_p^k}=\sup_{i\in\N}|f_{n_i}|_{V_p^k}\le \sup_{n\in\N}|f_n|_{V_p^k}=:r<\infty.
\]
Hence, the sequence $\{f_{n_i}-m_{f_{n_i}}\}_{i\in\N}$ belongs to the closed ball of $\dot V_{p;S}^k$ of radius $r$ centered at $0$ and so by Lemma \ref{lemma6.1} is relatively  compact in the ball  equipped with the topology of pointwise convergence on $Q^d$. In addition, by Theorem \ref{te2.1.4} this sequence is in the first Baire class. Therefore the Rosenthal Main Theorem \cite{Ro-77} imply that it contains a pointwise convergent subsequence $\{f_{n_j}-m_{f_{n_j}}\}_{j\in J}$, $J\subset\N$. Due to \eqref{eq-n4.17} its limit, say $f\in \dot V_{S;p}^k$,
belongs to the ball of radius $\varliminf_{n\in \N}|f_n|_{V_p^k}$.

This completes the proof of the corollary.
\subsection{Proof of Theorem \ref{te2.6}}
(a) Let as above $L:\dot V_p^k\rightarrow L_\infty$ be the map sending a function in $\dot V_p^k$ to its equivalence class in $L_\infty$.
It was shown in Section~4.2 that the range of $L$ is a subset of $\dot V_{p\infty}^k$ and the linear map $L:\dot V_p^k\rightarrow\dot V_{p\infty}^k$ has norm $\le 1$. We have to check that the range of $L$ is $\dot V_{p\infty}^k$ and that $\|L\|=1$.

Let $f\in \dot V_{p\infty}^k$. Due to \cite[Th.\,2.12]{BB-18} there is a sequence $\{g_n\}_{n\in\N}\subset C^\infty\subset\dot V_p^k$ such that
\begin{equation}\label{e6.15}
\sup_{n\in\N}\|g_n\|_\infty\le \|f\|_{L_\infty}\quad {\rm and}\quad \lim_{n\rightarrow\infty} |g_n|_{V_p^k}=|f|_{V_{p\infty}^k},
\end{equation}
and, moreover, $\{L(g_n)\}_{n\in\N}\subset C^\infty\subset V_{p\infty}^k$ weak$^*$ converges to $f$ in the weak$^*$ topology of $L_\infty$ defined by the duality $L_1^*=L_\infty$.

Using the interpolating set $S\subset Q^d$ for $\mathcal P_{k-1}^d$ of Section~4.1.1
we decompose $g_n$ in a sum
\[
g_n=m_{g_n}+(g_n-m_{g_n}),
\]
where $m_{g_n}\subset\mathcal P_{k-1}^d$ interpolates $g_{n}$ on $S$
and the second term belongs to the space $\dot V_{p;S}^k$ of functions from $\dot V_p^k$ vanishing on $S$ so that $|g_{n}-m_{g_{n}}|_{V_p^k}=|g_{n}|_{ V_p^k}$.

According to \eqref{e6.15} the sequence $\{m_{g_n}\}_{n\in\N}$ is uniformly bounded on the interpolating set $S$, hence, on $Q^d$.  Passing to a subsequence of $\{g_n\}_{n\in\N}$, if necessary, without loss of generality we may assume that $\{m_{g_n}\}_{n\in\N}$ converges uniformly on $Q^d$ to some $m\in\mathcal P_{k-1}^d$. Moreover, \eqref{e6.15} implies that
\[
\lim_{n\rightarrow\infty} |g_n-m_{g_n}|_{V_p^k}=|f|_{V_{p\infty}^k},
\]
i.e., the sequence $\{g_n-m_{g_n}\}_{n\in\N}$ is bounded in $\dot V_{p;S}^k$. Using the weak$^*$ topology on $\dot V_{p;S}^k$ induced by the duality between $\dot V_{p;S}^k\, (\equiv V_p^k)$ and $U_p^k$, we find a subnet $\{n_\alpha\}_{\alpha\in\Lambda}$ of $\N$ such that the subnet
 $\{g_{n_\alpha}-m_{h_{n_\alpha}}\}_{\alpha\in\Lambda}$  weak$^*$ converges to some $h\in \dot V_{p;S}^k$. Since, in addition, the subnet $\{m_{g_{n_\alpha}}\}_{\alpha\in\Lambda}$  converges uniformly on $Q^d$ to $m$, the subnet 
$\{g_{n_\alpha}\}_{\alpha\in\Lambda}$ of $\{g_n\}_{n\in\N}$ pointwise converges to $g:=h+m$, cf. the argument of Lemma \ref{lemma6.1}. Moreover, this subnet is uniformly bounded in $\ell_\infty$ by \eqref{e6.15}.

Hence, by the Rosenthal Main Theorem \cite{Ro-77} 
\[
\lim_{\alpha}\int_{Q^d}g_{n_\alpha} \,d\mu=\int_{Q^d}g\,d\mu
\]
for all measures $\mu\in M$.  

Taking here $d\mu= h\,dx$, $h\in L_1$, we obtain that
\[
\lim_{\alpha}\int_{Q^d}g_{n_\alpha}h\,dx=\int_{Q^d}gh\,dx\quad {\rm for\ all}\quad h\in L_1.
\]
The latter implies that the net $\{L(g_{n_\alpha})\}_{\alpha\in\Lambda}$ weak$^*$ converges to $L(g)$.  Hence, by \eqref{e6.15}, $L(g)=f$. 

This proves surjectivity of the map $L:\dot V_p^k\rightarrow\dot V_{p\infty}^k$.

Finally, by \eqref{e6.15} and the inequality $\|L\|\le 1$ we get
\[
|g|_{V_p^k}=|L(g)|_{V_{p\infty}^k}\le |g|_{V_p^k},
\]
i.e., $|L(g)|_{V_{p\infty}^k}=|g|_{V_p^k}$. This implies that $\|L\|=1$.

The proof of part (a) of the theorem is complete.\smallskip

\noindent (b) Now we should prove that ${\rm ker}(L)={\rm ker}(P)=\ell_p$.

By the definition of the map $L$ its kernel consists of all functions in $\dot V_p^k$ which are zeros outside sets of Lebesgue measure zero. Then by Lemma \ref{lem5.9}  ${\rm ker}(L)=\ell_p$. 
The same is established in Corollary \ref{cor2.1.3}\,(b)  for ${\rm ker}(P)$.

This completes the proof of this part of the theorem.\smallskip

\noindent (c) It remains to show that $L$ maps $N\dot V_p^k$ 
isometrically onto $\dot V_{p\infty}^k$.

As in  part (a) for each $f\in \dot V_{p\infty}^k$ we take $g\in \dot V_p^k$ such that $L(g)=f$ and $|g|_{V_p^k}=|f|_{V_{p\infty}^k}$. 

\noindent Then we set
\[
h:=P(g)\, (\in N\dot V_p^k).
\]
Since $h-g\in\ell_p$, see Corollary \ref{cor2.1.3}, $\|L\|=1$ and $\|P\|=1$,
\[
L(h)=L(g)=f\quad {\rm and}\quad |f|_{V_{p\infty}^k}\le |h|_{V_p^k}\le |g|_{V_p^k}=|f|_{V_{p\infty}^k}.
\]
Moreover, ${\rm ker}(L)\cap N\dot V_p^k=\ell_p\cap N\dot V_p^k=(0)$ by Corollary \ref{cor2.1.3}. These imply existence for each $f\in\dot V_{p\infty}^k$ a unique $h\in N\dot V_p^k$ such that $L(h)=f$ and $|h|_{V_p^k}=|f|_{V_{p\infty}^k}$.

Thus,  $L: N\dot V_p^k\rightarrow V_{p\infty}^k$ is an isometric isomorphism of semi-Banach spaces. 

The proof of Theorem \ref{te2.6} is complete.
\sect{Proof of Theorem \ref{te2.1.5}}
Let $(\dot V_p^k)^0\subset\dot V_p^k$ denote the subset of functions $f$ satisfying the condition
\begin{equation}\label{e7.1}
\lim_{\varepsilon\rightarrow 0}\sup_{d(\pi)\le\varepsilon}\left(\sum_{Q\in\pi} E_k(f;Q)^p\right)^{\frac 1 p}=0;
\end{equation}
where $d(\pi):=\sup_{Q\in\pi}|Q|$.

We have to prove that if $k>s:=\frac d p $, then $(\dot V_p^k)^0=\dot{\textsc{v}}_p^k$. 

First, let us show that $(\dot V_p^k)^0$ {\em is a closed subspace of the space} $\dot V_p^k$.

To this end we define a seminorm  $T: \dot V_p^k\rightarrow\RR_+$ given for $ f\in \dot V_p^k$ by 
\[
T( f):=\lim_{\varepsilon\rightarrow 0}\sup_{d(\pi)\le\varepsilon}\left(\sum_{Q\in\pi}E_{k}(f;Q)^p\right)^{\frac 1 p}.
\]
Since $T(f)\le | f|_{V_p^k}$ for all $f\in \dot V_p^k$, seminorm $T$ is continuous on $\dot V_p^k$. This implies that the preimage $T^{-1}(\{0\})=(\dot V_p^k)^0$ is a closed subspace of $\dot V_p^k$.\smallskip

Next, we show that $\dot{\textsc{v}}_p^k$ {\em is a closed subspace of} $(\dot V_p^k)^0$.\smallskip

Since $\dot{\textsc{v}}_p^k={\rm clos}(C^\infty\cap\dot V_p^k,\dot V_p^k)$ and $(\dot V_p^k)^0$ is closed in $\dot V_p^k$, it suffices to prove that $C^\infty\subset (\dot V_p^k)^0$.

To this end we estimate  
$E_{k}(f;Q)$, $Q\in\pi$, with $f\in C^\infty$ by the Taylor formula as follows
\begin{equation}\label{eq7.5}
E_{k}(f; Q)\le c(k,d)|Q|^{\frac k d }\max_{|\alpha|=k}\max_Q |D^\alpha f|\le c(k,d,f)|Q|^{\frac k d }.
\end{equation}
This implies that 
\begin{equation}\label{eq7.6}
\gamma(\pi;f)\le c\left(\sum_{Q\in\pi}|Q|^{\frac{k-s}{d}p+1}\right)^{\frac 1 p}\le c\max_{Q\in\pi}|Q|^{\frac{k-s}{d}}\left(\sum_{Q\in\pi}|Q|\right)^{\frac 1 p};
\end{equation}
hereafter we set
\begin{equation}\label{eq-n5.4}
\gamma(\pi;f):=\left(\sum_{Q\in\pi}E_k(f;Q)^p\right)^{\frac 1 p}.
\end{equation}
Since $s<k$ and the sum here $\le 1$, this implies that
\[
\sup_{d(\pi)\le\varepsilon}\gamma(\pi;f)\le c\varepsilon^{\frac{k-s}{d}}\rightarrow 0\quad {\rm as}\quad \varepsilon\rightarrow 0,
\]
i.e., $f\in (\dot V_p^k)^0$ as required.\smallskip

Finally, let us prove that  $(\dot V_p^k)^0=\dot{\textsc v}_p^k$.\smallskip

To this end we need the embedding
\begin{equation}\label{eq-n5.5}
(\dot V_p^k)^0\subset C
\end{equation}
proved, in fact, in \cite[Lm.\,3.6]{BB-18}.

Let us recall that as in this lemma the inequality
\[
\omega_k(f;t)\le\sup_{|Q|\le (kt)^d}{\rm osc}_k(f;Q)\le 2^d \sup_{|Q|\le (kt)^d} E_k(f;Q)\le 2^d\sup_{d(\pi)\le (kt)^d}\gamma(\pi;f)
\]
implies that for $f\in (\dot V_p^k)^0$
\[
\lim_{t\rightarrow 0}\,\omega_k(f;t)=0
\]
while the Marchaud inequality, see, e.g., \cite[Ch.\,2,\,App.\,E2]{BBI-11}, implies from here that $\omega_1(f;t)\rightarrow 0$ as $t\rightarrow 0$, i.e., $f\in C$.

From \eqref{eq-n5.5} we obtain  that $(\dot V_p^k)^0\subset N\dot V_p^k$, hence, due to 
Theorem \ref{te2.6}\,(c) the operator $L:\dot V_p^k\rightarrow \dot V_{p\infty}^k$ (sending a function of $\dot V_p^k$ to its class of equivalence in $L_\infty$) embeds $(\dot V_p^k)^0$ isometrically into $\dot V_{p\infty}^k$.

Further, since $L$ isometrically maps  $C^\infty$ as a subset of $\dot V_p^k$  {\em onto} $C^\infty$ as that of $\dot V_{p\infty}^k$, it isometrically maps $\dot{\textsc{v}}_p^k$ onto
$\dot{\textsc{v}}_{p\infty}^k\, (:={\rm clos}(C^\infty,\dot V_{p\infty}^k))$. In turn, by \cite[Thm.\,2.13]{BB-18},
\begin{equation}\label{eq-n5.6}
\dot{\textsc{v}}_{p\infty}^k=(\dot V_{p\infty}^k)^0,
\end{equation}
where
 \[
(\dot V_{p\infty}^k)^0:=\left\{f\in\dot V_{p\infty}^k\,:\, \lim_{\varepsilon\rightarrow 0}\sup_{|\pi|\le\varepsilon}\left(\sum_{Q\in\pi} E_{k\infty}(f;Q)^p\right)^{\frac 1 p}=0\right\};
\]
$E_{k\infty}(f;Q):=\displaystyle\inf_{m\in\mathcal P_{k-1}^d}\|f-m\|_{L_\infty(Q)}$.

Comparing the definitions of $(\dot V_p^k)^0\, (\subset C)$ and $(\dot V_{p\infty}^k)^0$ we see that $L((\dot V_p^k)^0)\subset (\dot V_{p\infty}^k)^0$. 

Since $\dot{\textsc v}_p^k\subset (\dot V_p^k)^0$,  the above implications yield
\[
 L(\dot{\textsc v}_p^k)\subset L((\dot V_p^k)^0)\subset (\dot V_{p\infty}^k)^0=\dot{\textsc v}_{p\infty}^k=L(\dot{\textsc v}_p^k).
\]
Hence, $L((\dot V_p^k)^0)=L(\dot{\textsc v}_p^k)$. This and the injectivity of $L|_{(\dot V_p^k)^0}$ imply that
\[
(\dot V_p^k)^0=\dot{\textsc v}_p^k.
\]
 
The proof of the theorem is complete.
%

\sect{Proof of Theorem  \ref{cor2.1.2}}
The proof of the theorem is based on the following result of independent interest.
\begin{Th}\label{teor2.1.1}
Let $Q\Subset\RR^d$ be a nontrivial closed cube and $f\in\ell_\infty(Q)$. Then the following two-sided inequality with equivalence constants depending only on $k,d$ holds:
\begin{equation}\label{2.1.1}
E_k(f;Q)\approx {\rm osc}_k(f;Q).
\end{equation}
\end{Th}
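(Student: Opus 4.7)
The plan is to reduce to the unit cube $Q=[0,1]^d$ by affine invariance (both $E_k$ and ${\rm osc}_k$ scale and translate correctly) and to prove the two inequalities separately. The easy direction ${\rm osc}_k(f;Q)\le 2^k E_k(f;Q)$ follows immediately because $\Delta_h^k$ annihilates every polynomial $m\in\mathcal P_{k-1}^d$, so $\Delta_h^k f=\Delta_h^k(f-m)$ and hence $|\Delta_h^k f(x)|\le 2^k\|f-m;Q\|_\infty$ whenever $x+jh\in Q$ for $j=0,\dots,k$; taking supremum in $(x,h)$ and infimum in $m$ yields the bound.

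For the hard direction $E_k(f;Q)\le C(k,d)\,{\rm osc}_k(f;Q)$ I would proceed by induction on $d$. In the base case $d=1$, I would construct $m\in\mathcal P_{k-1}^1$ as the Lagrange interpolant of $f$ at the $k$ equally spaced nodes $y_i:=i/(k-1)$, $0\le i\le k-1$, and estimate the error via the Newton identity
\[
f(x)-m(x)=[y_0,\dots,y_{k-1},x]\,f\cdot\prod_{i=0}^{k-1}(x-y_i).
\]
Unfolding the divided difference through its recursive definition, I would express it as a fixed finite linear combination of $k$-th differences $\Delta_{h_j}^k f(z_j)$ with step sizes $h_j$ bounded below by a constant depending only on $k$; this yields $|[y_0,\dots,y_{k-1},x]\,f|\le C(k)\,{\rm osc}_k(f;[0,1])$, and since $\prod_i|x-y_i|\le 1$, the base case follows.

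For the inductive step, I would fix a finite unisolvent set $T\subset Q^d$ for $\mathcal P_{k-1}^d$ and take $m\in\mathcal P_{k-1}^d$ to be the Lagrange interpolant of $f$ on $T$. To bound $|(f-m)(x)|$ at an arbitrary $x\in Q^d$, I would connect $x$ to $T$ by a short chain of axis-parallel segments lying in $Q^d$. On each such segment the restriction of $f-m$ is a bounded univariate function, to which the univariate base case applies; combined with the inductive hypothesis applied to $(d-1)$-dimensional slices (which handle the ``transverse'' contribution), this produces a bound expressed through ${\rm osc}_k(f;Q')$ for subcubes $Q'\subset Q^d$, each majorized by ${\rm osc}_k(f;Q^d)$.

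The main obstacle, as already signalled in Remark \ref{rem2.2.2}, is the non-measurability of $f$: the classical Whitney-type proofs for $L_p$ spaces construct $m$ via Steklov averaging or convolution mollification, both of which require integrating $f$ and are therefore unavailable when $f\in\ell_\infty(Q)$ is only bounded. Every step here must be discrete and pointwise. The technical heart is the univariate divided-difference estimate: bounding $|[y_0,\dots,y_{k-1},x]\,f|$ uniformly in $x$ purely in terms of $k$-th forward differences of $f$ on the a priori fixed equispaced grid, with constant depending only on $k$, requires a careful combinatorial unfolding of the recursion and cannot be shortcut by any averaging argument.
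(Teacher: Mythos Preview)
Your easy direction ${\rm osc}_k\le 2^k E_k$ is correct and matches the paper verbatim.

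For the hard direction, the $d=1$ case is indeed the Whitney inequality \cite{Wh-59}, which the paper simply cites. Your divided-difference sketch is roughly in the right spirit, but the claim that $[y_0,\dots,y_{k-1},x]\,f$ unfolds into a \emph{fixed} linear combination of $k$-th differences with steps bounded below in $x$ is not right as stated: when $x$ approaches a node $y_i$ the divided-difference coefficients blow up, and only the compensating factor $\prod_i(x-y_i)$ saves the estimate. Whitney's actual argument is different and more delicate. Still, this is a known result and not the crux.

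The genuine gap is the inductive step. You fix $m$ as the Lagrange interpolant of $f$ on a unisolvent set $T$ and propose to bound $(f-m)(x)$ by walking along axis-parallel segments from $x$ to $T$, applying the univariate case on each segment. But the univariate Whitney on a segment $I$ only tells you that $f-m$ is within $C\,{\rm osc}_k(f;I)$ of \emph{some} univariate polynomial $p$ on $I$; it does not bound $|(f-m)(x)|$ itself. To extract a pointwise bound you would need to know $f-m$ at $k$ points of the segment, and a single endpoint in $T$ (where $f-m=0$) is not enough. The vague reference to ``inductive hypothesis on $(d-1)$-dimensional slices'' does not close this: the slice estimate is again of the form $E_k\le C\,{\rm osc}_k$ on a lower-dimensional cube, which produces yet another unknown approximating polynomial, not a bound on the fixed $m$. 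There is no mechanism in your outline that forces the segment-wise approximants to agree with the global $m\in\mathcal P_{k-1}^d$.

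The paper's route avoids this entirely. It never fixes a single interpolant; instead it builds, for each multi-index $\alpha$ with $|\alpha|=k$, a projection $\mathcal L_\alpha=1-\prod_i(1-L_{\alpha_i}^i)$ onto the space $\sum_i\mathscr Q_{\alpha_i}^i$ and proves $\|f-\mathcal L_\alpha f\|_\infty\le c\,{\rm osc}_\alpha f$ by induction on $d$ (this induction works because the mixed difference $\Delta_h^\alpha$ factors through the coordinates). Then $\mathcal L_k:=\prod_{|\alpha|=k}\mathcal L_\alpha$ is shown to land in $\mathcal P_{k-1}^d$, giving $E_k(f)\le\|f-\mathcal L_k f\|_\infty\le c\sum_{|\alpha|=k}{\rm osc}_\alpha f$, and finally a combinatorial identity bounds each mixed oscillation ${\rm osc}_\alpha f$ by ${\rm osc}_k f$. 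The tensor-product structure is what makes the passage from one-variable to $d$-variable work without any path-tracking.
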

\begin{proof}
We begin with the proof of the inequality
\begin{equation}\label{e1}
E_k(f;Q)\le c(k,d)\,{\rm osc}_k(f;Q),\quad Q\subset Q^d,
\end{equation}
where throughout the proof $c(k,d)$ denotes a positive constant depending on $k, d$ and changing from line to line or within a line.

Without loss of generality it suffices to prove \eqref{e1} for $Q=Q^d$; in this case, we write in \eqref{e1} $E_k(f)$ and ${\rm osc}_k\, f$ omitting $Q^d$, see Stipulation \ref{stip1.2.6}.

For $d=1$ the inequality was proved in \cite{Wh-59}. In more details, let $f\in\ell_\infty[0,1]$ and $L_k f$ be a polynomial of degree $k-1$ interpolating $f$ at points $\frac{i}{k-1}$, $i=0,\dots, k$, if $k>1$ and equal $\frac{1}{2}(f(0)+f(1))$ if $k=1$. Clearly, $L_k$ is a projection of $\ell_\infty [0,1]$ onto $\mathcal P_{k-1}^1$.

Now according to \cite{Wh-59}
\begin{equation}\label{e2}
\|f-L_kf\|_\infty\le c\,{\rm osc}_k\,f,
\end{equation}
where $c=c(k)$.

Further, let $\mathscr Q_k^i$ be a linear subspace of $\ell_\infty$ consisting of polynomials in $x_i$ of degree $k-1$ with coefficients depending on the remaining variables $x_j\ne x_i$. This is extended 
to $k=0$ by setting $\mathscr Q^d=\{0\}$. Moreover, we define a projection of $\ell_\infty$ onto $\mathscr Q_k^i$ denoted by $L_k^i$ given for $f\in\ell_\infty$ by applying the interpolation operator $L_k$ to the function $x_i\mapsto f(x)$, $0\le x_i\le 1$, with fixed variables $x_j\ne x_i$.

As a direct consequence of this definition and inequality \eqref{e2} we have
\begin{equation}\label{e3}
\|f-L_k^i f\|_\infty\le c\, {\rm osc}_k^i\, f,
\end{equation}
where $c=c(k)$ and
\begin{equation}\label{e4}
{\rm osc}_k^i\, f:=\sup_{x,h}\bigl\{|\Delta_h^k\, f(x)|\bigr\},
\end{equation}
where supremum is taken for the $x,h\in\RR^d$ satisfying the condition\smallskip

\noindent (*) $h$ {\em is parallel to the $x_i$-axis and} $x,x+kh\in Q^d$.\smallskip

Further, again directly from the definition we have the following
\begin{Lm}\label{lm1}
If $i\ne i'$, then projections $L_k^i, L_{k'}^{i'}$ commute.
\end{Lm}
Now let $\alpha\in\Z_+^d$ and
\begin{equation}\label{e5}
L_\alpha:=\prod_{i=1}^d L_{\alpha_i}^i.
\end{equation}
Since the projections here pairwise commute, $L_\alpha f$ is a polynomial in $x_i$ of degree $\alpha_i-1$ for each $1\le i\le d$. 

Hence, $L_\alpha f$ is a polynomial of vector degree $\alpha-e$, where $e:=\{1,\dots, 1\}$. The space of these polynomials is denoted by $\mathcal P_\alpha$.
\begin{Lm}\label{lm2}
It is true that
\begin{equation}\label{e6}
\|f-L_\alpha f\|_\infty\le c\,\sum_{i=1}^d {\rm osc}_{\alpha_i}^i f,
\end{equation}
where $c=c(\alpha, d)$.
\end{Lm}
\begin{proof}
Using the identity
\[
1-L_\alpha=(1-L_{\alpha_1}^1)+\sum_{i=2}^d\left(\prod_{j=1}^{i-1} L_{\alpha_j}^j\right) (1-L_{\alpha_i}^i),
\]
the estimate $\|L_k^i\|\le\|L_k\|$, $1\le i\le d$, and Lemma \ref{lm1} we conclude that
\[
\|f-L_\alpha f\|_\infty\le \left(\prod_{i=1}^{d-1}\|L_{\alpha_i}\|\right)\sum_{i=1}^d\|f- L_{\alpha_i}^i f\|_\infty\le c(\alpha,d)\,\sum_{i=1}^d {\rm osc}_{\alpha_i}^i\, f. 
\]
\end{proof}
\begin{C}\label{c3}
A function $f\in\ell_\infty$ belongs to the space $\mathcal P_\alpha$ if and only if
\begin{equation}\label{e7}
{\rm osc}_{\alpha_i}^i\, f=0\quad {\rm for\ all}\quad 1\le i\le d.
\end{equation}
\end{C}
\begin{proof}
If \eqref{e7} holds, then by \eqref{e6} $f=L_\alpha f\in\mathcal P_\alpha$.

Conversely, if $f\in\mathcal P_\alpha$, then $\Delta_h^{\alpha_i} f=0$ for every $h$ parallel to the $x_i$ axis, as $f$ is a polynomial in $x_i$ of degree $\alpha_i-1$, $1\le i\le d$.

This and definition \eqref{e4} imply \eqref{e7}.
\end{proof}

At the next step, we define a subspace of $\ell_\infty$ by setting 
\begin{equation}\label{e8}
\mathscr Q_\alpha:=\sum_{i=1}^d \mathscr Q_{\alpha_i}^i
\end{equation}
and an operator acting in $\ell_\infty$ by
\begin{equation}\label{e9}
\mathcal L_\alpha:=1-\prod_{i=1}^d (1-L_{\alpha_i}^i).
\end{equation}
Since $1-L_{\alpha_i}^i$ annihilates $\mathscr Q_{\alpha_i}^i$, $1\le i\le d$, and these operators pairwise commute, $\mathcal L_\alpha$ is a projection of $\ell_\infty$ onto $\mathscr Q_\alpha$ of norm $\le \prod_{i=1}^\alpha (1+\|L_{\alpha_i}^i\|)=:c(\alpha, d)$.

Now we estimate order of approximation by $\mathcal L_\alpha$ using {\em mixed $\alpha$-oscillation} given for $\alpha\in\Z_+^d$ and $f\in\ell_\infty$ by
\begin{equation}\label{e10}
{\rm osc}_\alpha\, f:=\sup_{x,h}\left\{\left|\left(\prod_{i=1}^d
\Delta_{h_i e^i}^{\alpha_i}f\right) (x)\right|\right\},
\end{equation}
where supremum is taken over $x,h\in\RR^d$ satisfying the condition
\begin{equation}\label{e11}
x\in Q^d,\quad x_i+\alpha_i h_i\in [0,1],\ 1\le i\le d,
\end{equation}
and $\{e^i\}_{1\le i\le d}$ is the standard orthonormal basis of  $\RR^d$.

In the sequel, we write
\begin{equation}\label{e12}
\Delta_h^\alpha:=\prod_{i=1}^d\Delta_{h_i e^i}^{\alpha_i}.
\end{equation}
Here $\Delta_{h_i e^i}^{\alpha_i}=1$ if $\alpha_i=0$; in particular,
\begin{equation}\label{e13}
{\rm osc}_\alpha\, f={\rm osc}_k^i\, f\quad {\rm if}\quad \alpha=ke^i.
\end{equation}
\begin{Lm}\label{lm4}
It is true that
\begin{equation}\label{e14}
\|f-\mathcal L_\alpha f\|_\infty\le c(\alpha, d)\,{\rm osc}_\alpha\, f.
\end{equation}
\end{Lm}

\begin{proof}[Proof {\rm (induction on $d$)}]
For $d=1$ the assertion coincides with inequality \eqref{e2}. 

Now let \eqref{e14} holds for $d-1\ge 0$. Setting  $\hat x:=(x_1,\dots, x_{d-1})$, $\hat\alpha:=(\alpha_1,\dots,\alpha_{d-1})$, etc. for $x\in \RR^d$, $\alpha\in\Z_+^d$ and $\hat Q:=[0,1]^{d-1}$ we then have for $g\in\ell_\infty(\hat Q)$
\begin{equation}\label{e15}
\|g-\mathcal L_{\hat\alpha}g\|_{\ell_\infty(\hat Q)}\le c(\hat\alpha, d-1)\,{\rm osc}_{\hat\alpha}\, g.
\end{equation}
Now, for $f\in\ell_\infty(Q^d)$, $\alpha\in\Z_+^d$, we have from \eqref{e9}
\[
f-\mathcal L_{\alpha}f=\left(\prod_{i=1}^{d-1}(1-L_{\alpha_i}^i)\right)(f-L_{\alpha_d}^d f)=:(1-\mathcal L_{\hat\alpha})\varphi_{x_d},
\]
where $\varphi_{x_d}:\hat x\mapsto (f-L_{\alpha_d}^d f)(\hat x, x_d)$, $\hat x\in\hat Q^d$ and $x_d\in [0,1]$ is fixed.

Taking here the $\ell_\infty(\hat Q)$-norm and applying \eqref{e15} we obtain that
\begin{equation}\label{e16}
\|f-\mathcal L_\alpha f\|_{\ell_\infty(\hat Q)}\le c(\hat\alpha, d-1)\, {\rm osc}_{\hat \alpha} (\varphi_{x_d})= c(\hat\alpha, d-1)\,\sup_{\hat x,\hat h}\left\{\left|(\Delta_{\hat h}^{\hat\alpha}(1-L_{\alpha_d}^d)f)(x)\right|\right\},
\end{equation}
where $\hat x,\hat h$ satisfy \eqref {e11} for $d-1$ (instead of $d$).

Further, denoting the function $x_d\mapsto\Delta_{\hat h}^{\hat\alpha} f(x)$, $0\le x_d\le 1$, with fixed $\hat x,\hat h$ by $\psi_{\hat x,\hat h}$, changing the order of  $\Delta_{\hat h}^{\hat\alpha}$ and $1-L_{\alpha_d}^d$ and taking supremum over $0\le x_d\le 1$ we obtain from \eqref{e16}
\[
\|f-\mathcal L_\alpha f\|_\infty\le c(\hat\alpha, d-1)\, \sup_{\hat x,\hat h}\left\{\|\psi_{\hat x,\hat h}-L_{\alpha_d}\psi_{\hat x,\hat h}\|_{\ell_\infty[0,1]}\right\}.
\]
Estimating the right-hand side by \eqref{e2} we finally have
\[
\|f-\mathcal L_\alpha f\|_\infty\le c(\hat\alpha, d-1)c(\alpha_d)\,\sup_{x,h}\left\{\left|\Delta_{\hat h}^{\hat\alpha}\Delta_{h_de^d}^{\alpha_d} f\right|\right\}=:c(\alpha, d)\, {\rm osc}_\alpha\, f.
\]
\end{proof}

Now let
\begin{equation}\label{e17}
\mathcal L_k:=\prod_{|\alpha|=k}\mathcal L_\alpha=\prod_{i=1}^N\mathcal L_{\alpha^i},
\end{equation}
where $N$ is the cardinality of the set $\{\alpha\in\Z_+^d\, :\, |\alpha|=k\}$ and $\{\alpha^i\}_{1\le i\le N}$ is its arbitrary enumeration.
\begin{Lm}\label{lm5}
It is true that
\begin{equation}\label{e18}
\|f-\mathcal L_k f\|_\infty\le c(k,d)\,\sum_{|\alpha|=k}{\rm osc}_\alpha\, f.
\end{equation}
\end{Lm}
\begin{proof}
Using the identity
\[
1-\mathcal L_k=\sum_{i=1}^N\left(\prod_{j=1}^{i-1}\mathcal L_{\alpha^j}\right)(1-\mathcal L_{\alpha^i}),
\]
where $\prod_{j=1}^{i-1}:=1$ for $i=1$ and applying Lemma \ref{lm4} we have
\[
\|f-\mathcal L_k f\|_\infty\le\left(\prod_{i=1}^{n-1}\|\mathcal L_{\alpha^i}\|\right)\sum_{|\alpha|=k}c(\alpha,d)\, {\rm osc}_\alpha\, f\le c(k,d)\,\sum_{|\alpha|=k}{\rm osc}_\alpha\, f.
\]
\end{proof}
\begin{Lm}\label{lm6}
$E_k(f)\le\|f-\mathcal L_k f\|_\infty$.
\end{Lm}
\begin{proof}
It suffices to prove that $\mathcal L_k f$ is a polynomial of degree $k-1$. 

To this end we note that $\mathcal L_\alpha f=f$ if
\begin{equation}\label{e19}
{\rm osc}_\alpha\, f=0\quad {\rm for\ all}\quad |\alpha|=k,
\end{equation}
see \eqref{e18}.

Since ${\rm osc}_k^i\, f={\rm osc}_{ke^i}\, f$, see \eqref{e13}, and the latter satisfies \eqref{e19} for all $1\le i\le d$, Corollary \ref{c3} implies that $f$ is a polynomial of degree $k-1$ in each variable.

Due to \eqref{e19} for every $|\alpha|=k$ the mixed difference $\Delta_h^\alpha f=0$. Dividing by $h^\alpha$ and sending $h$ to $0$ we obtain that $D^\alpha f=0$, $|\alpha|=k$. Since for every monomial $x^\beta$ with $|\beta|\ge k$ there is $|\alpha|=k$ such that $D^\alpha (x^\beta)\ne 0$, the polynomial $f$, hence, $\mathcal L_\alpha f$, contains in its decomposition only monomials $x^\beta$ with $|\beta|\le k-1$.

Hence, $\mathcal L_\alpha f\in\mathcal P_{k-1}^d$.
\end{proof}

It remains to use Corollary~E.4 of \cite[Ch.\,2,\,App.\,E]{BBI-11} which, in particular, implies that
\begin{equation}\label{e20}
{\rm osc}_\alpha\, f\le c(k,d)\,{\rm osc}_k,\quad |\alpha|=k.
\end{equation}
The corollary is formulated for continuous $f$ but its proof is based   only on the combinatorial identity relating $\Delta_h^\alpha$ with a linear combination of shifted $k$-differences, see Theorem~E.1 there. Hence, inequalities \eqref{e20} hold for $f\in\ell_\infty$ as well.

Finally, we combine Lemmas \ref{lm5}, \ref{lm6} and inequalities \eqref{e20} to obtain the required inequality
\[
E_k f\le c(k,d)\, {\rm osc}_k\, f,
\]
see \eqref{e1}.

The proof of the converse inequality is essentially simpler. In fact, since
by the definition, see \eqref{eq-n1.4}, ${\rm osc}_k\, f\le 2^k\|f\|_\infty$ and ${\rm osc}_k|_{\mathcal P_{k-1}^d}=0$, for every polynomial $m\in\mathcal P_{k-1}^d$
\[
{\rm osc}_k\, f\le{\rm osc}_k(f-m)\le 2^k\|f-m\|_\infty.
\]
Taking here infimum in $m$ we get
\[
2^{-k}{\rm osc}_k\, f\le E_k\, f.
\]
The proof of Theorem \ref{teor2.1.1} is complete.
\end{proof}

Now Theorem \ref{cor2.1.2} follows directly from Theorem \ref{teor2.1.1}
 and the definition of $|\cdot |_{V_p^k}$, see \eqref{1.2.14}.

\sect{Proof of Theorem \ref{teo2.2.5} }
\noindent (a) Let $f\in\dot V_p^k$ and $s:=\frac d p \in (0,k)$. Given $\varepsilon>0$ we should find a function $f_\varepsilon\in\Lambda^{k,s}$ such that
\begin{equation}\label{equ1}
|\{x\in Q^d\, :\, f(x)\ne f_\varepsilon (x)\}|<\varepsilon.
\end{equation}
To this end we first find a set, say, $S_f\subset Q^d$ of measure $1$ such that $f$ is locally Lipschitz at every its point. Then we find a subset of $S_f$ denoted by $S_\varepsilon$ such that its Lebesgue measure is at least $1-\varepsilon$ and the trace $f|_{S_\varepsilon}$ satisfies the conditions of the extension theorem for Lipschitz functions. Extending $f|_{S_\varepsilon}$ to a function of $\Lambda^{k,s}$ we finally obtain the required $f_\varepsilon$ of \eqref{equ1}.

We begin with a result on the structure of $E_k(f)$ as a function of $Q\subset Q^d$. Parameterizing the set of cubes in $\RR^d$ by the bijection $\RR^d\times\RR_+\ni (x,r)\leftrightarrow Q_r(x)\subset\RR^d$, where $Q_r(x)$ is a closed cube of sidelength $2r>0$ centered at $x$, we consider $E_k(f)$ as a function of $x,r$. It is easily seen that the subset $\Omega_0\subset \RR^d\times\RR_+$ of cubes $Q_r(x)\subset Q^d$ is a convex body.
\begin{Prop}\label{prop1}
Let $f\in \dot V_p^k$, $s\in (0,k]$. There is a function $\mathcal E_k(f):\Omega_0\rightarrow\RR_+$  such that the following is true.

\noindent {\rm (a)} $\mathcal E_k(f)$ is Lebesgue measurable in $x\in Q^d$.

\noindent {\rm (b)} For every $Q\subset Q^d$
\begin{equation}\label{equ2}
E_k(f;Q)\le\mathcal E_k(f;Q).
\end{equation}

\noindent {\rm (c)} For every packing $\pi\in\Pi$
\begin{equation}\label{equ3}
\left\{\sum_{Q\in\pi}\mathcal E_k(f;Q)^p\right\}^{\frac 1 p}\le 5 |f|_{V_p^k}.
\end{equation}
\end{Prop}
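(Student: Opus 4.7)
My plan is to define $\mathcal E_k(f;\cdot)$ as a regularized (countable-sup) majorant of $E_k(f;\cdot)$, obtained by allowing a controlled concentric enlargement of the cube with rational radii, and then derive the packing bound via a covering argument.

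For $Q=Q_r(x)\subset Q^d$ I would set
$$\mathcal E_k(f;Q_r(x)):=\sup\{E_k(f;Q_{r'}(x)):r'\in\mathbb Q\cap[r,2r],\ Q_{r'}(x)\subset Q^d\},$$
with the convention that when no admissible $r'$ exists we take $\mathcal E_k(f;Q):=E_k(f;Q)$. Property (b) is immediate: by monotonicity of $E_k(f;\cdot)$ in cube inclusion and density of $\mathbb Q$ in $[r,2r]$, $\mathcal E_k(f;Q)\ge E_k(f;Q)$. For (a), I would verify that $x\mapsto E_k(f;Q_{r'}(x))$ is Borel for each fixed rational $r'$: by Theorem \ref{te2.1.3} the set $D$ of discontinuities of $f$ is at most countable, so the inner supremum satisfies
$$\sup_{z\in Q_{r'}(x)}|f(z)-m(z)|=\sup_{z\in Q_{r'}(x)\cap(S\cup D)}|f(z)-m(z)|,$$
where $S$ is any countable dense subset of $Q^d$; rewriting this as $\sup_{z\in S\cup D}\mathbf 1_{Q_{r'}(z)}(x)\cdot|f(z)-m(z)|$ exhibits it as a countable supremum of Borel functions of $x$. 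Taking a countable infimum over a dense subset of $\mathcal P_{k-1}^d$ (using continuity of the sup in $m$) and then a countable supremum over $r'\in\mathbb Q$ preserves Borel measurability, giving (a).

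For (c), given a finite packing $\pi=\{Q_i=Q_{r_i}(x_i)\}$ and $\delta>0$, I would select rationals $r_i'\in[r_i,2r_i]$ with $E_k(f;\tilde Q_i)\ge\mathcal E_k(f;Q_i)-\delta$, where $\tilde Q_i:=Q_{r_i'}(x_i)$; each $\tilde Q_i$ is a concentric dilation of $Q_i$ by a factor at most $2$. A dyadic size-class decomposition of $\pi$ (grouping by the integer $j$ with $r_i\in[2^{-j-1},2^{-j})$) combined with a volume count shows that the point-multiplicity of $\{\tilde Q_i\}$ is bounded by some absolute constant $M$: within one size class the $Q_i$'s are pairwise non-overlapping with sides comparable to their dilations, and cross-class overlaps are controlled by the size hierarchy. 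Hence $\{\tilde Q_i\}$ decomposes into at most $M$ sub-packings $\pi_1,\dots,\pi_M\in\Pi$, each satisfying $\sum_{Q'\in\pi_j}E_k(f;Q')^p\le|f|_{V_p^k}^p$. The $\ell_p$-triangle inequality yields $(\sum_i\mathcal E_k(f;Q_i)^p)^{1/p}\le M^{1/p}|f|_{V_p^k}+\delta\cdot|\pi|^{1/p}$, and letting $\delta\to 0$ completes the bound. The main obstacle I anticipate is obtaining the stated absolute constant $5$ (independent of $d$): a crude size-class count yields $M\lesssim 5^d$, so extracting the dimension-free constant requires either a refined coloring argument that exploits the centered structure of the dilations, or replacing the pointwise countable enlargement by an averaged/continuous-parameter variant whose overlap mass is controlled dimension-freely through the $\ell_p$-subadditivity of $\mu(Q):=E_k(f;Q)^p$ over packings.
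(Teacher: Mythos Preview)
Your construction over-complicates the issue and, as you correctly anticipate, the covering step cannot yield the dimension-free constant $5$: concentric $2$-dilations of a packing in $\RR^d$ already have interior overlap of order $2^d$ (consider the corner point of a unit-cube tiling after doubling), so any decomposition into sub-packings forces $c(d)$ in place of $5$. But the remedy is already contained in your own argument for (a). The measurability claim you prove for $x\mapsto E_k(f;Q_{r'}(x))$ with fixed rational $r'$ nowhere uses rationality: your reduction of the $\ell_\infty$-supremum to the countable set $S\cup D$ (a countable dense set union the discontinuity set of $f$, the latter at most countable by Theorem~\ref{te2.1.3}) applies verbatim to any fixed radius $r$ and shows that $x\mapsto E_k(f;Q_r(x))$ is itself Borel. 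Hence you may simply take $\mathcal E_k(f;\cdot):=E_k(f;\cdot)$; then (a) holds by your argument, (b) is an equality, and (c) holds with constant $1$ directly from the definition of $|f|_{V_p^k}$. The enlargement step, and with it the coloring problem, is unnecessary.

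The paper takes a genuinely different route. It invokes Theorem~\ref{te2.1.4} to produce a sequence $\{f_n\}\subset C^\infty$ pointwise converging to $f$ with $\varlimsup_n |f_n|_{V_p^k}\le 5|f|_{V_p^k}$, and sets $\mathcal E_k(f;Q):=\varliminf_{n\to\infty} E_k(f_n;Q)$. Measurability is immediate since $E_k(g;\cdot,r)$ is \emph{continuous} in $x$ whenever $g\in C$; the lower bound (b) follows by extracting a uniformly convergent subsequence of the (bounded) optimal polynomials for $f_n$ on $Q$ and using pointwise convergence $f_n\to f$; and (c) is a one-line Fatou inequality across the finite packing sum, inheriting the constant $5$ from the approximation theorem. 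This avoids analyzing the discontinuity structure of $f$ at the cost of relying on the nontrivial weak$^*$ approximation result and producing a weaker constant than your (corrected) approach would.
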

\begin{proof}
Let $\{f_n\}_{n\in\N}\subset C^\infty$ be a sequence pointwise converging on $Q^d$ to $f$ such that
\begin{equation}\label{equ4}
\varlimsup_{n\rightarrow\infty}|f_n|_{V_p^k}\le 5|f|_{V_p^k},\qquad \varlimsup_{n\rightarrow\infty}\|f_n\|_\infty\le 3\|f\|_\infty,
\end{equation}
see Theorem \ref{te2.1.4}.

Then we define the required function by setting for $Q\subset Q^d$
\begin{equation}\label{equ5}
\mathcal E_k(f;Q):=\varliminf_{n\rightarrow\infty} E_k(f_n;Q)
\end{equation}
and prove that $\mathcal E_k(f)$ satisfies the declared properties.

\noindent (a) First we show that $E_k(g)$ is continuous in $x\in Q^d$ if $g$ is.

Let $Q\subset Q^d$ and $h\in\RR^d$ be such that $Q+h\subset Q^d$. Then we have
\[
\begin{array}{c}
\displaystyle
|E_k(g;Q)-E_k(g;Q+h)|=|E_k(g;Q)-E_k(g(\cdot - h);Q)|\le E_k(g-g(\cdot - h);Q)
\\
\\
\displaystyle \qquad\qquad \qquad\quad \le \max_{Q}|g(x)-g(x-h)|\rightarrow 0\quad {\rm as}\quad h\rightarrow 0.
\end{array}
\]
In other words,
\[
\lim_{h\rightarrow 0} E_k(g;x+h,r)=E_k(g;x,r)
\]
at every $x$ from the set
\begin{equation}\label{equ6}
\mathcal Q_r:=\{x\in Q^d\, :\, Q_r(x)\subset Q^d\},
\end{equation}
i.e., $E_k(g)$ is continuous on the compact convex set $\mathcal Q_r$.

Now since each $f_n\in C^\infty$ approximating $f$ is continuous on $Q^d$, the function $E_k(f_n;\cdot ; r)$ is continuous in $x$, hence, $\mathcal E_k(f;\cdot ; r)$ is Lebesgue measurable in $x\in \mathcal Q_r$, see \eqref{equ5}. Therefore it is Lebesgue measurable in $x$ on the set
$\cup_{r>0}\,\mathcal Q_r=\mathring{Q}^d$.\smallskip

\noindent (b) To prove that $E_k(f;Q)\le\mathcal E_k(f;Q)$, $Q\subset Q^d$, we for every $n\in\N$ choose a polynomial $P_n\in\mathcal P_{k-1}^d$ such that
\[
E_k(f_n;Q)=\|f_n-P_n\|_{\ell_\infty(Q)}.
\]
We have 
\[
\|P_n\|_{\ell_\infty(Q)}\le\|f_n\|_\infty+ E_k(f_n,Q)\le 2\|f_n\|_\infty
\]
and then by \eqref{equ4}
\[
\varlimsup_{n\rightarrow\infty}\|P_n\|_{\ell_\infty(Q)}\le 2\varlimsup_{n\rightarrow\infty}\|f_n\|_\infty\le 6\|f\|_\infty.
\]
Hence, $\{P_n\}_{n\in\N}$ is bounded in $\ell_\infty(Q)$ and so contains a subsequence $\{P_{n_i}\}_{i \in\N}$ uniformly converging on $Q$ to some $P\in\mathcal P_{k-1}^d$ such that 
\[
\lim_{i\rightarrow\infty}\|f-P_{n_i}\|_{\ell_\infty(Q)}=\varliminf_{n\rightarrow\infty}\|f_n-P_n\|_{\ell_\infty(Q)}.
\]
This then implies that 
\[
E_k(f;Q)\le \|f-P\|_{\ell_\infty(Q)}\le\lim_{i\rightarrow\infty}\|f_{n_i}-P_{n_i}\|_{\ell_\infty(Q)}=
\varliminf_{n\rightarrow\infty}\|f_n-P_n\|_{\ell_\infty(Q)}=:\mathcal E_k(f;Q).
\]

This proves \eqref{equ2} and property (b).\smallskip

\noindent (c) To prove \eqref{equ3} we write
\[
\sum_{Q\in\pi}\mathcal E_k(f;Q)^p:=\sum_{Q\in\pi}\varliminf_{n\rightarrow\infty}E_k(f_n;Q)^p\le \varliminf_{n\rightarrow\infty}\sum_{Q\in\pi} E_k(f_n;Q)^p\le
\varliminf_{n\rightarrow\infty}|f_n|^p_{V_p^k}\le (5|f|_{V_p^k})^p.
\]
This proves \eqref{equ3} and the proposition.
\end{proof}
\begin{Prop}\label{prop2}
Under the assumptions of Proposition \ref{prop1}
\begin{equation}\label{equ7}
\varlimsup_{Q\rightarrow x}\bigl(|Q|^{-\frac 1 p} E_k(f;Q)\bigr)<\infty
\end{equation}
for almost all $x\in Q^d$.
\end{Prop}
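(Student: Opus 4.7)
The plan is to deduce the almost-everywhere finiteness from a weak-type $(p,p)$ estimate for a cube-maximal function built out of the majorant $\mathcal E_k(f)$ supplied by Proposition~\ref{prop1}. This is the classical Hardy--Littlewood pattern: a weak-type $(p,p)$ bound forces the supremum to be finite almost everywhere.

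Concretely, I will introduce the maximal function
\[
M(x) := \sup\bigl\{|Q|^{-1/p}\mathcal E_k(f;Q) : Q\subset Q^d,\ x\in Q\bigr\},\qquad x\in Q^d,
\]
and reduce the statement to showing $M<\infty$ a.e. The reduction is immediate from property~(b) of Proposition~\ref{prop1}: since $E_k(f;Q)\le\mathcal E_k(f;Q)$ for every subcube $Q\subset Q^d$, one has $\varlimsup_{Q\to x}|Q|^{-1/p}E_k(f;Q)\le M(x)$ at every $x\in Q^d$. Note also that $A_\lambda:=\{x\in Q^d : M(x)>\lambda\}$ coincides with the union of all open subcubes satisfying $|Q|^{-1/p}\mathcal E_k(f;Q)>\lambda$ and hence is open.

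Next I will establish the weak-type inequality
\[
|A_\lambda|\le 5^{d+p}\lambda^{-p}|f|_{V_p^k}^p,\qquad \lambda>0.
\]
To this end, for each $x\in A_\lambda$ I select a subcube $Q_x\ni x$ with $\mathcal E_k(f;Q_x)>\lambda |Q_x|^{1/p}$. Vitali's covering lemma, applied to the uniformly bounded family $\{Q_x\}_{x\in A_\lambda}$, yields a countable pairwise disjoint subfamily $\{Q_i\}_{i\in\N}$ with $A_\lambda\subset\bigcup_i 5Q_i$. Hence
\[
|A_\lambda|\le 5^d\sum_i|Q_i|\le 5^d\lambda^{-p}\sum_i\mathcal E_k(f;Q_i)^p.
\]
Since $\{Q_i\}$ is a countable packing in $Q^d$, applying property~(c) of Proposition~\ref{prop1} to finite truncations and passing to the supremum will give $\sum_i\mathcal E_k(f;Q_i)^p\le(5|f|_{V_p^k})^p$, producing the announced bound.

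Letting $\lambda\to\infty$ in that inequality yields $|\{M=\infty\}|=0$, and hence $\varlimsup_{Q\to x}|Q|^{-1/p}E_k(f;Q)<\infty$ at almost every $x\in Q^d$. The main technical step is the Vitali extraction together with the uniform packing control supplied by Proposition~\ref{prop1}(c); the openness of $A_\lambda$ and the passage from finite to countable packings in the final sum are routine.
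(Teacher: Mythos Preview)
Your proof is correct and follows the same weak-type maximal-function strategy as the paper, the only difference being that the paper invokes the Besicovich covering theorem (splitting the selected cubes into $c(d)$ packings) where you use the Vitali $5r$-lemma. One caveat: your claim that $A_\lambda$ is open is not justified---the witnessing cube for $x$ may meet $x$ only on its boundary, and $\mathcal E_k(f;\cdot)$ carries no continuity to appeal to---but this is harmless, since the Vitali argument already bounds the \emph{outer} measure of $A_\lambda$, which is all that is needed to conclude $|\{M=\infty\}|=0$.
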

\begin{proof}
Let $\rho_a: \mathcal Q_a\rightarrow\RR_+$, $a>0$, be a function given by
\begin{equation}\label{equ8}
\rho_a(x):=\sup\bigl\{r^{-\frac d p}\mathcal E_k(f;x,r)\, :\, r\le a\bigr\};
\end{equation}
here $\mathcal Q_a$ is the subset of $Q^d$ given by \eqref{equ6}.

Given $N\in\N$ we then set
\begin{equation}\label{equ9}
S_{a,N}:=\{x\in \mathcal Q_a\, :\, \rho_a(x)> N\}.
\end{equation}
\begin{Lm}\label{le3}
Let $|f|_{V_p^k}=1$. Then 
\begin{equation}\label{equ10}
|S_{a,N}|\le c(d,p)N^{-p}.
\end{equation}
\end{Lm}
\begin{proof}
Let $S\subset Q^d$ and 
\begin{equation}\label{equ11}
M(S):=\inf_{\Delta}\left\{\sum_{Q\in\Delta}|Q|\right\},
\end{equation}
where $\Delta$ runs over all coverings of $S$ by subcubes of $Q^d$.

It is known that for a measurable set $S$
\begin{equation}\label{equ12}
|S|\le M(S),
\end{equation}
see, e.g., \cite[Thm.\,I.1]{Ca-67}.

Now let $x\in S_{a,N}$. By definitions \eqref{equ8},\eqref{equ9} there is a cube
$Q_x$ of radius $\le a$ centered at $x\in \mathcal Q_a$ such that
\begin{equation}\label{equ13}
\mathcal E_k(f;Q_x)> N |Q_x|^{\frac 1 p}.
\end{equation}
Since the family $\{Q_x\}$ covers $S_{a,N}$ by centers of its cubes, the Besicovich covering theorem, see, e.g., \cite[Thm.\,1.2]{Gu-75} asserts that there is a countable subcovering of $\{Q_x\}$, say $\{Q_i\}_{i\in\N}$, which is the union of at most $c(d)$ packings $\pi_i\in\Pi$. This and \eqref{equ11}-\eqref{equ13} imply that
\[
|S_{a,N}|\le\sum_{i\le c(d)}\sum_{Q\in\pi_i}|Q|\le N^{-p}\sum_{i\le c(d)}\sum_{Q\in\pi_i}\mathcal E_k(f;Q)^p.
\]
By \eqref{equ3} the inner sum in the right-hand side is bounded from above by $(5|f|_{V_p^k})^p=5^p$, hence,
\[
|S_{a,N}|\le 5^p c(d) N^{-p}.
\]
\end{proof}

Now we have from \eqref{equ10}
\[
|S_{a,\infty}|=\left|\bigcap_{N\in\N} S_{a,N}\right|=0;
\]
moreover, by monotonicity of sets $S_{a,\infty}$ under inclusions, $S_\infty:=\cup_{a>0}S_{a,\infty}$ is of (Lebesgue) measure zero.

Finally, for every $x\in\mathcal Q_a\setminus S_{a,\infty}$
\begin{equation}\label{equ14}
\sup_{r\le a} r^{-\frac d p}\mathcal E_k(f;x,r)<\infty\quad {\rm and}\quad \bigcup_{0<a\le\frac 1 2}\mathcal Q_a=\mathring Q^d,
\end{equation}
see \eqref{equ6}.

Hence, for every point $x\in Q^d$ outside set $S_\infty\cup\partial Q^d$ of measure $0$
\[
\varlimsup_{Q\rightarrow x}|Q|^{-\frac 1 p}\mathcal E_k(f;Q)<\infty.
\]
This and  \eqref{equ2} give \eqref{equ7}.

Proposition \ref{prop2} is proved.
\end{proof}

In the next auxiliary result we use the following:
\begin{D}\label{def4}
A Lebesgue measurable set $S\subset\RR^d$ is said to be Ahlfors $d$-regular (briefly, regular) if for every $x\in S$ and some constants $\delta\in (0,1)$, $r_0>0$,
\begin{equation}
|Q_r(x)\cap S|\ge\delta |Q_r(x)|,\quad 0<r\le r_0.
\end{equation}
\end{D} 
\begin{Lm}\label{lem5}
Let $f\in V_p^k$, $s:=\frac d p\in (0,k]$. Given $\varepsilon\in (0,1)$ there is a regular set, say, $\Sigma_\varepsilon\subset Q^d$ such that
\begin{equation}\label{equ16}
|\Sigma_\varepsilon|\ge 1-\varepsilon
\end{equation}
and, moreover, there are positive constants $c_\varepsilon,a_\varepsilon$ such that  \begin{equation}\label{equ17}
\sup_{x\in\Sigma_\varepsilon} E_k(f;x,r)\le c_\varepsilon r^s,\quad 0\le r\le a_\varepsilon .
\end{equation}
\end{Lm}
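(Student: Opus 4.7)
The plan is to combine Lemma \ref{le3}, the Lebesgue density theorem, and Egorov's theorem, with the hard step being the upgrade from an Egorov-type set to one that is genuinely Ahlfors $d$-regular.

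First, normalize so that $|f|_{V_p^k}=1$. Pick $N=N_\varepsilon$ with $c(d,p)N^{-p}<\varepsilon/4$ via Lemma \ref{le3}, and $a=a_1<1/4$ so that $|Q^d\setminus\mathcal{Q}_a|=1-(1-2a)^d<\varepsilon/4$. Then
\[
T:=\mathcal{Q}_a\setminus S_{a,N}
\]
has measure $|T|\ge 1-\varepsilon/2$ and, by the definition of $\rho_a$ combined with \eqref{equ2}, satisfies
\[
E_k(f;x,r)\le\mathcal{E}_k(f;x,r)\le N\,r^{s},\qquad x\in T,\ 0<r\le a.
\]

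Next, I would pass from $T$ to an Ahlfors-regular subset by an iterated Egorov refinement. At each stage the Lebesgue differentiation theorem together with Egorov's theorem (applied to $h_n(x):=\inf_{0<r\le 1/n}|Q_r(x)\cap A|/|Q_r(x)|$, which increases to $1$ a.e.\ on any measurable set $A\subset Q^d$) allows one to pass from $A$ to a subset $A'\subset A$, with prescribed small measure loss, on which $|Q_r(x)\cap A|/|Q_r(x)|\ge 1/2$ uniformly for $0<r\le r(A')$. Starting from $\Sigma^{(0)}:=T$, recursively produce $\Sigma^{(k+1)}\subset\Sigma^{(k)}$ with $|\Sigma^{(k)}\setminus\Sigma^{(k+1)}|<\varepsilon\cdot 2^{-k-3}$ and a uniform scale $r_{k+1}\le r_k$ at which this density bound holds for $\Sigma^{(k)}$ around the points of $\Sigma^{(k+1)}$. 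Setting $\Sigma_\varepsilon:=\bigcap_k\Sigma^{(k)}$ gives $|\Sigma_\varepsilon|\ge 1-\varepsilon$ by the geometric series.

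The main obstacle is establishing Ahlfors regularity of $\Sigma_\varepsilon$ itself (not merely of the ambient $\Sigma^{(k)}$ around points of $\Sigma_\varepsilon$), at all scales below some $r_0>0$. For a given small $r$, choose an index $k=k(r)$ satisfying both $r\le r_k$ and $(2r)^d\ge \varepsilon\cdot 2^{-k}$; then
\[
|Q_r(x)\cap\Sigma_\varepsilon|\ge |Q_r(x)\cap\Sigma^{(k)}|-\sum_{j\ge k}|\Sigma^{(j)}\setminus\Sigma^{(j+1)}|\ge \tfrac{1}{2}|Q_r(x)|-\tfrac{\varepsilon}{4}\cdot 2^{-k}\ge \tfrac{1}{4}|Q_r(x)|,
\]
yielding regularity with $\delta=1/4$. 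The delicate point is the compatibility of the two constraints on $k(r)$ — the sequence $\{r_k\}$ must not decay faster than $c\,2^{-k/d}$ — and this has to be enforced inside Egorov by choosing each threshold $n_k$ large enough to meet simultaneously the measure-loss bound and the required lower bound on $r_k$; this is the technical heart of the proof.

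Finally, the polynomial-approximation estimate for $x\in\Sigma_\varepsilon\subset T$ is inherited directly from $T$, with $c_\varepsilon:=2^{d/p}N_\varepsilon$ (accounting for $|Q_r(x)|=(2r)^d$) and $a_\varepsilon:=\min(a_1,r_0)$, completing the proof.
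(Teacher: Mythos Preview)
The paper's proof is considerably more direct than your iterated scheme: after obtaining (essentially your) set $\Omega_\varepsilon$ on which the approximation bound holds, it applies Egorov's theorem \emph{once} to the density functions $r\mapsto |Q_r(x)\cap\Omega_\varepsilon|/(2r)^d$, producing $\Sigma_\varepsilon\subset\Omega_\varepsilon$ on which these converge to $1$ uniformly; after discarding the null set of non-density points of $\Sigma_\varepsilon$, it then asserts that the ratios $|Q_r(x)\cap\Sigma_\varepsilon|/(2r)^d$ likewise converge to $1$ uniformly on $\Sigma_\varepsilon$, and reads off regularity with $\delta=\tfrac12$. No iteration is used.

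Your worry that a single Egorov step only controls the density of the \emph{ambient} set around the Egorov set, not the density of the Egorov set itself, is well taken, and your iterated construction is a natural attempt to close that gap. But the step you flag as ``the technical heart'' is genuinely incomplete, and your proposed resolution points in the wrong direction. Egorov's theorem does not let you prescribe the uniform-convergence scale: once you fix the measure loss $\varepsilon\cdot 2^{-k-3}$ at stage $k$, the resulting $r_{k+1}$ is dictated by how slowly the density functions of $\Sigma^{(k)}$ converge near the removed part, and nothing forces $r_k\gtrsim 2^{-k/d}$. Your phrase ``choosing each threshold $n_k$ large enough'' conflates the two constraints: since $r_k=1/n_k$, large $n_k$ helps the measure-loss side but works \emph{against} the lower bound on $r_k$ you need; what would be required is an upper bound on $n_k$, and Egorov provides none. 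Without that control, for all sufficiently small $r$ there may be no index $k$ satisfying simultaneously $r\le r_k$ and $(2r)^d\ge\varepsilon\,2^{-k}$, and your displayed inequality for $|Q_r(x)\cap\Sigma_\varepsilon|$ is unavailable. In short, the obstacle you identify is real, but the proposal as written does not overcome it.
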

\begin{proof}
We set
\begin{equation}\label{equ18a}
a_\varepsilon:=\frac 1 2 \left(1-\sqrt[d]{1-\frac \varepsilon 3 } \right).
\end{equation}
By \eqref{equ14} we have for every $x\in\mathcal Q_{a_\varepsilon}\setminus S_{a_\varepsilon,\infty}$ and $r_n:=2^{-n}a_\varepsilon$
\begin{equation}\label{equ18}
\varphi(x):=\sup_{n\in\Z_+}\{r_n^{-s}\mathcal E_k(f;x,r_n)\}<\infty.
\end{equation}
Since the sequence under supremum is measurable in $x$, see Proposition \ref{prop1}\,(a), $\varphi$ is measurable and finite at every point of $\mathcal Q_{a_\varepsilon}\setminus S_{a_\varepsilon,\infty}$. Hence, there are a set  $\Omega_{\varepsilon}\subset\mathcal Q_{a_\varepsilon}\setminus S_{a,\infty}$ and a constant $N_\varepsilon$ such that
\begin{equation}\label{equ19}
\varphi\le N_\varepsilon\quad {\rm on}\quad \Omega_\varepsilon\quad {\rm and}\quad |\Omega_\varepsilon|\ge |\mathcal Q_{a_\varepsilon}|-\frac \varepsilon 3 .
\end{equation}
Further, the Lebesgue density theorem implies that
\begin{equation}\label{equ20}
\lim_{r\rightarrow 0}\frac{|\Omega_\varepsilon\cap Q_r(x)|}{|Q_r(x)|}=1\quad {\rm a.e.}\quad {\rm in}\quad \Omega_\varepsilon.
\end{equation}
By Egorov's theorem there is a measurable subset $\Sigma_\varepsilon\subset\Omega_\varepsilon$ of measure
\[
|\Sigma_\varepsilon|>|\Omega_\varepsilon|-\frac \varepsilon 3
\] 
such that \eqref{equ20} converges {\em uniformly} on $\Sigma_\varepsilon$. Assuming without loss of generality that all $x\in\Sigma_\varepsilon$ are simultaneously density points of $\Omega_\varepsilon$ and $\Sigma_\varepsilon$ we also have
\[
1=\lim_{r\rightarrow 0}\frac{|\Omega_\varepsilon\cap Q_r(x)|}{(2r)^d}=
\lim_{r\rightarrow 0}\frac{|\Sigma_\varepsilon\cap Q_r(x)|}{(2r)^d}
\]
for every $x\in\Sigma_\varepsilon$.

This implies that the second limit also converges on $\Sigma_\varepsilon$ uniformly.

Hence, for every $x\in\Sigma_\varepsilon$ and some $0<r_0<1$ (depending on $\Omega_\varepsilon$)
\[
\frac{|\Sigma_\varepsilon\cap Q_r(x)|}{(2r)^d}>\frac 1 2\quad {\rm for}\quad 0<r\le r_0,
\]
i.e., $\Sigma_\varepsilon$ is regular.

Let us show that $\Sigma_\varepsilon$ satisfy inequality \eqref{equ16}. In fact, due to \eqref{equ6} and \eqref{equ18a}
\[
|\Sigma_\varepsilon|>|\Omega_\varepsilon|-\frac {\varepsilon}{3} >|\mathcal Q_{a_\varepsilon}|-\frac{2\varepsilon}{3}=(1-2a_\varepsilon)^d-\frac{2\varepsilon}{3}=1-\varepsilon,
\]
as required.

Finally, $\varphi\le N_\varepsilon$ on the regular set $\Sigma_\varepsilon$, hence, for $x\in\Sigma_\varepsilon$
\[
\mathcal E_k(f;x,2^{-n}a_\varepsilon)\le N_\varepsilon (2^{-n}a_\varepsilon)^{s},\quad n\in\Z_+.
\]
Enlarging $N_\varepsilon$ (and denoting its new value by $c_\varepsilon$)  we can replace here $2^{-n}a_\varepsilon$ by an arbitrary $r\in (0,a_\varepsilon]$. Moreover, by Proposition \ref{prop1}\,(b), $E_k(f)\le\mathcal E_k(f)$, hence, for $x\in\Sigma_\varepsilon$
\[
E_k(f;x,r)\le c_\varepsilon r^s,\quad 0<r\le a_\varepsilon.
\]
This gives \eqref{equ17} and proves the lemma.
\end{proof}

Proceeding the proof of Theorem we apply inequality \eqref{equ17} to the trace
$f|_{\Sigma_\varepsilon}$. This function satisfies inequality
\begin{equation}\label{equ21}
E_k(f|_{\Sigma_\varepsilon};Q_r(x))\le c_\varepsilon r^s,\quad 0<r\le a_\varepsilon,
\end{equation}
on the regular set $\Sigma_\varepsilon$ and therefore meets the conditions of the extension theorem \cite{Br1-70}, see \cite[Thm.\,9.30]{BBII-11} for the general version of this result. Due to this theorem there is a function $f_\varepsilon:\RR^d\rightarrow\RR$ such that
\[
f_\varepsilon|_{\Sigma_\varepsilon}=f|_{\Sigma_\varepsilon}
\]
and, moreover, 
\[
\sup_{x\in Q^d} E_k(f_\varepsilon;x,r)\le c r^s,\quad 0\le r\le 1;
\]
here $c$ depends on $\varepsilon, d, k$ and $f$.

Using further Theorem \ref{teor2.1.1} we replace $E_k(f_\varepsilon;x,r)$ by ${\rm osc}_k(f;Q_r(x))$ and finally obtain
\[
|\Delta_h^k f_\varepsilon (x)|\le {\rm osc}_k(f;x, k\|h\|)\le c\|h\|^s,
\]
where $\|h\|:=\max_{1\le i\le d}|h_i|$ and $x, x+h$ belong to $Q^d$.

By definition, see \eqref{e2.2.4}, \eqref{e2.2.5}, this means that $f_\varepsilon\in\Lambda^{k,s}$, $0<s\le k$.

Thus, we have proved that for $f\in\dot V_p^k$ with $s\in (0,k]$ and $\varepsilon>0$ there is a Lipschitz function $f_\varepsilon\in\Lambda^{k,s}$ such that
\begin{equation}\label{equa22}
f=f_\varepsilon\quad {\rm outside\ a\ set\ of\ Lebesgue\ measure}\ <\varepsilon.
\end{equation}
For $0<s<k$, this establishes assertion (a) of Theorem \ref{teo2.2.5}.

Finally, in part (b) of this theorem we should prove that if $s=k$ we can replace the function $f_\varepsilon\in\Lambda^{k,k}$ by a $C^k$ function. To this end we use the linear continuous isomorphism
\begin{equation}\label{equa23}
\Lambda^{k,k}=C^{k-1,1}\cong \dot W_\infty^k.
\end{equation}
see Proposition \ref{prop2.2.3} and \cite{Br1-70}, respectively.

More precisely, every class $f\in\dot W_\infty^1$ contains a unique $C^{k-1,1}$ function, say $\hat f$, such that the map $f\mapsto\hat f$ is a linear continuous bijection of $\dot W_\infty^k$ onto $C^{k-1,1}$.

Further, $\dot W_\infty^k\subset \dot W_p^k$, $p>1$, while for every function $\hat f$ from the class $f\in\dot W_p^k$ there is a function $f_\varepsilon\in C^k$ such that $f$ coincides with $f_\varepsilon$ outside a set of measure $<\varepsilon$, see \cite[Thm.\,4.13]{CZ-61}.
Combining this with \eqref{equa23} we conclude that for every $g\in C^{k-1,1}$ there is $g_\varepsilon\in C^k$ coinciding with $g$ outside of a set of measure $<\varepsilon$.

Taking now $f\in\dot V_p^k$ with $s=k$ and $g=f_\varepsilon$ from \eqref{equa22} we obtain the function $g_\varepsilon\in C^k$ coinciding with $f$ outside a set of measure $<2\varepsilon$.

This proves assertion (b) and the theorem.
\sect{Proof of Theorem \ref{te2.2.8}}
(a) We should prove that if $f\in\dot V_p^k$ and $s:=\frac d p \in (0,k]$, then
\begin{equation}\label{one}
f\in\Lambda^{k,s}(x)\quad {\rm a.\, e.}
\end{equation}
To this end we use Proposition \ref{prop2} asserting that for this $f$
\begin{equation}\label{two}
\varlimsup_{r\rightarrow 0} r^{-s}E_k(f;x,r)<\infty\quad {\rm for\ all}\quad x\in S_f,
\end{equation}
where $S_f$ is a subset of $\mathring Q^d$ of measure $1$.

Due to inequality \eqref{equat2.1.1} $E_k(f)$ here can be replaced by ${\rm osc}_k(f)$; after this change condition \eqref{two} at $x\in S_f$ coincides with the condition of Definition \ref{def2.2.6} introducing the space $\Lambda^{k,s}(x_0)$. Hence, we obtain that $f\in\Lambda^{k,s}(x)$ for all $x\in S_f\subset\mathring Q^d$, i.e., almost everywhere on $Q^d$.
\smallskip

\noindent (b) We should prove that the function under consideration belongs to the Taylor space $T^s(x)$ for almost all $x\in Q^d$.

To this end we use Theorem 3 from \cite[\S 2]{Br1-94}
that, in particular, asserts the following:\smallskip

{\em If a function $f\in\ell_\infty(\mathring Q^{d})$ satisfies at a given point $x_0$ the condition}
\[
\varlimsup_{r\rightarrow 0} r^{-s} E_k(f;x,r)<\infty,
\]
{\em where} $0<s\le k$, {\em then} $f\in T^s(x_0)$.\smallskip

It then follows from here and condition \eqref{two} that $f\in T^s(x)$ for every $x\in S_f$, i.e., for almost all $x\in Q^d$.\smallskip

\noindent (c) Now we should show that if $f\in\dot V_p^k$ and $s=k$, then $f\in t^k(x_0)$ a.e.

To this end we first note that due to the previous result $f\in T^k(x)$ for every $x\in S_f$, i.e., for every such $x$ there is the Taylor polynomial $T_x(f)\in\mathcal P_{k-1}^d$ and constants $c_1,c_2>0$ independent of $r$ such that
\begin{equation}\label{three}
\max_{Q_r(x)}|f-T_x(f)|\le c_1 r^k\quad {\rm for}\quad
0<r\le c_2. 
\end{equation}

Further, we use Theorem \ref{teo2.2.5} asserting that given $\varepsilon>0$ there is a regular subset $S_\varepsilon\subset\mathring Q^d$ and a function $f_\varepsilon\in C^k$ such that
\begin{equation}\label{four}
|S_\varepsilon|>1-\varepsilon\quad {\rm and}\quad f=f_\varepsilon\quad {\rm on}\quad S_\varepsilon .
\end{equation}
Now we derive from here  and \eqref{three} that
\begin{equation}\label{five}
f\in t^k(x)\quad {\rm for\ almost\ all}\quad x\in S_\varepsilon .
\end{equation}
Since $|\cup_{\varepsilon>0} S_\varepsilon |=1$, this implies that $f\in t^k(x)$ for almost all $x$ in $Q^d$.

Let $T_x(f_\varepsilon)\in\mathcal P_{k}^d$ be the Taylor polynomial of $f_\varepsilon$ at $x$, i.e.,
\begin{equation}\label{six}
\max_{Q_r(x)}|f_\varepsilon -T_x(f_\varepsilon)|\le c(r) r^k,\quad 0<r\le\frac 1 2,
\end{equation}
where $c(r)\rightarrow 0$  as $r\rightarrow 0$.

We write
\[
T_x(f_\varepsilon):=\hat T_x(f_\varepsilon)+\sum_{|\alpha|=k}c_\alpha(f_\varepsilon )(\cdot - x_0)^\alpha,
\]
where $\hat T_x(f_\varepsilon)$ is the Taylor polynomial for $f_{\varepsilon}$ at $x$ of degree $k-1$. In particular,
\begin{equation}\label{eq-n8.7}
\max_{Q_r(x)}|f_\varepsilon-\hat T_x(f_\varepsilon)|\le c_1 r^k\quad {\rm for}\quad 0< r\le\frac 1 2 
\end{equation}
with a constant independent of $r$.
\begin{Lm}\label{lemm1}
If $x\in S_\varepsilon\cap S_f$, then 
\begin{equation}\label{seven}
T_x(f)=\hat T_x(f_\varepsilon).
\end{equation}
\end{Lm}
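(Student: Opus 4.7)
The plan is to exploit the fact that the difference $P := T_x(f) - \hat T_x(f_\varepsilon)$ is a polynomial of degree at most $k-1$, and to show that its two ``competing'' $O(r^k)$ estimates (one from \eqref{three} for $T_x(f)$, one from \eqref{eq-n8.7} for $\hat T_x(f_\varepsilon)$) combined with the density of $S_\varepsilon$ at $x$ force $P$ to vanish identically.

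First I would write, for $y \in Q_r(x) \cap S_\varepsilon$ and for $r$ sufficiently small (smaller than both $c_2$ from \eqref{three} and $\frac12$ from \eqref{eq-n8.7}),
\[
|P(y)| \le |T_x(f)(y) - f(y)| + |f_\varepsilon(y) - \hat T_x(f_\varepsilon)(y)| \le 2c_1 r^k,
\]
using that $f(y) = f_\varepsilon(y)$ on $S_\varepsilon$. Hence $\|P\|_{\ell_\infty(Q_r(x)\cap S_\varepsilon)} \le 2c_1 r^k$ for all small $r$.

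Next I would upgrade this bound from $Q_r(x) \cap S_\varepsilon$ to the full cube $Q_r(x)$. Since $x \in S_\varepsilon$ and $S_\varepsilon$ is Ahlfors $d$-regular (Lemma \ref{lem5}), there is $\delta>0$ and $r_0>0$ such that $|Q_r(x)\cap S_\varepsilon| \ge \delta |Q_r(x)|$ for $0<r\le r_0$. The standard comparability of $L^\infty$-norms of polynomials of bounded degree over a cube and over a subset of positive relative Lebesgue measure (a Remez/Brudnyi-Ganzburg type inequality) then gives a constant $C = C(\delta,k,d)$ such that
\[
\|P\|_{\ell_\infty(Q_r(x))} \le C \, \|P\|_{\ell_\infty(Q_r(x)\cap S_\varepsilon)} \le 2c_1 C \, r^k
\qquad (0<r\le r_0).
\]

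To finish, I would expand $P$ in Taylor form at $x$: $P(y) = \sum_{|\alpha|\le k-1} a_\alpha (y-x)^\alpha$. The A.~Markov inequality applied on $Q_r(x)$ yields $|a_\alpha| \le c(k,d) r^{-|\alpha|}\|P\|_{\ell_\infty(Q_r(x))} \le c'(k,d) r^{k-|\alpha|}$. Since $k - |\alpha| \ge 1$ for every $|\alpha| \le k-1$, letting $r \to 0$ forces $a_\alpha = 0$ for all such $\alpha$, hence $P \equiv 0$, which is the desired identity $T_x(f) = \hat T_x(f_\varepsilon)$.

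The main obstacle is the middle step, namely invoking the Remez/Brudnyi--Ganzburg type inequality on an Ahlfors regular set to pass from control on $Q_r(x)\cap S_\varepsilon$ to control on $Q_r(x)$; everything else is a direct application of estimates already stated in \eqref{three} and \eqref{eq-n8.7} together with the classical Markov inequality for polynomials on a cube.
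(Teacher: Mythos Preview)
Your proof is correct and follows essentially the same route as the paper: bound $P=T_x(f)-\hat T_x(f_\varepsilon)$ by $O(r^k)$ on $Q_r(x)\cap S_\varepsilon$ using $f=f_\varepsilon$ there together with \eqref{three} and \eqref{eq-n8.7}, then invoke the Remez--type inequality of Brudnyi--Ganzburg (the paper cites \cite{BG-73}) via the regularity of $S_\varepsilon$ to pass to the full cube, and finally use that a polynomial of degree $\le k-1$ which is $O(r^k)$ on $Q_r(x)$ must vanish. The only cosmetic difference is that the paper phrases the last step as ``divide by $r^k$ and let $r\to 0$'' while you spell it out via the Markov inequality on the coefficients.
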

\begin{proof}
By regularity of $S_\varepsilon$ there are positive constants $\gamma, r_0$ such that for every $x\in S_\varepsilon$
\begin{equation}\label{eight}
|S_\varepsilon \cap Q_r(x)|\ge \gamma |Q_r(x)|\quad {\rm for}\quad 0<r\le r_0.
\end{equation}
Moreover, $f=f_\varepsilon$ on $S_\varepsilon\cap Q_r(x)$; hence,
\[
\max_{S_\varepsilon \cap Q_r(x)}|T_x(f)-\hat T_x(f_\varepsilon)|\le\max_{Q_r(x)}|f-T_x(f)|+\max_{Q_r(x)}|f_\varepsilon- \hat T_x(f_\varepsilon)|.
\]
Estimating the right-hand side by inequalities \eqref{three} and \eqref{eq-n8.7} we have for every $x\in S_\varepsilon\cap S_f$
\begin{equation}\label{nine}
\max_{S_\varepsilon\cap Q_r(x)}|T_x(f)-\hat T_x(f_\varepsilon)|\le c_3 r^k,\quad 0\le r\le c_2
\end{equation}
for some $c_3>0$ independent of $r$.

Further, using a Remez type inequality \cite{BG-73} and \eqref{eight} we can estimate the left-hand side of \eqref{nine} from below by $c(k,d)\gamma^{k-1}\max_{Q_r(x)}|T_x(f)-\hat T_x(f_\varepsilon)|$.

This implies for sufficiently small $r>0$ the inequality
\[
\max_{Q_r(x)}|T_x(f)-\hat T_x(f_\varepsilon)|\le c r^k
\]
with $c$ independent of $r$.

Dividing this by $r^k$ and letting $r$ to $0$ we conclude that every coefficient of this polynomial (of degree $k-1$) should be $0$.
\end{proof}

Finally, we show that $f\in t^k(x)$ for almost all $x\in S_\varepsilon $. 

In fact, for the function $F:=f-f_\varepsilon$ and $x\in S_\varepsilon\cap S_f$ we have by Lemma \ref{lemm1} and inequalities  \eqref{three}, \eqref{eq-n8.7}
\begin{equation}\label{eq-n8.11}
\max_{Q_r(x)}|F|\le \max_{Q_r(x)}|f_\varepsilon-\hat T_x(f_\varepsilon)|+\max_{Q_r(x)}|f- T_x(f)|\le c_1 r^k,\quad  0<r\le c_2,
\end{equation}
where $c_1,c_2>0$ are independent of $r$.

Inequality \eqref{eq-n8.11} and Theorem 10 from \cite{CZ-61}  
imply that for almost all $x\in S_\varepsilon\cap S_f$
\[
\max_{Q_r(x)}|F|=o(r^k),\quad r\rightarrow 0.
\]
In turn, for such $x$ we derive from here and \eqref{six}
\[
\max_{Q_r(x)}|f-T_x(f_\varepsilon)|\le \max_{Q_r(x)}|F|+\max_{Q_r(x)}|f_\varepsilon-T_x(f_\varepsilon)|=o(r^k),\quad r\rightarrow 0.
\]
Since $|S_\varepsilon\cap S_f|=|S_\varepsilon |$, $f\in t^k(x)$ for almost all $x\in S_\varepsilon$.

This proves assertion (c) of Theorem \ref{te2.2.8}.
\sect{Proofs of Theorem \ref{teo2.2.11} and Corollary \ref{cor9.7}}
\begin{proof}[Proof of Theorem \ref{teo2.2.11}]
(a) We should show that the map $L: f\mapsto\tilde f$ sending a function $f\in N\dot V_p^k$ to its equivalence class $\tilde f\in L_p$ is a linear continuous injection  of $N\dot V_p^k$ in $\Lambda_{p\infty}^{ks}$, $0<s\le k$. 

In fact, the injectivity of the map follows from the definition of the class $N\dot V_p^k$, see Corollary \ref{cor2.1.3}, and 
so it remains to show that its image belongs to $\Lambda_{p\infty}^{ks}$ and the map  acts to this space continuously.

To this end, we again apply Theorem 4 from \cite[\S2]{Br1-71} to write
\[
t^{-s}\omega_{kp}(\tilde f;t)\le c\, t^{-s}\left(\sum_{Q\in\pi} E_{kp}(\tilde f;Q)^p\right)^{\frac 1 p},
\]
where $\pi$ is some packing in $\Pi(Q^d)$ containing cubes of volume $\le t^d$.

For these cubes, we clearly have
\[
E_{kp}(\tilde f;Q)\le |Q|^{\frac 1 p} E_{k\infty}(\tilde f;Q)\le c\, t^{\frac d p} E_k(f;Q):=c\, t^s E_k(f;Q).
\]
Together with the previous inequality this implies
\[
t^{-s}\omega_{kp}(\tilde f;t)\le c \left(\sum_{Q\in\pi} E_k(f;Q)^p\right)^{\frac 1 p}.
\]
Taking here supremums over $t>0$ and all $\pi\in\Pi$, we finally have
\[
|\tilde f|_{\Lambda_{p\infty}^{ks}}:=\sup_{t>0} t^{-s}\omega_{kp}(\tilde f;t)\le
c\sup_{\pi}\left(\sum_{Q\in\pi} E_k(f;Q)^p\right)^{\frac 1 p}=:c\, |f|_{V_p^k}.
\]
Hence, the map under consideration acts continuously from $N\dot V_p^k$ in $\Lambda_{p\infty}^{ks}$, i.e., gives the required implication $N\dot V_p^k\hookrightarrow \Lambda_{p\infty}^{ks}$, see \eqref{e2.2.19}. \smallskip

Further, let the space $\dot V_p^k$ be  such that
\begin{equation}\label{10.1}
s:=\frac d p \in (0,k).
\end{equation}
We will show that every class $f\in\Lambda_{p1}^{ks}$ contains a unique representative $\hat f \in C$ and the map $f\mapsto\hat f$ is a linear continuous injection of $\Lambda_{p1}^{ks}$ into $\dot V_p^k$. This clearly proves the required implication
\begin{equation}\label{10.2}
\Lambda_{p1}^{ks}\hookrightarrow \dot V_p^k.
\end{equation}

This will complete the proof of part (a) of Theorem \ref{teo2.2.11}.

To prove \eqref{10.2} we given $f\in L_p$ denote by $\mathcal L_f$ the Lebesgue set of $f$, i.e., $Q^d\setminus\mathcal L_f$ is of measure zero and the limit over cubes $Q$ centered at $x$
\begin{equation}\label{10.4}
\hat f(x):=\lim_{Q\rightarrow x}\frac{1}{|Q|}\int_Q f\,dy
\end{equation}
exists at every $x\in\mathcal L_f$.
\begin{Lm}\label{le10.1}
If $f\in L_p$ is such that
\begin{equation}\label{10.5}
\int_0^1\frac{\omega_{kp}(f;t)}{t^{s+1}}\, dt<\infty
\end{equation}
for $0<s<k$, then $\hat f$ is uniformly continuous on $\mathcal L_f$ and for every subcube $Q\subset Q^d$
\begin{equation}\label{10.6}
E_{k\infty}(f;Q)\le c(k,d)\int_0^{|Q|^{\frac 1 d }}\frac{\omega_{kp}(f;t;Q)}{t^{s+1}}\, dt.
\end{equation}
\end{Lm}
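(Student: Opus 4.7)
The plan is a dyadic polynomial-chain argument converting $L_p$-best-approximation control into $L_\infty$ control via the Nikolskii inequality $\|P\|_{L_\infty(Q)}\le c(k,d)|Q|^{-1/p}\|P\|_{L_p(Q)}$ for $P\in\mathcal P_{k-1}^d$, combined with the $L_p$-version of Whitney's inequality $E_{kp}(f;Q)\le c(k,d)\,\omega_{kp}(f;|Q|^{1/d};Q)$. The factor $|Q|^{-1/p}$ in Nikolskii produces exactly the $t^{-s}$ weight since $s=d/p$, making this the borderline case where the integral condition \eqref{10.5} is sharp.

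Fix $Q\subset Q^d$ and, for $j\ge 0$, partition it into $2^{dj}$ congruent subcubes of sidelength $r_j:=2^{-j}|Q|^{1/d}$. For $x\in Q$ let $Q_j(x)$ be the subcube at level $j$ containing $x$, and choose $m_{Q_j(x)}\in\mathcal P_{k-1}^d$ realizing $E_{kp}(f;Q_j(x))$. Combining the triangle inequality in $L_p(Q_{j+1}(x))$, the Nikolskii inequality on $Q_{j+1}(x)$, and the Whitney bound on $Q_{j+1}(x)\subset Q_j(x)$ yields
\[
\|m_{Q_{j+1}(x)}-m_{Q_j(x)}\|_{L_\infty(Q_{j+1}(x))}\le c(k,d)\,r_j^{-s}\,\omega_{kp}(f;r_j;Q).
\]
By monotonicity of $\omega_{kp}(f;\cdot;Q)$, summing in $j$ gives a quantity comparable to $\int_0^{|Q|^{1/d}}\omega_{kp}(f;t;Q)/t^{s+1}\,dt$, which is finite by hypothesis. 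Hence the telescoping series
\[
g(x):=m_Q(x)+\sum_{j\ge 0}\bigl(m_{Q_{j+1}(x)}-m_{Q_j(x)}\bigr)(x)
\]
converges absolutely and uniformly on $Q$ to a continuous limit $g$ with $\|g-m_Q\|_{L_\infty(Q)}$ dominated by the right-hand side of \eqref{10.6}.

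To finish, I identify $g$ with $\hat f$ on $\mathcal L_f$. At a Lebesgue point $x$ the averages $|Q_j(x)|^{-1}\int_{Q_j(x)}f$ tend to $\hat f(x)$; by H\"older they differ from the corresponding averages of $m_{Q_j(x)}$ by at most $c(k,d)|Q_j(x)|^{-1/p}\,\omega_{kp}(f;r_j;Q)\to 0$, and the average of the polynomial $m_{Q_j(x)}$ on $Q_j(x)$ is comparable to its pointwise value $m_{Q_j(x)}(x)$ up to a Markov-bounded error that likewise vanishes. Thus $g=\hat f$ on $\mathcal L_f$, establishing uniform continuity of $\hat f$ there; since $g$ is continuous and $\mathcal L_f$ has full measure in $Q$, one has $\|f-m_Q\|_{L_\infty(Q)}=\|g-m_Q\|_{L_\infty(Q)}$, whence $E_{k\infty}(f;Q)\le\|f-m_Q\|_{L_\infty(Q)}\le c(k,d)\int_0^{|Q|^{1/d}}\omega_{kp}(f;t;Q)/t^{s+1}\,dt$.

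The main obstacle is this pointwise identification, which requires a careful passage between pointwise values, local averages, and $L_p$-approximating polynomials at Lebesgue points; it is precisely where the integral hypothesis \eqref{10.5} is used most delicately, since the pointwise decay of $r_j^{-s}\omega_{kp}(f;r_j;Q)$ along a subsequence does not follow from $L_p$-membership alone but is built in to the integrability of $\omega_{kp}(f;t)/t^{s+1}$.
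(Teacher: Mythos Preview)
Your approach is correct and genuinely different from the paper's. The paper does not telescope: it invokes a rearrangement estimate from \cite[App.~II, Cor.~2$'$]{Br1-94},
\[
(f-m_Q(f))^*(t)\le c(k,d)\int_{t/2}^{|Q|}\frac{\omega_{kp}(f;u^{1/d};Q)}{u^{1+1/p}}\,du,
\]
and lets $t\to 0$ to obtain \eqref{10.6} in one stroke; uniform continuity of $\hat f$ is then deduced from \eqref{10.6} via the Marchaud inequality for the $L_\infty$ modulus. Your dyadic chain (Nikolskii inequality plus $L_p$-Whitney plus geometric telescoping) is self-contained and makes transparent why the weight $t^{-s-1}$ with $s=d/p$ is forced by the scaling. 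The paper's route is shorter given the cited black box; yours is more elementary and exposes the mechanism.

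Two points in your writeup deserve tightening. First, the partial sums $x\mapsto m_{Q_J(x)}(x)$ are piecewise polynomial and jump across dyadic faces, so uniform convergence alone (Weierstrass $M$-test) does not give continuity of $g$; you must also bound the jump at a face shared by adjacent level-$J$ cubes $Q',Q''$ by comparing both $m_{Q'}$ and $m_{Q''}$ to the best polynomial on a common cube of sidelength $\approx 2r_J$, which costs another $c\,r_J^{-s}\omega_{kp}(f;r_J;Q)$ and hence vanishes. Second, the identification is cleaner if you bypass polynomial averages altogether: for $x\in\mathcal L_f$ and $m\ge j$, write
\[
\left|\frac{1}{|Q_m(x)|}\int_{Q_m(x)}f - m_{Q_j(x)}(x)\right| \le r_m^{-s}E_{kp}(f;Q_m(x)) + \sum_{i=j}^{m-1}\|m_{Q_{i+1}(x)}-m_{Q_i(x)}\|_{L_\infty(Q_{i+1}(x))},
\]
let $m\to\infty$, and conclude $|\hat f(x)-m_{Q_j(x)}(x)|\le c\sum_{i\ge j}r_i^{-s}\omega_{kp}(f;r_i;Q)$. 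Taking $j=0$ yields \eqref{10.6} directly, without needing to control the oscillation of $m_{Q_j(x)}$ on its own cube (your ``Markov-bounded error'' can be made to vanish, but only after a convolution-with-geometric-sequence argument that your sketch does not supply).
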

Hereafter, we as above set
\begin{equation}\label{10.7a}
E_{kp}(f;Q):=\inf\{\|f-m\|_{L_p(Q)}\, :\, m\in\mathcal P_{k-1}^d\}
\end{equation}
and denote by $m_Q(f)$ an optimal polynomial for this relation.
\begin{proof}
Setting for brevity
\[
\omega(t):=\omega_{kp}(f;t;Q),\quad t>0,
\]
we estimate the nonincreasing rearrangement $(f-m_Q(f))^*$ by
\begin{equation}\label{10.7}
(f-m_Q(f))^*(t)\le c(k,d)\int_{\frac t 2}^{|Q|}\frac{\omega(u^{\frac 1 d})}{u^{1+\frac 1 p}}\, du,\quad 0\le t\le |Q|,
\end{equation}
see \cite[App.\,{\rm II},\,Cor.\,2$'$]{Br1-94}.

Sending here $t$ to $0$ and noting that $s=\frac d p$ we then obtain
\[
E_{k\infty}(f;Q)\le\|f-m_Q(f)\|_{L_\infty(Q)}=\lim_{t\rightarrow\infty}(f-m_Q(f))^*(t)\le c(k,d)\int_{0}^{|Q|^{\frac 1 d}}\frac{\omega(u)}{u^{s+1}}\, du.
\]

This proves \eqref{10.6}.

Now we show that the function $\hat f:\mathcal L_f\rightarrow\RR$ is uniformly continuous.

In fact, by \eqref{10.6}
\begin{equation}\label{10.8}
\lim_{Q\rightarrow x} E_{k\infty}(f;Q)=0
\end{equation}
and the convergence is uniform in $x$.  As in the proof of 
Theorem \ref{te2.1.5} we derive from here that
\begin{equation}\label{10.9}
\lim_{t\rightarrow 0}\omega_{1\infty}(f;t)=0.
\end{equation}

Finally, let $x,x+h\in\mathcal L_f\, (\subset \mathring Q^d)$. Then
\[
|\hat f(x+h)-\hat f(x)|\le\varlimsup_{Q\rightarrow x}\frac{1}{|Q|}\int_{Q}|f(y+h)-f(y)|\, dy\le\omega_{1\infty}(f;\|h\|)\rightarrow 0\quad {\rm as}\quad h\rightarrow 0.
\]
Hence, $\hat f$ is uniformly continuous on the dense in $Q^d$ set $\mathcal L_f$ and so can be continuously extended to $Q^d$; we preserve the same notation $\hat f$ for the extension.

Lemma \ref{le10.1} is proved.
\end{proof}

Now let us check that the regularization $\hat f$, see \eqref{10.4}, satisfies
\begin{equation}\label{10.12}
E_{k\infty}(f;Q)=E_k(\hat f;Q)\quad {\rm for\ all}\quad Q\subset Q^d.
\end{equation}

Indeed, let $m_Q\in\mathcal P_{k-1}^d$ be such that
\[
E_{k\infty}(f;Q)=\|f-m_Q\|_{L_\infty(Q)}.
\]
Given $\varepsilon\in (0,1)$ we choose $\hat f_\varepsilon\in\ell_\infty$ of the class $f\in L_\infty$ such that
\[
\sup_{Q}|\hat f_\varepsilon -m_Q|\le\|f-m_Q\|_{L_\infty(Q)}+\varepsilon.
\]
Since $\hat f$ and $\hat f_\varepsilon$ coincide on a set of complete Lebesgue measure, say $S_\varepsilon$, we have
\[
\hat f-m_Q=\hat f_\varepsilon-m_Q\quad {\rm on}\quad S_\varepsilon\cap\mathcal L_f.
\]
This, in turn, implies
\[
E_k(\hat f;Q)\le \sup_Q |\hat f-m_Q|-\sup_{S_\varepsilon\cap\mathcal L_f}|\hat f-m_Q|\le\sup_Q |\hat f_\varepsilon-m_Q|\le\|f-m_Q\|_{L_\infty(Q)}+\varepsilon=E_{k\infty}(f;Q)+\varepsilon.
\]
Hence,
\[
E_k(\hat f;Q)\le E_{k\infty}(f;Q).
\]
Since the converse inequality is evident, \eqref{10.12} is proved.

Now let $\pi\in\Pi$ be a packing. Due to Lemma \ref{le10.1}, equality \eqref{10.12} and Minkowski inequality we have
\begin{equation}\label{10.13}
\begin{array}{l}
\displaystyle
\left(\sum_{Q\in\pi} E_k(\hat f;Q)^p\right)^{\frac 1 p}\le c\left(\sum_{Q\in\pi}\left(\int_0^{|Q|^{\frac 1 d}}\frac{\omega_{kp}(f;t;Q)}{t^{s+1}}\, dt\right)^p\right)^{\frac 1 p}\\
\\
\displaystyle \hfill\le c\int_0^1 t^{-s-1}\left(\sum_{Q\in\pi}\omega_{kp}( f;t;Q)^p\right)^{\frac 1 p}\, dt.\quad
\end{array}
\end{equation}
Hereafter all constants depend only on $k,d$.
\begin{Lm}\label{le10.2}
It is true that
\begin{equation}\label{10.14}
\left(\sum_{Q\in\pi}\omega_{kp}( f;t;Q)^p\right)^{\frac 1 p}\le c\,\omega_{kp}( f;t).
\end{equation}
\end{Lm}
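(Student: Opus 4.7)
The plan for Lemma \ref{le10.2} is to bound the $\ell^p$-sum of local moduli by the global modulus $\omega_{kp}(f;t)$ with a constant depending only on $k$ and $d$. The elementary half of the argument exploits the disjointness of the packing: for any single $h\in\RR^d$ with $\|h\|\le t$, the restricted domains $\{Q_{kh}\}_{Q\in\pi}$ are pairwise disjoint (because $Q_{kh}\subset Q$ and the $Q$'s are nonoverlapping) and each is contained in $Q^d_{kh}$. Hence, for each such $h$,
\[
\sum_{Q\in\pi}\int_{Q_{kh}}|\Delta_h^k f(x)|^p\,dx\le\int_{Q^d_{kh}}|\Delta_h^k f|^p\,dx\le \omega_{kp}(f;t)^p.
\]
If a single $h$ were to nearly maximize every local modulus simultaneously, this would already finish the proof; the whole difficulty is that the maximizing $h_Q$ varies from cube to cube.

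To overcome this, I would introduce the $L_p$--averaged modulus
\[
\overline\omega(f;t;Q)^p:=\frac{1}{|B(0,t)|}\int_{\|h\|\le t}\|\Delta_h^k f\|_{L_p(Q_{kh})}^p\,dh,
\]
for which Fubini together with the elementary bound above yields directly
\[
\sum_{Q\in\pi}\overline\omega(f;t;Q)^p\le\omega_{kp}(f;t)^p.
\]
The lemma then reduces to establishing the Harnack-type comparison
\[
\omega_{kp}(f;t;Q)^p\le c(k,d)\,\overline\omega(f;Ct;Q)^p
\]
for some $C=C(k,d)$: once this is in hand, summing over $Q\in\pi$, applying the averaged bound at scale $Ct$, and invoking the quasi-subadditivity $\omega_{kp}(f;Ct)\le C^k\omega_{kp}(f;t)$ of the global modulus yields the claim.

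The Harnack comparison would be obtained from the quasi-stability of $G_Q(h):=\|\Delta_h^k f\|_{L_p(Q_{kh})}$ under perturbations of $h$. Using the identity
\[
\Delta_h^k f(x)-\Delta_{h'}^k f(x)=\sum_{j=1}^k(-1)^{k-j}\binom{k}{j}\bigl(f(x+jh)-f(x+jh')\bigr),
\]
together with a triangle inequality in $L_p(Q_{kh}\cap Q_{kh'})$, one shows $|G_Q(h)-G_Q(h')|\le c\,\omega_{1p}(f;k\|h-h'\|)$ uniformly in $Q$. This forces $G_Q$ to be essentially flat on the ball $B(0,t)$ compared with its values on $B(0,Ct)$, so its supremum on $B(0,t)$ is controlled by its average on $B(0,Ct)$, which is exactly the averaged modulus.

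The main obstacle is this Harnack-type step: quantifying the stability of $L_p$-norms of $k$-th differences under perturbation of the increment in a way that remains uniform across all cubes of $\pi$ while keeping the constants dependent only on $k$ and $d$. All error terms appearing along the way must be absorbed into the global modulus at a slightly enlarged scale, an absorption justified by the Marchaud-type inequality for moduli of continuity together with the monotonicity of $\omega_{kp}(f;\cdot)$ in $t$.
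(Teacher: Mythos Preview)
Your overall strategy---reduce to the averaged modulus $\overline\omega$, for which the packing bound follows from Fubini and disjointness---is a legitimate alternative route, and the equivalence $\omega_{kp}(f;t;Q)\approx\overline\omega_{kp}(f;t;Q)$ is indeed a standard fact. The problem is your proposed proof of that equivalence (the ``Harnack step''). Your stability estimate
\[
|G_Q(h)-G_Q(h')|\le c\,\omega_{1p}(f;k\|h-h'\|;Q)
\]
is correct but useless here: the oscillation of $G_Q$ on $B(0,t)$ is controlled by the \emph{first} modulus $\omega_{1p}$, which for $k\ge2$ can be vastly larger than the values of $G_Q$ themselves (these are at most $\omega_{kp}(f;t;Q)$). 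So $G_Q$ is not ``essentially flat,'' and you cannot conclude $\sup G_Q\lesssim\overline\omega$. Your appeal to Marchaud does not save this: Marchaud bounds $\omega_{1p}(f;t)$ by an \emph{integral} of $\omega_{kp}$ over all scales from $t$ to the diameter, not by $\omega_{kp}$ at a single enlarged scale; in fact $\omega_{1p}\le c\,\omega_{kp}$ is simply false (take any $f\in\mathcal P_{k-1}^d\setminus\mathcal P_0^d$, for which $\omega_{kp}\equiv0$ but $\omega_{1p}\not\equiv0$). Hence the error terms cannot be absorbed as you claim, and the argument breaks for $k\ge2$.

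The paper sidesteps this entirely by invoking a different known equivalence (Theorem~4 of \cite[\S2]{Br1-71}):
\[
\omega_{kp}(f;t;Q)\approx\sup_{\pi\in\Pi_t(Q)}\Bigl(\sum_{R\in\pi}E_{kp}(f;R)^p\Bigr)^{1/p},
\]
where $\Pi_t(Q)$ is the set of packings of $Q$ by cubes of volume at most $t^d$. This characterization is automatically additive over packings: for each $Q\in\pi$ pick a near-optimal $\pi_Q\in\Pi_t(Q)$; then $\bigcup_{Q\in\pi}\pi_Q\in\Pi_t(Q^d)$, so the double sum is bounded by the global quantity, which by the same equivalence is $\le c\,\omega_{kp}(f;t)$. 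If you wish to push the averaged-modulus route through, you must either cite $\omega_{kp}\approx\overline\omega_{kp}$ as a known result, or prove it via one of the standard combinatorial identities expressing $\Delta_h^k f$ as a linear combination of $k$-th (not first-order) differences with varying steps; the perturbation argument you sketch does not accomplish this.
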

\begin{proof}
By Theorem 4 from \cite[\S2]{Br1-71}
\begin{equation}\label{10.15}
\omega_{kp}( f;t;Q)\approx \sup_{\pi\in\Pi_t(Q)}\left(\sum_{R\in\pi}E_{kp}(f;R)^p\right)^{\frac 1 p},
\end{equation}
where $\Pi_t(Q)$ consists of all packings in $Q$ with cubes of volume $\le t^d$.

Hence, for every $Q$ there is a packing, say $\pi_Q\in\Pi_t(Q)$, such that 
\[
\omega_{kp}( f;t;Q)\le c \left(\sum_{R\in\pi_Q} E_{kp}( f;R)^p\right)^{\frac 1 p}.
\]
Therefore the left-hand side of \eqref{10.14} is bounded by
$
c\bigl(\sum_{Q\in\pi}\sum_{R\in\pi_Q}E_{kp}( f;R)^p\bigr)^{\frac 1 p}.
$

\noindent Since the set of subcubes $\cup_{Q\in\pi}\pi_Q$ is a packing in $Q^d$ containing only cubes of volume $\le t^d$, the  equivalence \eqref{10.15} bounds the above expression by $c\,\omega_{kp}(f;t)\, (=c\,\omega_{kp}(f;t;Q^d))$.

This proves \eqref{10.14}.
\end{proof}

Now combining \eqref{10.13} and \eqref{10.14} and taking supremum over $\pi\in\Pi$ we finally have
\[
|\hat f|_{V_p^k}:=\sup_{\pi}\left(\sum_{Q\in\pi} E_k(\hat f;Q)^p\right)^{\frac 1 p}\le
c\int_0^1 \frac{\omega_{kp}( f;t)}{t^{s+1}}\, dt=:c | f|_{\Lambda_{p1}^{ks}}.
\]
This proves that the linear injection $f\mapsto\hat f$ maps $\Lambda_{p1}^{ks}$ in $\dot V_p^k\cap C\subset N\dot V_p^k$ and is bounded, i.e., implication \eqref{10.2}  holds.

Theorem \ref{teo2.2.11}\,(a) is proved.\smallskip

\noindent (b) Now we should prove that if $k=s=d$, then the following linear isomorphism is given by the operator $L|_{N\dot V_p^k}$
\begin{equation}\label{10.17}
Lip_1^d\cong N\dot V_1^d.
\end{equation}

Since the injection  $L|_{N\dot V_p^k}:N\dot V_p^k\rightarrow \Lambda_{p\infty}^{ks}$,  $0<s\le k$, of part (a) of the theorem implies for $k=s=d$ the injection $L|_{N\dot V_1^d}:N\dot V_1^d\rightarrow Lip_1^d$, it remains to prove that it is onto.  In the derivation, we need the following Sobolev type embedding
that is interesting in its own right result.
\begin{Prop}\label{pro10.4}
It is true that
\begin{equation}\label{10.18}
Lip_1^d\subset \dot V_{1\infty}^d.
\end{equation}
\end{Prop}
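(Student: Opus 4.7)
My plan is to establish the critical per-cube Sobolev estimate $E_{d\infty}(g;Q) \le C(d) \, \|\partial_1 \partial_2 \cdots \partial_d g\|_{L^1(Q)}$ for smooth $g$, sum it over packings, bound the mixed top-order derivative by the $Lip_1^d$-seminorm, and then extend to arbitrary $g \in Lip_1^d$ by mollification.

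For the per-cube estimate, given $g \in C^\infty$ and a cube $Q \subset Q^d$ with a fixed corner $a$, I would construct a polynomial $P \in \mathcal{P}_{d-1}^d$ by iterating univariate Taylor expansions of $g$ along the $d$ coordinate directions emanating from $a$, so that the pointwise identity
\[
g(x) - P(x) = \int_{a_1}^{x_1} \cdots \int_{a_d}^{x_d} \partial_1 \partial_2 \cdots \partial_d g(y_1,\ldots,y_d) \, dy_1 \cdots dy_d
\]
holds for all $x \in Q$. This yields $\|g - P\|_{L^\infty(Q)} \le \|\partial_1 \cdots \partial_d g\|_{L^1(Q)}$, and both sides are invariant under isotropic dilation of $Q$, so the constant is independent of $Q$.

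Summing over any packing $\pi \in \Pi$ and using disjointness of the cubes,
\[
\sum_{Q \in \pi} E_{d\infty}(g;Q) \le C \sum_{Q \in \pi} \|\partial_1 \cdots \partial_d g\|_{L^1(Q)} \le C \|\partial_1 \cdots \partial_d g\|_{L^1(Q^d)}.
\]
To dominate the right-hand side by $|g|_{Lip_1^d}$, I would apply the $L^1$-version of the combinatorial identity of Corollary~E.4 in \cite{BBI-11} (the same tool used in the proof of Theorem \ref{teor2.1.1}), which expresses the mixed difference $\Delta_{h_1 e_1} \cdots \Delta_{h_d e_d} g$ as a fixed linear combination of shifted pure $d$-th differences $\Delta_{h'}^d g$ of comparable step-size. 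This yields
\[
\|\Delta_{h_1 e_1} \cdots \Delta_{h_d e_d} g\|_{L^1} \le C \,\omega_{d,1}\bigl(g; \max_i |h_i|\bigr) \le C \bigl(\max_i |h_i|\bigr)^d |g|_{Lip_1^d};
\]
dividing by $|h_1 \cdots h_d|$, choosing all $|h_i|$ of the same order, and passing to the limit via Fatou gives $\|\partial_1 \cdots \partial_d g\|_{L^1(Q^d)} \le C |g|_{Lip_1^d}$.

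For arbitrary $g \in Lip_1^d$, convolution with a standard mollifier produces $g_\varepsilon \in C^\infty$ with $|g_\varepsilon|_{Lip_1^d} \le |g|_{Lip_1^d}$ and $g_\varepsilon \to g$ in $L^1$; the previous steps give the uniform bound $|g_\varepsilon|_{V_{1\infty}^d} \le C |g|_{Lip_1^d}$, and lower semicontinuity of the $(d,1)$-variation based on $E_{d\infty}$ (a direct analogue of Proposition \ref{prop1.7} with $L^\infty$-approximation in place of $\ell_\infty$) transfers the bound to $g$. The main obstacle I anticipate is the construction of $P \in \mathcal{P}_{d-1}^d$ in the per-cube step: the iterated univariate Taylor remainders naturally produce an anisotropic polynomial of coordinate degree $d-1$ in each variable, and organizing the bookkeeping so that the residual term reduces cleanly to the single top-order mixed derivative $\partial_1 \cdots \partial_d g$ (rather than the full family $\{\partial^\alpha g : |\alpha| = d\}$, whose mixed norms are harder to control from pure $d$-th differences alone) is essentially the content of the critical Bramble--Hilbert/Sobolev representation and must be verified carefully.
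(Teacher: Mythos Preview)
Your per-cube estimate $E_{d\infty}(g;Q)\le C\|\partial_1\partial_2\cdots\partial_d g\|_{L^1(Q)}$ is false, and this kills the whole scheme. Take $d\ge 2$ and $g(x)=x_1^d$ on $Q=[0,1]^d$: then $\partial_1\cdots\partial_d g\equiv 0$, yet $g\notin\mathcal P_{d-1}^d$, so $E_{d\infty}(g;Q)>0$. The pointwise identity you quote is correct, but the object $P$ it produces is the inclusion--exclusion combination of the restrictions of $g$ to the coordinate faces through the corner $a$; this is never a polynomial unless $g$ already is. The anticipated ``obstacle'' you mention at the end is not a bookkeeping issue---it is the reason the approach cannot yield a residual controlled by the single mixed derivative alone. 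Any correct per-cube inequality of this type must involve the full family $\{D^\alpha g:|\alpha|=d\}$.

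The paper's proof uses exactly that full family. It first identifies $Lip_1^d$ with $BV^d$ (distributional $d$-th derivatives are finite measures) via the known equivalence $Lip_1^k=BV^k$, and then proves the per-cube bound $E_{d\infty}(f;Q)\le c(d)\,|f|_{BV^d(Q)}$ with $|f|_{BV^d(Q)}=\max_{|\alpha|=d}{\rm var}_Q D^\alpha f$. This step is the genuine Sobolev input: it comes from Gagliardo's endpoint embedding $W_1^d\hookrightarrow C$, transferred to $BV^d$ by a relative-completion/$K$-functional argument. Once that per-cube estimate is in hand, summing over a packing $\pi$ works because each $D^\alpha f$ is a \emph{measure} and hence $\sum_{Q\in\pi}{\rm var}_Q D^\alpha f\le {\rm var}_{Q^d}D^\alpha f$; taking a maximum over the finitely many $\alpha$ with $|\alpha|=d$ costs only a dimensional constant. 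Your mollification/lower-semicontinuity endgame would work fine once the correct per-cube inequality is in place, but the core analytic step you need is precisely the one the paper supplies via Gagliardo's theorem, not the mixed-derivative identity.
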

\begin{proof}
In the proof, we use the following known results.

Let $BV^k(Q)$ be a linear space of $L_1$ functions on $Q$ whose $k$-th distributional derivatives are finite Borel measures. It is endowed by a seminorm given for $f\in L_1(Q)$ by
\begin{equation}\label{10.19}
|f|_{BV^k}:=\max_{|\alpha|=k}\bigl\{{\rm var}\,D^\alpha f\bigr\}.
\end{equation}
\begin{Th A}[\cite{Br1-70} Theorem 4]
It is true that
\begin{equation}\label{10.20}
Lip_1^k(Q)= BV^k(Q)
\end{equation}
with equivalence constants of seminorms depending only on $k$ and $d$.
\end{Th A}

The second result is the embedding theorem for Sobolev space $W_1^d:= W_1^d(Q^d)$.
\begin{Th B}[\cite{Ga-59}]
It is true that
\begin{equation}\label{10.21}
W_1^d\hookrightarrow C,
\end{equation}
where the embedding constant depends only on $d$.
\end{Th B}

As a consequence we have the following:
\begin{Lm}\label{pr10.5}
For each function $f\in BV^d(Q)$, $Q\subset Q^d$, it is true that $f\in L_\infty$ and
\[
E_{d\infty}(f;Q)\le c(d) |f|_{BV^d(Q)}.
\]
\end{Lm}
\begin{proof}
We begin with some facts of interpolation space theory.

First, we recall that the $K$-{\em functional} of an embedded pair $X\subset Y$ of Banach spaces is a function on $Y\times\RR_+$ given for $y\in Y$, $t\in\RR_+$ by
\begin{equation}\label{10.24}
K(t;y;X,Y):=\inf_{y=x+z}\bigl\{\|x\|_X+t\|z\|_Y\bigr\}.
\end{equation}

Second, the relative (Gagliardo) {\em completion} of $X$ in $Y$ denoted by $X^{c,Y}$ is a Banach space whose closed unit ball is the closure of that of $X$ in $Y$.

The relation between these notions is given by the following:
\begin{Prop}[\cite{BK-91}, Prop.\,2.2.20]\label{pr.10.6}
It is true that
\begin{equation}\label{10.25}
\|y\|_{X^{c,Y}}=\sup_{t>0} t^{-1}K(t;y;X,Y).
\end{equation}
\end{Prop}

Further, we use the following well-known result, see, e.g., \cite{JH-77} and references therein,
\begin{equation}\label{10.26}
K(t;f;L_p;W_p^k)\approx t\|f\|_p+\omega_{kp}(f;t^{\frac 1 k}),\quad t>0,
\end{equation}
with constants of equivalence depending only on $k$ and $d$.

Along with \eqref{10.25} and Theorem A this implies
\begin{equation}\label{10.27a}
\|f\|_{(W_1^k)^{c,L_1}}\approx\|f\|_1+\sup_{t>0}\frac{\omega_{k1}(f;t)}{t^k}:=\|f\|_1+|f|_{Lip_1^k}\approx \|f\|_1+|f|_{BV^k}.
\end{equation}
This and Theorem $B$ further imply
\begin{equation}\label{10.27}
\|f\|_{C^{c,L_1}}\le c(d)\bigl(\|f\|_1+|f|_{Lip_1^d}\bigr).
\end{equation}

By definition the closed unit ball of $C^{c,L_1}$ is the closure of the closed unit ball of $C$ in $L_1$, hence, $(C^{c,L_1},\|\cdot\|_{C^{c,L_1}})=(L_\infty,\|\cdot\|_\infty)$. Using this and a homothety of $\RR^d$ mapping $Q^d$ onto $Q$ we transform \eqref{10.27} to the inequality
\[
\|f\|_{L_\infty(Q)}\le c(d)\bigl(|Q|^{-1}\|f\|_{L_1(Q)}+|f|_{Lip_1^d(Q)}\bigr)
\]
Applying this to a function $f-m_Q$, where $m_Q\in\mathcal P_{k-1}^d$ is optimal for the distance from $f$ to $\mathcal P_{k-1}^d$ in $L_1(Q)$, we have
\begin{equation}\label{10.28}
E_{d\infty}(f;Q)\le \|f-m_Q\|_{L_\infty(Q)}\le c(d)\bigl(|Q|^{-1} E_{d1}(f;Q)+|f|_{Lip_1^d(Q)}\bigr).
\end{equation}

Finally, by the Taylor formula for functions $g\in W_1^d(Q)$ we have
\begin{equation}\label{10.29}
E_{d1}(g;Q)\le c(d)\,|Q|\cdot |g|_{W_1^d(Q)}.
\end{equation}

Since due to \eqref{10.27a} the Banach space $(BV^d(Q),\|\cdot\|_{L_1(Q)}+|\cdot|_{BV^d(Q)})$ coincides (up to equivalence of norms) with the relative completion of $W_1^d(Q)$ in $L_1(Q)$, for every $f\in BV^d(Q)= Lip_1^d(Q)$ (see Theorem A) there is a sequence $\{f_n\}_{n\in\N}\subset W_1^d(Q)$ such that $f_n\rightarrow f$ in $L_1(Q)$ and $|f_n|_{W_1^d(Q)}\rightarrow |f|_{BV^d(Q)}$ as $n\rightarrow\infty$.

Applying \eqref{10.29} to every $f_n$, then passing to limit and using Theorem A we obtain from \eqref{10.28} the required inequality
\begin{equation}\label{10.30}
E_{d\infty}(f;Q)\le c(d)\bigl( |Q|^{-1}\cdot |Q|\cdot |f|_{BV^d(Q)}+|f|_{Lip_1^d(Q)}\bigr)\le c_1(d)\, |f|_{BV^d(Q)}.
\end{equation}

Lemma \ref{pr10.5} is proved.
\end{proof}

Now let $\pi$ be a packing in $Q^d$. Using Lemma \ref{pr10.5} for each $Q\in\pi$ and $f\in BV^d$ we have
\[
\begin{array}{l}
\displaystyle \sum_{Q\in\pi} E_{d\infty}(f;Q)\le c(d)\sum_{Q\in\pi} |f|_{BV^d(Q)}\le c(d)\max_{|\alpha|=d}\left\{\sum_{Q\in\pi}{\rm var}_Q\, D^\alpha f\right\}\\
\\
\displaystyle \qquad\qquad\qquad\quad\le c(d)\max_{|\alpha|=d}{\rm var}_{Q^d}\, D^\alpha f:= c(d)|f|_{BV^d}.
\end{array}
\]
Taking here supremum over $\pi$ we then obtain by Theorem A
\[
|f|_{V_{1\infty}^d}\le c(d) |f|_{BV^d}\le c_1(d)|f|_{Lip_1^d}.
\]
In other words, we proved the embedding 
\begin{equation}\label{10.31}
Lip_1^d\subset \dot V_{1\infty}^d.
\end{equation}
This completes the proof of Proposition \ref{pro10.4}
\end{proof}

To complete the proof of part (b) of the theorem, we consider the composition of continuous embeddings $i_1:N\dot V_1^d\hookrightarrow Lip_1^d$, see \eqref{e2.2.19}, and $i_2: Lip_1^d\subset \dot V_{1\infty}^d$. The resulting embedding $i_2\circ i_1$ coincides with $L|_{N\dot V_1^d}$, i.e., sends a function in $N\dot V_1^d\subset\ell_\infty$ to its equivalence class in $\dot V_{1\infty}^d\subset L_\infty$. But according to Theorem \ref{te2.6}\,(c) it maps
$N\dot V_1^d$ isometrically onto $\dot V_{1\infty}^d$. Hence, $Lip_1^d=\dot V_{1\infty}^d$ and $N\dot V_1^d\cong Lip_1^d$.

The proof of Theorem \ref{teo2.2.11} is complete.
\end{proof}
\begin{proof}[Proof of Corollary \ref{cor9.7}]
First, we prove that each $\mathring f\in\dot W_1^d$ can be represented by a function $f\in AC$ (i.e., $f\in \mathring f$) such that
\begin{equation}\label{eq-n9.30a}
|f|_{V_1^d}\le c |\mathring f|_{W_1^d}
\end{equation}
for some $c$ depending only on $d$.

To this end we use the Gagliardo theorem \cite{Ga-59} asserting that each $\mathring g\in\dot W_1^d(Q)$, $Q\subset Q^d$, can be represented
by a (unique) continuous function $g$ and there
is a linear projection $P_Q: L_1(Q)\rightarrow\mathcal P_{d-1}^d$   such that
\begin{equation}\label{eq-n9.30b}
\max_Q |g|\le c(d)\left(\max_Q |P_Q(\mathring g)|+|\mathring g|_{W_1^d (Q)}\right).
\end{equation}

Let $f\in C$ be the representative of $\mathring f\, (\in \dot W_1^d)$.
Applying \eqref{eq-n9.30b} to $\mathring g=\mathring f|_{Q}-P_Q(\mathring f|_{Q})$ and to its continuous representative $g:=f|_Q-P_Q(\mathring f|_{Q})$ we obtain (as $P_Q(g)=0$):
\[
E_d(f;Q)\le \max_Q |f-P_Q(\mathring f)|\le c(d)|\mathring f|_{W_1^d(Q)}.
\]
Then for every packing $\pi\in\Pi$
\begin{equation}\label{eq-n9.30c}
\sum_{Q\in\pi} E_d(f;Q)\le c(d)\sum_{|\alpha|=d}\,\int_{\cup\,\pi} |D^\alpha f|\, dx,
\end{equation}
where $\cup\,\pi:=\underset{Q\in\pi}{\cup}\, Q$.

By the absolute continuity of the Lebesgue integral the right-hand side here tends to $0$ as $|\cup\pi|\rightarrow 0$. Hence, $f$ satisfies condition \eqref{eq-n9.30}, i.e., belongs to $AC$.

This and \eqref{eq-n9.30c} prove the required statement \eqref{eq-n9.30a}. \smallskip

Next, let $f\in AC\, (\subset\dot V_1^d)$. 
We prove that $L(AC)\subset \dot W_1^d$, where $L$ sends $f\in\dot V_1^d$ to its class of equivalence in $L_\infty$, and that
\begin{equation}\label{eq-n9.33a}
|L(f)|_{W_1^d}\le c |f|_{V_1^d}
\end{equation}
for some $c$ depending only on $d$.

This and \eqref{eq-n9.30a} will show that  $L|_{AC}$ is an isomorphism of $AC$ onto $\dot W_1^d$.

We start with the embeddings $AC\subset(\dot V_p^k)^0\subset C$ implying that $AC\subset N\dot V_1^d$, see Section~5 and Corollary \ref{cor2.1.3}\,(c). The latter, in turn, implies that
$L(f)|_Q\in BV^d(Q)$, $Q\subset Q^d$, and, moreover,
\[
|f|_{V_1^d(Q)}\approx |L(f)|_{BV^d(Q)}
\]
with the constants of equivalence depending only on $d$, see \eqref{10.17},  \eqref{10.20}.

Hence, for every $\pi\in\Pi$
\begin{equation}\label{eq-n9.33}
\sum_{Q\in\pi}|L(f)|_{BV^d(Q)}\le c(d)\sum_{Q\in\pi}|f|_{V_1^d(Q)}.
\end{equation}

Choosing for every $Q\in\pi$ a packing $\pi_Q\in\Pi(Q)$ such that
\[
|f|_{V_1^d(Q)}\le\sum_{R\in\pi_Q} E_d(f;R)+\frac{\varepsilon}{{\rm card}\, \pi},
\]
where $\varepsilon>0$ is arbitrary small, we obtain from \eqref{eq-n9.33} that
\[
\sum_{|\alpha|=k}\underset{\cup\,\pi}{\rm var}\, D^\alpha L(f)\le c(d)\left(\sum_{R\in\tilde\pi}E_d(f;R)+\varepsilon\right),
\]
where $\tilde\pi:=\cup_{Q\in\pi}\,\pi_Q$ is a packing.

Sending $|\pi|\ge|\tilde\pi|$ to zero we conclude from here and \eqref{eq-n9.30} that for all closed sets $S$ being  unions of nonoverlapping cubes, hence, for all Lebesgue measurable sets $S$
\[
\lim_{|S|\rightarrow 0}\underset{S}{\rm var}\, D^\alpha L(f)=0,\quad |\alpha|=d.
\]
By the Radon-Nikodym theorem, see, e.g., \cite[Sec.\,\mbox{III}.10]{DSch-58},
\[
(D^\alpha L(f))(S)=\int_S f_\alpha\, dx, \quad |\alpha|=d,
\]
for all Lebesgue measurable sets $S\subset Q^d$ and some $f_\alpha\in L_1$.

In other words, $D^\alpha L(f)\in L_1$ for all $|\alpha|=d$ and $|L(f)|_{BV^d}=|Lf|_{W_1^d}$.

Hence, $L$ maps $f\in AC$ in $\dot W_1^d$ and \eqref{eq-n9.33a} is fulfilled.

This proves that $L|_{AC}:AC\rightarrow \dot W_1^d$ is an isomorphism.

Finally, presentation \eqref{eq-n9.32} is valid for continuous representatives $f$ of elements $\mathring f\in\dot W_1^d$, see, e.g., \cite[Sec.\,1.10]{Ma-85}. As we have proved such $f\in AC$,  hence, 
this completes the proof of the corollary.
\end{proof}
\sect{Proof of Theorem \ref{te2.2.5}}
\subsection{Auxiliary Result}
Let $\kappa:\dot V_{p\infty}^k\rightarrow V_{p\infty}^k:=\dot V_{p\infty}^k/\mathcal P_{k-1}^d$ be the quotient map defining the latter space.
We then set
\begin{equation}\label{equ8.1}
\tilde  L:=\kappa\circ \bigl(L |_{\dot V_{p;S}^k}\bigr),
\end{equation}
where $L:\dot V_p^k\rightarrow \dot V_{p\infty}^k$ is the surjective map of norm one sending a function in $\dot V_p^k$ to its equivalence class in $L_\infty$

By the definition, $\tilde  L:\dot V_{p;S}^k\rightarrow V_{p\infty}^k$ is a bounded surjective linear map of norm $1$. (Recall that $\dot V_{p;S}^k\subset\dot V_p^k$ is the subspace of functions vanishing on the interpolating set $S$ and that we naturally identify $\dot V_{p;S}^k$ with $V_p^k$, see Section~4.1.1.) 

In the next result, we use the duality between $V_{p;S}^k\, (\equiv V_p^k)$ and $U_p^k$ established in Theorem \ref{te2.2.6} below and between $V_{p\infty}^k$ and $U_{p\infty}^k$ established in \cite[Th.\,2.6]{BB-18}.
\begin{Prop}\label{prop8.2}
The map $\tilde L:\dot V_{p;S}^k\rightarrow V_{p\infty}^k$ is weak$^*$ continuous with respect to the weak$^*$ topologies on $\dot V_{p;S}^k$ and $V_{p\infty}^k$.
\end{Prop}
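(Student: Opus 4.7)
The strategy I would take is the standard one for maps between dual spaces: realize $\tilde L$ as the Banach-space adjoint of a bounded linear operator between the preduals. Since $V_p^k \equiv (U_p^k)^*$ by Theorem~\ref{te2.2.6} and $V_{p\infty}^k\equiv(U_{p\infty}^k)^*$ by \cite[Th.\,2.6]{BB-18}, it suffices to produce a bounded linear map $T:U_{p\infty}^k\to U_p^k$ such that $\tilde L=T^*$ under these identifications; every such adjoint is automatically $\sigma(V_p^k,U_p^k)$–$\sigma(V_{p\infty}^k,U_{p\infty}^k)$ continuous.

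The natural candidate for $T$ exploits the space $\widetilde U_p^k$ of Theorem~\ref{te2.3.2} whose atoms are arbitrary measure atoms rather than only discrete ones. Given an $L_1$-atom $a_Q\in(U_{p\infty}^k)^0$, the absolutely continuous measure $a_Q\,dx$ is supported on $Q$, has total variation $\|a_Q\|_{L_1}\le 1$, and is orthogonal to $\mathcal P_{k-1}^d$, so it is a measure-atom of $\widetilde U_p^k$. The assignment $a_Q\mapsto a_Q\,dx$ extends linearly to chains and, since the $p'$-chain norm depends only on the scalar coefficients and the atomic structure, it defines a bounded linear embedding $\iota:(U_{p\infty}^k)^0\to(\widetilde U_p^k)^0$ of norm $\le 1$, which extends by continuity to $\iota:U_{p\infty}^k\to\widetilde U_p^k$. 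I then set
\[
T:=\mathcal E\circ\iota : U_{p\infty}^k\longrightarrow U_p^k,
\]
where $\mathcal E:\widetilde U_p^k\to U_p^k$ is the continuous surjection of Theorem~\ref{te2.3.2}\,(b). Boundedness of $T$ is immediate.

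To verify $\tilde L=T^*$, I would compare the two pairings on the dense subspace $(U_{p\infty}^k)^0$. For $f\in\dot V_{p;S}^k$ and a chain $u\in(U_{p\infty}^k)^0\subset L_1$, the $(V_{p\infty}^k,U_{p\infty}^k)$-pairing is
\[
\langle\tilde L(f),u\rangle=\int_{Q^d}L(f)\,u\,dx=\int_{Q^d}f\,u\,dx,
\]
since $f$ is a bounded Borel representative of $L(f)\in L_\infty$ (Theorem \ref{te2.1.3}) and the orthogonality of $u$ to $\mathcal P_{k-1}^d$ makes the value independent of the representative. On the other hand, the identification of $(\widetilde U_p^k)^0\subset M$ with a dense subspace of $U_p^k$ via $\mathcal E$ extends the $V_p^k$–$U_p^k$ pairing to measures by $\langle f,\mu\rangle=\int_{Q^d} f\,d\mu$ (agreeing with the discrete pairing $\sum_x f(x)v(x)$ when $\mu$ is discrete). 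Applied to $\mu=\iota(u)=u\,dx$ this yields
\[
\langle f,T(u)\rangle=\langle f,\mathcal E(\iota(u))\rangle=\int_{Q^d}f\,d(u\,dx)=\int_{Q^d}f\,u\,dx,
\]
which matches the previous expression. Hence $\tilde L$ and $T^*$ agree on the weak$^*$ dense subspace determined by $(U_{p\infty}^k)^0$ and, being bounded, on all of $\dot V_{p;S}^k$.

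The main obstacle I anticipate is the middle step: one must be sure that the extended pairing on measures is the pull-back along $\mathcal E$ of the genuine $V_p^k$–$U_p^k$ duality, i.e.\ that $\int f\,d\mu$ really equals $\langle f,\mathcal E(\mu)\rangle$ for every $\mu\in(\widetilde U_p^k)^0$. This is where Theorem~\ref{te2.3.2} and the accompanying Riesz-representation identification of $(\widetilde U_p^k)^0$ with a dense subspace of $U_p^k$ must be invoked carefully; once this compatibility is in hand the adjoint computation and hence the weak$^*$ continuity follow at once.
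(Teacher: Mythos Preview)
Your argument is correct but takes a genuinely different route from the paper. The paper proves weak$^*$ continuity of $\tilde L$ directly, via the Krein--Smulian theorem: it suffices that $\tilde L^{-1}(U)\cap B(\dot V_{p;S}^k)$ be weak$^*$ closed for each sub-basic weak$^*$ closed set $U\subset V_{p\infty}^k$, and this is checked by taking a weak$^*$ convergent net in the ball, observing (Lemma~\ref{lemma6.1}) that it converges pointwise, and then invoking Rosenthal's Main Theorem to pass to limits under integration against every $h\in\hat L_1$, hence against every $g\in U_{p\infty}^k$ by density. Your approach instead constructs the pre-adjoint $T=\mathcal E\circ\iota$ explicitly and reads off weak$^*$ continuity from $\tilde L=T^*$. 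The ``obstacle'' you flag --- that $\int f\,d\mu=\langle f,\mathcal E(\mu)\rangle$ for $\mu\in(\widetilde U_p^k)^0$ --- is in fact immediate from the paper's definition $\mathcal E(\mu):=\ell_\mu$ with $\ell_\mu(v)=\int v\,d\mu$ in Section~12, so your computation goes through. Two remarks on the comparison: first, there is a pleasant inversion here, since the paper uses Proposition~\ref{prop8.2} precisely to \emph{deduce} the existence of a pre-adjoint $T$ (in the proof of Theorem~\ref{te2.2.5}\,(c)), whereas you build $T$ first and get the proposition as a corollary; second, your route is not genuinely more elementary, because the fact that $\mathcal E$ lands in $U_p^k$ (rather than merely in $(V_p^k)^*$) is itself proved in Section~12 by exactly the same Krein--Smulian plus Rosenthal argument --- you have relocated the hard step rather than avoided it. There is no logical circularity, since the proof of Theorem~\ref{te2.3.2} does not use Proposition~\ref{prop8.2}.
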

\begin{proof}
It suffices to check that $\tilde L^{-1}(U)\subset \dot V_{p;S}^k$ is weak$^*$ closed for each set $U$ of the form $\{f\in V_{p\infty}^k\, :\, |f(g)|\le 1\}$, where $g\in U_{p\infty}^k\setminus\{0\}$. 
Since $\tilde L^{-1}(U)$ is a convex absorbing subset of $\dot V_{p;S}^k$,
it suffices by  the Krein-Smulian theorem, see, e.g., \cite[Thm. V.5.7]{DSch-58}, to check that $\tilde L^{-1}(U)\cap B(\dot V_{p;S}^k)$ is closed in the weak$^*$ topology. 

Let $\{f_\alpha\}_{\alpha\in\Lambda}$ be a net of functions in  $\tilde L^{-1}(U)\cap B(\dot V_{p;S}^k)$ converging in the weak$^*$ topology to a function $f\in B(\dot V_{p;S}^k)$. According to Theorem \ref{te2.1.3}, the space $\dot V_p^k$ consists of functions of the {\em first Baire class} (pointwise limits of continuous on $Q^d$ functions).  Moreover, due to Lemma \ref{lemma6.1}, the closed ball $B(\dot V_{p;S}^k)$ is compact in the topology of pointwise convergence on $Q^d$. In particular, the net
$\{f_\alpha\}_{\alpha\in\Lambda}$ pointwise converges to $f$. Since, in addition, $\sup_\alpha \|f_\alpha\|_{\infty}\le 1$, the Rosenthal Main Theorem \cite{Ro-77} implies that 
\begin{equation}\label{equ6.2}
\lim_{\alpha}\int_{Q^d}f_\alpha \,d\mu=\int_{Q^d} f\, d\mu
\end{equation}
for all signed Borel measures $\mu$ on $Q^d$.

Now we apply \eqref{equ6.2} to $d\mu= h\,dx$ with $h\in \hat L_1:=\{g\in L_1\, :\, g\perp\mathcal P_{k-1}^d\}$.  Since every function of $\hat L_1$ is an atom for the space $U_{p\infty}^k$, see the text before Theorem \ref{te2.2.5} in Section~2.3, $\hat L_1:=(U_{p\infty}^k)^0\subset U_{p\infty}^k$. Hence, we can use 
the duality pairing for  $(V_{p\infty}^k, U_{p\infty}^k)$, see \cite[Th.\,2.6]{BB-18},
to get
\begin{equation}\label{equ6.3}
\lim_{\alpha}[\tilde L(f_\alpha)](h):=\lim_{\alpha}\int_{Q^d}f_\alpha  h\,dx=\int_{Q^d}f h\,dx=:[\tilde L(f)](h).
\end{equation}

Next, by the definition $U_{p\infty}^k$ is the completion of $(U_{p\infty}^k)^0$; hence, for every $g\in U_{p\infty}^k$  there is a sequence  $\{g_n\}_{n\in\N}\subset \hat L_1\subset U_{p\infty}^k$ such that $\|g-g_n\|_{U_{p\infty}^k}\rightarrow 0$ as $n\rightarrow\infty$. 

Applying \eqref{equ6.3} to $h=g_n$  we then obtain
\[
\lim_{\alpha}\bigl([\tilde L(f_\alpha)](g_n-g)+[\tilde L(f_\alpha)](g)-[\tilde L(f)](g)\bigr) =[\tilde L(f)](g_n-g).
\]

Further, since by our assumptions $\tilde L(f), \tilde L(f_\alpha)\in \tilde L\bigl(B(\dot V_{p;S}^k)\bigr)\subseteq
B(V_{p\infty}^k)$ for all $\alpha\in\Lambda$, 
\[
\limsup_\alpha \left|[\tilde L(f_\alpha)] (g_n-g)\right|+
\left|[\tilde L(f)] (g_{n}-g)\right|\le 2\|g_n-g\|_{U_{p\infty}^k}\rightarrow 0
\quad {\rm as}\quad n\rightarrow \infty.
\]

This and the previous  imply that
\[
\limsup_{\alpha}\bigl|[\tilde L(f_\alpha)](g)-[\tilde L(f)](g)\bigr|\le \lim_{n\rightarrow\infty}\left(\limsup_\alpha \left|[\tilde L(f_\alpha)] (g_n-g)\right|+
\left|[\tilde L(f)] (g_{n}-g)\right|\right)=0,
\]
i.e., $
\lim_\alpha [\tilde L(f_\alpha)](g)=[\tilde L(f)](g)$ for all $g\in U_{p\infty}^k$.

But $\tilde L(f_\alpha)\in U\cap B(V_{p\infty}^k)$ for all $\alpha\in\Lambda$; hence, $\tilde L(f)$ also belongs to $U\cap B(V_{p\infty}^k)$ since this set is closed in the weak$^*$ topology of $V_{p\infty}^k$. In particular,
\[
f\in \tilde L^{-1}(\tilde L(f))\cap B(\dot V_{p;S}^k) \subset \tilde L^{-1}(U)\cap B(\dot V_{p;S}^k).
\]

This shows that the set $\tilde L^{-1}(U)\subset \dot V_{p;S}^k$ is weak$^*$ closed, as required. 

The proof of the proposition is complete.
\end{proof}
\subsection{Proof of Theorem \ref{te2.2.5}}
(a) We should prove that $U_p^k$ is a Banach space.
Since $U_p^k$ is the completion of $((U_p^k)^0,\|\cdot\|_{(U_p^k)^0})$, it suffices to prove that $\|\cdot\|_{(U_p^k)^0}$ is a norm, i.e., that if $\|g\|_{(U_p^k)^0}=0$ for some $g\in (U_{p}^k)^0$, then $g=0$. 

To prove this, we first show that for every $f\in\dot V_p^k\, (\subset\ell_\infty)$ and $g\in (U_p^k)^0\, (\subset\ell_1)$ 
\begin{equation}\label{eq8.4}
\left|\sum_{x\in Q^d}f(x)g(x)\right|\le |f|_{V_p^k}\|g\|_{U_p^k}.
\end{equation}

Let $m_Q\in\mathcal P_{k-1}^d$, $Q\subset Q^d$, be such that
\[
\|f-m_Q;Q\|_{\infty}=E_{k}(f;Q).
\]
Then for $b_\pi:=\sum_{Q\in\pi}c_Q\,a_Q$ we obtain by the definition of $(k,p)$-atoms
\[
\sum_{x\in Q^d}f(x) b_\pi (x)=\sum_{Q\in\pi} c_Q\sum_{x\in Q} (f(x)-m_Q(x))\,a_Q(x).
\]
Applying the H\"{o}lder inequality we derive from here
\[
\begin{array}{l}
\displaystyle \left|\,\sum_{x\in Q^d}f(x) b_\pi (x)\,\right|\le\left(\sum_{Q\in\pi} |c_Q|^{p'}\right)^{\frac{1}{p'}}\left(\sum_{Q\in\pi}\bigl(\|f-m_Q;Q\|_{\infty}\,\|a_Q\|_{1}\bigr)^p\right)^{\frac{1}{p}}\\
\\
\displaystyle \qquad\quad\qquad\qquad\ \le [b_\pi]_{p'} \left(\sum_{Q\in\pi}E_{k}(f;Q)^p\right)^{\frac 1p}\le [b_\pi]_{p'} |f|_{V_p^k}.
\end{array}
\]
Since $g\in (U_p^k)^0$  can be presented as a finite sum of $(k,p)$-chains $b_\pi$, this gives
\[
\left|\,\sum_{x\in Q^d}f(x) g(x) \,\right|\le\left(\inf\sum_{\pi}\,[b_\pi]_{p'}\right) |f|_{V_p^k}=\|g\|_{U_p^k}\, |f|_{V_p^k};
\]
here infimum is taken over all such presentations of $g$.

From this inequality and the equality $\|g\|_{U_p^k}=0$ we deduce that
\begin{equation}\label{eq8.5}
\sum_{x\in Q^d}f(x)g(x)=0\quad {\rm for\ all}\quad f\in \dot V_p^k .
\end{equation}
In particular, since $C^\infty\subset\dot{\textsc{v}}_p^k\subset\dot V_p^k$, see Section~5, equation \eqref{eq8.5} is valid for all $f\in C^\infty$.

Now assume, on the contrary, that $g$ satisfies \eqref{eq8.5} but $g\ne 0$. Let $X\subset Q^d$ be a finite subset of  ${\rm supp}\, g$ such that 
\[
\sum_{x\in X}|g(x)|\ge \frac{1}{2}\|g\|_{1}\, (>0).
\]
Clearly, there exists a function $\varphi_X\subset C^\infty$ such that 
\[
\varphi_X(x)={\rm sgn}(g(x)),\quad  x\in X,\quad {\rm and}\quad \|\varphi_X\|_{\infty}\le 1.
\] 
From here and \eqref{eq8.5} we obtain the contradiction
\[
\frac 1 2\|g\|_{1}\le \sum_{x\in X} |g(x)|=
\left| \sum_{x\in X} \varphi_X(x)g(x) \right|\le
\sum_{x\in Q^d\setminus X} |\varphi_X(x)| |g(x)|<\frac 1 2\|g\|_{1}.
\]
Hence, $g=0$ and  the proof of part (a) of the theorem is complete.\medskip

\noindent (b) We should prove that $B(U_p^k)$ is the closure of the symmetric convex hull of the set $\{b_\pi\in (U_p^k)^0\, :\, [b_\pi]_{p'}\le 1\}$. This, in fact, is proved in Theorem~2.5 of \cite{BB-18} for $B(U_{p\infty}^k)$. The proof can be easily adapted to the required case.\smallskip

\noindent (c) We should prove that $U_p^k$ is nonseparable and contains a separable subspace. $\hat U_p^k\equiv U_{p\infty}^k$ such that $(U_p^k/\hat U_p^k)^*\cong\ell_p$. 

Let us  show that $U_p^k$ is nonseparable. To this end, we as in Lemma \ref{lemma6.1} take $S\subset Q^d$ to be an interpolating set for the space $\mathcal P_{k-1}^d$ and a function $c_x: S\rightarrow\RR$, $x\in Q^d$, such that  
\[
m(x)=\sum_{s\in S}c_x(s) m(s)\quad {\rm for\ every}\quad m\in\mathcal P_{k-1}^d.
\]

Further, we set
\[
\delta_x':=\delta_x-\sum_{s\in S}c_x(s)\delta_S,
\]
where $\delta_y$ is the delta-function at $y\in\RR^d$.

If $x\ne y\in Q^d\setminus S$, then by definition
\[
\|\delta_{x}'-\delta_{y}'\|_{U_p^k}:=\sup_{f\in \dot V_{p;S}^k\,; |f|_{V_p^k}\le 1} |f(x)-f(y)|\ge
2^{-\frac 1 p}|f_{x,y}(x)-f_{x,y}(y)|=2^{\frac{1}{p'}};
\]
here $f_{x,y}(x)=1$, $f_{x,y}(y)=-1$ and $f_{x,y}(z)=0$ otherwise. 

Hence, $U_p^k$ contains a discrete uncountable subset, i.e., it is nonseparable.

Next, let us show that $U_p^k$ contains a separable subspace $\hat U_p^k$ such that
\[
\hat U_p^k\equiv U_{p\infty}^k\quad {\rm and}\quad (U_p^k/\hat U_p^k)^*\cong\ell_p .
\]

In fact, since $\tilde L: \dot V_{p;S}^k\rightarrow V_{p\infty}^k$ is weak$^*$ continuous and surjective, see Proposition \ref{prop8.2}, there is a bounded linear injective map $T: U_{p\infty}^k\rightarrow U_p^k$ such that $T^*=\tilde L$, see, e.g., \cite[VI.9.13]{DSch-58}. 

We set
\begin{equation}\label{e8.6}
\hat U_p^k:= T(U_{p\infty}^k)
\end{equation}
and prove that $T: U_{p\infty}^k\rightarrow (\hat U_p^k, \|\cdot\|_{U_p^k})$ is isometry.

Due to Theorem \ref{te2.6}\,(c) $\tilde L$ maps the Banach space $N\dot V_p^k\cap \dot V_{p;S}^k$ isometrically onto $V_{p\infty}^k$. 

Further, let $u\in U_{p\infty}^k\setminus \{0\}$ and $f_u\in V_{p\infty}^k$ be such that $f_u(u)=\|u\|_{U_{p\infty}^k}$ and $\|f\|_{V_{p\infty}^k}=1$. Then there exists $\tilde f_u\in N\dot V_p^k\cap \dot V_{p;S}^k$ such that $\tilde L(\tilde f_u)=f_u$ and $|\tilde f_u|_{V_p^k}=1$. In particular, we have
\[
\|T(u)\|_{U_p^k}\ge |\tilde f_u(Tu)|=|\tilde L(\tilde f_u)(u)|=|f_u(u)|=\|u\|_{U_{p\infty}^k}.
\]
On the other hand,
\[
\|T(u)\|_{U_p^k}\le \|T\|\cdot\|u\|_{U_{p\infty}^k}=\|\tilde L\|\cdot\|u\|_{U_{p\infty}^k}=\|u\|_{U_{p\infty}^k}
\]
and therefore
\[
\|T(u)\|_{U_p^k}=\|u\|_{U_{p\infty}^k}\quad {\rm for\ all}\quad u\in U_{p\infty}^k,
\]
i.e., $T: U_{p\infty}^k\rightarrow (\hat U_p^k, \|\cdot\|_{U_p^k})$ is an isometric isomorphism and $\hat U_p^k$ is a closed subspace of $U_p^k$. It is separable because $U_{p\infty}^k$ is  by \cite[Th.\,2.5(b)]{BB-18}.

Further, by the definition of dual to a factor-space, $(U_p^k/\hat U_p^k)^*$ is isomorphic to the annihilator $(\hat U_p^k)^{\perp}\subset V_p^k$ of $\hat U_p^k$.  Moreover, since $T: U_{p\infty}^k\rightarrow \hat U_p^k$ is an isomorphism, an element $f\in (\hat U_p^k)^{\perp}$ iff for every $u\in U_{p\infty}^k $
\[
f(T(u))=(\tilde L(f))(u)=0,
\]
i.e., iff $f\in {\rm ker}(\tilde L)\cong\ell_p$.

This shows that
$(U_p^k/\hat U_p^k)^*\cong\ell_p$ and completes the proof of the theorem.
\sect{Proof of Theorem \ref{te2.2.6} }
In the proof we use the following auxiliary result.
\begin{Lm}\label{lem9.1}
Let $f\in \ell_\infty(Q)$. There is a sequence of functions $\{g_n\}_{\in\N}\in \ell_1(Q)$ such that
\begin{equation}\label{eq9.1}
E_{k}(f;Q)=\lim_{n\rightarrow\infty}\sum_{x\in Q} f(x)g_n(x);
\end{equation}
\begin{equation}\label{eq9.2}
\|g_n\|_{\ell_{1}(Q)}=1\quad {\rm and}\quad
\sum_{x\in Q} x^\alpha g_n(x)=0,\quad \quad |\alpha|\le k-1,\quad {\rm for\ all}\quad n\in\N.
\end{equation}
\end{Lm}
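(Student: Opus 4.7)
The plan is to compute $E_k(f;Q)$ by applying finite-dimensional Hahn--Banach duality along an exhausting sequence of finite subsets of $Q$. Since $E_k(f;Q)$ is the distance in $\ell_\infty(Q)$ from $f$ to the finite-dimensional subspace $\mathcal{P}_{k-1}^d|_Q$, the natural dual object is exactly a function $g\in \ell_1(Q)$ of unit norm annihilating $\mathcal{P}_{k-1}^d$, which is what is being constructed.

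First, I would fix an interpolating set $S_0\subset Q$ for $\mathcal{P}_{k-1}^d$ and prove the pointwise-reduction identity
\[
E_k(f;Q)=\sup_{F\supset S_0,\,|F|<\infty}E_k(f|_F;F).
\]
The inequality $\ge$ is immediate by restriction. For $\le$, for each such $F$ I choose a near-optimal polynomial $m_F\in\mathcal{P}_{k-1}^d$ with $\|f-m_F\|_{\ell_\infty(F)}\le E_k(f|_F;F)+\tfrac{1}{|F|}$. The values $\{m_F(s)\}_{s\in S_0}$ are uniformly bounded by $\|f\|_\infty+\sup_F E_k(f|_F;F)\le 2\|f\|_\infty$, so the equivalence of norms on the finite-dimensional space $\mathcal{P}_{k-1}^d$ (using the interpolating set $S_0$) bounds the coefficients of $m_F$ uniformly. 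The net $\{m_F\}$ directed by inclusion therefore has a subnet converging to some $m_\ast\in\mathcal{P}_{k-1}^d$, and since every $x\in Q$ lies eventually in $F$, passage to the limit yields $|f(x)-m_\ast(x)|\le \sup_F E_k(f|_F;F)$ for all $x\in Q$, establishing the reverse inequality.

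Next, for each finite $F$ with $F\supset S_0$ and $|F|>\dim\mathcal{P}_{k-1}^d$, I invoke ordinary Hahn--Banach duality in the finite-dimensional Banach space $\ell_\infty(F)\cong\RR^{|F|}$, whose dual is $\ell_1(F)$. This produces $g_F\in\ell_1(F)$ satisfying $\|g_F\|_{\ell_1(F)}=1$, $\sum_{x\in F}x^\alpha g_F(x)=0$ for every $|\alpha|\le k-1$, and (attained in finite dimensions) $\sum_{x\in F}f(x)g_F(x)=E_k(f|_F;F)$; when $E_k(f|_F;F)=0$ I instead pick any unit-norm $g_F$ with the orthogonality conditions, which exists because $|F|>\dim\mathcal{P}_{k-1}^d$ and $F$ contains an interpolating set so that the orthogonality constraints have full rank and leave a nontrivial kernel. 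Extending $g_F$ by zero outside $F$ embeds it into $\ell_1(Q)$. Choosing a nested sequence $F_1\subset F_2\subset\cdots$, all containing $S_0$, with $E_k(f|_{F_n};F_n)\to E_k(f;Q)$ (possible by the first step), I set $g_n:=g_{F_n}$.

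The properties \eqref{eq9.2} are built into the construction, and \eqref{eq9.1} follows from $\sum_{x\in Q}f(x)g_n(x)=E_k(f|_{F_n};F_n)\to E_k(f;Q)$. The main obstacle is the compactness argument of the first paragraph: one must carefully handle a net indexed by the uncountable directed set of finite subsets of $Q$ and extract from near-optimal approximants $m_F$ a single polynomial $m_\ast$ witnessing the reverse inequality, leveraging that finite pointwise data on $S_0$ already pins down the coefficients of a polynomial of degree $k-1$.
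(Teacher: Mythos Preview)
Your proof is correct. The paper takes a shorter, purely abstract route: it sets $\hat\ell_1(Q):=\{g\in\ell_1(Q):g\perp\mathcal P_{k-1}^d\}$, invokes the Hahn--Banach duality $\hat\ell_1(Q)^*\equiv\ell_\infty(Q)/\mathcal P_{k-1}^d$, identifies $E_k(f;Q)$ with the quotient norm of $f$ (hence with the functional norm of $[f]$ acting on $\hat\ell_1(Q)$), and then simply takes a norming sequence in the unit sphere of $\hat\ell_1(Q)$. No finite-set reduction or compactness step is needed.

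Your approach trades that single duality for a finite-dimensional argument: the reduction identity $E_k(f;Q)=\sup_{F\supset S_0}E_k(f|_F;F)$ (proved via a compactness/subnet argument on near-best approximants $m_F$) plus standard finite-dimensional Hahn--Banach on each $\ell_\infty(F)$. This is more hands-on but has two mild advantages: it uses only finite-dimensional duality (avoiding the identification $\hat\ell_1(Q)^*\equiv\ell_\infty(Q)/\mathcal P_{k-1}^d$, which relies on recognizing that the bipolar of the finite-dimensional subspace $\mathcal P_{k-1}^d$ is itself), and it yields $g_n$ that are \emph{finitely supported}, which dovetails nicely with the $(k,p)$-atom framework used later in the paper. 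One small remark: in your compactness step, the $1/|F|$ slack is harmless because any subnet of the inclusion-directed net of finite sets is cofinal and thus has $|F_\alpha|\to\infty$; alternatively you could have taken $m_F$ optimal (the infimum over a finite-dimensional subspace in $\ell_\infty(F)$ is attained) and avoided the issue entirely.
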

\begin{proof}
Let $\hat\ell_1(Q):=\{g\in\ell_1(Q)\, :\, g\perp\mathcal P_{k-1}^d\}$. By the Hahn-Banach theorem 
$\hat\ell_1(Q)^*\equiv \ell_\infty(Q)/\mathcal P_{k-1}^d$. 
 Since the best approximation in \eqref{eq9.1} is the norm of the image of $f$ in 
 $\ell_\infty(Q)/\mathcal P_{k-1}^d|_Q$, the definition of the norm of a bounded linear functional on $\hat\ell_1(Q)$ implies existence of a sequence $\{g_n\}_{n\in\N}\subset\hat\ell_1(Q)$ of elements of norm $1$ satisfying \eqref{eq9.1}
 \end{proof}
 \begin{proof}[Proof of Theorem \ref{te2.2.6}]
 By definition, $(U_{p}^k)^0\subset\hat \ell_{1}$.
Further, every function $f\in \hat \ell_{1}$ is a $(k,p)$-chain subordinate to the packing $\pi=\{Q^d\}$. 

In fact, a function $f=c_{Q^d}\,a_{Q^d}$, where $c_{Q^d}:=\|f\|_{1}$ and $a_{Q^d}:=f/\|f\|_{1}$, vanishes on $\mathcal P_{k-1}^d$ and, moreover, $\|a_{Q^d}\|_{1}=1$. Hence, by the definition of the seminorm of   $U_p^k$, 
\begin{equation}\label{eq9.3}
\|f\|_{U_p^k}\le |c_{Q^d}|=\|f\|_{1}.
\end{equation}
In other words, the linear embedding
\begin{equation}\label{eq9.4}
E:\hat \ell_{1}\hookrightarrow U_p^k
\end{equation} 
holds with the embedding constant $1$ and has dense image. Passing to the conjugate map we obtain that 
\[
E^*: (U_p^k)^*\hookrightarrow \hat \ell_{1}^*\equiv \ell_\infty/\mathcal P_{k-1}^d
\] 
is a linear injection of norm $\le 1$.
On the other hand, $V_p^k$ is contained in $\ell_\infty/\mathcal P_{k-1}^d$. 

Further, we show that ${\rm range}(E^*)$ is in $V_p^k$ and  that the linear map $E^*:(U_p^k)^*\rightarrow V_p^k$ is of norm $\le 1$.

To this end, for $\ell\in (U_p^k)^*$ we denote by $f_\ell\in \ell_\infty$ an element whose image in $\ell_\infty/\mathcal P_{k-1}^d$ coincides with $E^*(\ell)$. Then we take for every $Q\subset Q^d$ and $\varepsilon\in (0,\frac 12)$ a $(k,p)$-atom denoted by $\tilde a_{Q}$ such that
\begin{equation}\label{eq9.5}
(1-\varepsilon)E_{k}(f_\ell;Q)\le \sum_{x\in Q} f_\ell(x) \tilde a_{Q}(x)\le E_{k}(f_\ell;Q);
\end{equation}
its existence directly follows from Lemma \ref{lem9.1} and the definition of $(k,p)$-atoms. 

Then for a $(k,p)$-chain $ \tilde b_{\pi}$ given by $ \tilde b_{\pi}:=\sum_{Q\in\pi}c_Q \tilde a_{Q}$ 
we get from \eqref{eq9.5}
\[
[E^*(\ell)]( \tilde b_{\pi})=\sum_{x\in Q^d} \tilde b_{\pi}(x) f_\ell(x)\ge (1-\varepsilon)\sum_{Q\in\pi}c_Q E_{k}(f_\ell;Q).
\]
This, in turn, implies
\[
\begin{array}{l}
\displaystyle
\sum_{Q\in\pi} c_Q  E_{k}(f_\ell;Q)\le (1-\varepsilon)^{-1}\|E^*(\ell)\|_{1}\| \tilde b_{\pi}\|_{U_p^k}\le (1-\varepsilon)^{-1}\|\ell\|_{(U_p^k)^*}\| \tilde b_{\pi}\|_{U_p^k}\\
\displaystyle
\qquad\qquad\qquad\quad\ \le (1-\varepsilon)^{-1}\left(\sum_{Q\in\pi}|c_Q|^{p'}\right)^{\frac{1}{p'}}\|\ell\|_{(U_p^k)^*}.
\end{array}
\]
Sending here $\varepsilon\rightarrow 0$ and then taking supremum over all sequences $(c_Q)_{Q\in\pi}$ of the $\ell_{p'}(\pi)$ norm $1$ and supremum over all $\pi$ we conclude that
\[
|f_\ell|_{V_p^k}:=\sup_{\pi}\left(\sum_{Q\in\pi}E_{k}(f_\ell;Q)^p\right)^{\frac 1p}\le \|\ell\|_{(U_p^k)^*}.
\]
Hence, $E^*(\ell)\in V_p^k$ for every $\ell\in (U_p^k)^*$ and $E^*:(U_p^k)^*\hookrightarrow V_p^k$ is a linear injection of norm $\le 1$.

Next, let us show that there is a linear injection $F$ of norm $\le 1$
\begin{equation}\label{eq9.6}
F:V_p^k\hookrightarrow (U_p^k)^*
\end{equation}
such that
\begin{equation}\label{eq9.7}
FE^*={\rm id}|_{(U_p^k)^*}.
\end{equation}

Actually, let $f\in \dot V_p^k$ and $\ell_f: (U_p^k)^0\rightarrow\RR$ be a linear functional given for $g\in (U_p^k)^0$ by
\begin{equation}\label{eq9.8}
\ell_f(g):=\sum_{x\in Q^d}f(x)g(x).
\end{equation}
Due to \eqref{eq8.4}
\[
|\ell_f(g)|\le |f|_{V_p^k}\|g\|_{U_p^k}.
\]
Thus, $\ell_f$ continuously extends to a linear functional  from $(U_p^k)^*$ (denoted by the same symbol) and the linear map $F:V_p^k\rightarrow (U_p^k)^*$,  $\{f\}+\mathcal P_{k-1}^d\mapsto \ell_f$, is of norm $\le 1$. 

Further, we show that $F$  is an injection. 

Indeed, let $\ell_f=0$ for some $f\in \dot V_p^k$. Since $(U_p^k)^0=\hat \ell_{1}$ and $\dot V_p^k\subset \ell_\infty$, equality \eqref{eq9.8} implies that $\ell_f|_{(U_p^k)^0}$ determines the trivial functional on $(\hat \ell_{1})^*\equiv \ell_\infty/\mathcal P_{k-1}^d$. Hence, $f\in  \mathcal P_{k-1}^d$, i.e., $f$ determines the zero element of the factor-space $V_p^k$, as required.

Further, by the definitions of $E^*$ and $F$ we have for each $h\in (U_p^k)^*$ and $g\in (U_p^k)^0$ 
\[
[FE^*(h)](g)=\ell_{E^*(h)}(g)=\sum_{x\in Q^d}E^*(h)(x)g(x)=h(E(g))=h(g).
\]
The proof of \eqref{eq9.6} and \eqref{eq9.7} is complete. 

In turn, the established results mean that ${\rm range}(E^*)=V_p^k$, ${\rm range}(F)=(U_p^k)^*$ and $F$ and $E^*$ are isometries.

The theorem is proved.
 \end{proof}
 \sect{Proof of Theorem \ref{te2.3.2}}
 \noindent (a) We should prove that the map sending a chain $b\in (U_p^k)^0\, (\subset\ell_1)$ to a discrete measure $\mu_b\in (\widetilde U_p^k)^0\, (\subset M)$, where $\mu_b(\{x\}):=b(x)$, $x\in Q^d$, extends to an isometric embedding $\mathcal I$ of $U_p^k$ into  $\widetilde U_p^k$.
  
In fact,  by the definitions of seminorms of $(U_p^k)^0$ and $(\widetilde U_p^k)^0$, see Sections~2.3, 2.4, we obtain
\begin{equation}\label{eq10.3}
\|\mu_b\|_{\widetilde U_p^k}\le \|b\|_{U_p^k}\quad {\rm for\ all}\quad b\in (U_p^k)^0,
\end{equation}
i.e., the map $(U_p^k)^0\ni b\mapsto\mu_b\in (\widetilde U_p^k)^0$ extends by continuity to a bounded linear map $\mathcal I: U_p^k\rightarrow \widetilde U_p^k$ of norm $\le 1$. 

The fact that $\mathcal I$ is an isometry is a  consequence of the following properties of the introduced below operator $\mathcal E: \widetilde U_p^k\rightarrow U_p^k$  that will be established in part (b) of the proof:
\[
\mathcal E\circ\mathcal I={\rm id}\quad {\rm and}\quad \|\mathcal E\|\le 1.
\]

Actually, these imply
\[
\|u\|_{U_p^k}=\|\mathcal E(\mathcal I(u))\|_{U_p^k}\le \|\mathcal I(u)\|_{\widetilde U_p^k}\le \|u\|_{U_p^k}\quad {\rm for\ all}\quad u\in U_p^k.
\]
Hence, $\mathcal I: U_p^k\rightarrow \widetilde U_p^k$ is an isometric embedding.\smallskip

\noindent (b) We should prove that there exists a linear continuous surjection $\mathcal E: \widetilde U_p^k\rightarrow U_p^k$ such that
 \[
 {\rm ker}(\mathcal E)\cap (\widetilde U_p^k)^0=(0)\quad {\rm and}\quad \mathcal E\circ \mathcal I={\rm id}.
 \]
 
To define such $\mathcal E$ we use a bilinear form given for $v\in\dot V_p^k$ and $\mu\in (\widetilde U_p^k)^0$ by 
\begin{equation}\label{2.3.2}
\langle v,\mu\rangle :=\int_{Q^d}v\,d\mu.
\end{equation}
Due to Theorem \ref{te2.1.3} this form is correctly defined.
As we show now it can be continuously extended to the seminormed Banach space $\dot V_p^k\times\widetilde U_p^k$. (We will retain notation $\langle\cdot,\cdot\rangle$ for the extension.) 

To this end it suffices to establish the inequality
\begin{equation}\label{2.3.3}
|\langle v,\mu\rangle |\le |v|_{V_p^k}\|\mu\|_{\widetilde U_p^k},\quad (v,\mu)\in \dot V_p^k\times (\widetilde U_p^k)^0,
\end{equation}
whose proof repeats line by line the proof of inequality \eqref{eq8.4} with sum replaced by integral, $a_Q\in\ell_1$ by $\mu_Q\in M$ and $\|\cdot\|$ by $\|\cdot\|_M$. We leave the details to the readers.

 Thus, inequality \eqref{2.3.3} is valid for all $\mu\in \widetilde U_p^k$ and, hence,
for every such $\mu$  the map
\[
\ell_\mu: v\mapsto \langle v,\mu\rangle,\quad v\in \dot V_{p;S}^k\, (\equiv V_p^k),
\]
determines a linear continuous functional from $(V_p^k)^*$.  

Now we define the required map $\mathcal E$ by the formula
\begin{equation}\label{2.3.4}
\mathcal E(\mu):=\ell_\mu,\quad \mu\in \widetilde U_p^k.
\end{equation}
Then $\mathcal E$ acts linearly from $\widetilde U_p^k$ to $(V_p^k)^*$ and is of norm $\le 1$. We should prove that the range of $\mathcal E$ coincides with $U_p^k\, (\subset (U_p^k)^{**}\equiv (V_p^k)^{*})$. 

To this end we establish the following: 

{\em Each bounded linear functional $\ell_\mu$ with $\mu\in (\widetilde U_p^k)^0$ is weak$^*$ continuous on $V_p^k$ with respect to the weak$^*$ topology induced by the duality} $(U_p^k)^*\equiv V_p^k$. 

It suffices to check that $\ell_\mu^{-1}([-1,1])\subset V_p^k$, $\ell_\mu:=\mathcal E(\mu)$, is weak$^*$ closed. In fact, since $\ell_\mu^{-1}([-1,1])$ is a convex absorbing subset of $ V_{p}^k$,
it suffices by  the Krein-Smulian theorem, see, e.g., \cite[Thm.\,V.5.7]{DSch-58}, to check that $\ell_\mu^{-1}([-1,1])\cap B(V_{p}^k)$ is closed in the weak$^*$ topology. In the proof of this, 
without loss of generality we identify $V_p^k$ with $\dot V_{p;S}^k\subset \dot V_p^k$, see Section~4.1.1, so that $B(V_{p}^k)=B(\dot V_{p;S}^k)$.

Let $\{f_\alpha\}_{\alpha\in\Lambda}$ be a net of functions in  $\ell_\mu^{-1}([-1,1])\cap B(\dot V_{p;S}^k)$ converging in the weak$^*$ topology to a function $f\in B(\dot V_{p;S}^k)$. Since $\dot V_p^k$ consists of functions of the  first Baire class, see Theorem \ref{te2.1.3}, and the closed ball $B(\dot V_{p;S}^k)$ is compact in the topology of pointwise convergence on $Q^d$ (see Lemma \ref{lemma6.1}), the net
$\{f_\alpha\}_{\alpha\in\Lambda}$ pointwise converges to $f$; in addition, $\sup_\alpha \|f_\alpha\|_{\infty}\le 1$ and $\|\ell_\mu\|\le 1$. These and the Rosenthal Main Theorem \cite{Ro-77} imply that 
\begin{equation}\label{eq10.1}
\ell_\mu(f):=\int_{Q^d} f\, d\mu=\int_{Q^d}\lim_{\alpha}f_\alpha \,d\mu=\lim_{\alpha}\int_{Q^d}f_\alpha\,d\mu=
\lim_{\alpha}\ell_\mu(f_\alpha)\, (\in [-1,1]);
\end{equation}
hence, $f\in \ell_\mu^{-1}([-1,1])$ and $\ell_\mu\in (V_p^k)^*$ is weak$^*$ continuous. 

Since the subspace of weak$^*$ continuous functionals in $(V_p^k)^*$ coincides with $U_p^k$, we conclude that $\mathcal E(\mu)\in U_p^k$ for all $\mu\in (\widetilde U_p^k)^0$. Moreover, $U_p^k$ is a closed subspace of $(V_p^k)^*$ and $\mathcal E$ is continuous on $(\widetilde U_p^k)^0$ that is dense in $\widetilde U_p^k$. These facts imply that
${\rm range}(\mathcal E)\subset U_p^k$.  

Next, we show that 
\begin{equation}\label{eq-n12.6}
{\rm ker}(\mathcal E)\cap (\widetilde U_p^k)^0=(0).
\end{equation}

In fact, let $\mu\in {\rm ker}(\mathcal E)\cap (\widetilde U_p^k)^0$. Then for each $f\in C^\infty\subset\dot V_p^k$
\[
\bigl(\mathcal E(\mu)\bigr)(f):=\int_{Q^d}f\, d\mu=0.
\]
Since $C^\infty $ is dense in $C$ in the topology of uniform convergence, the latter implies that
\begin{equation}\label{eq10.5}
\int_{Q^d}f\, d\mu=0\quad {\rm for\ all}\quad f\in C.
\end{equation}
According to the Riesz representation theorem $C^*\equiv (M,\|\cdot\|_M)$ with the duality defined by integration with respect to the corresponding measure; hence, \eqref{eq10.5} implies that $\mu=0$ proving \eqref{eq-n12.6}.

Finally, we prove that
\begin{equation}\label{eq10.4}
\mathcal E\circ \mathcal I={\rm id}.
\end{equation}

In fact, we have for $b\in (U_p^k)^0\, (\subset\ell_1)$ and all $v\in \dot V_{p;S}^k\, (\subset\ell_\infty)$
\[
\bigl(\mathcal E(\mathcal I(b))\bigr)(v)=\int_Q v\, d\mu_b=\sum_{x\in Q^d}v(x) b(x)=b(v).
\]
Since by Theorem \ref{te2.2.6} $(U_p^k)^*\equiv V_p^k$, the latter implies that
\[
\mathcal E(\mathcal I(b))=b\quad {\rm for\ all}\quad b\in (U_p^k)^0.
\]
From here using  that $\mathcal E$ and $\mathcal I$ are continuous maps and that  $(U_p^k)^0$ is dense in $U_p^k$ we obtain the required identity \eqref{eq10.4}.

In particular, this implies that ${\rm range}(\mathcal E)=U_p^k$, i.e., surjectivity of $\mathcal E$, and completes the proof of part (b) and hence of the theorem.
\sect{Proofs of Theorem \ref{te2.3.3} and Corollary \ref{cor2.3.4}}
\begin{proof}[Proof of Theorem \ref{te2.3.3}] We have to prove that if
\begin{equation}\label{e11.1}
s:=\frac{d}{p}< k,\quad 1<p<\infty,
\end{equation}
then
\begin{equation}\label{e11.2}
(\textsc{v}_p^k)^*\cong U_p^k.
\end{equation}
Since the relation
\begin{equation}\label{eq-n13.3}
(\textsc{v}_{p\infty}^k)^*\cong U_{p\infty}^k
\end{equation}
under condition \eqref{e11.1} has been just proved in \cite[Th.\,2.7]{BB-18} and the spaces of our consideration are defined similarly to those in \eqref{eq-n13.3}, several of the basic arguments of the cited paper can and will after some trivial modification of notations be used in the forthcoming proof.

In the following text we as before identify $U_p^k$ with its image under the natural embedding $U_p^k\hookrightarrow (U_p^k)^{**}$. 
and  $V_p^k$ with $(U_p^k)^*$, see Theorem \ref{te2.2.6}; hence, we regard $U_p^k$ as a linear subspace of $(V_p^k)^*\, (=((U_p^k)^*)^*)$.

Further, $\mathfrak i:\textsc{v}_p^k\hookrightarrow V_p^k$ is the natural embedding and $\mathfrak i^*:(V_p^k)^*\rightarrow(\textsc{v}_p^k)^*$ is its adjoint. 
\begin{Prop}\label{prop11.1}
(a) $\mathfrak i^*$ is a surjective linear map of norm one which maps $U_p^k$ isomorphically onto a closed subspace of
$(\textsc{v}_p^k)^*$.

\noindent (b) The image $\mathfrak i^*(B(U_p^k))$ is a dense subset of  $B((\textsc{v}_p^k)^*)$ in the weak$^*$ topology of $(\textsc{v}_p^k)^*$. 
\end{Prop}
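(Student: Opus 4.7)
\textbf{Proof plan for Proposition \ref{prop11.1}.}

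For part (a) I will separate three claims: $\|\mathfrak{i}^*\|=1$, surjectivity of $\mathfrak{i}^*$, and the isomorphism property of $\mathfrak{i}^*|_{U_p^k}$. The first two are routine: the embedding $\mathfrak{i}:\textsc{v}_p^k\hookrightarrow V_p^k$ is norm-decreasing, so $\|\mathfrak{i}^*\|\le 1$, and the Hahn--Banach theorem allows every $\psi\in (\textsc{v}_p^k)^*$ to be extended to an element of $(V_p^k)^*$ of the same norm, which yields both surjectivity and the reverse bound $\|\mathfrak{i}^*\|\ge 1$.

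The key step is the lower estimate that makes $\mathfrak{i}^*|_{U_p^k}$ an isomorphism. The upper bound $\|\mathfrak{i}^*(u)\|_{(\textsc{v}_p^k)^*}\le \|u\|_{U_p^k}$ is immediate from $\|\mathfrak{i}^*\|\le 1$. For the lower bound I plan to use the weak$^*$ approximation furnished by Theorem \ref{te2.1.4}. Fix a chain $b\in (U_p^k)^0$, which via Theorem \ref{te2.3.2}\,(a) is identified with a discrete finite signed Borel measure $\mu_b$, and fix $f\in\dot V_p^k$ with $|f|_{V_p^k}\le 1$. By Theorem \ref{te2.1.4} there exists a sequence $\{f_n\}\subset C^\infty\subset\dot{\textsc v}_p^k$ with $\varlimsup|f_n|_{V_p^k}\le 5$ and $\int(f-f_n)\,d\mu_b\to 0$, so that $\langle f,b\rangle=\lim_n\langle f_n,b\rangle$. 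Since each $f_n$ lies in $\textsc{v}_p^k$, we estimate
\[
|\langle f,b\rangle|=\lim_n|\langle f_n,b\rangle|\le \varlimsup_n |f_n|_{V_p^k}\cdot\|\mathfrak{i}^*(b)\|_{(\textsc{v}_p^k)^*}\le 5\|\mathfrak{i}^*(b)\|_{(\textsc{v}_p^k)^*}.
\]
Taking the supremum over the unit ball of $V_p^k$ gives $\|b\|_{U_p^k}\le 5\|\mathfrak{i}^*(b)\|_{(\textsc{v}_p^k)^*}$; density of $(U_p^k)^0$ in $U_p^k$ then extends this to every $u\in U_p^k$. The two-sided bound together with completeness of $U_p^k$ yields the isomorphism onto a closed subspace of $(\textsc{v}_p^k)^*$.

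For part (b) I plan to combine Goldstine's theorem with the weak$^*$-to-weak$^*$ continuity of $\mathfrak{i}^*$. Via Theorem \ref{te2.2.6} we identify $U_p^k\subset (U_p^k)^{**}=(V_p^k)^*$, and Goldstine's theorem asserts that $B(U_p^k)$ is dense in $B((V_p^k)^*)$ for the topology $\sigma((V_p^k)^*,V_p^k)$. The map $\mathfrak{i}^*$ is continuous from this topology to $\sigma((\textsc{v}_p^k)^*,\textsc{v}_p^k)$, because for any $g\in\textsc{v}_p^k$ the map $\varphi\mapsto(\mathfrak{i}^*\varphi)(g)=\varphi(\mathfrak{i}(g))$ is the evaluation at the element $\mathfrak{i}(g)\in V_p^k$. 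Now given $\psi\in B((\textsc{v}_p^k)^*)$, Hahn--Banach supplies $\widetilde\psi\in B((V_p^k)^*)$ with $\mathfrak{i}^*(\widetilde\psi)=\psi$; Goldstine produces a net $\{u_\alpha\}\subset B(U_p^k)$ weak$^*$ converging to $\widetilde\psi$, and applying $\mathfrak{i}^*$ yields $\mathfrak{i}^*(u_\alpha)\to\psi$ in the weak$^*$ topology of $(\textsc{v}_p^k)^*$, proving density.

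The main obstacle I anticipate is the lower estimate in (a): one must transfer a chain $b\in\ell_1$ to a \emph{discrete} finite signed Borel measure in order to invoke the conclusion \eqref{2.1.5} of Theorem \ref{te2.1.4}, rather than the $L_1$-based convergence that was available in the analogous result for $U_{p\infty}^k$ in \cite{BB-18}. The identification provided by Theorem \ref{te2.3.2} is precisely what unlocks this argument.
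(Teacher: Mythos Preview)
Your proposal is correct and follows essentially the same route as the paper. The paper packages the lower bound in (a) via an intermediate lemma asserting weak$^*$ density of $\textsc{v}_p^k$ in $V_p^k$ (itself a corollary of Theorem~\ref{te2.1.4}), and then argues directly for every $u\in U_p^k$; you instead invoke Theorem~\ref{te2.1.4} on chains $b\in(U_p^k)^0$ and pass to the closure by density---this is the same argument, differently organized, with the same constant $5$. For part (b) the paper defers to its companion paper; your Goldstine/Hahn--Banach argument is the standard one and is correct. One minor remark: you do not need Theorem~\ref{te2.3.2} to identify a chain $b\in(U_p^k)^0\subset\ell_1$ with the discrete measure $\mu_b$---that identification is elementary, and Theorem~\ref{te2.3.2} (which concerns the isometric embedding of all of $U_p^k$ into $\widetilde U_p^k$) is more than you use.
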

\begin{proof}
The proof that $\mathfrak i^*: (V_p^k)^*\rightarrow (\textsc{v}_p^k)^*$ is a surjective linear map of norm one repeats line by line the proof of the similar assertion of Proposition 6.1 in \cite{BB-18}. The same is true for part (b) of Proposition \ref{prop11.1}. 

To prove the remaining statement of part (a) of the proposition we use
the following analog of Lemma 6.2 of \cite{BB-18}.
\begin{Lm}\label{lem11.2}
The subspace $\textsc{v}_p^k$ is weak$^*$ dense in the space $V_p^k\, (=(U_p^k)^*)$.
\end{Lm}

\begin{proof}
The result is a straightforward corollary of Theorem \ref{te2.1.4}.
\end{proof}
Now, to prove that $\mathfrak i^*$ maps $U_p^k$ isomorphically onto a closed subspace of 
$(\textsc{v}_p^k)^*$, we have to show that  there is a constant $c\in (0,1)$ such that $\|\mathfrak i^*(v)\|_{(\textsc{v}_p^k)^*}\ge c\|v\|_{U_p^k}$ for all $v\in U_p^k$.

In fact, let $u\in U_p^k\setminus\{0\}$. By the Hahn-Banach theorem there exists $f\in V_p^k$ such that $\|f\|_{V_p^k}=1$ and $f(u)=\|u\|_{U_p^k}$.  By Theorem \ref{te2.1.4} there exists a sequence $\{f_n\}_{n\in\N}\subset \textsc{v}_p^k$ weak$^*$ converging to $f$ such that
\[
\sup_n \|f_n\|_{V_p^k}\le C\|f\|_{V_p^k}=C
\]
for some $C\ge 1$ independent of $f$.

These imply that
\[
\|v\|_{U_p^k}=|f(v)|=\lim_{n\to\infty}|f_n(v)|\le C\sup_{g\in B(\textsc{v}_p^k)}|g(v)|=C\sup_{g\in B(\textsc{v}_p^k)}|(\mathfrak i^*(v))(g)|:=
C\|\mathfrak i^*(v)\|_{(\textsc{v}_p^k)^*},
\]
as required.

This completes the proof of Proposition \ref{prop11.1}.
\end{proof}

Our next result is the analog of Proposition 6.3 of \cite{BB-18}. In its formulation, we use Theorem \ref{te2.3.2} to identify the vector space of measures $\hat M:=(\widetilde U_p^k)^0$ vanishing on $\mathcal P_{k-1}^d$ with a subspace of $U_p^k$. Let $\bar{\mathscr B}_p^k$  stand for the closure in $U_p^k$ of the set of $(k,p)$-chains
\begin{equation}\label{equ11.3}
\mathscr B_p^k:=\{b_\pi\in (\widetilde U_p^k)^0\, :\, [b_\pi]_{p'}\le 1\}\subset\hat M,
\end{equation}
see \eqref{2.2.1} for the definition.
\begin{Prop}\label{prop11.3}
 $\mathfrak i^*(\bar{\mathscr B}_p^k)$ is a subset of $B((\textsc{v}_p^k)^*)$ compact
in the weak$^*$ topology of $(\textsc{v}_p^k)^*$.
\end{Prop}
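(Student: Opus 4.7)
The natural strategy is to place $\mathfrak i^*(\bar{\mathscr B}_p^k)$ inside the weak$^*$-metrizable compact unit ball of $(\textsc{v}_p^k)^*$, so that compactness reduces to \emph{sequential} weak$^*$-closure, and then to verify the latter by a compactness extraction on chains via the measure-theoretic extension $\widetilde U_p^k$ furnished by Theorem~\ref{te2.3.2}.

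\textbf{Easy steps.} Boundedness: every chain $b_\pi$ satisfies $\|b_\pi\|_{U_p^k}\le [b_\pi]_{p'}\le 1$ by the infimum definition of $\|\cdot\|_{U_p^k}$, and this bound is preserved by the $U_p^k$-norm closure; since $\|\mathfrak i^*\|\le 1$ by Proposition~\ref{prop11.1}(a) we get $\mathfrak i^*(\bar{\mathscr B}_p^k)\subset B((\textsc{v}_p^k)^*)$. Metrizability: by definition $\textsc{v}_p^k$ is the $\dot V_p^k$-closure of the separable space $C^\infty\cap\dot V_p^k$ and is therefore separable, so $B((\textsc{v}_p^k)^*)$ is weak$^*$ compact and metrizable by Banach--Alaoglu. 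Hence it suffices to consider a sequence $T_n=\mathfrak i^*(u_n)\to T$ weak$^*$ in $(\textsc{v}_p^k)^*$ with $u_n\in\bar{\mathscr B}_p^k$, and to produce $u\in\bar{\mathscr B}_p^k$ with $\mathfrak i^*(u)=T$.

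\textbf{Main step.} Approximating each $u_n$ by a chain $b_n\in\mathscr B_p^k$ to within $1/n$ in $U_p^k$-norm replaces the $u_n$ by $b_n$ without changing the weak$^*$ limit. Via the isometric embedding $\mathcal I:U_p^k\hookrightarrow\widetilde U_p^k$ of Theorem~\ref{te2.3.2}, each $b_n$ is identified with the discrete signed Borel measure $\mu_n=\sum_{Q\in\pi_n}c_{n,Q}a_{n,Q}$ whose coefficient vector has $\ell_{p'}$-norm at most one, with each $a_{n,Q}$ supported in $Q$, of total variation at most one, and orthogonal to $\mathcal P_{k-1}^d$. I would then pass to a subsequence along which: (i) after a preliminary dyadic-approximation reduction, the packings $\pi_n$ are drawn from a fixed countable family of cubes; (ii) cube by cube, $c_{n,Q}a_{n,Q}\to\tilde\mu_Q$ in the weak$^*$ topology of $M(Q)$ with $c_{n,Q}\to\tilde c_Q$; (iii) Fatou's lemma secures $\bigl(\sum_Q|\tilde c_Q|^{p'}\bigr)^{1/p'}\le 1$. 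Taking $\tilde\mu:=\sum_Q\tilde\mu_Q\in\widetilde U_p^k$ and setting $\tilde u:=\mathcal E(\tilde\mu)\in U_p^k$, one verifies $\tilde u\in\bar{\mathscr B}_p^k$ by approximating the countable sum by its finite truncations (again controlled in $U_p^k$-norm by $\ell_{p'}$), and then $\mathfrak i^*(\tilde u)=T$ by testing against $f\in\textsc{v}_p^k$: every such $f$ is continuous (from the embedding $(\dot V_p^k)^0\subset C$ used in the proof of Theorem~\ref{te2.1.5}), so cube-by-cube weak$^*$-in-$M$ convergence combined with the $\ell_{p'}$-control on the tails of the chain yields $\int f\,d\mu_n\to\int f\,d\tilde\mu=T(f)$.

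\textbf{Main obstacle.} The delicate point is the compactness extraction in the previous paragraph. The subcubes of $Q^d$ form an \emph{uncountable} family, the packings $\pi_n$ may vary wildly in $n$, and the total-variation norms $\sum_Q|c_{n,Q}|$ of the chains are in general unbounded---only the weaker $\ell_{p'}$-norms of the coefficient vectors are controlled. A direct appeal to Banach--Alaoglu in $M(Q^d)$ therefore fails, and one must implement a two-level selection: first, a dyadic approximation of arbitrary packings so that the chain-indexing set becomes countable (this uses subadditivity of $E_k$ together with lower semicontinuity of the chain seminorm $[\cdot]_{p'}$ to avoid losing mass), and second, a diagonal extraction over that countable index set coupled with Banach--Alaoglu applied to the fixed-norm slice $\{\mu\in M:\mathrm{supp}\,\mu\subset Q,\,\|\mu\|_M\le 1\}$ for each cube. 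The combined semicontinuity estimates are what ensure that the limit object actually lies in the \emph{norm closure} $\bar{\mathscr B}_p^k$ of $\mathscr B_p^k$, rather than merely in the larger weak$^*$-closure or in the convex hull built into $B(U_p^k)$ by Theorem~\ref{te2.2.5}(b).
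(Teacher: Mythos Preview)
Your overall framework---reduce to sequential weak$^*$ closure in the metrizable ball $B((\textsc{v}_p^k)^*)$, approximate each $u_n\in\bar{\mathscr B}_p^k$ by a genuine chain $b_n\in\mathscr B_p^k$, then extract a limiting chain---matches the paper's. The divergence, and the gap, is in the extraction. Your step~(i), the ``preliminary dyadic-approximation reduction'' meant to force all packings $\pi_n$ into a fixed countable family of cubes, is not justified and appears genuinely problematic: if you \emph{enlarge} each $Q\in\pi_n$ to a nearby rational cube, the atom $\mu_Q$ remains an atom for the larger cube but the enlarged cubes need not be pairwise nonoverlapping, so the packing structure is destroyed; if you \emph{shrink}, the atom's support may escape the smaller cube. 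The appeal to ``subadditivity of $E_k$'' is misplaced here---$E_k$ is a function-side quantity and does not obviously control approximation of measure-valued chains in $U_p^k$-norm. Without this reduction the ``cube-by-cube'' weak$^*$ extraction in~(ii) has no fixed index set to diagonalize over.

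The paper sidesteps the countable-family issue entirely. It orders the cubes of each $\pi_n$ by \emph{decreasing volume}, writes $b^n=\sum_{i\ge 1}c_i^n\mu_{Q_i^n}$ (with zeros appended), and then extracts diagonally over the \emph{position index}~$i$: the sequence $\{Q_i^n\}_n$ of $i$-th largest cubes is made to converge in the Hausdorff metric to a cube $Q_i$ (the space of closed subcubes of $Q^d$ being Hausdorff-compact), the atoms $\mu_{Q_i^n}$ to a measure $\mu_i$ weak$^*$ in $M$ whenever $|Q_i|>0$, and the coefficient vectors $c^n$ to some $c\in B(\ell_{p'})$ weak$^*$. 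When $|Q_i|=0$ one shows $\mathfrak i^*(\mu_{Q_i^n})\to 0$; when $|Q_i|>0$ the limit $\mu_i$ is checked to be a $(k,p)$-atom on $Q_i$. The limiting chain $v=\sum_i c_i\mu_{Q_i}$ then lies in $\bar{\mathscr B}_p^k$ (the $\{Q_i\}$ still form a packing and $\|c\|_{p'}\le 1$), and one verifies $\mathfrak i^*(v)=T$. The device of ordering by volume is what replaces your missing countable reduction: it gives, for each fixed $i$, a canonical sequence of cubes to which Hausdorff compactness applies directly.
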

\begin{proof}
As in  the proof of \cite[Prop.\,6.3]{BB-18}, the required result follows from the next one.
\begin{Stat}\label{stat11.4}
If $\{b^n\}_{i\in\N}\subset\mathscr B_p^k$ is such that the sequence $\{\mathfrak i^*(b^n)\}_{n\in\N}$ weak$^*$ converges in $B((\textsc{v}_p^k)^*)$, then its limit belongs to $\mathfrak i^*(\bar{\mathscr B}_p^k)$.
\end{Stat}
\begin{proof}
Let $b^n$ has the form
\[
b^n:=\sum_{i=1}^{N(n)} c_i^n \mu_{Q_i^n},\quad n\in\N,
\]
where $\pi_n:=\{Q_i^n\, :\, 1\le i\le N(n)\}\in\Pi$ is a packing.
\end{proof}
As in the cited proposition we assume without loss of generality that
\[
|Q_{i+1}^n|\le |Q_i^n|,\quad 1\le i < N(n).
\]
Further, we extend sequences $\pi_n$, $\{c_i^n\}$ and $\{\mu_{Q_i^n}\}$  by setting
\[
Q_i^n:=\{0\},\quad c_i^n:=0,\quad \mu_{Q_i^n}:=0\quad {\rm for}\quad i>N(n).
\]
Hence, we write
\[
b^n:=\sum_{i=1}^\infty c_i^n \mu_{Q_i^n},\quad n\in\N.
\]

\begin{Lm}\label{lem11.5}
There is an infinite subsequence $\{b^n\}_{n\in J}$, $J\subset\N$, such that for every $i\in\N$ the following is true.
\begin{itemize}
\item[(a)]
$\{Q_{i}^n\}_{n\in J}$  converges in the Hausdorff metric to a closed subcube of $Q^d$ denoted by $Q_i$; 
\item[(b)] $\{\mathfrak i^*(\mu_{Q_{i}^n})\}_{n\in\N}\subset B((\textsc{v}_p^k)^*)$  converges in the weak$^*$ topology of $B((\textsc{v}_p^k)^*)$;
\item[(c)] if the limiting cube $Q_{i}$ has a nonempty interior, then the sequence of measures $\{\mu_{Q_{i}^n}\}_{n\in J}$ converges in the weak$^*$ topology of $M$ (regarded as the dual space of $C:=C(Q^d)$);
\item[(d)] the sequence $\{c^n:=(c_{i}^n)_{i\in\N}\}_{n\in J}$ of vectors from $B(\ell_{p'}(\N))$ converges in the weak$^*$ topology of $\ell_{p'}(\N)\, (=\ell_p(\N)^*)$ to a vector denoted  by $c\, (\in B(\ell_{p'}(\N)))$.
\end{itemize}
\end{Lm}
\begin{proof}
The result, in fact, is proved in \cite[Lm.\,6.5]{BB-18}.
\end{proof}
Hence, without loss of generality we can and will assume that the sequence $\{b^n\}\subset\mathscr B_p^k$ of $(k,p)$-chains satisfies the assertions of Lemma \ref{lem11.5}. 
In particular, there are closed cubes $Q_i\subset Q^d$, $i\in\N$, such that in the Hausdorff metric
\begin{equation}\label{eq11.3}
Q_i=\lim_{n\to\infty} Q_i^n.
\end{equation}
Since for each $n\in\N$ the cubes $Q_i^n$, $i\in\N$, are nonoverlapping and their volumes form a nonincreasing  sequence, the same is true for the family of cubes $\{Q_i\}_{i\in\N}$. Thus, for every $i\in\N$
\begin{equation}\label{eq11.4}
\mathring{Q}_i\cap\mathring{Q}_{i+1}=\emptyset\quad {\rm and}\quad |Q_i|\ge |Q_{i+1}|.
\end{equation}

Now we let $N=\infty$ if $|Q_i|\ne 0$ for all $i\in\N$, otherwise,
 $N$ be the minimal element of the set of integers $n\in\Z_+$ such that
\begin{equation}\label{eq11.5}
 |Q_i|=0\quad {\rm for}\quad i> n.
\end{equation}
Then due to Lemma \ref{lem11.5}\,(c) for  $N\ne 0$ there are measures $\mu_i\in M$, $1\le i< N+1$, such that in the weak$^*$ topology of $M$
\begin{equation}\label{eq11.6}
\mu_i=\lim_{n\to\infty}\mu_{Q_i^n}.
\end{equation}
\begin{Lm}\label{lem11.6}
(1) If $N\ne \infty$ and $i> N$, then in the weak$^*$ topology of $B((\textsc{v}_p^k)^*)$
\begin{equation}\label{eq11.7}
\lim_{n\to\infty} i^*(\mu_{Q_i^n})=0.
\end{equation}
(2) If $N\ne 0$ and $1\le i<N+1$, then the measure $\mu_i$ is a $(k,p)$-atom subordinate to $Q_i$.
\end{Lm}
\begin{proof}
The proof of part (1) repeats with the corresponding change of notations and definitions the analogous assertion in Lemma 6.6 of \cite{BB-18}.

\noindent (2) First, due to \eqref{eq11.6}
\[
\|\mu_i\|_{M}\le 1
\]
 as atoms $\mu_{Q_i^n}$ belong to the closed unit ball of $M=C^*$ which is weak$^*$ compact.

Next, we prove that
\begin{equation}\label{eq11.8}
{\rm supp}\, \mu_i\subset Q_i\quad {\rm and}\quad \mu_i\perp\mathcal P_{k-1}^d.
\end{equation}
If, on the contrary, ${\rm supp}\, \mu_i\setminus Q_i\ne\emptyset$, then there is a continuous function $f\in C$ such that $({\rm supp}\, f)\cap Q_i=\emptyset$ and
\begin{equation}\label{eq11.9}
\int_{Q^d}fd\mu_i\ne 0.
\end{equation}
However, $Q_i^n\to Q_i$ in the Hausdorff metric
as $n\to\infty$ and therefore $Q_i^n\cap {\rm supp}\, f=\emptyset$ for all sufficiently large $n$.
This and condition (c) of Lemma \ref{lem11.5} imply that
\[
0=\lim_{n\rightarrow\infty}\int_{Q^d} f d\mu_{Q_i^n}=\int_Q f d\mu_i
\]
in contradiction with \eqref{eq11.9}.

To prove the second assertion of \eqref{eq11.8} we note that $\mu_{Q_i^n}\perp\mathcal P_{k-1}^d$ for all $i$ and $n$. Then according to condition (c) of Lemma \ref{lem11.5} we have for each $m\in\mathcal P_{k-1}^d$ 
\[
\int_{Q^d}m d\mu_i=\int_{Q^d}m d\mu_{Q_i^n}=0,
\]
as required.

Hence, $\mu_i$ (in the sequel denoted by $\mu_{Q_i}$) is a $(k,p)$-atom subordinate to $Q_i$.

The proof of the lemma is complete.
\end{proof}

Now we show that for $1\le N<\infty$
\begin{equation}\label{eq11.10}
v_N:=\sum_{i=1}^{N} c_i \mu_{Q_i}\in\mathscr B_p^k.
\end{equation}

In fact, by Lemma \ref{lem11.6}\,(2) and \eqref{eq11.4} $\mu_{Q_i}$ are $(k,p)$-atoms and $\{Q_i\}_{1\le i\le N}$ is a packing. Moreover, by Lemma \ref{lem11.5}\,(d) the $(k,p)$-atom $v_N$ satisfies 
\[
[v_N]_{p'}=\|c\|_{p'}\le 1,
\]
as required.

Further, for $N=\infty$ 
\begin{equation}\label{eq11.11}
v_\infty:=\sum_{i=1}^\infty c_i \mu_{Q_i}\in\bar{\mathscr B}_p^k
\end{equation}
In fact, by Lemmas \ref{lem11.5}\,(d) and \ref{lem11.6}\,(2), and inequality \eqref{2.3.3},
\[
\left\|\sum_{i=\ell}^m c_i\mu_{Q_i}\right\|_{U_p^k}\le \left(\sum_{i=\ell}^m |c_i|^{p'}\right)^{\frac{1}{p'}}\to 0
\]
as $\ell, m\to\infty$, i.e., the series  in \eqref{eq11.11} converges in $U_p^k$.
Moreover, its partial sums belong to $\mathscr B_p^k$, cf. \eqref{eq11.10}, hence, $v_\infty$ belongs to the closure of $\mathscr B_p^k$.

In the remaining case of $N=0$ we set
\begin{equation}\label{eq11.12}
v_0:=0.
\end{equation}
Now the proof of Statement \ref{stat11.4} will be completed if we show that
\begin{equation}\label{eq11.13}
\lim_{n\to\infty} \mathfrak i^*(b^n)=\mathfrak i^*(v_N)
\end{equation}
in the weak$^*$ topology of $B((\textsc{v}_p^k)^*)$.

However, the derivation of \eqref{eq11.13} repeats line by line the proof of the similar statement in \cite{BB-18}, see equation (6.23) there.

The proof of Statement \ref{stat11.4} is complete. 
\end{proof}

 Now, we complete the proof of Theorem \ref{te2.3.3} beginning with the following:
\begin{Lm}\label{lem11.7}
The space $\textsc{v}_p^k$ isometrically embeds in the space  $C(\mathfrak i^*(\bar{\mathscr B}_p^k))$ of continuous functions on the (metrizable) compact space $\mathfrak i^*(\bar{\mathscr B}_p^k)$,
\end{Lm}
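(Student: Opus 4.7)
The plan is to consider the evaluation map
\[
\Phi:\textsc{v}_p^k\to C(\mathfrak{i}^*(\bar{\mathscr B}_p^k)),\qquad \Phi(f)(\phi):=\phi(f),
\]
and to verify that it is a linear isometry. Linearity is immediate, and each $\Phi(f)$ will be continuous because $\mathfrak{i}^*(\bar{\mathscr B}_p^k)$ carries the weak$^*$ topology inherited from $(\textsc{v}_p^k)^*$ (Proposition \ref{prop11.3}). To justify metrizability I will appeal to the separability of $\textsc{v}_p^k$: by definition $\dot{\textsc{v}}_p^k$ is the $|\cdot|_{V_p^k}$-closure of $C^\infty\cap\dot V_p^k$, and the Taylor-type estimate \eqref{eq7.5} already used in Section~5 bounds $|\cdot|_{V_p^k}$ on $C^\infty$ by $C^k$-norms, so polynomials with rational coefficients furnish a countable $|\cdot|_{V_p^k}$-dense subset; the standard Banach-space fact then makes the weak$^*$ topology metrizable on the norm-bounded set $\mathfrak{i}^*(\bar{\mathscr B}_p^k)$.

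The decisive step will be the identity
\[
\|\Phi(f)\|_{C(\mathfrak i^*(\bar{\mathscr B}_p^k))}=\|f\|_{\textsc{v}_p^k}.
\]
First I will rewrite the left-hand side: each $\phi\in\mathfrak i^*(\bar{\mathscr B}_p^k)$ is $\phi=\mathfrak i^* u$ for some $u\in\bar{\mathscr B}_p^k\subset U_p^k$, and because $\mathfrak i$ is the natural inclusion $\textsc{v}_p^k\hookrightarrow V_p^k\equiv (U_p^k)^*$ we have $\phi(f)=u(f)$; hence
\[
\|\Phi(f)\|_{C}=\sup_{u\in\bar{\mathscr B}_p^k}|u(f)|.
\]
The inclusion $\bar{\mathscr B}_p^k\subset B(U_p^k)$ (immediate from $\|b_\pi\|_{U_p^k}\le[b_\pi]_{p'}\le 1$, which is the very definition of the $U_p^k$-norm) gives the upper bound $\sup_{u\in\bar{\mathscr B}_p^k}|u(f)|\le\|f\|_{V_p^k}$. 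For the reverse inequality I will invoke Theorem~\ref{te2.2.5}(b): every $v\in B(U_p^k)$ lies in the $U_p^k$-closure of the symmetric convex hull of the discrete $(k,p)$-chains satisfying $[b_\pi]_{p'}\le 1$, which, viewed through the embedding $\mathcal I$ of Theorem~\ref{te2.3.2}, form a subset of $\mathscr B_p^k$. For such a convex combination $v=\sum_i\lambda_i b_i$ with $\sum_i|\lambda_i|\le 1$ and $b_i\in\mathscr B_p^k$, the triangle inequality gives $|v(f)|\le\sup_i|b_i(f)|\le\sup_{u\in\bar{\mathscr B}_p^k}|u(f)|$, and continuity of $u\mapsto u(f)$ on $U_p^k$ preserves this bound after passing to the closure; taking sup over $B(U_p^k)$ yields $\|f\|_{V_p^k}\le\sup_{u\in\bar{\mathscr B}_p^k}|u(f)|$. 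Since $\|f\|_{\textsc{v}_p^k}=\|f\|_{V_p^k}$ for $f\in\textsc{v}_p^k$, the isometry follows, and in particular $\Phi$ is injective.

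The main obstacle is not analytic but bookkeeping across several identifications at once: the measure-valued chains constituting $\mathscr B_p^k$ must be regarded simultaneously as elements of $U_p^k$ (through the embedding $\mathcal I$ and retraction $\mathcal E$ of Theorem~\ref{te2.3.2}) and as sources of functionals on $\textsc{v}_p^k$ via $\mathfrak i^*$, and the reduction of $\sup_{u\in B(U_p^k)}|u(f)|$ to a supremum over the chain set $\bar{\mathscr B}_p^k$ has to be deduced carefully from the discrete-chain version of Theorem~\ref{te2.2.5}(b) rather than restated directly from it.
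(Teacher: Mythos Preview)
Your proof is correct and follows essentially the same route as the paper's: both define the evaluation map into $C(\mathfrak i^*(\bar{\mathscr B}_p^k))$ and obtain the isometry by sandwiching $\sup_{u\in\bar{\mathscr B}_p^k}|u(f)|$ between $\|f\|_{V_p^k}$ (since $\bar{\mathscr B}_p^k\subset B(U_p^k)$) and itself (since by Theorem~\ref{te2.2.5}(b) the discrete chains with $[b_\pi]_{p'}\le 1$ already generate $B(U_p^k)$ as a closed symmetric convex hull and embed, via $\mathcal I$, into $\mathscr B_p^k$). The paper compresses the convex-hull step by citing \cite[Lm.~6.7]{BB-18}, whereas you spell it out, and you additionally supply the separability argument for metrizability that the paper leaves implicit; otherwise the arguments coincide.
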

\begin{proof}
By Theorem \ref{te2.2.5}\,(b) $B(U_p^k)$ is the closure of the symmetric convex hull of the set $\{b_\pi\in (U_p^k)^0\, :\, [b_\pi]_{p'}\le 1\}$.  Since the latter set is a subset of $\mathscr B_p^k\subset B(U_p^k)$, see \eqref{equ11.3}, $B(U_p^k)$ is the closure of the symmetric convex hull of $\mathscr B_p^k$, denoted by ${\rm sc}(\mathscr B_p^k)$.
As in \cite[Lm.\,6.7]{BB-18} this implies that the set
\begin{equation}\label{equ11.15}
\mathfrak i^*({\rm sc}(\bar{\mathscr B}_p^k))={\rm sc}(\mathfrak i^*(\bar{\mathscr B}_p^k))
\end{equation}
is weak$^*$ dense in $B((\textsc{v}_p^k)^*)$ and for every element $v\in \textsc{v}_p^k$ regarded as a bounded linear functional on $(\textsc{v}_p^k)^*$ 
\begin{equation}\label{equ11.16}
\|v\|_{\textsc{v}_p^k}=\sup_{v^*\in \mathfrak i^*(\bar{\mathscr B}_p^k)}|v(v^*)|.
\end{equation} 

Since $v|_{\mathfrak i^*(\bar{\mathscr B}_p^k)}$ is a continuous function on $\mathfrak i^*(\bar{\mathscr B}_p^k)$ in the weak$^*$ topology induced from $B((\textsc{v}_p^k)^*)$ and its supremum norm equals $\|v\|_{\textsc{v}_p^k}$, the map
\[
\textsc{v}_p^k\ni v\mapsto v(v^*),\quad v^*\in \mathfrak i^*(\bar{\mathscr B}_p^k),
\]
is a linear isometric embedding of $\textsc{v}_p^k$ in $C(\mathfrak i^*(\bar{\mathscr B}_p^k))$.
\end{proof}

Now let $v^*$ be a linear continuous functional on the space $\textsc{v}_p^k$ regarded as the closed subspace of the space $C(\mathfrak i^*(\bar{\mathscr B}_p^k))$. By the Hahn-Banach theorem $v^*$ can be extended to a linear continuous functional, say, $\hat v^*$ on this  space with the same norm. In turn, by the Riesz representation theorem there is a {\em regular finite (signed) Borel measure} on the compact space $\mathfrak i^*(\bar{\mathscr B}_p^k)$ denoted by $\mu_{v^*}$ that represents $\hat v^*$.

This implies that
\begin{equation}\label{equ11.17}
v(v^*)=\int_{\mathfrak i^*(\bar{\mathscr B}_p^k)}v\, d\mu_{v^*},\quad v\in\textsc{v}_p^k.
\end{equation}

Now we use this measure to find a similar representation for elements of $V_p^k$.

Since $\textsc{v}_p^k$ is a weak$^*$ dense subspace of the space $V_p^k$, see Lemma \ref{lem11.2} and Theorem \ref{te2.1.4}, for every $v\in V_p^k$ there is a {\em bounded} in the $\textsc{v}_p^k$ norm sequence $\{v_j\}_{j\in\N}\subset\textsc{v}_p^k$ such that
\begin{equation}\label{equ11.18}
\lim_{j\to\infty}v_j(u)=v(u),\quad u\in U_p^k.
\end{equation}

Now let $\tau: \mathfrak i^*(U_p^k)\rightarrow U_p^k$ be the inverse to the {\em injection} $\mathfrak i^*|_{U_p^k}: U_p^k\rightarrow (\textsc{v}_p^k)^*$, see Proposition \ref{prop11.1}\,(a). Making the change of variable $u\to \tau(v^*)$ we derive from \eqref{equ11.18}
\begin{equation}\label{equ11.19}
\lim_{j\to\infty} v_j(v^*)=(v\circ\tau)(v^*),\quad v^*\in \mathfrak i^*(\bar{\mathscr B}_p^k).
\end{equation}
Since linear functionals $v_j:\textsc{v}_p^k\to\RR$ are continuous in the weak$^*$ topology defined by $(\textsc{v}_p^k)^*$ their traces to $\mathfrak i^*(\bar{\mathscr B}_p^k)$ are continuous functions in the weak$^*$ topology induced from $B((\textsc{v}_p^k)^*)$. This implies the following:
\begin{Lm}\label{lem11.8}
The function $(v\circ\tau)|_{\mathfrak i^*(\bar{\mathscr B}_p^k)}$ is  $\mu_{v^*}$-integrable and bounded.

Moreover, a function $\phi_{v^*}:V_p^k\to\RR$ given by
\begin{equation}\label{equ11.20}
\phi_{v^*}(v):=\int_{\mathfrak i^*(\bar{\mathscr B}_p^k)}v\circ\tau\,d\mu_{v^*}
\end{equation}
belongs to $(V_p^k)^*$.
\end{Lm}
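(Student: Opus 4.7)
The plan is to show $v\circ\tau$ is a bounded Baire-class-$1$ function on the compact metrizable space $\mathfrak i^*(\bar{\mathscr B}_p^k)$, hence integrable against the finite regular Borel measure $\mu_{v^*}$, and that the resulting integral depends linearly and continuously on $v$.

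First, I would establish the pointwise bound $|(v\circ\tau)(v^*)|\le \|v\|_{V_p^k}$ on $\mathfrak i^*(\bar{\mathscr B}_p^k)$. Each $(k,p)$-chain $b_\pi$ of the generating family in \eqref{equ11.3} satisfies $\|b_\pi\|_{U_p^k}\le[b_\pi]_{p'}\le 1$ by definition of the $U_p^k$-seminorm, so $\bar{\mathscr B}_p^k\subset B(U_p^k)$; in particular, for $v^*\in\mathfrak i^*(\bar{\mathscr B}_p^k)$ the element $\tau(v^*)$ lies in $B(U_p^k)$, and the bound follows from the duality $(U_p^k)^*\equiv V_p^k$.

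Next, I would exhibit $v\circ\tau$ as a pointwise limit of weak$^*$-continuous functions, using the approximating sequence $\{v_j\}_{j\in\N}\subset\textsc{v}_p^k$ from \eqref{equ11.18}. The key identity is $v_j(\tau(v^*))=v^*(v_j)$ for all $v^*\in\mathfrak i^*(U_p^k)$ and $v_j\in\textsc{v}_p^k$; this is immediate from the definition $\mathfrak i^*(u)(g)=u(\mathfrak i(g))=u(g)$ for $u\in U_p^k$, $g\in\textsc{v}_p^k$. Thus $v_j\circ\tau$ on $\mathfrak i^*(\bar{\mathscr B}_p^k)$ coincides with the evaluation functional $v^*\mapsto v^*(v_j)$, which is continuous in the weak$^*$ topology inherited from $(\textsc{v}_p^k)^*$. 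Combined with \eqref{equ11.19}, this shows $v\circ\tau$ is a pointwise limit of continuous functions on a compact metrizable space, hence of Baire class $1$ and, in particular, Borel measurable.

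With measurability and boundedness in hand, the integral in \eqref{equ11.20} is well defined and satisfies $|\phi_{v^*}(v)|\le\|v\|_{V_p^k}\,\|\mu_{v^*}\|_M$, while linearity in $v$ is inherited from linearity of $\tau$, of the action of $v$, and of integration, so $\phi_{v^*}\in(V_p^k)^*$. The main conceptual step in all this is the identification $v_j\circ\tau=v^*(v_j)$ on $\mathfrak i^*(\bar{\mathscr B}_p^k)$, which converts the a priori opaque composition with $\tau$ (arising from an embedding that is not surjective) into a transparent weak$^*$-continuous evaluation; the remainder of the argument is then standard measure theory.
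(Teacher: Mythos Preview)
Your argument is correct and complete. The paper's own ``proof'' of this lemma consists entirely of a reference to \cite[Lemma~6.8]{BB-18}, so you have supplied the details that the paper omits; the approach you take---bounding $v\circ\tau$ via $\bar{\mathscr B}_p^k\subset B(U_p^k)$, exhibiting it as a pointwise limit of the weak$^*$-continuous evaluations $v^*\mapsto v^*(v_j)$ through the identity $v_j(\tau(v^*))=v^*(v_j)$, and concluding Borel measurability and integrability---is exactly the natural one and presumably matches the argument in the cited reference.
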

\begin{proof}
The result is, in fact, proved in Lemma~6.8  of \cite{BB-18}.
\end{proof}

At the next stage we establish  weak$^*$ continuity of  $\phi_{v^*}$ on $V_p^k$ regarded as the dual space of $U_p^k$, see Theorem \ref{te2.2.6}. 

To this end it suffices to show that  $\phi_{v^*}^{-1}(R)\subset V_p^k$ is weak$^*$ closed for every closed interval  $R\subset\RR$. Since this preimage is convex, we can use the Krein-Smulian weak$^*$ closedness criterion, see, e.g., \cite[Thm.\,V.5.7]{DSch-58}. In our case, it asserts that 
$\phi_{v^*}^{-1}(R)$ is weak$^*$ closed iff
$B_r(0)\cap \phi_{v^*}^{-1}(R)$ is for every $r>0$; here $B_r(0):=\{v\in V_p^k\, :\, \|v\|_{V_p^k}\le r\}$.

Without loss of generality we identify $(V_p^k,\|\cdot\|_{V_p^k})$ with $(\dot V_{p;S}^k, |\cdot|_{V_p^k})$, where $\dot V_{p;S}^k\subset \dot V_p^k$ is the subspace of functions vanishing on an interpolating set $S\subset Q^d$ for $\mathcal P_{k-1}^d$, see Section~4.1.1. 

Further, let $\mathcal B_1(Q^d)$ stand for the space of functions on $Q^d$ of the first Baire class equipped with the topology of pointwise convergence. By Lemma \ref{lemma6.1}
$B_r(0)\cap \phi_{v^*}^{-1}(R)\subset \dot V_{p;S}^k\subset\mathcal B_1(Q^d)$ is relatively compact in the latter space and by Theorem F3 of \cite{BFT-78} is  {\em sequentially dense}  in its closure. Hence,
if $v$ belongs to this closure, then there is a
sequence $\{v_j\}_{j\in\N}\subset B_r(0)\cap\phi_{v^*}^{-1}(R)$ pointwise convergent to $v$ on $Q^d$. Therefore 
$\{v_j\}_{j\in\N}$ satisfies the assumptions of the Rosenthal Main theorem \cite{Ro-77} implying that
\begin{equation}\label{equ11.21}
\lim_{j\rightarrow\infty}\int_{Q^d}v_j\,d\mu=\int_{Q^d}v\,d\mu
\end{equation}
for all finite signed Borel measures on $Q^d$. 

In particular, this is true for discrete measures giving rise  by functions in $\hat\ell_1=(U_p^k)^0$. But $(U_p^k)^0$ is dense in $U_p^k$ and by our definition 
the sequence $\{v_j\}_{j\in\N}$ is bounded in $\dot V_{p;S}^k$. Hence, \eqref{equ11.21} implies that
\begin{equation}\label{equ11.22}
\lim_{j\rightarrow\infty}v_j(u)=v(u)\quad {\rm for\ all}\quad u\in U_p^k;
\end{equation}
here we regard $v_j$ and $v$ as bounded linear functionals on $U_p^k$, see Theorem \ref{te2.2.6}. 

This means that $\{v_j\}_{j\in\N}\subset \dot V_{p;S}^k$ weak$^*$ converges to $v$.

To show weak$^*$ closedness 
of $B_r(0)\cap\phi_{v^*}^{-1}(R)$ it remains to prove the following:
\begin{Lm}\label{lem11.9}
If a sequence $\{v_j\}_{j\in\N}\subset B_r(0)\cap\phi_{v^*}^{-1}(R)$ weak$^*$ converges to some $v\in V_p^k$, then $v\in B_r(0)\cap\phi_{v^*}^{-1}(R)$.
\end{Lm}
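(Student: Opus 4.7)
I would split the constraint $B_r(0)\cap\phi_{v^*}^{-1}(R)$ into its two defining conditions and verify each separately. Membership in $B_r(0)$ is automatic from the weak$^*$ lower semicontinuity of the norm on $V_p^k=(U_p^k)^*$: $\|v\|_{V_p^k}\le\varliminf_{j\to\infty}\|v_j\|_{V_p^k}\le r$. The substance of the lemma is therefore to show $v\in\phi_{v^*}^{-1}(R)$, and since $R\subset\RR$ is closed, this reduces to proving the numerical convergence $\phi_{v^*}(v_j)\to\phi_{v^*}(v)$.

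To establish this convergence I would apply the Lebesgue dominated convergence theorem to the integral representation \eqref{equ11.20}, writing
\[
\phi_{v^*}(v_j)=\int_{\mathfrak i^*(\bar{\mathscr B}_p^k)} v_j\circ\tau\,d\mu_{v^*},
\]
with respect to the finite regular Borel measure $\mu_{v^*}$. Two ingredients are required. \emph{(i) Pointwise convergence on $\mathfrak i^*(\bar{\mathscr B}_p^k)$:} for every $v^*\in\mathfrak i^*(\bar{\mathscr B}_p^k)$ the point $\tau(v^*)$ lies in $U_p^k$, so by the weak$^*$ convergence hypothesis applied to this fixed element, $(v_j\circ\tau)(v^*)=v_j(\tau(v^*))\to v(\tau(v^*))=(v\circ\tau)(v^*)$. \emph{(ii) Uniform domination:} I would show that $\tau(v^*)\in\bar{\mathscr B}_p^k\subset B(U_p^k)$ for every $v^*$ in the domain; this gives $|(v_j\circ\tau)(v^*)|\le\|v_j\|_{V_p^k}\cdot\|\tau(v^*)\|_{U_p^k}\le r$, and the constant $r$ serves as the dominating integrable function on the finite measure space. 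Measurability and integrability of $v\circ\tau$ with respect to $\mu_{v^*}$ have already been supplied by Lemma \ref{lem11.8}, so dominated convergence yields $\phi_{v^*}(v_j)\to\phi_{v^*}(v)$ and, since $R$ is closed, $\phi_{v^*}(v)\in R$.

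The main obstacle is step (ii), the inclusion $\bar{\mathscr B}_p^k\subset B(U_p^k)$. To handle it I would revisit the identification, provided by Theorem \ref{te2.3.2}, of the measure-theoretic chains $\mathscr B_p^k\subset\hat M=(\widetilde U_p^k)^0$ as elements of $U_p^k$ via the map $\mathcal E:\widetilde U_p^k\to U_p^k$. For a chain $b_\pi=\sum_{Q\in\pi}c_Q\mu_Q\in\mathscr B_p^k$ the atoms $\mu_Q$ are unit-mass elements of $M$ orthogonal to $\mathcal P_{k-1}^d$, so by definition of the $\widetilde U_p^k$ seminorm $\|b_\pi\|_{\widetilde U_p^k}\le[b_\pi]_{p'}\le 1$; since $\mathcal E$ has operator norm at most $1$ (it is the adjoint-type surjection built in the proof of Theorem \ref{te2.3.2}), the image of $b_\pi$ in $U_p^k$ has norm $\le 1$. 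Thus $\mathscr B_p^k\subset B(U_p^k)$, and closedness of the unit ball gives $\bar{\mathscr B}_p^k\subset B(U_p^k)$, as needed. Once (ii) is in hand the dominated convergence argument closes the proof in one line.
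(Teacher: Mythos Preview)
Your proposal is correct and follows essentially the same route as the paper: both arguments apply the Lebesgue dominated convergence theorem to the integral representation \eqref{equ11.20}, using that weak$^*$ convergence of $\{v_j\}$ yields pointwise convergence of $v_j\circ\tau$ on $\mathfrak i^*(\bar{\mathscr B}_p^k)$ and that the sequence is uniformly bounded by $r$, after which closedness of $R$ finishes. The only cosmetic difference is that you derive the uniform bound by explicitly checking $\bar{\mathscr B}_p^k\subset B(U_p^k)$ via Theorem~\ref{te2.3.2} and the estimate $\|b_\pi\|_{\widetilde U_p^k}\le[b_\pi]_{p'}$, whereas the paper simply invokes the boundedness clause of Lemma~\ref{lem11.8}.
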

\begin{proof}
Weak$^*$ convergence of $\{v_j\}_{j\in\N}$ to $v$ implies pointwise convergence of the sequence of functions $\{v_j\circ\tau |_{\mathfrak i^*(\bar{\mathscr B}_p^k)}\}_{j\in\N}$ to the function $v\circ\tau |_{\mathfrak i^*(\bar{\mathscr B}_p^k)}$, see \eqref{equ11.18}. Further, the functions of this sequence are $\mu_{v^*}$-measurable and bounded by $\sup_j\|v_j\|_{V_\kappa}$, see Lemma \ref{lem11.8}. Moreover, by the assumption of Lemma \ref{lem11.9}
\begin{equation}\label{eq11.23}
\sup_j\|v_j\|_{V_p^k}\le r\quad {\rm and}\quad \phi_{v^*}(v_j)\in R,\ \  j\in\N.
\end{equation}
Therefore, the Lebesgue pointwise convergence theorem implies
\[
\lim_{j\to\infty}\phi_{v^*}(v_j)=\int_{\mathfrak i^*(\bar{\mathscr B}_p^k)}\left(\lim_{j\to\infty} v_j\circ\tau\right)d\mu_{v^*}=\int_{\mathfrak i^*(\bar{\mathscr B}_p^k)}v\circ\tau\,d\mu_{v^*}=\phi_{v^*}(v).
\]
Since $R\subset\RR$ is closed, the limit on the left-hand side belongs to $R$, hence,
the limit point $v\in B_r(0)\cap\phi_{v^*}^{-1}(R)$ as required.
\end{proof}

Thus, $\phi_{v^*}$ is a weak$^*$ continuous linear functional from $(V_p^k)^*$.  By the definition of the weak$^*$ topology on $V_p^k=(U_p^k)^*$ every weak$^*$ continuous functional is uniquely determined by an element of $U_p^k$, i.e., for some $u_{v^*}\in U_p^k$
\[
\phi_{v^*}(v)=v(\mathfrak i^*(u_{v^*})),\quad v\in V_p^k.
\]
On the other hand, see \eqref{equ11.20}, for all $v\in\textsc{v}_p^k$,
\[
\phi_{v^*}(v)=v(v^*).
\]
Moreover, since $\mathfrak i^*|_{U_p^k}:U_p^k\rightarrow (\textsc{v}_p^k)^*$ is an isomorphic embedding, see Proposition \ref{prop11.1}\,(a), $\textsc{v}_p^k$ separates points of $U_p^k$. 

These two equalities imply that
\[
v^*=\mathfrak i^*(u_{v^*}).
\]
Thus, every point $v^*\in (\textsc{v}_p^k)^*$ is the image under $\mathfrak i^*$ of some point of $U_p^k$, i.e., $\mathfrak i^*: U_p^k\to (\textsc{v}_p^k)^*$ is a surjection. Moreover, $\mathfrak i^*|_{U_p^k}$ is also an embedding. Hence, $\mathfrak i^*$ is an isomorphism of the Banach spaces $U_p^k$ and $(\textsc{v}_p^k)^*$. 

This completes the proof of Theorem \ref{te2.3.3}.
\end{proof}
\begin{proof}[Proof of Corollary \ref{cor2.3.4}]
The result is the combination of Theorems \ref{te2.3.3} and \ref{te2.2.6}.
\end{proof}


\end{document}